\newtheorem{Ass}{Assumption}
\newtheorem{Thm}{Theorem}
\newtheorem{Prop}[Thm]{Proposition}
\newtheorem{Lem}{Lemma}
\newtheorem{Rem}{Remark}
\newtheorem*{Prop*}{Proposition}
\newtheorem*{Cor*}{Corollary}
\newtheorem*{Thm*}{Theorem}
\apptocmd{\sloppy}{\hbadness 10000\relax}{}{}
\begin{document}
	\title[Degenerate triangular reaction-diffusion system]{Convergence to equilibrium for\\ a degenerate triangular reaction-diffusion system}
	
	
	
	\bibliographystyle{plain}
	
	\author[Saumyajit Das]{Saumyajit Das}
	\address{S.D.: Department of Mathematics, Indian Institute of Technology Bombay, Powai, Mumbai 400076 India.}
	\email{194099001@iitb.ac.in}
	
	\author[Harsha Hutridurga]{Harsha Hutridurga}
	\address{H.H.: Department of Mathematics, Indian Institute of Technology Bombay, Powai, Mumbai 400076 India.}
	\email{hutridurga@iitb.ac.in}
	
	\maketitle
	
	\begin{abstract}
		In this article we study a reaction diffusion system with $m$ unknown concentration. The non-linearity in our study comes from an underlying reversible chemical reaction and triangular in nature. Our objective is to understand the large time behaviour of solution where there are degeneracies. In particular we treat those cases when one of the diffusion coefficient is zero and others are strictly positive. We prove convergence to equilibrium type of results under some condition on stoichiometric coefficients in dimension $1$,$2$ and $3$ in correspondence with the existence of classical solution. For dimension greater than 3 we prove similar result under certain closeness condition on the non-zero diffusion coefficients and with the same condition imposed on stoichiometric coefficients. All the constant occurs in the decay estimates are explicit.
	\end{abstract}

\section{Introduction}\label{sec:introduction}

Reaction-diffusion equations govern the evolution (in time) of species concentrations at various spatial locations that are simultaneously diffusing and undergoing chemical reactions. We consider a reaction-diffusion model that concerns the $m$ species $X_1$,$\cdots$. $X_{m-1}$ and $X_m$ undergoing the following reversible reaction:	
	\[
	\alpha_1 X_1+\alpha_2X_2+\cdot+\alpha_{m-1}X_{m-1}{\rightleftharpoons}X_m
	\]
Here the medium is some domain  $\Omega\in \mathbb{R}^n$, generally taken to be smooth and bounded with $\partial\Omega$ it's boundary. For all $1\leq i\leq m-1$,  $\alpha_i$ generally taken non-negative real number and most preferably non negative integer. For all $1\leq i\leq m$, we denote  $a_i(t,x)$ as the concentration of the compound  $X_i$ at a particular space and time. Furthermore $a_i(t,x)$  satisfies the following p.d.e.	
\begin{equation}\label{triangular  model}
	\left \{
	\begin{aligned}
		\partial_t a_i - d_i \Delta a_i=& a_m-\displaystyle\prod\limits_{j=1}^{m-1}(a_j)^{\alpha_j} \qquad \qquad \mbox{in}\ (0,T)\times\Omega\\
		\partial_t a_m-d_{m}\Delta a_m=& \displaystyle\prod\limits_{j=1}^{m-1}(a_j)^{\alpha_j}-a_m \qquad \qquad \mbox{in}\ (0,T)\times\Omega \\
		\nabla_{x}a_i.\gamma= & 0 \qquad \qquad \qquad \qquad \qquad \mbox{on}\ (0,T)\times \partial \Omega \\
		a_{i}(0,x)=& a_{i,0} \qquad \qquad \ \ \   \qquad \qquad \ \ \ \mbox{in}\  \Omega.
	\end{aligned}
	\right .
\end{equation}
Here, the initial condition is taken to be positive and smooth up to the boundary. For all $1\leq i\leq m$, the diffusion coefficients $d_i$ are assumed to be non-negative constant. Although here the rate functions, appears on the right hand side corresponding to each density are difference between two product type polynomials, one can express the general reaction-diffusion equation in the following way

\vspace{.3cm}

	General Reaction-Diffusion System: \ For all $1\leq i\leq m$:
	
	\begin{equation}\label{general reaction diffusion}
		\left \{
		\begin{aligned}
			\partial_{t}u_{i}-d_{i}\Delta u_{i}= & f_{i}(u_1,u_2,\cdots,u_m) \quad \   \mbox{in} (0,T) \times \Omega \\ 
			\nu_{i} \nabla_x u_i. \gamma+(1-\nu_{i})u_{i}=& 0 \qquad \qquad \qquad \qquad \ \ \mbox{on} \ (0,T) \times \partial\Omega   \\ 
			u_{i}(0, \cdot)= & u_{i0} \qquad \qquad \qquad \qquad \mbox{in} \ \Omega.
		\end{aligned}
		\right .
	\end{equation}
Here the unknown is $u=(u_1,u_2,\cdots,u_m):[0,T)\times\Omega\to\mathrm{R}^m$. The functions $f_i:\mathbb{R}^m\to\mathbb{R}$ for $1\leq i\leq m$ are $C^1$ functions. The coefficients $d_1,\cdots,d_m$ are non-negative constants. In the boundary condition, we take the constants $\nu_i\in[0,1]$ for $1\leq i\leq m$. We consider the spatial domain $\Omega\subset\mathbb{R}^{N}$ to be bounded and smooth with $\partial\Omega$ it's boundary. We emphasize on triangular reaction-diffusion system. 

\vspace{.2cm}

Triangular Reaction-Diffusion System: If the rate functions corresponding to \eqref{general reaction diffusion}, for all $1\leq i\leq m$, satisfy the following:
\[
\texttt{P}(f_1,\cdots,f_m)^{t}\leq \left[1+\sum\limits_{i=1}^{m}u_i\right]\texttt{b} \quad \mbox{when}\ u_i\geq 0,
\]
for some lower triangular positive matrix $\texttt{P}$ and for some positive vector  $\texttt{b}\in \mathbb{R}^m$, then the reaction-diffusion system \eqref{general reaction diffusion} is called a triangular one.  The rate functions corresponding to our system \eqref{triangular  model} are the following:

\begin{equation*}
	\left \{
	\begin{aligned}
		f_i = & a_m-\displaystyle\prod_{j=1}^{m-1}(a_j)^{\alpha_j} \quad \forall i=1,\cdots,m-1\\
		f_m=& -f_1.
	\end{aligned}
	\right . 
\end{equation*}

Consider the positive definite lower triangular matrix
\begin{equation*}
	P=(p_{ij})_{m\times m}= \left \{
	\begin{aligned}
		&1 \qquad i=j, i\in\{1,\cdots,m \}\\
		&1 \qquad j=i-1, i\in \{2,\cdots,m\}\\
		& 0 \qquad \text{otherwise}.
	\end{aligned}
	\right .
\end{equation*}

The action of this matrix on the rate function vector $(f_1,\cdots,f_m)$ is the the following product

\[
P \begin{pmatrix}
	f_1 \\ \cdot \\ \cdot\\ \cdot \\ f_m
\end{pmatrix} \leq \left[1+\sum\limits_{1}^{m}a_i\right] \begin{pmatrix}
	1 \\ 2\\ \cdot \\ \cdot \\ 2 \\ 0
\end{pmatrix},
\]
which makes our system a triangular one. 

\vspace{.3cm}

 When all the diffusion coefficients are strictly positive i.e., $d_i>0$ for all $i=1,\cdots,m$ (here onwards we call this as non-degenerate setting),
well posedness of this triangular system and global-in-time existence of classical positive solution  is established in  articles \cite{Pie2010} \cite{Pierre2003}. In the article \cite{DF15}, authors first analyze the simplest three species triangular reaction-diffusion system where one or more species stops diffusing (here onwards we call this as degenerate setting). Authors consider the following reversible reaction:
\[
X_1+X_2 \rightleftharpoons X_3,
\] 

where the concentrations $a_1,a_2,a_3$ satisfy the following system 

\begin{equation}\label{eq:model}
	\left \{
	\begin{aligned}
		\partial_t a_1 -d_1 \Delta a_1 =& a_3-a_1a_2 \qquad \mbox{in}\ (0,T)\times\Omega\\
		\partial_t a_2 -d_2 \Delta a_2 =& a_3-a_1a_2 \qquad \mbox{in}\ (0,T)\times\Omega\\
		\partial_t a_3 -d_3 \Delta a_3 =& a_1a_2-a_3 \qquad \mbox{in}\ (0,T)\times\Omega\\
		\nabla_x a_i.\gamma=& 0 \qquad \qquad \mbox{on}\ (0,T)\times\partial\Omega, \forall i=1,2,3\\
		a_{i}(0,x)=& a_{i,0}(\in C^{\infty}(\overline{\Omega}))\geq 0 \qquad \forall i=1,2,3.
	\end{aligned}
	\right .
\end{equation}
Authors established existence of global-in-time positive classical solution  for all degenerate cases in any dimension except when $d_3=0$ and $d_1$,$d_2$ strictly positive. However, this particular degeneracy also attains a global-in-time positive classical solution up to dimension $3$. The degenerate triangular reaction-diffusion system \eqref{triangular  model} is analysed in the article [SH]. The authors established barring the degenerate case where $d_m=0$ and rest of the diffusion coefficient strictly positive, system \eqref{triangular  model} always has a unique global-in-time positive classical solution. However, this particular degeneracy also attains a global-in-time positive classical solution up to dimension $2$.

\vspace{.2cm}

The natural question arises, how the solution behaves as time growing larger and larger. The large time behaviour of solutions to \eqref{eq:model} in the non-degenerate setting was addressed in \cite{DF06} using the method of entropy. In this method, a Lyapunov functional (termed entropy) is found for the evolution equation. The negative of the time derivative of this entropy functional is referred to as the entropy dissipation functional. The entropy dissipation functional is then related back to the relative entropy via a functional inequality. This will then be followed by a Gr\"onwall type argument to deduce convergence of relative entropy to zero. A Czisz\'ar-Kullback-Pinsker type inequality that relates relative entropy and the $\mathrm L^1$-norm helps the authors in \cite{DF06} to prove the convergence to equilibrium in the $\mathrm L^1(\Omega)$-norm and that the decay is exponentially fast in time.  On a similar note, authors in the article \cite{FLT20} obtained similar convergence to equilibrium in the $\mathrm L^1(\Omega)$-norm and that the decay is exponentially fast in time. In both the proof of \cite{DF06} and \cite{FLT20}, uniform boundedness of the solution  to \eqref{eq:model} and \eqref{triangular  model} plays an important role. This uniform bound is available in the non-degenerate case.

\vspace{.2cm}

The study of large time behaviour of solutions for the degenerate case is quite different as the uniform bound of solutions is not yet known. In the article \cite{EMT20}, authors analysed global-in-time existence and large time behaviour of solution of a particular $4\times 4$ degenerate reaction-diffusion system. Although, the same entropy method is used but to relate entropy with it's dissipation we need to go through various integral estimate in unit cylinder which is established under some assumption on the non-zero diffusion coefficients. Under the same assumption on the diffusion coefficients, authors proved a similar convergence to equilibrium in the $\mathrm L^1(\Omega)$-norm but, here the the decay is sub-exponentially fast in time. In the light of this article authors in the article [SH] derive  similar sub-exponentially fast convergence to equilibrium in the $\mathrm L^1(\Omega)$-norm for the three species degenerate case \eqref{eq:model}, when there is only one degeneracy. The result is unconditional up to dimension $3$. However, authors extended the result in any dimension for the case where $d_1=0$ and other two diffusion coefficient is strictly positive, under the assumption that the two non-zero diffusion coefficients are sufficiently close. In all the cases, the equilibrium states are constants and can be described from the mass conservation nature of the system. In this article, we deduce similar sub-exponentially fast convergence towards equilibrium results for the degenerate triangular reaction-diffusion system \eqref{triangular  model}  where there is only one degeneracy and stoichiometric coefficients being natural number. We further assume that the stoichiometric coefficient corresponding to degenerate diffusion coefficient is $1$, which is automatically satisfied when $d_m=0$. We first address the mass conservation nature of the triangular reaction-diffusion system and equilibrium states.

\vspace{.2cm}
\textbf{Mass Conservation Property, Equilibrium States}:\\

Considering the initial datum are strictly positive, we have the following mass conservation estimates for the  triangular reaction-diffusion system \eqref{triangular  model}:

\begin{align} 
	0<M_{i} :=\int_{\Omega}a_i(t,x)+a_m(t,x)=\int_{\Omega}a_i(0,x)+a_m(0,x) \qquad \forall i=1,\cdots,m-1. \label{tri mass conserve 1}
\end{align}

The equilibrium state must be a steady state. Let we denote  $(a_{1\infty},\cdots,a_{m\infty})$ as the equilibrium state for the triangular  reaction-diffusion system \eqref{triangular  model}. In particular, we search for constants where the source term vanish. Hence, this constant state is a steady state.  So, the equilibrium state satisfies:
\begin{align}\label{triangular equibrium state 2}
	a_{m\infty}=\displaystyle{\prod\limits_{j=1}^{m-1}a_{j\infty}^{\alpha_j}}
\end{align}
Moreover, the mass conservation estimates also satisfied by the equilibrium state. Keeping in mind that we consider the constant steady state as described above, we obtain
\begin{align}\label{triangular equilibrium state 1}
	a_{i\infty}+a_{m\infty}=\frac{M_{i}}{\vert \Omega\vert} \qquad \forall i=1,\cdots,m-1.
\end{align}

Consider the following function
\[
f(x)=\displaystyle{\prod\limits_{j=1}^{m-1}\left(\frac{M_j}{\vert \Omega\vert}-x\right)^{\alpha_j}-x}\qquad x\in\left[0,\min\limits_{i=1,\cdots,m-1}\left\{\frac{M_i}{\vert \Omega\vert}\right\}\right]
\]
$f$ is continuously differentiable and observe that $f(x)>0$ when $x=0$ and $f(x)<0$ when $x=\displaystyle{\min\limits_{i=1,\cdots,m-1}\left\{\frac{M_i}{\vert \Omega\vert}\right\}}$. Intermediate value property yields a root of $f$ in the interval $\left(0,\displaystyle{\min\limits_{i=1,\cdots,m-1}\left\{\frac{M_i}{\vert \Omega\vert}\right\}}\right)$. Furthermore, consider the first derivative of $f$
\[
f'(x)= -\prod\limits_{j=1}^{m-1}\left(\frac{M_j}{\vert \Omega \vert}-x\right)^{\alpha_j} \sum\limits_{i=1}^{m-1}\frac{\alpha_i}{\frac{M_i}{\vert \Omega\vert}-x} -1< 0 \qquad x\in \left(0,\displaystyle{\min\limits_{i=1,\cdots,m-1}\left\{\frac{M_i}{\vert \Omega\vert}\right\}}\right).
\]
Hence $f(x)$ has only one root in $\left(0,\displaystyle{\min\limits_{i=1,\cdots,m-1}\left\{\frac{M_i}{\vert \Omega\vert}\right\}}\right)$. This ensures existence of such unique equilibrium points.

For non-degenerate triangular system, authors in the article \cite{FLT20} have shown that the global-in-time positive classical solution decays around it's equilibrium point exponentially fast in time in $\mathrm{L}^1(\Omega)$  norm. For degenerate triangular reaction-diffusion system \eqref{triangular  model} we follow similar approach as in degenerate three species system as described in [SH]. We obtain various estimates inspired by the articles \cite{EMT20}\cite{Lam87}\cite{Tan18}. These estimates helps us to obtain a polynomial type time growth  of the solution and  enable us to relate entropy functional with it's dissipation. Thus provide  asymptotic decay results. We further use various Neumann Green's function results from \cite{FMT20} \cite{rothe06} \cite{Morra83} \cite{ML15}, to get various estimates. The triangular degenerate systems we analyze for asymptotic behavior are described below:

\vspace{.2cm}
\text{Degenerate Triangular Species Model, Convergence to Equilibrium}:
\begin{equation} \label{degenerate 1}
	\left\{
	\begin{aligned}
		\partial_t a_1 = & a_m-a_1\prod\limits_{j=2}^{m-1}a_j^{\alpha_j}  \qquad \mbox{ in } (0,T)\times\Omega \\
		\partial_t a_i-d_i \Delta a_i= & a_m-a_1\prod\limits_{j=2}^{m-1}a_j^{\alpha_j}  \qquad \mbox{ in } (0,T)\times\Omega \ \ \forall \ i=2,\cdots,m-1\\
		\partial a_m-d_m \Delta a_m= & a_1\prod\limits_{j=2}^{m-1}a_j^{\alpha_jj}-a_m  \qquad \mbox{ in } (0,T)\times\Omega \\
		\nabla_x a_i.\gamma= & 0  \qquad \qquad \qquad \qquad \  \mbox{ on } (0,T)\times\partial\Omega, \ \  \forall \ i=1,\cdots,m\\
		a_i(0,x)=& a_{i,0}  \qquad \qquad \ \ \qquad \ \  \mbox{ in } \Omega,\ \  \forall \ i=1,\cdots,m
	\end{aligned}
	\right.
\end{equation}
and 
\begin{equation}\label{triangular d_m=0}
	\left\{
	\begin{aligned}
		\partial_t a_i - d_i \Delta a_i=& a_m-\displaystyle\prod\limits_{j=1}^{m-1}(a_j)^{\alpha_j} \qquad \qquad \mbox{in}\ (0,T)\times\Omega, \forall i=1,\cdots,m-1\\
		\partial_t a_m=& \displaystyle\prod\limits_{j=1}^{m-1}(a_j)^{\alpha_j}-a_m \qquad \qquad \mbox{in} (0,T)\times\Omega \\
		\nabla_{x}a_i.\gamma= & 0 \qquad \qquad \qquad \qquad \qquad \ \mbox{on}\ (0,T)\times \partial \Omega, \forall i=1,\cdots,m \\
		a_{i}(0,x)=& a_{i,0} \qquad \qquad \qquad \qquad \ \ \ \ \ \mbox{in} \ \Omega, \forall i=1,\cdots,m.
	\end{aligned}
	\right .
\end{equation}

For both the cases, initial datum are taken to be strictly positive and smooth up to the boundary. For all $1\leq i\leq m-1$, stoichiometric coefficients $\alpha_i\in\mathbb{N}$. Let $\alpha_m=1$. We consider the following entropy functional associated with the system  \eqref{degenerate 1} and \eqref{triangular d_m=0}:

\begin{align}\label{entropy}
	E(a_1,\cdots,a_m)=\sum\limits_{i=1}^m\int_{\Omega}  \alpha_i(a_i(\ln{a_i}-1)+1) \rm{d}x
\end{align}
The entropy dissipation functional is the following:
\begin{equation}\label{dissipation functional} 
	\left \{
	\begin{aligned}
		\text{For} \ d_1&=0\\
		D(a_1,\cdots,a_m)=\sum\limits_{i=2}^m\int_{\Omega} &\alpha_i d_i  \frac {\vert \nabla a_i \vert^2}{a_i} \rm{d}x \\ 
		+& \int_{\Omega}   \left(a_m-a_1\displaystyle \prod\limits_{j=2}^{m-1} (a_j)^{\alpha_j}\right)\ln{\left(\frac{a_m}{a_1\prod\limits_{j=2}^{m-1} (a_j)^{\alpha_j}}\right)}\rm{d}x. \nonumber   \\
		\text{For} \ d_m&=0\\
		D(a_1,\cdots,a_m)=\sum\limits_{i=1}^{m-1}\int_{\Omega} & \alpha_i d_i \frac{\vert \nabla a_i \vert^2}{a_i}\rm{d}x\\
		& +\int_{\Omega}   \left(a_m-\displaystyle \prod_{j=1}^{m-1} (a_j)^{\alpha_j}\right)\ln{\left(\frac{a_m}{\prod\limits_{j=1}^{m-1} (a_j)^{\alpha_j}}\right)}\rm{d}x . 
	\end{aligned}
	\right .
\end{equation}
Differentiating entropy functional with respect to time yields entropy dissipation functional with a negative sign. To apply Gr\'onwal inequality, we relate these two functionals. To relate this for the system \eqref{degenerate 1}, the key step is to obtain an $\mathrm{L}^p$ integral estimate of the species concentrations $a_i$, for all $i=2,\cdots,m$ where $p>N$ and $\mathrm{L}^{\frac{N}{2}}$ integral estimate of $a_1$. These estimates yields a polynomial time growth on the supremum of the solutions.  Thus, provides a sub-exponential decay results. These estimates are unconditional upto the dimension $3$ for the system \eqref{degenerate 1}, whereas for the  dimensions $N\ge4$, we have been able to arrive at those estimates only under certain closeness assumption on the non-zero diffusion coefficients. The precise assumption is the following:

\begin{Ass}
	The non-zero diffusion coefficients $d_i$ and $d_m$ are said to satisfy the closeness assumption if
	\begin{align}\label{tri closeness condition imp}
		\vert{d_i-d_m}\vert < \frac{1}{C^{PRC}_{{\frac{d_i+d_m}{2}},p'}}
		\qquad 
		\text{ and } 
		\qquad \frac{\vert d_i-d_m \vert}{d_i+d_m}<\frac{1}{C_{SOR}(\Omega,N,p')},
	\end{align}
	where the constants $C^{PRC}_{{\frac{d_a+d_c}{2}},p'}$ and $C_{SOR}(\Omega,N,p')$, are the parabolic regularity constant $($see Appendix \ref{PE}$)$ and the second order regularity constant $($see Appendix \ref{estimation 1}$)$, respectively.
\end{Ass}

For the degenerate triangular system \eqref{triangular d_m=0}, we have same unconditional sub-exponential convergence result up to dimension $2$. In this report the two main results for the triangular  degenerate reaction-diffusion equation are described below:

\begin{Thm}\label{theorem convergence 1 tri}
	Let dimension $N\geq4$ and let $(a_1,\cdots,a_m)$ be the solution of the degenerate triangular reaction-diffusion system \eqref{degenerate 1}. Let $(a_{1\infty},\cdots,a_{m\infty})$ be equilibrium states given by the \eqref{triangular equilibrium state 1}-\eqref{triangular equibrium state 2}. Furthermore let  $d_i$ and $d_m$ satisfying closeness condition \eqref{tri closeness condition imp} for some $i\in\{2,\cdots,m-1\}$. Then, for positive $\epsilon\ll 1$, there exists a time $T_{\epsilon}$ and positive constants $\lambda_1,\lambda_2$ depends on the domain, dimension, initial condition and $\epsilon$ and a positive constant $C_{LE}$ explicitly defined in \eqref{lower bound of entropy}, such that:
	\[
	\sum\limits_{i=1}^{m}\Vert a_i-a_{i\infty}\Vert_{\mathrm{L}^1(\Omega)}^{2} \leq 
	\frac{\lambda_1}{C_{LE}}e^{-\lambda_2(1+t)^{\frac{1-\epsilon}{N-1}}} \qquad \forall t\geq T_{\epsilon}.
	\]
	
	For dimension $N=1,2,3$, there exists a time $T_{\epsilon}$, positive constants $\lambda_1,\lambda_2$ depends on the domain, dimension, initial condition and $\epsilon$ and a constant and a positive constant $C_{LE}$ explicitly defined \eqref{lower bound of entropy}, such that:
	
	\[
	\sum\limits_{i=1}^{m}\Vert a_i-a_{i\infty}\Vert_{\mathrm{L}^1(\Omega)}^{2} \leq 
	\frac{\lambda_1}{C_{LE}}e^{-\lambda_2(1+t)^{\frac{1-\epsilon}{2}}} \qquad \forall t\geq T_{\epsilon}
	\]
	Regardless of any closeness assumption on non-zero diffusion coefficients.  
\end{Thm}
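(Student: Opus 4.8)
The plan is to run the entropy--entropy dissipation (EED) method, adapted to the degenerate setting where no uniform-in-time $\mathrm{L}^\infty$ bound on the solution is available. First I would verify the dissipation identity $\frac{d}{dt}E(a_1,\dots,a_m) = -D(a_1,\dots,a_m)$ for the functionals in \eqref{entropy}--\eqref{dissipation functional}, by multiplying the $i$-th equation of \eqref{degenerate 1} by $\alpha_i \ln a_i$, summing over $i$, integrating over $\Omega$, and using the homogeneous Neumann boundary conditions; the diffusive contributions assemble into the Fisher-information terms $\alpha_i d_i \int_\Omega |\nabla a_i|^2 / a_i$ (the $i=1$ term being absent since $d_1=0$), while the reaction contribution assembles into the nonnegative term $(a_m - a_1\prod_{j=2}^{m-1}a_j^{\alpha_j})\ln(a_m / (a_1\prod_{j=2}^{m-1}a_j^{\alpha_j}))$ displayed in \eqref{dissipation functional}. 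The end of the argument is then standard: once the relative entropy $E(a)-E(a_\infty)$ is shown to decay, I convert this into the claimed $\mathrm{L}^1$ bound using the Czisz\'ar-Kullback-Pinsker inequality, which supplies the lower bound $E(a)-E(a_\infty) \ge C_{LE}\sum_{i=1}^m \|a_i - a_{i\infty}\|_{\mathrm{L}^1(\Omega)}^2$ with $C_{LE}$ as in \eqref{lower bound of entropy}.

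The crux, and the step I expect to be hardest, is controlling the growth of the solution in the absence of a uniform bound. Because $a_1$ solves a pure ODE in time (no diffusion regularizes it), the nonlinearity $a_1\prod_{j=2}^{m-1}a_j^{\alpha_j}$ couples its growth to that of the diffusing species, so the estimates must be run simultaneously. The plan is to establish, via a duality argument built on the Neumann heat semigroup together with the Green's-function bounds cited from \cite{FMT20,rothe06,Morra83,ML15}, an $\mathrm{L}^p$ estimate with $p>N$ for the diffusing concentrations $a_i$, $i=2,\dots,m$, and an $\mathrm{L}^{N/2}$ estimate for the degenerate component $a_1$. Feeding these into a bootstrap of Moser type would then upgrade the bounds to a polynomial-in-time supremum estimate $\|a_i(t)\|_{\mathrm{L}^\infty(\Omega)} \le C(1+t)^{\theta}$. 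For $N\le 3$ the exponents align favourably and these estimates close unconditionally; for $N\ge 4$ the duality estimate for the pair $(a_i,a_m)$ cannot be closed with unequal diffusivities, and this is precisely where the closeness condition \eqref{tri closeness condition imp} enters: rewriting the two equations with the averaged coefficient $\tfrac{d_i+d_m}{2}$ and treating $|d_i-d_m|$ as a perturbation, the smallness of $|d_i-d_m|$ relative to the parabolic-regularity constant $C^{PRC}$ and of $\tfrac{|d_i-d_m|}{d_i+d_m}$ relative to the second-order-regularity constant $C_{SOR}$ makes the perturbation absorbable into the main term.

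With the polynomial growth in hand, I would next prove a \emph{time-dependent} functional inequality of the form $D(a(t)) \ge \frac{1}{K(t)}\bigl(E(a(t))-E(a_\infty)\bigr)$, where $K(t)$ grows polynomially, say $K(t)\sim (1+t)^{\beta}$ with $\beta$ determined by the growth exponent $\theta$ and by $N$. This is obtained by bounding the relative entropy above in terms of the Fisher-information and reaction-dissipation terms, a comparison whose constant degrades like a power of $\|a\|_{\mathrm{L}^\infty(\Omega)}$; substituting the polynomial supremum bound yields the stated power of $(1+t)$. Combined with the dissipation identity, this gives the differential inequality $\frac{d}{dt}\bigl(E-E_\infty\bigr)\le -\frac{1}{K(t)}\bigl(E-E_\infty\bigr)$ for $t\ge T_\epsilon$, the threshold time $T_\epsilon$ being where the supremum bound and the functional inequality become simultaneously effective.

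Integrating this Gr\"onwall-type inequality produces $E(a(t))-E(a_\infty) \le \bigl(E(a(T_\epsilon))-E(a_\infty)\bigr)\exp\!\bigl(-\int_{T_\epsilon}^t K(s)^{-1}\,ds\bigr)$, and since $\int^t (1+s)^{-\beta}\,ds \sim (1+t)^{1-\beta}$ the exponent $1-\beta$ is exactly $\tfrac{1-\epsilon}{N-1}$ in the regime $N\ge 4$ and $\tfrac{1-\epsilon}{2}$ for $N=1,2,3$, where the uniform low-dimensional rate reflects the sharper unconditional estimates there; the $\epsilon$ absorbs the loss incurred in passing from the $\mathrm{L}^p$ bound to the $\mathrm{L}^\infty$ bound, and $\lambda_1,\lambda_2$ collect the resulting constants depending on $\Omega$, $N$, the initial data and $\epsilon$. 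Chaining this decay with the Czisz\'ar-Kullback-Pinsker lower bound from the first paragraph yields the asserted decay of $\sum_{i=1}^m\|a_i-a_{i\infty}\|_{\mathrm{L}^1(\Omega)}^2$. The main obstacle throughout remains the high-dimensional $\mathrm{L}^p$ estimate for the system coupling the degenerate component to the diffusing ones, which is the reason the closeness assumption is unavoidable for $N\ge 4$ while dimensions $N\le 3$ stay unconditional.
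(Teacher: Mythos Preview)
Your plan matches the paper's approach almost exactly: entropy--dissipation identity, duality/Green's-function $\mathrm L^p$ estimates on unit cylinders (with the closeness condition \eqref{tri closeness condition imp} entering precisely where you say for $N\ge4$), uniform $\mathrm L^\infty$ bounds on $a_2,\dots,a_{m-1}$, polynomial-in-time $\mathrm L^\infty$ growth for $a_1,a_m$, a time-degraded EED inequality, Gr\"onwall, then Czisz\'ar--Kullback--Pinsker.

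One point deserves sharpening, because it governs the exponent you claim. The polynomial factor $(1+t)^{(N-2)/(N-1)}$ (respectively $(1+t)^{1/2}$ for $N\le3$) does \emph{not} come from the $\mathrm L^\infty$ growth of the solution entering the entropy upper bound; that contribution is only logarithmic, via the bound $\Gamma(a_i,a_{i\infty})\le C_\Gamma\max\{1,\ln(a_i/a_{i\infty})\}$. The polynomial degradation enters instead in the \emph{dissipation lower bound}, when you recover the missing deviation $\|\delta_{A_1}\|_{\mathrm L^2}^2$ from the reaction term: after writing $A_m-A_1\prod_{j\ge2}A_j^{\alpha_j}$ in terms of $\overline{A_j}$ and $\delta_{A_j}$, a H\"older step pairs $a_1$ against $\|\delta_{A_i}\|_{\mathrm L^{2N/(N-2)}}^2$, and it is precisely the bound $\|a_1(t)\|_{\mathrm L^{N/2}(\Omega)}\le \tilde K(1+t)^{(N-2)/(N-1)}$ (obtained by interpolating $\|a_1\|_{\mathrm L^N}\lesssim 1+t$ against the conserved $\mathrm L^1$ mass) that fixes the rate. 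The $\epsilon$ then absorbs only the residual $\ln(1+t)$ from the $\Gamma$-bound. If you ran the argument using the $\mathrm L^\infty$ growth $\|a_1\|_\infty\lesssim(1+t)^3$ in place of the $\mathrm L^{N/2}$ bound, you would obtain a strictly worse exponent than the theorem asserts.
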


\begin{Thm}\label{theorem comvergence 2 tri}
	Let dimension $N=1,2$ and $(a_1,\cdots,a_m)$ be the solution of the degenerate triangular reaction-diffusion system \eqref{triangular d_m=0}. Let  $(a_{1\infty},\cdots,a_{m\infty})$ be the equilibrium states \eqref{triangular equilibrium state 1}-\eqref{triangular equibrium state 2}. Then, for positive $\epsilon\ll 1$, there exists a time $T_{\epsilon}$ and positive constants $\lambda_1,\lambda_2$ depends on the domain, dimension, initial condition and $\epsilon$ and $C_{LE}$ explicitly defined in \eqref{lower bound of entropy}, such that:
	
	\[
	\sum\limits_{i=1}^{m}\Vert a_i-a_{i\infty}\Vert_{\mathrm{L}^1(\Omega)}^{2} \leq 
	\frac{\lambda_1}{C_{LE}}e^{-\lambda_2(1+t)^{1-\epsilon}} \qquad \forall t\geq T_{\epsilon}.
	\]
\end{Thm}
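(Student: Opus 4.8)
The proof will run the entropy--entropy-dissipation (EED) method, adapted to the degenerate setting in which the lack of a uniform-in-time $\mathrm L^\infty$ bound forces the functional-inequality constant to degrade polynomially in time. First I would verify that the entropy \eqref{entropy} is a Lyapunov functional along the flow of \eqref{triangular d_m=0}, namely
\[
\frac{d}{dt} E(a_1,\dots,a_m) = -D(a_1,\dots,a_m),
\]
with $D$ the $d_m=0$ dissipation of \eqref{dissipation functional}. This is a direct computation: multiply the equation for $a_i$ ($i<m$) by $\alpha_i\ln a_i$ and integrate by parts, so that the Neumann condition kills the boundary term and yields the gradient contributions $\alpha_i d_i|\nabla a_i|^2/a_i$; multiply the ODE for $a_m$ by $\ln a_m$; then sum. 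Because $\alpha_m=1$ and by the triangular structure, the reaction terms collapse into the single dissipative integral $(a_m-\prod_j a_j^{\alpha_j})\ln(a_m/\prod_j a_j^{\alpha_j})\ge 0$.

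Next come the polynomial a priori bounds, which replace the missing uniform bound. Since $d_m=0$ and $\alpha_m=1$, the species $a_m$ solves the linear ODE $\partial_t a_m + a_m = \prod_{j<m}a_j^{\alpha_j}$ pointwise in $x$, so by Duhamel
\[
a_m(t,x) = e^{-t}a_{m,0}(x) + \int_0^t e^{-(t-s)}\prod_{j<m} a_j(s,x)^{\alpha_j}\,ds,
\]
and $a_m$ is therefore controlled by the diffusing species. For the latter I would run a duality / parabolic-regularity bootstrap, using the Neumann Green's-function bounds of \cite{FMT20,rothe06,Morra83,ML15} and the estimates of \cite{EMT20,Lam87,Tan18}, to obtain for some $p>N$ a bound $\|a_i(t)\|_{\mathrm L^p(\Omega)}\le C(1+t)^{\beta}$, and hence, through the representation above, a matching polynomial bound on $\sup_x a_m$. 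The restriction $N\in\{1,2\}$ is exactly what makes these bounds \emph{unconditional} (no closeness hypothesis on the $d_i$ is required, in contrast to \eqref{degenerate 1} in higher dimension), because the relevant Sobolev and Green-function exponents remain subcritical there.

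The heart of the argument is a time-degrading EED inequality
\[
D(a_1,\dots,a_m)\ \ge\ \frac{K}{\Psi\big(\sup_x a_m,\ \{\|a_i\|_{\mathrm L^p}\}\big)}\ \big(E(a_1,\dots,a_m)-E_\infty\big),
\]
where $E_\infty:=E(a_{1\infty},\dots,a_{m\infty})$ and $\Psi$ is an explicit increasing function of the a priori norms. The diffusive part of $D$ controls, via Poincar\'e (equivalently the logarithmic-Sobolev inequality), the spatial oscillation of each diffusing $a_i$ about its mean; the reaction part controls the distance of $\prod_j a_j^{\alpha_j}$ from $a_m$; together with the mass constraints \eqref{tri mass conserve 1} and the uniqueness of the equilibrium \eqref{triangular equilibrium state 1}--\eqref{triangular equibrium state 2}, these bound $E-E_\infty$ from above by $D$ up to the factor $\Psi$. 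Inserting the polynomial bounds of the previous step, $\Psi\le C(1+t)^{\gamma}$, and choosing $p$ close to its threshold drives $\gamma$ down to $\epsilon$, giving $D\ge c\,(1+t)^{-\epsilon}(E-E_\infty)$ for $t\ge T_\epsilon$; the favorable ODE structure of the non-diffusing $a_m$ is what improves the exponent relative to \eqref{degenerate 1}. Combining this with the entropy law yields $\frac{d}{dt}(E-E_\infty)\le -c(1+t)^{-\epsilon}(E-E_\infty)$, and integrating (using $\int_{T_\epsilon}^t(1+s)^{-\epsilon}\,ds\sim(1+t)^{1-\epsilon}$) gives $E(t)-E_\infty\le\lambda_1\exp(-\lambda_2(1+t)^{1-\epsilon})$. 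A Csisz\'ar--Kullback--Pinsker-type lower bound $E-E_\infty\ge C_{LE}\sum_{i}\|a_i-a_{i\infty}\|_{\mathrm L^1(\Omega)}^2$, with $C_{LE}$ as in \eqref{lower bound of entropy}, then converts this into the claimed $\mathrm L^1$ decay.

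\emph{Main obstacle.} The crux is the combination of the EED inequality with the non-diffusing species: because $a_m$ contributes no gradient term to $D$, its spatial oscillations cannot be damped by Poincar\'e and must instead be handled through the Duhamel representation and the reaction dissipation, while the constant $\Psi$ has to be tracked explicitly so that its polynomial-in-time growth---rather than an a priori uniform bound---produces precisely the exponent $1-\epsilon$. Establishing the a priori $\mathrm L^p$ bounds without any closeness assumption is what pins the unconditional statement to $N\in\{1,2\}$.
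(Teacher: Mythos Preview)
Your outline has the right architecture, but two points diverge from the paper and the second is a real gap in your mechanism for producing the exponent $1-\epsilon$.

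First, the $\mathrm L^p$ bounds on the diffusing species $a_1,\dots,a_{m-1}$ are in fact \emph{uniform in time} in dimension $N\le 2$ (Proposition~\ref{a_i L^p d_m}), not polynomially growing: using the Neumann Green-function bound together with the uniform $\mathrm L^1$ control from mass conservation, one gets $\sup_{t>0}\|a_i(t)\|_{\mathrm L^p(\Omega)}\le C_{0,p}$ for every finite $p$. Consequently the dissipation lower bound of Lemma~\ref{dissipation d_m},
\[
D(a_1,\dots,a_m)\ \ge\ \hat C\Big(\sum_{i=1}^m\|\delta_{A_i}\|_{\mathrm L^2(\Omega)}^2+\Big\|A_m-\prod_{j=1}^{m-1} A_j^{\alpha_j}\Big\|_{\mathrm L^2(\Omega)}^2\Big),
\]
holds with $\hat C$ \emph{independent of time}: the error incurred when recovering the missing term $\|\delta_{A_m}\|_{\mathrm L^2}^2$ from the reaction contribution involves only these uniform $\mathrm L^p$ norms of $a_1,\dots,a_{m-1}$. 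No Duhamel representation of $a_m$ is used for this step.

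Second, and this is the substantive issue, the time-degradation in the EED inequality does not sit where you place it, and $\epsilon$ does not arise by tuning $p$. The only time-dependence enters through the \emph{upper} bound on the relative entropy via \eqref{eq:Gamma-bound}: one has
\[
E-E_\infty\ \le\ C\,\max_i\big\{1,\ln(1+\|a_i\|_{\mathrm L^\infty})\big\}\sum_{i=1}^m\|A_i-A_{i\infty}\|_{\mathrm L^2(\Omega)}^2,
\]
and Theorem~\ref{polynomial on t d_m} gives $\|a_i\|_{\mathrm L^\infty(\Omega_{0,t})}\le K_{\mathrm{pol}}(1+t)^\mu$, so this factor grows only like $1+\ln(1+t)$. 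Combining with the time-independent $\hat C$ and Theorem~\ref{ED} yields $E-E_\infty\le C(1+\ln(1+t))\,D$. The $\epsilon$ is then introduced by the elementary inequality $\ln(1+t)\le (1+t)^\epsilon$ for $t\ge T_\epsilon$, giving $E-E_\infty\le C(1+t)^\epsilon D$ and hence, after Gr\"onwall and \eqref{lower bound of entropy}, the exponent $1-\epsilon$. Your proposed device of driving a polynomial exponent $\gamma$ down to $\epsilon$ by pushing $p$ toward a Sobolev threshold has no counterpart here and would not by itself produce the claimed rate.
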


\vspace{.2cm}
Some Notation:
\begin{itemize}
	\item For a function $f:\Omega\to \mathbb{R}$, the average $\displaystyle{\overline{f}=\frac{1}{\vert \Omega\vert}\int_{\Omega}f}$.
	\item $\sqrt{a_i}=A_i$, $\sqrt{a_{i\infty}}=A_{i\infty}$, $\delta_{A_i}= A_i-\overline{A_i}$.
	\item $\Omega_{T}:=(0,T)\times \Omega, \qquad \Omega_{\tau,T}:=(\tau,T)\times\Omega, \ 0\leq\tau<T$,
	\item $\partial\Omega_{T}:=(0,T)\times\partial\Omega,\quad \partial\Omega_{\tau,T}:=(\tau,T)\times\partial\Omega, \ 0\leq\tau<T$,
	\item $\alpha_m=1$.
\end{itemize} 
Apart from the above notations we heavily use the following time cut-off function to derive various estimate in unit cylinder. 
\\
Time cut-off function: Let $\phi:[0,\infty)\rightarrow [0,1]$ be a smooth function such that 
\[
\phi(0)=0, \ \phi\big\vert_{[1,\infty)}=1 \text{\ and \ } \phi'\in[0,M_{\phi}],
\]
where $M_{\phi}$ is a positive constant. For any $\tau\geq 0$, define $\phi_{\tau}:[\tau, \infty)\to[0,1]$ as follows
\begin{align}\label{cut-off}
\phi_{\tau}(s)=\phi(s-\tau) \qquad \mbox{for} \ s\in[\tau, \infty).
\end{align}
We also used the following  Green's function estimate from \cite{Morra83}\cite{ML15}. There exists a constant $\tilde{\kappa}_{d_i}>0$, depending only on the domain, such that
\begin{align}\label{Heat kernel estimate}
0\leq G_{d_i}(t_1,s,x,y) \leq \frac{\tilde{\kappa}_{d_i}}{(4\pi( t_1-s))^\frac{N}{2}}e^{-\kappa_{d_i} \frac{\Vert x-y \Vert^2}{(t_1-s)}}\leq g_{d_i}(t_1-s,x-y) \qquad 0\leq s<t,
\end{align}
for some constant $\kappa_{d_i}>0$ depending only on $\Omega$ and the diffusion coefficient $d_i$. Furthermore, the function $g_{d_i}(t_1-s,x-y)$ is defined in the following way:
\begin{align}\label{Heat kernel bound function}
g_{d_i}(t,x):= \frac{\tilde{\kappa}_{d_i}}{(4\pi\vert t \vert)^\frac{N}{2}}e^{-\kappa_{d_i} \frac{\Vert x-y \Vert^2}{\vert t\vert}}, \qquad (t,x)\in \mathbb{R}\times\mathbb{R}^{N}.
\end{align}
Note that
\begin{align}\label{Heat kernel integral estimate}
\left\Vert g_{d_i}\right\Vert_{\mathrm{L}^{z}((-2,2)\times\mathbb{R}^N)} \leq \tilde{\kappa}_{1,d_i} \qquad \forall z\in\left[1,1+\frac{2}{N}\right)
\end{align}
for some constant positive $\tilde{\kappa}_{1,d_i}$ depending on $z$. Next, we give a brief outline of our proof. 

\vspace{.2cm}
The entropy dissipation functional for the degenerate system \eqref{degenerate 1}:
\begin{align}
	D(a_1,\cdots,a_m)=\sum\limits_{i=2}^{m}\int_{\Omega} \alpha_i d_i  \frac {\vert \nabla a_i \vert^2}{a_i} 
	+ \int_{\Omega}   \Big(a_m-a_1\displaystyle \prod\limits_{j=2}^{m-1} (a_j)^{\alpha_j}\Big)\ln{\left(\frac{a_m}{a_1\prod\limits_{j=2}^{m-1} (a_j)^{\alpha_j}}\right)} \nonumber.
\end{align}

We rewrite the terms in the above dissipation functional as follows:
\[
D(a_1,\cdots,a_m)=\sum\limits_{i=2}^{m}\int_{\Omega} \alpha_i d_i \vert \nabla \sqrt{a_i} \vert^2 +\int_{\Omega}   \Big(a_m-a_1\displaystyle \prod_{j=2}^{m-1} (a_j)^{\alpha_j}\Big)\ln{\left(\frac{a_m}{a_1\prod\limits_{j=2}^{m-1} (a_j)^{\alpha_j}}\right)}.
\]

We recall here an algebraic inequality:
\[ 
(x-y)(\ln{x}-\ln{y}) \geq 4(\sqrt{x}-\sqrt{y})^2, \ \ \forall x,y \geq 0.
\]

This algebraic inequality gives a lower bound on the last term of the above entropy dissipation functional whereas the classical Poincar\'e inequality gives a lower bound for the gradient terms. More precisely, we have
\begin{gather}
	D(a_1,\cdots,a_m) \geq \sum\limits_{i=2}^{m}\int_{\Omega} \frac{\alpha_i d_i}{P(\Omega)} \vert  \delta_{A_i} \vert^2 +\int_{\Omega}   \Big(A_m-A_1\displaystyle \prod_{j=2}^{m-1} (A_j)^{\alpha_j}\Big)^2,
	\label{dissipation-L2}
\end{gather}
where $P(\Omega)$ is the Poincar\'e constant (see Lemma \ref{Poincare-Wirtinger}) on the domain $\Omega$.

Next we apply Poincar\'e Writenger inequality (see Lemma \ref{Poincare-Wirtinger}) on the graident terms. It yields

\begin{equation}\label{dissipation-L_2N/N-2}
	\left \{
	\begin{aligned}
		D(a_1,\cdots,a_m) \geq \sum\limits_{i=2}^{m} & \frac{\alpha_i d_i}{P(\Omega)} \Vert  \delta_{A_i} \Vert_{\mathrm{L}^{\frac{2N}{N-2}}(\Omega)}^2\\
		& +\left\Vert A_m-A_1\displaystyle \prod_{j=2}^{m-1} (A_j)^{\alpha_j}\right\Vert_{\mathrm{L}^2(\Omega)}^2 \qquad \mbox{for}\ N\geq 4,
		\\
		D(a_1,\cdots,a_m) \geq \sum\limits_{i=2}^{m} &\frac{\alpha_i d_i}{P(\Omega)} \Vert  \delta_{A_i} \Vert_{\mathrm{L}^{6}(\Omega)}^2\\
		&+\left\Vert A_m-A_1\displaystyle \prod_{j=2}^{m-1} (A_j)^{\alpha_j}\right\Vert_{\mathrm{L}^2(\Omega)}^2 \qquad \mbox{for}\ N=1,2,3.
	\end{aligned}
	\right .
\end{equation}

Similar to above setting, we now consider the following dissipation functional for the degenerate system \eqref{triangular d_m=0}:
\begin{align}
	D(a_1,\cdots,a_m)=\sum\limits_{i=2}^{m-1}\int_{\Omega}  \alpha_i d_i \frac{\vert \nabla a_i \vert^2}{a_i}\label{dissipation functional d_m}
	+\int_{\Omega}   \Big(a_m-\displaystyle \prod_{j=1}^{m-1} (a_j)^{\alpha_j}\Big)\ln{\left(\frac{a_m}{\prod\limits_{j=1}^{m-1} (a_j)^{\alpha_j}}\right)}. 
\end{align}

It rewrites as
\[
D(a_1,\cdots,a_m)=\sum\limits_{i=1}^{m-1}\int_{\Omega} \alpha_i d_i \vert \nabla \sqrt{a_i} \vert^2 +\int_{\Omega}   \Big(a_m-\displaystyle \prod\limits_{j=1}^{m-1} (a_j)^{\alpha_j}\Big)\ln{\left(\frac{a_m}{\prod\limits_{j=1}^{m-1} (a_j)^{\alpha_j}}\right)}.
\]

Thanks to the algebraic identity mentioned earlier and the Poincar\'e inequality, we arrive at

\begin{gather}
	D(a_1,\cdots,a_m) \geq \sum\limits_{i=1}^{m-1}\int_{\Omega} \frac{\alpha_i d_i}{P(\Omega)} \vert  \delta_{A_i} \vert^2 +\int_{\Omega}   \Big(A_m-\displaystyle \prod_{j=1}^{m-1} (A_j)^{\alpha_j}\Big)^2.
	\label{dissipation-L2 d_m}
\end{gather}

To summarize, the solutions to the degenerate system \eqref{triangular d_m=0} satisfies:

\begin{equation}\label{dissipation-L_2N/N-2 d_m}
	\left \{
	\begin{aligned}
		D(a_1,\cdots,a_m) \geq \sum\limits_{i=1}^{m-1} &\frac{\alpha_i d_i}{P(\Omega)} \Vert  \delta_{A_i} \Vert_{\mathrm{L}^{\frac{2N}{N-2}}(\Omega)}^2\\
		&+\left\Vert A_m-\displaystyle \prod_{j=1}^{m-1} (A_j)^{\alpha_j}\right\Vert_{\mathrm{L}^2(\Omega)}^2 \qquad \ \mbox{for}\ N\geq 4,
		\\
		D(a_1,\cdots,a_m) \geq \sum\limits_{i=1}^{m-1} &\frac{\alpha_i d_i}{P(\Omega)} \Vert  \delta_{A_i} \Vert_{\mathrm{L}^{6}(\Omega)}^2\\
		&+\left\Vert A_m-\displaystyle \prod_{j=1}^{m-1} (A_j)^{\alpha_j}\right\Vert_{\mathrm{L}^2(\Omega)}^2 \ \qquad \mbox{for}\ N=1,2,3.
	\end{aligned}
	\right .
\end{equation}

We are particularly interested in the case $N=2$, i.e.
\begin{align}\label{dissipation-L_6 d_m}
	D(a_1,\cdots,a_m) \geq \sum\limits_{i=1}^{m-1} \frac{\alpha_i d_i}{P(\Omega)} \Vert  \delta_{A_i} \Vert_{\mathrm{L}^{6}(\Omega)}^2+\Vert A_m-\displaystyle \prod\limits_{j=1}^{m-1} (A_j)^{\alpha_j}\Vert_{\mathrm{L}^2(\Omega)}^2.
\end{align}

Note that both the entropy functional and entropy dissipation functional are positive functional. If $(a_1,\cdots,a_m)$ solves either of the system \eqref{degenerate 1} and \eqref{triangular d_m=0}, we obtain that
\[
\frac{dE}{dt}(a_1,\cdots,a_m)=-D(a_1,\cdots,a_m),
\]
which ensures the decay of entropy with time.
Let us consider the relative entropy(relative with respect to the homogeneous equilibrium states given by \eqref{triangular equilibrium state 1}-\eqref{triangular equibrium state 2}), we obtain that
\[
E(a_1,\cdots,a_m)-E(a_{1\infty},\cdots,a_{m\infty}) =\sum\limits_{i=1}^{m}\int_{\Omega}\alpha_i(a_i \ln{a_i}-a_i-a_{i\infty} \ln{a_{i\infty}}+a_{i\infty}).
\]
Thanks to the mass conservation property \eqref{triangular equilibrium state 1} of the equilibrium states, the relative entropy becomes
\begin{align}\label{Entropy Gamma function relation tri}
	E(a_1,\cdots,a_m)-E(a_{1\infty},\cdots,a_{m\infty}) =\sum\limits_{i=1}^{m}\int_{\Omega}\alpha_i (a_i \ln{\frac{a_i}{a_{i\infty}}}-a_i+a_{i\infty}). 
\end{align}

Let us define a function $\Gamma:(0,\infty)\times(0,\infty)\to\mathbb{R}$ as follows:
\begin{equation}
	\Gamma(x,y) :=
	\left\{
	\begin{aligned}
		&\frac{x \ln\left(\frac{x}{y}\right)-x+y}{\left(\sqrt{x}-\sqrt{y}\right)^2}  \qquad \mbox{ for }x\not=y,
		\\
		&2 \qquad  \qquad \qquad \qquad \ \ \ \mbox{ for }x=y.
	\end{aligned}\right.
\end{equation}
It can be shown (see \cite[Lemma 2.1, p.162]{DF06} for details) that the above defined function satisfies the following bound:
\begin{align}\label{eq:Gamma-bound}
	\Gamma(x,y)\leq C_{\Gamma}\max\left\{1,\ln\left(\frac{x}{y}\right)\right\}
\end{align}
for some positive constant $C_{\Gamma}$. Note that using the function $\Gamma$ defined above, the relative entropy can be rewritten as 
\[
E(a_1,\cdots,a_m)-E(a_{1\infty},\cdots,a_{1\infty})= \sum\limits_{1}^{m}\int_{\Omega}\alpha_i \Gamma(a_i,a_{i\infty})(A_i-A_{i\infty})^2
\]
Using the aforementioned bound for $\Gamma$, we obtain
\begin{align*}
	E(a_1,\cdots,a_m)&-E(a_{1\infty},\cdots,a_{m\infty})\\
	\leq & \max\limits_{i=1,\cdots,m}\{\alpha_i\}C_{\Gamma}\max\limits_{i=1,\cdots,m} \big\{1,\ln{(\Vert a_i\Vert_{\mathrm{L}^{\infty}(\Omega)}+1)}+\vert \ln{a_{i\infty}}\vert\big\}\sum\limits_{i=1}^{m}\Vert A_i-A_{i\infty}\Vert_{\mathrm{L}^2(\Omega)}^2.
\end{align*} 

So, other than the logarithmic growth of the supremum norm of the solution, the growth of relative entropy depends on the $\mathrm{L}^2(\Omega)$ norm of the deviation of the $A_i$ from $A_{i\infty}$. Observe from \eqref{dissipation-L_2N/N-2} and \eqref{dissipation-L_2N/N-2 d_m} that the dissipation functional is also related to this $\mathrm{L}^2(\Omega)$ norm of $\delta_{A_i}$. Observe that
\[
\Vert A_i-A_{i\infty}\Vert_{\mathrm{L}^2(\Omega)}^2 \leq 3\left( \Vert A_i-\overline{A_i}\Vert_{\mathrm{L}^2(\Omega)}^2+\Vert \overline{A_i}-\sqrt{\overline{A_i^2}}\Vert_{\mathrm{L}^2(\Omega)}^2+\Vert \sqrt{\overline{A_i^2}}-A_{i\infty}\Vert_{\mathrm{L}^2(\Omega)}^2\right). 
\]
The following observation says that the first term on the right hand side dominate the second term:
\begin{align*}
	\Vert \overline{A_i}-\sqrt{\overline{A_i^2}}\Vert_{\mathrm{L}^2(\Omega)}^2= &\vert\Omega\vert \big\vert
	\overline{A_i}-\sqrt{\overline{A_i^2}} \big\vert^2 = \vert\Omega\vert \left( \overline{A_i^2}+\overline{A_i}^2-2\overline{A_i}\sqrt{\overline{A_i^2}} \right)\\ &
	\leq\vert\Omega\vert \left( \overline{A_i^2}-\overline{A_i}^2\right) \leq \Vert A_i-\overline{A_i}\Vert_{\mathrm{L}^2(\Omega)}^2.
\end{align*}
Here we used the fact that $\displaystyle{\overline{A_i}\leq \sqrt{\overline{A_i^2}}}$, thanks to H\"older inequality. Hence
\[
\Vert A_i-A_{i\infty}\Vert_{\mathrm{L}^2(\Omega)}^2 \leq 6\left( \Vert A_i-\overline{A_i}\Vert_{\mathrm{L}^2(\Omega)}^2+\left\Vert \sqrt{\overline{A_i^2}}-A_{i\infty}\right\Vert_{\mathrm{L}^2(\Omega)}^2\right). 
\]

Hence we deduce
\[
\sum\limits_{i=1}^m\Vert A_i-A_{i\infty}\Vert_{\mathrm{L}^2(\Omega)}^2 \leq 6\left(\sum\limits_{i=1}^m \Vert A_i-\overline{A_i}\Vert_{\mathrm{L}^2(\Omega)}^2+\sum\limits_{i=1}^m\left\Vert \sqrt{\overline{A_i^2}}-A_{i\infty}\right\Vert_{\mathrm{L}^2(\Omega)}^2\right).
\]

Next we borrow a result from \cite{FLT20} (see Appendix, Theorem \ref{ED}) which says that there exists $C_{EB}>0$, depending only on the domain and the equilibrium state $(a_{1\infty},\cdots,a_{m\infty})$ such that

\[
\sum\limits_{i=1}^{m}\Vert \sqrt{\overline{A_i^2}}-A_{i\infty} \Vert_{\mathrm{L}^2(\Omega)}^2 \leq C_{EB} \left ( \sum_1^{m}  \Vert  \delta_{A_i} \Vert_{\mathrm{L}^{2}(\Omega)}^2+\left\Vert A_m-\displaystyle \prod_{j=1}^{m-1} (A_j)^{\alpha_j}\right\Vert_{\mathrm{L}^2(\Omega)}^2\right).
\]

It is evident that for the degenerate system \eqref{degenerate 1}, the gradient term $\displaystyle{\Vert \nabla \delta_{A_1} \Vert_{\mathrm{L}^2(\Omega)}}$ is missing in the dissipation expression. Similarly for the degenerate system \eqref{triangular d_m=0}, the term $\displaystyle{\Vert \nabla \delta_{A_m} \Vert_{\mathrm{L}^2(\Omega)}}$  is missing in the dissipation expression. If we relate dissipation with the corresponding missing terms, we will be able to produce entropy-entropy dissipation inequality provided we know how the logarithm of the supremum of the solution behaves. It turns out that the lower bound \eqref{dissipation-L_2N/N-2} for the entropy dissipation can be improved to include the gradient term $\displaystyle{\Vert \nabla \delta_{A_1} \Vert_{\mathrm{L}^2(\Omega)}}$ but with a price of the constant being time dependent. We prove in Proposition \ref{Dissipatation theorem d_1} and Proposition \ref{Dissipatation theorem d_1, N=3} that there exists a constant $\hat{C}>0$ such that 

\begin{equation}\label{new entity triangular one}
	\left \{
	\begin{aligned}
		\mbox{For}\ N\leq 3, \ \mbox{we have}
		\\
		D(a_1,\cdots,a_m) \geq \hat{C}(1+t)^{-\frac{1}{2}}& \left(\sum\limits_{i=1}^{m}  \Vert  \delta_{A_i} \Vert_{\mathrm{L}^{2}(\Omega)}^2+\left\Vert A_m-A_1\displaystyle \prod_{j=2}^{m-1} (A_j)^{\alpha_j}\right\Vert_{\mathrm{L}^2(\Omega)}^2\right).
		\\
		\mbox{for}\ N\geq 4, \ \mbox{we have}
		\\
		D(a_1,\cdots,a_m) \geq \hat{C}(1+t)^{-\frac{N-2}{N-1}}&\left(\sum\limits_{i=1}^{m}  \Vert  \delta_{A_i} \Vert_{\mathrm{L}^{2}(\Omega)}^2+\left\Vert A_m-A_1\displaystyle \prod_{j=2}^{m-1} (A_j)^{\alpha_j}\right\Vert_{\mathrm{L}^2(\Omega)}^2\right). 
	\end{aligned}
	\right .
\end{equation}

In the context of the degenerate system \eqref{triangular d_m=0}, we improve the lower bound in \eqref{dissipation-L_2N/N-2 d_m} to include the missing gradient term $\displaystyle{\Vert \nabla \delta_{A_m} \Vert_{\mathrm{L}^2(\Omega)}}$. More precisely in Proposition \ref{dissipation d_m}, we proved that for dimension $N\leq 2$, there exists a positive constant $\hat{C}$ such that

\[
D(a_1,\cdots,a_m) \geq \hat{C}\left(\sum\limits_{i=1}^{m}  \Vert  \delta_{A_i} \Vert_{\mathrm{L}^{2}(\Omega)}^2+\left\Vert A_m-\displaystyle \prod_{j=1}^{m-1} (A_j)^{\alpha_j}\right\Vert_{\mathrm{L}^2(\Omega)}^2\right).
\]

The result in Proposition \ref{Dissipatation theorem d_1} concerning the dimension $N\geq 4$ is conditional in the sense that \eqref{new entity triangular one} is valid under the assumption that the diffusion coefficients $d_m$ and at least one of tyhe diffusion coefficients $\{d_2,\cdots,d_{m-1}\}$ are close to each other. The result in Proposition \ref{Dissipatation theorem d_1, N=3} dealing with dimension $N\leq 3$ is however unconditional.

We briefly highlight the key points which leads to \eqref{new entity triangular one}. For $d_1=0$ with $\alpha_1=1$, we relate the missing term $\displaystyle{\Vert A_1-\overline{A_1}\Vert_{\mathrm{L}^2(\Omega)}}$ with the entropy dissipation. We describe the key steps below.

\vspace{.2cm}

Step 1: We obtain a large integral estimate of all the species except $a_1$ and $a_m$ in a parabolic cylinder with unit height up to dimension $N=3$. The result is as follows
\[
\Vert a_i(t,x) \Vert_{\mathrm{L}^9(\Omega_{\tau,\tau+1})} \leq {K_{\Gamma}},
\qquad i=2,\cdots,m-1, \tau \geq 0,
\]
where $K_{\Gamma}$ is a positive constant, independent of time.

\vspace{.2cm}

Step 2: We pass the integral regularity to $a_m$ up to dimension $N=3$.

\[
\Vert a_m(t,x) \Vert_{\mathrm{L}^9(\Omega_{\tau,\tau+1})} \leq \texttt{K},
\]
where $\texttt{K}$ is a positive constant, independent of time.

\vspace{.2cm}

Step 3: We will go reverse when dimension $N\geq 4$. We obtain the following relation
\[
\Vert a_m \Vert_{\mathrm{L}^{p}(\Omega_{\tau,\tau+1})} \leq C_0,
\]
where $C_0$ is a positive constant independent of time and we choose $p>N$. Here in this case we need the closeness condition\eqref{tri closeness condition imp} on non-zero diffusion coeeficients.

\vspace{.2cm}

Step 4: Above integral estimates in dimension $N\geq 4$ also yields the growth of supremum norm of the solution barring the species $a_1$ and $a_m$.  
\[
\Vert a_i \Vert_{\mathrm{L}^{\infty}(\Omega)} \leq K_{\infty}, \qquad \forall i=2,\cdots,m-1,
\]
where $K_{\infty}$ is a positive constant, independent of time.

\vspace{.2cm}

Step 5: Integrability of $a_m$ further yields the $\mathrm{L}^{\frac{N}{2}}(\Omega)$ norm estimate of the species $a_1$.
\begin{equation*}
	\left \{
	\begin{aligned}
		\mbox{For dimension}& \ N\geq 4, \ \exists \Tilde{K}>0\ \mbox{such that}\\
		& \Vert a_1 \Vert_{\mathrm{L}^{\frac{N}{2}}(\Omega)} \leq \Tilde{K}(1+t)^{\frac{N-2}{N-1}}. \\
		\mbox{For dimension} & \ N=1,2,3, \ \exists \Tilde{K}>0\ \mbox{such that}\\
		& \Vert a_1 \Vert_{\mathrm{L}^{\frac{3}{2}}(\Omega)} \leq \Tilde{K}(1+t)^{\frac{1}{2}}.
	\end{aligned}
	\right .
\end{equation*}

\vspace{.2cm}

Step 6: We now arrive at the position where we can link dissipation with the missing deviation $\displaystyle{\Vert A_1-\overline{A_1}\Vert_{\mathrm{L}^2(\Omega)}}$. The results read as
\[
D(a_1,\cdots,a_m) \geq \hat{C}\left(\sum_1^{m}  \Vert A_i-\overline{A_i}\Vert_{\mathrm{L}^2(\Omega)}^2+\left\Vert A_m-A_1\displaystyle \prod_{j=2}^{m-1} (A_j)^{\alpha_j}\right\Vert_{\mathrm{L}^2(\Omega)}^2\right),
\]
where $\hat{C}$ is a positive constant depends on domain and initial conditions only. The above estimates also yields that the space-time growth of the solutions varies polynomially with time. Hence we can relate relative entropy with the dissipation and obtain the convergence results.

\vspace{.3cm}

For the degeneracy $d_m=0$, we relate the missing term $\displaystyle{\Vert A_m-\overline{A_m}\Vert_{\mathrm{L}^2(\Omega)}}$. The result we obtain holds up to dimension $N=2$. We present the two dimensional case only. The key points are described below

\vspace{.2cm}

Step 1: We obtain a uniform integral estimate for $d_m=0$ case. The result reads as
\[
\sup\limits_{t>0}\Vert a_i \Vert_{\mathrm{L}^p(\Omega)}\leq C_{0,p}; \qquad \forall i=1,\cdot\cdot,m-1, p\in[1,\infty),
\]
where $C_{0,p}$ is a positive constant depends on $p$ but independent of time.

\vspace{.2cm}

Step 2: Using the above estimate, we can link dissipation with the missing deviation $\displaystyle{\displaystyle{\Vert A_m-\overline{A_m}\Vert_{\mathrm{L}^2(\Omega)}}}$. The results read as
\[
D(a_1,\cdots,a_m) \geq \hat{C}\left(\sum_1^{m}  \Vert A_i-\overline{A_i}\Vert_{\mathrm{L}^2(\Omega)}^2+\left\Vert A_m-\displaystyle \prod_{j=1}^{m-1} (A_j)^{\alpha_j}\right\Vert_{\mathrm{L}^2(\Omega)}^2\right),
\]
where $\hat{C}$ is a positive constant depends on domain and initial conditions only. The above estimates also yields that the space-time growth of the solutions varies polynomially with time. Hence we can relate relative entropy with the dissipation and obtain the convergence results for dimension $N=2$.

\section{Large time behaviour: degenerate system $d_1=0$}\label{d1=0 convergence section}

This section deals with the degenerate system \eqref{degenerate 1} where the diffusion coefficient $d_1$ vanishes. Our ultimate goal is to employ the method of entropy to demonstrate that the solution to the degenerate system \eqref{degenerate 1} converges to the equilibrium states given by \eqref{triangular equilibrium state 1}-\eqref{triangular equibrium state 2}. While carrying out the procedure, information on the $\mathrm{L}^{\infty}(\Omega_t)$ norm of the solution comes in handy. So, a good portion of this section is spent on analyzing the supremum norm of the solution. To begin with, we record an $\mathrm{L}^2$ estimate on the diffusive species' concentrations in dimension $N\leq 3$. 

\vspace{.2cm}

\begin{Prop}\label{a_i to a_m L^9 entropy}
	Let dimension $N\leq 3$. Let $(a_1,\cdots,a_m)$ be the smooth positive solution to the degenerate triangular reaction-diffusion system $d_1=0$\eqref{degenerate 1}. Then there exists a positive constant $K_{I}$, independent of $\tau$, such that
	\[
	\Vert a_i(t,x) \Vert_{\mathrm{L}^2(\Omega_{\tau,\tau+1})} \leq {K_{I}}
	\]
	for all $i=2,\cdots,m$ and for all $\tau\geq 0$.
\end{Prop}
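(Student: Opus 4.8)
The plan is to bypass the nonlinearity entirely by working with the reaction-free combinations $s_i := a_i + a_m$. Since the reaction term enters the $a_i$- and $a_m$-equations of \eqref{degenerate 1} with opposite signs, for every $i\in\{2,\dots,m-1\}$ the sum $s_i$ solves $\partial_t s_i - \Delta(d_i a_i + d_m a_m) = 0$ with homogeneous Neumann data. Writing $d_i a_i + d_m a_m = A_i s_i$ with $A_i := (d_i a_i + d_m a_m)/s_i$ (and $A_i := d_m$ on the null set $\{s_i=0\}$), we have $\lambda_i := \min\{d_i,d_m\} \le A_i \le \max\{d_i,d_m\} =: \Lambda_i$, both strictly positive since $i\ge 2$. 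Thus $s_i\ge 0$ solves the scalar, reaction-free, uniformly elliptic equation $\partial_t s_i - \Delta(A_i s_i) = 0$. Because $0\le a_i, a_m \le s_i$ pointwise, it suffices to prove $\|s_i\|_{\mathrm{L}^2(\Omega_{\tau,\tau+1})}\le K_I$ uniformly in $\tau$: any single $i\ge 2$ controls $a_m$ (the case $i=m$), while each $i\le m-1$ controls $a_i$. The only a priori input required is the uniform mass bound furnished by \eqref{tri mass conserve 1}, namely $\|s_i(t)\|_{\mathrm{L}^1(\Omega)} = M_i$ for all $t$.

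I would estimate $\|s_i\|_{\mathrm{L}^2}$ by a Pierre-type duality argument localized to a unit cylinder, using the time cut-off \eqref{cut-off} to discard the initial trace (which is not controlled in $\mathrm{L}^2$). Fix $\tau\ge 1$ and set $\tilde s_i := \phi_{\tau-1}s_i$, solving $\partial_t\tilde s_i - \Delta(A_i\tilde s_i) = \phi_{\tau-1}' s_i$ with $\tilde s_i(\tau-1,\cdot)=0$. Given a mean-zero test density $\psi\in\mathrm{L}^2(\Omega_{\tau-1,\tau+1})$, let $w$ solve the backward dual problem $-\partial_t w - A_i\Delta w = \psi$ on $\Omega_{\tau-1,\tau+1}$ with $w(\tau+1,\cdot)=0$ and Neumann data (existence with the needed regularity follows from the usual regularize-and-pass-to-the-limit scheme of the duality method, since $A_i$ is merely bounded). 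Pairing the equation for $\tilde s_i$ against $w$ and integrating by parts in space and time yields the identity $\int\tilde s_i\psi = \int\phi_{\tau-1}' s_i\,w$.

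The crux is the dual regularity estimate. Multiplying $-\partial_t w - A_i\Delta w = \psi$ by $-\Delta w$, integrating, and absorbing the cross term by Young's inequality gives $\|\Delta w\|_{\mathrm{L}^2(\Omega_{\tau-1,\tau+1})}\le \lambda_i^{-1}\|\psi\|_{\mathrm{L}^2}$; combining this with the Poincar\'e-Wirtinger inequality (Lemma \ref{Poincare-Wirtinger}) to control the mean and gradient of $w$ upgrades it to $\|w\|_{\mathrm{L}^2_t\mathrm{H}^2_x}\le C\|\psi\|_{\mathrm{L}^2}$, with $C$ depending only on $\Omega,\lambda_i,\Lambda_i$ and \emph{not} on $\tau$. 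Here the dimension enters decisively: for $N\le 3$ one has $\mathrm{H}^2(\Omega)\hookrightarrow\mathrm{L}^\infty(\Omega)$, so $w\in\mathrm{L}^2_t\mathrm{L}^\infty_x$. Estimating the right-hand side by H\"older in space against the uniform mass bound and then Cauchy-Schwarz in time, $\int\phi_{\tau-1}' s_i w \le M_{\phi}\,\sup_t\|s_i(t)\|_{\mathrm{L}^1(\Omega)}\,\|w\|_{\mathrm{L}^1_t\mathrm{L}^\infty_x} \le C M_{\phi}M_i\,\|w\|_{\mathrm{L}^2_t\mathrm{H}^2_x}\le C'\|\psi\|_{\mathrm{L}^2}$. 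Taking the supremum over such $\psi$ bounds the mean-zero part of $\tilde s_i$ in $\mathrm{L}^2$, while its mean is controlled directly by $M_i$; restricting to $(\tau,\tau+1)$, where $\phi_{\tau-1}\equiv 1$, gives $\|s_i\|_{\mathrm{L}^2(\Omega_{\tau,\tau+1})}\le K_I$ uniformly in $\tau\ge 1$, the finitely many cylinders with $\tau\in[0,1)$ being handled with the genuine smooth initial data.

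The main obstacle is exactly the interplay in the previous step between the dual bound $w\in\mathrm{L}^2_t\mathrm{H}^2_x$ and the embedding $\mathrm{H}^2\hookrightarrow\mathrm{L}^\infty$: this is what confines the unconditional argument to $N\le 3$ and foreshadows why $N\ge 4$ forces the closeness hypothesis \eqref{tri closeness condition imp}. The delicate point behind uniformity in $\tau$ is that the cut-off source $\phi_{\tau-1}'s_i$ must be paired with $w$ through the $\mathrm{L}^\infty_t\mathrm{L}^1_x$ mass bound rather than through an $\mathrm{L}^2$ norm of $s_i$, which would be circular. I would remark that for $N\le 2$ a softer route is available: the entropy dissipation \eqref{dissipation functional} together with \eqref{tri mass conserve 1} gives $\sqrt{a_i}\in\mathrm{L}^\infty_t\mathrm{L}^2_x\cap\mathrm{L}^2_t\mathrm{H}^1_x$ on each unit cylinder, uniformly in $\tau$, whence the parabolic Gagliardo-Nirenberg embedding into $\mathrm{L}^{2(N+2)/N}$ yields $a_i\in\mathrm{L}^{(N+2)/N}\subseteq\mathrm{L}^2$ at once; this shortcut degrades precisely at $N=3$, where it only reaches $\mathrm{L}^{5/3}$, which is why I would present the duality argument as the primary, dimension-uniform proof.
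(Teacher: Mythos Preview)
Your argument is correct. The paper's own proof, however, is organised differently: for $N=1,2$ it does not touch the mass-conserving sums $s_i=a_i+a_m$ at all, but uses the entropy identity $\frac{d}{dt}E=-D$ integrated in time to extract the uniform bounds $\sup_t\|a_i(t)\|_{\mathrm L^1(\Omega)}\le\tilde M_1$ and $\int_0^\infty\|\nabla\sqrt{a_i}\|_{\mathrm L^2(\Omega)}^2\le K_1$ for each diffusing species, and then applies Gagliardo--Nirenberg to $\sqrt{a_i}$ on each unit time slab---exactly the ``softer route'' you mention in your closing remark. Only for $N=3$ does the paper invoke a duality estimate of the kind you spell out, and there by citation to \cite[Lemma~3.2]{FMT20} rather than by an explicit computation. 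So you have inverted the emphasis: you present the Pierre-type duality argument as the primary, dimension-uniform proof covering all $N\le3$ in one stroke, and relegate the entropy/Gagliardo--Nirenberg route to a side comment, whereas the paper does the opposite. Your choice has the merit of being self-contained and of making the role of the embedding $\mathrm H^2\hookrightarrow\mathrm L^\infty$ (hence the threshold $N=3$) completely explicit; the paper's choice for $N\le2$ has the merit of leaning on the entropy structure that is already the backbone of the whole article and of avoiding the need to solve the non-divergence dual problem with merely bounded coefficient $A_i$.
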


\begin{proof} Recall the relation between the entropy and the entropy dissipation:
	\[
	\frac{dE}{dt}(a_1,\cdots,a_m)=-D(a_1,\cdots,a_m).
	\]
	Integrating the above relation in time on $[0,T]$ yields
	\begin{align*}
		&\sup\limits_{t\in[0,T]}\sum\limits_{i=1}^m \int_{\Omega}  \alpha_i(a_i(\ln{a_i}-1)+1)+ \sum\limits_{i=2}^{m}\int_{\Omega_T} \alpha_i d_i \frac{\vert \nabla a_i \vert^2}{a_i}\nonumber \\ & +\int_{\Omega_T}   \Big(a_m-a_1\displaystyle \prod\limits_{j=2}^{m-1} (a_j)^{\alpha_j}\Big)\ln{\left(\frac{a_m}{a_1\prod\limits_{j=2}^{m-1} (a_j)^{\alpha_j}}\right)}\leq E(a_{1,0},\cdots,a_{m,0}). 
	\end{align*}   
	
	The first two terms of the above inequality immediately yields the following bounds
	
	\begin{align}\label{mass bound}
		\int_{\Omega}a_i \leq \frac{1}{\min\limits_{1\leq i\leq m}\{\alpha_i\}}E(a_{1,0},\cdots,a_{m,0})+e^2\vert \Omega \vert=\Tilde{M}_1 \quad \mbox{for}\ i=1,\cdots,m
	\end{align}
	and 
	\begin{align}\label{Gradient bound}
		\int_{\Omega_T}\vert \nabla \sqrt{a_i}\vert^2\leq \frac{1}{\min\limits_{i=2,\cdots,m}\{d_i\alpha_i\}}E(a_{1,0},\cdots,a_{m,0})=K_1 \qquad \mbox{for}\ i=2,\cdots,m.
	\end{align}
	Employing the Gagliardo-Nirenberg interpolation inequality for $\sqrt{a_i}$ in dimension $N=1$, we obtain 
	\[
	\Vert \sqrt{a_i} \Vert^4_{\mathrm{L}^4(\Omega)} \leq C_{GN}\Vert \nabla \sqrt{a_i} \Vert_{\mathrm{L}^2(\Omega)}\Vert \sqrt{ a_i} \Vert^3_{\mathrm{L}^2(\Omega)}+C_{GN} \sqrt{ a_i} \Vert^4_{\mathrm{L}^2(\Omega)} 
	\]
	
	Integrating the above inequality in time on $[\tau,\tau+1]$ yields
	\[
	\int_{\tau}^{\tau+1} \int_{\Omega}a_i^2(t,x) \rm{d}x\rm{d}t \leq C_{GN}\left(\Tilde{M}_1^{\frac{3}{2}}K_1^{\frac12}+\Tilde{M}_1^2\right),
	\]

	where $K_1$ is defined in \eqref{Gradient bound}. Note that the above estimate holds true for all diffusive species.

	Following a similar strategy via the Gagliardo-Nirenberg interpolation inequality for $\sqrt{a_i}$ in dimension $N=2$, we obtain
	\[
	\int_{\tau}^{\tau+1} \int_{\Omega}a_i^2(t,x) \rm{d}x\rm{d}t \leq C_{GN}\left( K_1\Tilde{M}_1+\Tilde{M}_1^2\right)
	\] 
	for the same constants $K_1$ from \eqref{Gradient bound}.
	
	For dimension $N=3$, we borrow a result from \cite[Lemma 3.2]{FMT20} which gives the following bound
	
	\begin{align}\label{New L2}
		\int_{\Omega_{\tau,\tau+2}} a_i^2 \leq \sqrt{\Tilde{M}_1}\left( \sqrt{2}\frac{d_{max}}{d_{min}}\right)^{\frac{2}{3}}
	\end{align}
	for all $\tau\geq 0$ and for $i=2,\cdots,m$. Here, the constant $\Tilde{M}_1$ is defined in \eqref{mass bound} and $\displaystyle{d_{max}=\max\{d_2,\cdots,d_m\}}$ and $\displaystyle{d_{min}=\min\{d_2,\cdots,d_m\}}$.
	
	Let us choose a positive constant $K_{I}$ to be
	\[
	K_{I}=\max \left \{ \Tilde{M}_1^{\frac{3}{2}}K_1^{\frac12}+\Tilde{M}_1^2, M_1^4 C_{GN}\Vert \nabla \sqrt{a_i} \Vert^2_{\mathrm{L}^2(\Omega)}+C_{GN}M_1^3, \sqrt{\Tilde{M}_1}\left( \sqrt{2}\frac{d_{max}}{d_{min}}\right)^{\frac{2}{3}} \right\}
	\]
	
	This provides essential uniform bound for $\Vert a_i\Vert_{\mathrm{L}^2(\Omega_{\tau,\tau+1})}$ for $i=2,\cdots,m$.
\end{proof}

\begin{Rem}\label{L^3 dimension 1}
	For dimension $N=1$, we can further improve the integrability of $a_i$ via Gagliardo-Nirenberg interpolation. More precisely, we have for $i=2,\cdots,m$,
	\[
	\Vert \sqrt{a_i}\Vert^6_{\mathrm{L}^6(\Omega)} \leq C_{GN}\Vert \nabla \sqrt{a_i} \Vert^2_{\mathrm{L}^2(\Omega)}\Vert \sqrt{ a_i} \Vert^4_{\mathrm{L}^2(\Omega)}+C_{GN}\Vert \sqrt{ a_i} \Vert^6_{\mathrm{L}^2(\Omega)}\leq \Tilde{M}_1^4 C_{GN}\Vert \nabla \sqrt{a_i} \Vert^2_{\mathrm{L}^2(\Omega)}+C_{GN}\Tilde{M}_1^3,
	\]
	This helps us arrive at 
	\[
	\int_{\tau}^{\tau+1} \int_{\Omega} a_i^3(t,x) \rm{d}x\rm{d}t \leq C_{GN}\left( K_1\Tilde{M}_1^2+\Tilde{M}_1^3\right).
	\]
\end{Rem}

\begin{Rem}
	Even though we recall the estimate \eqref{New L2} for dimension $N=3$, the author in \cite{FMT20} derive a uniform $\mathrm{L}^2(\Omega_{\tau,\tau+2})$ estimate for dimension $N\geq 3$. They use duality approach.
\end{Rem}

\vspace{.2cm}

The following result improves the integrability of the species concentrations $a_2,\cdots,a_{m-1}$ to $\mathrm{L}^9(\Omega_{0,1})$. Note that this is for a parabolic cylinder of unit height at initial time.

\vspace{.2cm}

\begin{Prop}\label{a_i to a_m L^9 [0,1]}
	Let dimension $N\leq 3$. Let $(a_1,\cdots,a_m)$ be the smooth positive solution to the degenerate triangular reaction-diffusion system \eqref{degenerate 1}. Then there exists a positive constant $K_{0,1}$, independent of time, such that
	\[
	\Vert a_i(t,x) \Vert_{\mathrm{L}^9(\Omega_{0,1})} \leq {K_{0,1}} \qquad
	\quad i=2,\cdots,m-1.
	\]
\end{Prop}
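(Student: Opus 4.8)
The plan is to bootstrap the uniform $\mathrm{L}^2(\Omega_{\tau,\tau+1})$ estimate from Proposition \ref{a_i to a_m L^9 entropy} up to the desired $\mathrm{L}^9(\Omega_{0,1})$ bound by combining the Duhamel representation with the Green's function estimates \eqref{Heat kernel estimate}--\eqref{Heat kernel integral estimate} and Young's inequality for convolutions. The crucial structural observation is that, for each diffusive species $a_i$ with $i\in\{2,\dots,m-1\}$, the reaction term is controlled by $a_m$ alone: since all concentrations are non-negative,
\[
f_i = a_m - a_1\prod_{j=2}^{m-1}(a_j)^{\alpha_j} \leq a_m .
\]
Because $a_m$ is already controlled in $\mathrm{L}^2(\Omega_{0,1})$ by $K_I$ (the case $i=m$ of Proposition \ref{a_i to a_m L^9 entropy}), it suffices to propagate this integrability through the parabolic operator $\partial_t - d_i\Delta$ acting on $a_i$.

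First I would write the Duhamel formula in terms of the Neumann Green's function $G_{d_i}$,
\[
a_i(t,x) = \int_\Omega G_{d_i}(t,0,x,y)\,a_{i,0}(y)\,\mathrm{d}y + \int_0^t\!\!\int_\Omega G_{d_i}(t,s,x,y)\,f_i(s,y)\,\mathrm{d}y\,\mathrm{d}s ,
\]
and then exploit $f_i\leq a_m$ together with $G_{d_i}\geq 0$ to dominate the second term by the space-time convolution of $a_m$ against $g_{d_i}$. The first (homogeneous) term is bounded in $\mathrm{L}^\infty$ by the parabolic maximum principle, namely by $\Vert a_{i,0}\Vert_{\mathrm{L}^\infty(\Omega)}$, hence trivially in $\mathrm{L}^9(\Omega_{0,1})$ after multiplying by $\vert\Omega\vert^{1/9}$. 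For the source term I would invoke the pointwise kernel bound $G_{d_i}\leq g_{d_i}$ from \eqref{Heat kernel estimate} and extend $a_m$ by zero outside $\Omega_{0,1}$ so as to view the integral as a genuine convolution on $\mathbb{R}\times\mathbb{R}^N$, noting that the relevant time increments $t-s$ range only over $(0,1)\subset(-2,2)$.

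The final step is Young's inequality. Choosing the exponent $z=18/11$ yields $\tfrac{1}{z}+\tfrac12 = 1+\tfrac19$, so that convolving $g_{d_i}\in\mathrm{L}^z$ with $a_m\in\mathrm{L}^2$ lands precisely in $\mathrm{L}^9$:
\[
\Big\Vert \int_0^t\!\!\int_\Omega g_{d_i}(t-s,x-y)\,a_m(s,y)\,\mathrm{d}y\,\mathrm{d}s\Big\Vert_{\mathrm{L}^9(\Omega_{0,1})} \leq \Vert g_{d_i}\Vert_{\mathrm{L}^z((-2,2)\times\mathbb{R}^N)}\,\Vert a_m\Vert_{\mathrm{L}^2(\Omega_{0,1})}.
\]
The kernel factor is finite by \eqref{Heat kernel integral estimate} as long as $z<1+\tfrac2N$, and the remaining factor is at most $K_I$. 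Adding the homogeneous contribution produces the time-independent constant $K_{0,1}$, which closes the proof.

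The main obstacle is checking that the admissibility window $z<1+\tfrac2N$ of the heat-kernel integral estimate accommodates the required exponent in every dimension $N\leq 3$: the condition reads $z=18/11\approx 1.64 < 1+\tfrac2N$, which for $N=3$ becomes $18/11<5/3\approx 1.67$ — valid, but with very little room to spare, and it is exactly this margin that confines the unconditional argument to $N\leq 3$. A secondary technical care is the passage from the convolution on the bounded cylinder to the convolution on $\mathbb{R}\times\mathbb{R}^N$: the zero-extension of $a_m$ and the restriction of $g_{d_i}$ to the time slab $(-2,2)$ must be arranged so that Young's inequality applies cleanly and no spurious mass from outside $\Omega_{0,1}$ is introduced.
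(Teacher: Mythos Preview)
Your proposal is correct and follows essentially the same approach as the paper's proof: Duhamel representation, the subsolution observation $f_i\le a_m$, the pointwise kernel bound $G_{d_i}\le g_{d_i}$, zero-extension to apply Young's convolution inequality with the exponent $z=18/11$, and the admissibility check $18/11<1+2/N$ for $N\le3$. The paper carries out exactly these steps, with the same constants and the same final definition of $K_{0,1}$.
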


\begin{proof} Thanks to the non-negativity of solution, for all $2\leq i\leq m$, the species $a_i$ is a positive subsolution of the following equation in the time interval $(0,2)$.
	\begin{equation}\label{eq:tilde a}
		\left \{
		\begin{aligned}
			\partial_{t}a_i(t,x)-d_i\Delta a_i(t,x)
			\leq &a_m \qquad \qquad \qquad  \mbox{ in }\Omega_{0,2}
			\\
			\nabla_x a_i(t,x).\gamma=&  0 \qquad   \qquad \quad   \ \  \ \mbox{ on } \partial\Omega_{0,2} 
			\\
			a_i(0,x) =& a_{i,0} \qquad   \quad  \qquad \ \ \ \mbox{ in } \Omega.
		\end{aligned}
		\right.
	\end{equation}
	The solution corresponding to the equation \eqref{eq:tilde a} can be expressed as:
	\[
	a_i(t,x)\leq \int_{\Omega}G_{d_a}(t,0,x,y)a_{i,0}(y) \rm{d}y+ \int_{0}^{t}\int_{\Omega} G_{d_i}(t,s,x,y)a_m(s,y)\rm{d}y\rm{d}s, \quad (t,x)\in\Omega_{0,2}.
	\]
	where $G_{d_i}$ denotes the Green's function associated with the operator $\partial_t-d_i\Delta$ with Neumann boundary condition. We use the fact that $\displaystyle{a_{i,0}(y)\in\mathrm{L}^{\infty}(\Omega)}$ and $\displaystyle{\int_{\Omega}G_{d_i}(t,0,x,y) \rm{d}y \leq 1}$ for all $t\in(0,2)$. It yields
	\[
	\left\vert \int_{\Omega}G_{d_a}(t,0,x,y)a_{i,0}(y) \rm{d}y \right\vert \leq \Vert a_0\Vert_{\mathrm{L}^{\infty}(\Omega)}.
	\]
	The species $a_i$ can be pointwise estimated as
	\[
	a_i\leq \Vert a_0\Vert_{\mathrm{L}^{\infty}(\Omega)} +\int_{0}^{t}\int_{\Omega} G_{d_i}(t,s,x,y)a_m(s,y)\rm{d}y\rm{d}s.
	\]
	Triangular inequality of the $\mathrm{L}^9$ norm yields:
	\[
		\Vert a_i\Vert_{\mathrm{L}^9(\Omega_{0,1})} \leq \Vert a_0\Vert_{\mathrm{L}^{\infty}(\Omega)} \vert \Omega\vert^{\frac{1}{9}}+\left\Vert \int_{0}^{t}\int_{\Omega} G_{d_i}(t,s,x,y)a_m(s,y)\rm{d}y\rm{d}s \right\Vert_ {\mathrm{L}^9(\Omega_{0,1})}.
	\]
	Green function estimate as in \eqref{Heat kernel estimate} and \eqref{Heat kernel bound function} yields
	\[
	0\leq G_{d_i}(t_1,s,x,y) \leq  g_{d_i}(t_1-s,x-y) \qquad 0\leq s<t_1.
	\]
	 Let us consider the following two functions
\begin{equation*}
	\Tilde{g}_{d_i}(s,x)=
	\left \{
	\begin{aligned} 
		&g_{d_i}(s,x)  \qquad (s,x)\in (-1,1)\times \mathbb{R}^N,\\
		& 0  \qquad \qquad \qquad \mbox{otherwise}
	\end{aligned}
\right .
\end{equation*}
and
\begin{equation*}
	\Tilde{a}_m(s,x)=
	\left \{
	\begin{aligned} 
		&a_m(s,x)  \qquad (s,x)\in (0,1)\times \Omega\\
		& 0  \qquad \qquad \qquad \mbox{otherwise}.
	\end{aligned}
	\right .
\end{equation*} 
Hence 
\[
\int_{0}^{t}\int_{\Omega} G_{d_i}(t,s,x,y)a_m(s,y)\rm{d}y\rm{d}s \leq \int_{\mathbb{R}}\int_{\mathbb{R}^N} \Tilde{g}_{d_i}(t-s,x-y) \tilde{a}_m(s,y) \rm{d}y \rm{d}s.
\]
We use Young's convolution inequality. It yields
\begin{align*}
\left\Vert \int_{0}^{t}\int_{\Omega} G_{d_i}(t,s,x,y)a_m(s,y)\rm{d}y\rm{d}s \right\Vert_ {\mathrm{L}^9(\Omega_{0,1})} &\leq \left \Vert \int_{\mathbb{R}}\int_{\mathbb{R}^N} \Tilde{g}_{d_i}(t-s,x-y) \tilde{a}_m(s,y) \rm{d}y \rm{d}s \right\Vert_ {\mathrm{L}^9(\mathbb{R}\times\mathbb{R}^N)}\\ & \leq \Big\Vert \Tilde{g}_{d_i} \Big\Vert_{\mathrm{L}^{\frac{18}{11}}(\mathbb{R}\times\mathbb{R}^N)}
	\Big\Vert \tilde{a}_m \Big\Vert_{\mathrm{L}^2(\mathbb{R}\times\mathbb{R}^N)}
\end{align*}
as $\displaystyle{1+\frac{1}{9}=\frac{1}{2}+\frac{11}{18}}$. Hence
\[
\left\Vert \int_{0}^{t}\int_{\Omega} G_{d_i}(t,s,x,y)a_m(s,y)\rm{d}y\rm{d}s \right\Vert_{\mathrm{L}^9(\Omega_{0,1})} \leq \Big\Vert g_{d_i} \Big\Vert_{\mathrm{L}^{\frac{18}{11}}((-1,1)\times\mathbb{R}^N)}
\Big\Vert a_m \Big\Vert_{\mathrm{L}^2(\Omega_{0,1}}. 
\]
Furthermore \eqref{Heat kernel integral estimate} yields the following integral estimate:
\[
\left\Vert g_{d_i}\right\Vert_{\mathrm{L}^{z}((-2,2)\times\mathbb{R}^N)} \leq \tilde{\kappa}_{1,d_i} \qquad \forall z\in\left[1,1+\frac{2}{N}\right), N=1,2,3,
\]
for some constant positive $\tilde{\kappa}_{1,d_i}$ depending on $z$. We in particular choose $z=\frac{18}{11}$ which is admissible because $\frac{18}{11}< 1+\frac{2}{N}$ for all $N=1,2,3$. Hence 
	\[
	\left\Vert \int_{0}^{t}\int_{\Omega} G_{d_i}(t,s,x,y)a_m(s,y)\rm{d}y\rm{d}s \right\Vert_{\mathrm{L}^9(\Omega_{0,1})} \leq \tilde{\kappa}_{1,d_i} K_{I}
	\]
	where $K_I$ as in Proposition \ref{a_i to a_m L^9 entropy}.
 Defining the constant
	\[
	\displaystyle{K_{0,1}:=\max\limits_{2\leq i\leq m}\{\tilde{\kappa}_{1,d_i}\}K_I+\max\limits_{1\leq i\leq m}\left\{\Vert a_{i,0} \Vert_{\mathrm{L}^9(\Omega_{0,1})}\right\}},
	\]
	we arrive at 
	\[
	\Vert a_i(t,x) \Vert_{\mathrm{L}^9(\Omega_{0,1})} \leq {K_{0,1}},
	\ \qquad i=2,\cdots,m-1.
	\]
\end{proof}
\vspace{.2cm}

The following result extends this $\mathrm{L}^9$ estimate to any parabolic cylinder with unit height. 

\begin{Prop}\label{a_i to a_m L^9}
	Let dimension $N\leq 3$. Let $(a_1,\cdots,a_m)$ be the smooth positive solution to the degenerate triangular reaction-diffusion system \eqref{degenerate 1}. Then, there exists a positive constant $K_{\Gamma}$, independent of $\tau$, such that
	\[
	\Vert a_i(t,x) \Vert_{\mathrm{L}^9(\Omega_{\tau,\tau+1})} \leq {K_{\Gamma}}
	\]
	for all $\tau\geq 0$ and for $i=2,\cdots,m-1$.
\end{Prop}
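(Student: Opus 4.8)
The plan is to reduce the estimate on an arbitrary unit cylinder $\Omega_{\tau,\tau+1}$ to the situation already settled in Proposition~\ref{a_i to a_m L^9 [0,1]}. The only genuinely new difficulty is that the datum $a_i(\tau,\cdot)$ on a general time slice is \emph{not} known to be bounded in $\mathrm{L}^{\infty}(\Omega)$ uniformly in $\tau$ (uniform sup-norm control being precisely one of the open points of the whole analysis), so the proof of Proposition~\ref{a_i to a_m L^9 [0,1]}, whose first term was handled using $\Vert a_{i,0}\Vert_{\mathrm{L}^{\infty}}$, cannot be translated verbatim. To bypass this I would use the time cut-off function $\phi_{\tau-1}$ from \eqref{cut-off} to trade the unknown initial slice for a source term that only needs to be controlled in $\mathrm{L}^{2}$, a bound that \emph{is} available uniformly in $\tau$ thanks to Proposition~\ref{a_i to a_m L^9 entropy}.

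Concretely, for $\tau\geq 1$ and $i=2,\dots,m-1$, set $w:=\phi_{\tau-1}\,a_i$. Since $a_i$ is a positive subsolution of $\partial_t a_i-d_i\Delta a_i\leq a_m$ with homogeneous Neumann data, and since $\phi_{\tau-1}(\tau-1)=0$, the function $w$ satisfies
\[
\partial_t w-d_i\Delta w\leq \phi_{\tau-1}'\,a_i+\phi_{\tau-1}\,a_m \quad \mbox{in } \Omega_{\tau-1,\tau+1}, \qquad w(\tau-1,\cdot)=0,
\]
with Neumann boundary condition. The Duhamel representation with \emph{zero} initial datum, combined with $\phi_{\tau-1}'\leq M_\phi$ (supported in $(\tau-1,\tau)$) and $\phi_{\tau-1}\leq 1$, gives for $t\in(\tau,\tau+1)$, where $\phi_{\tau-1}\equiv 1$ and hence $w=a_i$,
\[
a_i(t,x)\leq M_\phi\int_{\tau-1}^{\tau}\!\!\int_{\Omega}G_{d_i}(t,s,x,y)\,a_i(s,y)\,dy\,ds+\int_{\tau-1}^{t}\!\!\int_{\Omega}G_{d_i}(t,s,x,y)\,a_m(s,y)\,dy\,ds.
\]

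From here the argument is a verbatim repetition of Proposition~\ref{a_i to a_m L^9 [0,1]}: bound $G_{d_i}$ pointwise by $g_{d_i}$ via \eqref{Heat kernel estimate}, extend $g_{d_i}$ and the source functions by zero outside the relevant space--time window, and apply Young's convolution inequality with $1+\tfrac19=\tfrac12+\tfrac{11}{18}$. Since $t-s$ ranges over $(0,2)$, the kernel contributes $\Vert g_{d_i}\Vert_{\mathrm{L}^{18/11}((-2,2)\times\mathbb{R}^N)}\leq\tilde\kappa_{1,d_i}$ by \eqref{Heat kernel integral estimate} (admissible because $\tfrac{18}{11}<1+\tfrac2N$ for $N\leq 3$), while the two source functions are controlled in $\mathrm{L}^2(\Omega_{\tau-1,\tau+1})$ by a fixed multiple of $K_I$ after splitting $\Omega_{\tau-1,\tau+1}$ into the two unit cylinders $\Omega_{\tau-1,\tau}$ and $\Omega_{\tau,\tau+1}$ and invoking Proposition~\ref{a_i to a_m L^9 entropy} (which holds for all $i=2,\dots,m$, hence also for the source $a_m$). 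As $M_\phi$, $\tilde\kappa_{1,d_i}$ and $K_I$ are all independent of $\tau$, this produces a $\tau$-independent bound for every $\tau\geq 1$. The remaining initial window $0\leq\tau<1$ satisfies $\Omega_{\tau,\tau+1}\subseteq\Omega_{0,2}$ and is covered by the argument of Proposition~\ref{a_i to a_m L^9 [0,1]}; taking $K_\Gamma$ to be the maximum of the two constants then finishes the proof.

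I expect the only delicate point to be the treatment of the initial slice, namely recognizing that the cut-off device is exactly what converts the unavailable $\mathrm{L}^{\infty}$ datum into an $\mathrm{L}^{2}$ source for which uniform-in-$\tau$ control is guaranteed by Proposition~\ref{a_i to a_m L^9 entropy}. Once this reduction is set up, every remaining estimate — the Green's-function bound, the choice of convolution exponents, and the admissibility of $z=\tfrac{18}{11}$ — is identical to what has already been carried out, so no additional obstacle should arise.
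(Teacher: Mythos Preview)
Your proposal is correct and follows essentially the same approach as the paper: both use the time cut-off $\phi$ to replace the (a priori uncontrolled) initial slice by an $\mathrm{L}^2$-bounded source term, then invoke the Green's-function bound with Young's convolution inequality at exponents $1+\tfrac19=\tfrac12+\tfrac{11}{18}$ and Proposition~\ref{a_i to a_m L^9 entropy} for the source, finally covering the initial window via Proposition~\ref{a_i to a_m L^9 [0,1]}. The only cosmetic difference is that the paper places the cut-off at $\tau$ on $\Omega_{\tau,\tau+2}$ (obtaining the estimate on $\Omega_{\tau+1,\tau+2}$), whereas you shift it to $\tau-1$ on $\Omega_{\tau-1,\tau+1}$; this is just a relabeling of the time variable.
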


\begin{proof} Consider the function $\phi_{\tau}:\mathbb{R}\to\mathbb{R}$ as defined in \eqref{cut-off}. For $2\leq i\leq m-1$, the product $\phi_{\tau}a_i$ satisfies
	
	\begin{equation} \nonumber
		\left\{
		\begin{aligned}
			\partial_t \phi_{\tau}a_i-d_i \Delta \phi_{\tau}a_i = & \phi'_{\tau}a_i+\phi_{\tau}\left( a_m-a_1\prod\limits_{j=2}^{m-1}a_j^{\alpha_j} \right) \qquad \mbox{ in } \Omega_{\tau,\tau+2}\\
			\nabla_x \phi_{\tau}a_i .\gamma=& 0 \qquad \qquad \qquad \qquad \qquad \qquad \qquad \ \   \mbox { on } \partial\Omega_{\tau,\tau+2}\\
			\phi_{\tau}a_i(\tau,x)=& 0 \qquad \qquad \qquad \ \qquad \qquad \qquad \ \  \qquad \mbox{ in }\Omega.
		\end{aligned}
		\right.
	\end{equation}

	Change of variable $t\rightarrow t_1+\tau$ yields
	
	\begin{equation*}
		\left \{
		\begin{aligned}
			\partial_{t_1}(\phi_\tau(t_1+\tau)&a_i(t_1+\tau,x))-d_{i}\Delta (\phi_\tau(t_1+\tau)a_i(t_1+\tau,x))\\ &= a_{i}(t_1+\tau,x)\phi^{'}_\tau(t_1+\tau)+\phi_\tau(t_1+\tau)\left(a_m-a_1\prod\limits_{j=2}^{m-1}a_j^{\alpha_j}\right) \  \mbox{\ in \ }\Omega_{0,2} \\
			& \nabla_x (\phi_\tau(t_1+\tau)a_i(t_1+\tau,x)).\gamma=  0 \qquad   \qquad \qquad \qquad \qquad \quad  \  \mbox{ on } \partial\Omega_{0,2} \\
			& \phi_{\tau}a_i(0+\tau,x)= 0 \qquad \qquad \qquad \qquad \qquad \ \ \ \qquad  \qquad \quad \ \quad \ \ \mbox{ in } \Omega. 
		\end{aligned}
		\right .
	\end{equation*}
	
	Let $G_i(t_1,s,x,y)$ be the Green's function corresponding to the operator $\partial_{t_1}- d_i \Delta$.  Then for all $t_1\in[0,2)$, we can represent the solution as follows
	\begin{align*}
		&\phi_\tau(t_1+\tau)a_i(t_1+\tau,x)\\ &= \int_{0}^{t_1}\int_{\Omega}G_i(t_1,s,x,y)\left(a_{i}(y,s+\tau)\phi^{'}_\tau(s+\tau)+\phi_\tau(s+\tau)(a_m-a_1\prod\limits_{j=2}^{m-1}a_j^{\alpha_j})(s+\tau,t)\right).
	\end{align*}
	Thanks to the non-negativity of the Green's function, the following is immediate:
	\[
	\phi_\tau(t_1+\tau)a_i(t_1+\tau,x)\leq \int_{0}^{t_1}\int_{\Omega}G_i(t_1,s,x,y)\Big(a_{i}(s+\tau,x)\phi^{'}_\tau(s+\tau)+\phi_\tau(s+\tau)a_m(s+\tau,x)\Big),
	\]
	where we have also used the fact that the solution is non-negative.
	
Green function estimate as in \eqref{Heat kernel estimate} and \eqref{Heat kernel bound function} yields
\[
0\leq G_{d_i}(t_1,s,x,y) \leq  g_{d_i}(t_1-s,x-y) \qquad 0\leq s<t_1
\]
where the function $g_{d_i}$ as defined in  \eqref{Heat kernel bound function}. Substituting this pointwise bound for the Green's function in the earlier inequality leads to
	\[
	\phi_\tau(t_1+\tau)a_i(t_1+\tau,x)\leq \int_{0}^{t_1}\int_{\Omega}g_{d_i}(t_1-s,x,y)\Big(a_{i}(s+\tau,x)\phi^{'}_\tau(s+\tau)+\phi_\tau(s+\tau)a_m(s+\tau,x)\Big).
	\]
	 Let us consider the following two functions
\begin{equation*}
	\Tilde{g}_{d_i}(s,x):=
	\left \{
	\begin{aligned} 
		&g_{d_i}(s,x)  \qquad (s,x)\in (-1,1)\times \mathbb{R}^N,\\
		& 0  \qquad \qquad \qquad \mbox{otherwise}
	\end{aligned}
	\right .
\end{equation*}
and
\begin{equation*}
	F(s,x):=
	\left \{
	\begin{aligned} 
		a_{i}(s+\tau,x)\phi^{'}_\tau(s+\tau)&+\phi_\tau(s+\tau)a_m(s+\tau,x)  \qquad (s,x)\in (0,1)\times \Omega\\
		& 0  \qquad \qquad \qquad \qquad \qquad \qquad \ \ \ \mbox{otherwise}.
	\end{aligned}
	\right .
\end{equation*} 
Hence 
\[
\int_{0}^{t}\int_{\Omega} G_{d_i}(t,s,x,y)a_i(s,y)\rm{d}y\rm{d}s \leq \int_{\mathbb{R}}\int_{\mathbb{R}^N} \Tilde{g}_{d_i}(t-s,x-y) F(s,y) \rm{d}y \rm{d}s.
\]
We use Young's convolution inequality. It yields
\begin{align*}
	\left\Vert \int_{0}^{t}\int_{\Omega} G_{d_i}(t,s,x,y)a_i(s,y)\rm{d}y\rm{d}s \right\Vert_ {\mathrm{L}^9(\Omega_{0,1})} &\leq \left \Vert \int_{\mathbb{R}}\int_{\mathbb{R}^N} \Tilde{g}_{d_i}(t-s,x-y) F(s,y) \rm{d}y \rm{d}s \right\Vert_ {\mathrm{L}^9(\mathbb{R}\times\mathbb{R}^N)}\\ & \leq \Big\Vert \Tilde{g}_{d_i} \Big\Vert_{\mathrm{L}^{\frac{18}{11}}(\mathbb{R}\times\mathbb{R}^N)}
	\Big\Vert F \Big\Vert_{\mathrm{L}^2(\mathbb{R}\times\mathbb{R}^N)}
\end{align*}
as $\displaystyle{1+\frac{1}{9}=\frac{1}{2}+\frac{11}{18}}$. Hence
\[
\left\Vert \int_{0}^{t}\int_{\Omega} G_{d_i}(t,s,x,y)a_m(s,y)\rm{d}y\rm{d}s \right\Vert_{\mathrm{L}^9(\Omega_{0,1})} \leq \Big\Vert g_{d_i} \Big\Vert_{\mathrm{L}^{\frac{18}{11}}((-1,1)\times\mathbb{R}^N)}
\Big\Vert a_{i}(\cdot+\tau,\cdot)\phi^{'}_\tau(\cdot+\tau)+\phi_\tau(\cdot+\tau)a_m(\cdot+\tau,\cdot) \Big\Vert_{\mathrm{L}^2(\Omega_{0,1})}. 
\]	
 Furthermore \eqref{Heat kernel integral estimate} yields the following integral estimate:
	\[
	\left\Vert g_{d_i}\right\Vert_{\mathrm{L}^{z}((-2,2)\times\mathbb{R}^N)} \leq \tilde{\kappa}_{1,d_i} \qquad \forall z\in\left[1,1+\frac{2}{N}\right), N=1,2,3,
	\]
	for some constant positive $\tilde{\kappa}_{1,d_i}$ depending on $z$. We in particular choose $z=\frac{18}{11}$ which is admissible because $\frac{18}{11}< 1+\frac{2}{N}$ for all $N=1,2,3$. Thanks to the $\mathrm{L}^2$ estimate from Proposition \ref{a_i to a_m L^9 entropy}, we arrive at 
	\[
	\Vert \phi_\tau(t_1+\tau)a_i(t_1+\tau,x) \Vert_{\mathrm{L}^9(\Omega_{0,2})}\leq 2\tilde{\kappa}_{1,d_i}(1+M_{\phi})K_I, \qquad i=2,\cdots,m-1.
	\]
	
	Substituting back $t_1+\tau=t$ yields  
	\begin{align*} \label{L^9 estimate of a_i}
		\Vert a_i \Vert_{\mathrm{L}^9(\Omega_{\tau,\tau+1})} \leq 2\max\limits_{2\leq i\leq m}\{\tilde{\kappa}_{1,d_i}\}(1+M_{\phi})K_I+ \Vert a_i\Vert_{\mathrm{L}^9(\Omega_{0,1})} \leq 2\max\limits_{2\leq i\leq m}\{\tilde{\kappa}_{1,d_i}\}(1+M_{\phi})K_I+K_{0,1} \qquad i=2,\cdots,m-1,
	\end{align*}
	thanks to Proposition \ref{a_i to a_m L^9 [0,1]}. Defining the constant $K_{\Gamma}$ as
	\[
	K_{\Gamma}:= 2\max\limits_{2\leq i\leq m}\{\tilde{\kappa}_{1,d_i}\}(1+M_{\phi})K_I+ \Vert a_i\Vert_{\mathrm{L}^9(\Omega_{0,1})} \leq 2\max\limits_{2\leq i\leq m}\{\tilde{\kappa}_{1,d_i}\}(1+M_{\phi})K_I+K_{0,1},
	\]
	we have proved the result.
\end{proof}

So far we have managed to obtain uniform (in $\tau$) $\mathrm{L}^p(\Omega_{\tau,\tau+1})$ estimate on the species concentration $a_2,\cdots,a_{m-1}$. Furthermore, our results have been valid for spatial dimension $N\leq 3$. Next we prove a conditional result in arbitrary spatial that derives an $\Vert a_m\Vert_{\mathrm{L}^p(\Omega_{\tau,\tau+1})}$ estimate with $p>N$ on the species concentration provided the same $\mathrm{L}^p$ estimates were available for the species concentrations $a_2,\cdots,a_{m-1}$.  We emphasize here that we have access to such estimate for $a_2,\cdots,a_{m-1}$ only when $N\leq 3$. Hence for $N>3$ the following is a conditional result.

\vspace{.2cm}

\begin{Prop}\label{a_i to a_m L^p}
	Let $(a_1,\cdots,a_m)$ be the smooth positive solution to the degenerate triangular reaction-diffusion system \eqref{degenerate 1}. Suppose there  exists some positive constant $\texttt{K}$, independent of $\tau$, such that
	\[
	\Vert a_i(t,x) \Vert_{\mathrm{L}^p(\Omega_{\tau,\tau+1})} \leq \texttt{K},  \text{ \ for\ some\ } i\in\{2,\cdots,m-1\}, \forall \tau \geq 0,
	\]
	for some exponent $p>N$. Then there exists positive constant $\textbf{K}$, such that
	\[
	\Vert a_m(t,x) \Vert_{\mathrm{L}^p(\Omega_{\tau,\tau+1})} \leq \textbf{K} \ \qquad  \mbox{for all}\ \tau\geq0
	\]
	and for the same exponent $p>N$ as above.
\end{Prop}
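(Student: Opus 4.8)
The plan is to exploit the fact that the reaction term cancels in the equation for the sum $w := a_i + a_m$. Adding the $a_i$-equation and the $a_m$-equation in \eqref{degenerate 1} kills the common nonlinearity $a_1\prod_{j=2}^{m-1}a_j^{\alpha_j}$ and leaves a conservation-type equation
\[
\partial_t w - \bar d\,\Delta w = \Delta\theta,\qquad \theta := \tfrac{d_i-d_m}{2}\,(a_i-a_m),\quad \bar d := \tfrac{d_i+d_m}{2},
\]
on each cylinder $\Omega_{\tau,\tau+1}$, together with homogeneous Neumann conditions. Since $a_i,a_m\ge 0$, the perturbation obeys the pointwise bound $|\theta|\le \tfrac{|d_i-d_m|}{2}\,w$, hence $\Vert\theta\Vert_{\mathrm{L}^p}\le \tfrac{|d_i-d_m|}{2}\big(\Vert a_i\Vert_{\mathrm{L}^p}+\Vert a_m\Vert_{\mathrm{L}^p}\big)$. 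Because $\Vert a_m\Vert_{\mathrm{L}^p}\le \Vert w\Vert_{\mathrm{L}^p}+\Vert a_i\Vert_{\mathrm{L}^p}$ and the hypothesis furnishes $\Vert a_i\Vert_{\mathrm{L}^p(\Omega_{\tau,\tau+1})}\le \texttt{K}$ uniformly in $\tau$, it suffices to control $\Vert w\Vert_{\mathrm{L}^p(\Omega_{\tau,\tau+1})}$.

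To estimate $\Vert w\Vert_{\mathrm{L}^p}$ I would argue by duality against the conjugate exponent $p'$. Given $\psi\in\mathrm{L}^{p'}(\Omega_{\tau,\tau+1})$, solve the backward dual problem $-\partial_t\varphi-\bar d\,\Delta\varphi=\psi$ on $\Omega_{\tau,\tau+1}$ with terminal condition $\varphi(\tau+1,\cdot)=0$ and Neumann boundary data. Parabolic maximal regularity (the constant $C^{PRC}_{\frac{d_i+d_m}{2},p'}$ of Appendix \ref{PE}) gives $\Vert\Delta\varphi\Vert_{\mathrm{L}^{p'}}\le C^{PRC}_{\frac{d_i+d_m}{2},p'}\,\Vert\psi\Vert_{\mathrm{L}^{p'}}$, uniformly in $\tau$ by translation invariance. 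Testing the $w$-equation against $\varphi$ and integrating by parts twice (the spatial boundary terms vanish by the Neumann conditions and $\varphi(\tau+1)=0$ annihilates the terminal term) yields
\[
\int_{\Omega_{\tau,\tau+1}} w\,\psi = \int_{\Omega_{\tau,\tau+1}}\theta\,\Delta\varphi + \mathcal{B}_\tau,
\]
where $\mathcal{B}_\tau:=\int_{\Omega} w(\tau,\cdot)\,\varphi(\tau,\cdot)$ is an initial contribution controlled through the conserved mass $M_i$ from \eqref{tri mass conserve 1} and a bound on $\varphi(\tau)$ from the dual solve. Hölder's inequality then gives $\big|\int\theta\,\Delta\varphi\big|\le \tfrac{|d_i-d_m|}{2}C^{PRC}_{\frac{d_i+d_m}{2},p'}\big(\Vert a_i\Vert_{\mathrm{L}^p}+\Vert a_m\Vert_{\mathrm{L}^p}\big)\Vert\psi\Vert_{\mathrm{L}^{p'}}$.

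Taking the supremum over $\Vert\psi\Vert_{\mathrm{L}^{p'}}\le 1$ produces a self-improving inequality of the form
\[
\Vert a_m\Vert_{\mathrm{L}^p(\Omega_{\tau,\tau+1})} \le \tfrac{|d_i-d_m|}{2}\,C^{PRC}_{\frac{d_i+d_m}{2},p'}\,\Vert a_m\Vert_{\mathrm{L}^p(\Omega_{\tau,\tau+1})} + C(\texttt{K},M_i).
\]
The crucial step — and the point where the closeness hypothesis \eqref{tri closeness condition imp} enters — is the absorption: since $\tfrac{|d_i-d_m|}{2}C^{PRC}_{\frac{d_i+d_m}{2},p'}<1$ under \eqref{tri closeness condition imp}, the $\Vert a_m\Vert_{\mathrm{L}^p}$ term may be moved to the left, leaving $\Vert a_m\Vert_{\mathrm{L}^p(\Omega_{\tau,\tau+1})}\le\mathbf{K}$ with $\mathbf{K}$ independent of $\tau$.

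I expect the main obstacle to be the rigorous treatment of the localization rather than the duality estimate itself. One must either work with a time cut-off $\phi_\tau$ as in \eqref{cut-off}, so that the initial datum vanishes at the expense of extra source terms $\phi'_\tau w$ supported on $[\tau,\tau+1]$ (controlled by the a priori mass and $\mathrm{L}^2$ bounds already available in low dimension), or else estimate the boundary term $\mathcal{B}_\tau$ directly; in either case one has to verify that every constant — in particular $C^{PRC}_{\frac{d_i+d_m}{2},p'}$ and the solvability of the dual problem in $\mathrm{L}^{p'}$ — is genuinely uniform in $\tau$, which is where translation invariance of the heat operator on the unit cylinder is used. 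The smallness required to absorb $\Vert a_m\Vert_{\mathrm{L}^p}$ is precisely what forces the two non-zero diffusion coefficients to be close when $N>3$, consistent with the conditional character of the statement in high dimension.
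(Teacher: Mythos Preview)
Your approach proves a weaker statement than the one claimed: the proposition carries \emph{no} closeness hypothesis on the diffusion coefficients, yet your absorption step requires $\tfrac{|d_i-d_m|}{2}C^{PRC}_{\frac{d_i+d_m}{2},p'}<1$. You have conflated this proposition with Proposition~\ref{L^n estimation of a_m}, which is the place where closeness actually enters. For dimensions $N\le 3$ the present result is meant to be unconditional, so the gap matters.

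The paper avoids the absorption altogether by an asymmetric choice of dual problem: instead of averaging the diffusion and working with $w=a_i+a_m$, solve the backward equation $\partial_t\psi + d_m\Delta\psi = -\theta$ with the coefficient $d_m$, and test it against $\phi_\tau a_m$. After replacing the reaction by $-(\partial_t a_i - d_i\Delta a_i)$ and integrating by parts a second time, the cross-diffusion contribution becomes $(d_i-d_m)\int \phi_\tau a_i\,\Delta\psi$, which involves only the \emph{known} species $a_i$ and is therefore bounded by a fixed multiple of $\texttt{K}\,\Vert\theta\Vert_{L^{p'}}$ --- nothing to absorb. The only place $a_m$ reappears is in the cut-off term $\int \psi\,\phi'_\tau(a_i+a_m)$, but here one estimates $\psi$ in $L^q$ with $q$ strictly larger than $p'$ (via the integrability estimate of Theorem~\ref{estimation 1}, since $p'<\frac{N+2}{2}$), so Hölder pairs it with $\Vert a_i+a_m\Vert_{L^{q'}}$ for some $q'<p$. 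Interpolating between $L^1$ (mass conservation) and $L^p$ yields a \emph{sublinear} power $\Vert a_m\Vert_{L^p(\Omega_{\tau,\tau+1})}^\alpha$ with $\alpha<1$, giving a recursion $\beta_{n+1}\le K_6 + K_7\beta_n^\alpha$ that is bounded regardless of the size of $|d_i-d_m|$.

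In short: choosing the dual diffusion equal to $d_m$ rather than $\bar d$ shifts the $(d_i-d_m)$-term onto $a_i$ instead of $a_m$, and the residual $a_m$-dependence from the localization is sublinear, so no smallness is needed.
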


\begin{proof} Let $\theta \in \mathrm L^{p'}(\Omega_{\tau,\tau+2})$ be a non-negative function with $p'$ being the H\"older conjugate of $p$. Let $\psi$ be the solution of the following backward heat equation:
	\begin{equation}\label{new label:psi}
		\left \{
		\begin{aligned}
			\partial_t \psi(t,x) +d_m\Delta \psi_(t,x)& = -\theta(t,x) \qquad \mbox{in}\ \Omega_{\tau,\tau+2} \\
			\nabla_x \psi_(t,x).\gamma & =0 \qquad \ \ \qquad \mbox{on} \ \partial \Omega_{\tau,\tau+2} 
			\\
			\psi(\tau+2,x) & =0 \qquad \qquad \ \ \  \ \mbox{in} \ \Omega. 
		\end{aligned}
		\right .
	\end{equation}
	
	From the second order regularity result (see-Appendix, Theorem \ref{estimation 1}) says
	\[
	\Vert \Delta \psi \Vert_{\mathrm{L}^{p'}(\Omega _{\tau,\tau+2})} \leq \frac{ C_{SOR}( \Omega,N,p)}{d_m} \Vert \theta \Vert_{\mathrm{L}^{p'}(\Omega_{\tau,\tau+2})}. 
	\]
	Note that $\displaystyle{p'= \frac{p}{p-1}< \frac{N+2}{2}}$. Employing integrability estimate (see-Appendix, Theorem \ref{estimation 1}) we obtain the following estimate:
	\[
	\Vert \psi_m \Vert_{\mathrm{L}^q(\Omega _{\tau,\tau+2})} \leq C_{IE}(\Omega,d_m,p,s) \Vert \theta \Vert_{\mathrm{L}^{p'}(\Omega_{\tau,\tau+2})} \qquad   \forall q< \frac{(N+2)p'}{N+2-2p'}.
	\]
	We remark that the constants  $C_{SOR}(\Omega,N,p)$ and  $C_{IE}(\Omega,d,p,s)$  are independent of  $\tau$. Consider the function $\phi_{\tau}:\mathbb{R}\to\mathbb{R}$ as in \eqref{cut-off}.	Using the equation \eqref{new label:psi}, we obtain
	\begin{align*}
		\int_{\tau}^{\tau+2} \int_{\Omega}\phi_\tau(t)a_m(t,x)\theta(t,x)= - \int_{\tau}^{\tau+2} \int_{\Omega}\phi_\tau(t)a_m(t,x)(\partial_t \psi(t,x) +d_m\Delta \psi(t,x)).
	\end{align*}
	Integration-by-parts in the term on the right hand side yields
	\begin{align*}	
		\int_{\tau}^{\tau+2} \int_{\Omega}&\phi_\tau(t)a_m(t,x)\theta(t,x)=	\int_{\tau}^{\tau+2} \int_{\Omega}\psi(t,x)\left(\partial_t(\phi_\tau(t)a_m(t,x))-d_m\Delta(\phi_\tau(t)a_m(t,x))\right)\\
		= & \int_{\tau}^{\tau+2} \int_{\Omega}\psi(t,x)\left(\phi'(t)a_m(t,x)-\phi_\tau\left(a_m(t,x)-a_1\prod\limits_{j=2}^{m-1}a_j^{\alpha_j}\right)\right)\\
		= & \int_{\tau}^{\tau+2} \int_{\Omega}\psi(t,x)\Big(\phi_\tau^{'}(t)(a_m(t,x)+a_i(t,x))-(\partial_t (\phi_\tau a_i) -d_i \Delta (\phi_\tau a_i))\Big)\\
		= & \int_{\tau}^{\tau+2} \int_{\Omega}\psi(t,x)\Big(\phi_\tau^{'}(t)(a_m(t,x)+a_i(t,x))\Big)\\
		&\qquad \quad +\int_{\tau}^{\tau+2} \int_{\Omega}\phi_\tau(t)a_i(t,x)\Big(\partial_t \psi(t,x)+d_i \Delta \psi(t,x)\Big)\\
		=& \int_{\tau}^{\tau+2} \int_{\Omega}\psi(t,x)(\phi_\tau^{'}(t)(a_i(t,x)+a_m(t,x)))+\int_{\tau}^{\tau+2} \int_{\Omega} -\phi_\tau(t)a_i(t,x)\theta(t,x)\\
		+& (-d_m+d_i)\int_{\tau}^{\tau+2} \int_{\Omega}\phi_\tau(t)a_i(t,x)  \Delta \psi(t,x)=: \textbf{I}+\textbf{II}+\textbf{III}.
	\end{align*}

	Thanks to the  H\"older inequality, we can  estimate the term  \textbf{II} as follows:
	\[
	\vert \textbf{II} \vert \leq \Vert \phi_{\tau}a_i \Vert_{\mathrm{L}^p(\Omega_{\tau,\tau+2})} \Vert \theta \Vert_{\mathrm{L}^{p'}(\Omega_{\tau,\tau+2})}\leq \Vert a_i \Vert_{\mathrm{L}^p(\Omega_{\tau,\tau+2})} \Vert \theta \Vert_{\mathrm{L}^{p'}(\Omega_{\tau,\tau+2})}\leq \texttt{K}\Vert \theta \Vert_{\mathrm{L}^{p'}(\Omega_{\tau,\tau+2})},
	\]
	where we use the hypothesis that $a_i$ is bounded in $\mathrm{L}^p$.
	Similarly we can estimate \textbf{III} using the H\"older inequality as follows:
	\begin{align*}
		\vert \textbf{III} \vert \leq & \vert d_m-d_i \vert \ \Vert \phi_{\tau}a_i \Vert_{\mathrm{L}^p(\Omega_{\tau,\tau+2})} \Vert \Delta \psi \Vert_{\mathrm{L}^{p'}(\Omega_{\tau,\tau+2})}
		\\
		\leq & \frac{C_{SOR}(\Omega,N,p)}{d_m}\vert d_m-d_i \vert\Vert a_i \Vert_{\mathrm{L}^p(\Omega_{\tau,\tau+2})} \Vert \theta \Vert_{\mathrm{L}^{p'}(\Omega_{\tau,\tau+2})}
		\\
		\leq& \frac{C_{SOR}(\Omega,N,p)}{d_m}\texttt{K}\vert d_m-d_i \vert\Vert \theta \Vert_{\mathrm{L}^{p'}(\Omega_{\tau,\tau+2})}.
	\end{align*}
	
	Observe that $\displaystyle{p^{'}<\frac{(N+2)p^{'}}{N+2-2p^{'}}}$. Hence we can choose a $q$ such that  $\displaystyle{p^{'}<q< \frac{(N+2)p^{'}}{N+2-2p^{'}}}$. Note further that the it's H\"older conjugate  $q'$ of $q$  satisfies  $1<q'<p$.

	From Theorem \ref{estimation 1} (see- Appendix), we have the integrability estimation:
	\[
	\Vert  \psi \Vert_{\mathrm{L}^q(\Omega _{\tau,\tau+2})} < C_{IE}( \Omega,d_m,p',q) \Vert \theta \Vert_{\mathrm{L}^{p'}(\Omega_{\tau,\tau+2})}.
	\]

	Note that the cut-off function satisfies  
	\[
	\phi^{'}_{\tau}(s)|_{[\tau+1,\tau+2]}=0 \mbox{\ and\ }   \phi^{'}_{\tau}(s)|_{[\tau,\tau+1]} \leq M_{\phi}.
	\]
	
	Thanks to the H\"older inequality, the following estimate holds
	
	\begin{align*}
		\vert \textbf{I}\vert \leq &  M_{\phi} \Vert \psi \Vert_{\mathrm{L}^{q}(\Omega_{\tau,\tau+2})}\Vert  a_i+a_m \Vert_{\mathrm{L}^{q'}(\Omega_{\tau,\tau+1})} 
		\\
		\leq & M_{\phi} C_{IE}( \Omega,d_m,N,p^{'},q) \Vert \theta \Vert_{\mathrm{L}^{p'}(\Omega_{\tau,\tau+2})}\Vert  a_i+a_m \Vert_{\mathrm{L}^{q'}(\Omega_{\tau,\tau+1})} \nonumber
	\end{align*}
	
	Since $1<q'<p$, there exists a $\alpha \in (0,1)$ such that
	$$\frac{1}{q'}=\frac{1-\alpha}{1}+\frac{\alpha}{p}.$$
	
	Hence by interpolation, we obtain
	\begin{align*}
		\Vert  a_i+&a_m \Vert_{\mathrm{L}^{q'}(\Omega_{\tau,\tau+1})} \\ \leq & 
		\Vert  a_i+a_m \Vert^{1-\alpha}_{\mathrm{L}^{1}(\Omega_{\tau,\tau+1})}\Vert  a_i(t,x)+a_m(t,x) \Vert^{\alpha}_{\mathrm{L}^{p}(\Omega_{\tau,\tau+1})}.
	\end{align*}
	Thanks to the mass conservation and the hypothesis, we arrive at
	\[
	\vert\textbf{I}\vert  \leq M_{\phi} C_{IE}( \Omega,d_m,N,p^{'},q) \Vert \theta \Vert_{\mathrm{L}^{p'}(\Omega_{\tau,\tau+2})}2M_i^{1-\alpha}  (\texttt{K}^{\alpha}+\Vert  a_m \Vert^{\alpha}_{\mathrm{L}^{p}(\Omega_{\tau,\tau+1})}).
	\]
	Let's define two positive constants $K_6,K_7$ in the following way
	\begin{equation*}
		\left \{
		\begin{aligned}
			K_6= & \texttt{K}+\frac{C_{SOR}(\Omega,N,p)}{d_m}\texttt{K}\vert d_m-d_1 \vert+M_{\phi} C_{IE}( \Omega,N,p^{'},q) 2M_i^{1-\alpha}\texttt{K}^{\alpha},
			\\
			K_7= & M_{\phi} C_{IE}( \Omega,d_m,N,p^{'},q) 2M_i^{1-\alpha}.
		\end{aligned}
		\right .
	\end{equation*}
	Putting all the estimates together, we obtain
	\[
	\bigg\vert \int_{\tau}^{\tau+2} \int_{\Omega}\phi_\tau(t)a_m(t,x)\theta(t,x)\bigg\vert \leq \big(K_6+K_7\Vert  a_m(t,x) \Vert^{\alpha}_{\mathrm{L}^{p}(\Omega_{\tau,\tau+1})}\big)\Vert \theta \Vert_{L^{p'}(\Omega_{\tau,\tau+2})}.
	\]
	By duality, we have
	\[
	\Vert \phi_\tau a_m \Vert_{\mathrm{L}^{p}(\Omega_{\tau,\tau+2})}\leq K_6+K_7\Vert  a_m \Vert^{\alpha}_{\mathrm{L}^{p}(\Omega_{\tau,\tau+1})}.
	\]
	
	This further implies, thanks to the property of $\phi_{\tau}$,
	
	\[
	\Vert a_m \Vert_{\mathrm{L}^{p}(\Omega_{\tau+1,\tau+2})}\leq K_6+K_7\Vert  a_m \Vert^{\alpha}_{\mathrm{L}^{p}(\Omega_{\tau,\tau+1})}.
	\]
	
	Define a sequence $\{\beta_n\}|$ as $\beta_n=\Vert a_m \Vert_{\mathrm{L}^{p}(\Omega_{n,n+1})}$, then we have
	\begin{align}\label{d_i, L^p a_m relation 1}
		\beta_{n+1} \leq & K_7 \beta_n^{\alpha}+K_6.
	\end{align}
	
	If $\beta_{n+1}>\beta_{n}$, Young's inequality yields \[
	K_7\beta_{n+1}^{\alpha}\leq \alpha \beta_{n+1}+(1-\alpha)K_7^{\frac{1}{1-\alpha}}.
	\]
	
	Substituting this in \eqref{d_i, L^p a_m relation 1}, we obtain the following bound
	
	\[
	\beta_{n+1} \leq K_7^{\frac{1}{1-\alpha}}+\frac{K_6}{1-\alpha}.
	\]
	Hence
	\begin{align}\label{d_i, L^p a_m relation 2}
		\beta_n \leq \max\left\{ K_7^{\frac{1}{1-\alpha}}+\frac{K_6}{1-\alpha},\beta_0\right\} \qquad \forall n\in\mathbb{N}.
	\end{align}
To compute $\beta_0$ explicitly,  we start with the following integral 
\[
\int_{0}^{2} \int_{\Omega}a_m(t,x)\theta(t,x)= - \int_{0}^{2} \int_{\Omega}a_m(t,x)(\partial_t \psi(t,x) +d_m\Delta \psi(t,x))
\]
where $\psi(t,x)$ is a solution of \eqref{new label:psi} in the cylinder $\Omega_{0,2}$.  	Integration-by-parts in the term on the right hand side yields
\begin{align*}	
	\int_{0}^{2} \int_{\Omega}&a_m(t,x)\theta(t,x)=	\int_{0}^{2} \int_{\Omega}\psi(t,x)\left(\partial_ta_m(t,x)-d_m\Delta a_m(t,x)\right)\\
	= &\int_{\Omega} a_{m,0}\psi(0,x)+\int_{0}^{1} \int_{\Omega}-\psi(t,x)\left(a_m(t,x)-a_1\prod\limits_{j=2}^{m-1}a_j^{\alpha_j}\right)\\
	= & \int_{\Omega} a_{m,0}\psi(0,x)+\int_{0}^{1} \int_{\Omega}-\psi(t,x)\Big((\partial_t  a_i -d_i \Delta  a_i)\Big)\\
	= & \int_{\Omega} (a_{m,0}-a_{i,0})\psi(0,x) +\int_{\tau}^{\tau+2} \int_{\Omega}a_i(t,x)\Big(\partial_t \psi(t,x)+d_i \Delta \psi(t,x)\Big)\\
	=&\int_{\Omega} (a_{m,0}-a_{i,0})\psi(0,x)+\int_{\tau}^{\tau+2} \int_{\Omega} -a_i(t,x)\theta(t,x)\\
	+ & (-d_m+d_i)\int_{\tau}^{\tau+2} \int_{\Omega}a_i(t,x)  \Delta \psi(t,x)
\end{align*}
Thanks to H\"older inequality, we obtain
\begin{align*}
	\int_{0}^{2} \int_{\Omega}a_m(t,x)\theta(t,x)\leq &(\Vert a_{m,0}\Vert_{\mathrm{L}^p(\Omega)}+ \Vert a_{i,0}\Vert_{\mathrm{L}^p(\Omega)}) \Vert \psi(0,\cdot)\Vert_{\mathrm{L}^q(\Omega)}+ \Vert a_i\Vert_{\mathrm{L}^p(\Omega_{0,2})} \Vert \theta\Vert_{\mathrm{L}^q(\Omega_{0,2})}\\
	+&\vert d_m-d_i\vert \Vert a_i\Vert_{\mathrm{L}^p(\Omega_{0,2})} \Vert \Delta \psi\Vert_{\mathrm{L}^q(\Omega_{0,2})}.
\end{align*}  
where $q$ is the H\"older conjugate of $p$. Regularity of backward heat equation (see Appendix, Theorem \ref{regularity of backward heat}) yileds
\[
\int_{0}^{2} \int_{\Omega}a_m(t,x)\theta(t,x)\leq C_q\left( (\Vert a_{m,0}\Vert_{\mathrm{L}^p(\Omega)}+ \Vert a_{i,0}\Vert_{\mathrm{L}^p(\Omega)})+ 2\texttt{K}+2\vert d_m-d_i\vert \texttt{K} \right) \Vert \theta\Vert_{\mathrm{L}^q(\Omega_{0,2})}
\]
where $C_q$ is a positive constant depends only on $q$ and the domain. We also used the fact that 
\[
\Vert a_i(t,x) \Vert_{\mathrm{L}^p(\Omega_{\tau,\tau+1})} \leq \texttt{K},  \text{ \ for\ some\ } i\in\{2,\cdots,m-1\}, \forall \tau \geq 0.
\]
Hence, by duality we estimate $\beta_0$:
\[
\beta_0 =\Vert a_m \Vert_{\mathrm{L}^p(\Omega_{0,1})} \leq C_q\left( (\Vert a_{m,0}\Vert_{\mathrm{L}^p(\Omega)}+ \Vert a_{i,0}\Vert_{\mathrm{L}^p(\Omega)})+ 2\texttt{K}+2\vert d_m-d_i\vert \texttt{K} \right).
\]
Let us denote the following constant 
\[
\textbf{K}:= \max\left\{ K_7^{\frac{1}{1-\alpha}}+\frac{K_6}{1-\alpha},C_q\left( \left(\Vert a_{m,0}\Vert_{\mathrm{L}^p(\Omega)}+ \max\limits_{1\leq i\leq m}\{\Vert a_{i,0}\Vert_{\mathrm{L}^p(\Omega)}\}\right)+ 2\texttt{K}+2\max\limits_{1\leq i\leq m}\{\vert d_m-d_i\vert\} \texttt{K} \right)\right\}
\]
Thanks to \eqref{d_i, L^p a_m relation 2}, we conclude 
	\[
	\Vert a_m \Vert_{\mathrm{L}^{p}(\Omega_{\tau,\tau+1})}\leq \textbf{K} \qquad \forall \tau \geq 0.
	\]
\end{proof}
\vspace{.4 cm}

\begin{Rem}
	From Proposition \ref{a_i to a_m L^9}, we have the following bounds when the dimension $N\leq 3$, 
	\begin{align}\label{3 dim a_m}
		\Vert a_i \Vert_{\mathrm{L}^9(\Omega_{\tau,\tau+1})} \leq  \textbf{K} \quad \mbox{for} \ i=2,\cdots,m-1.
	\end{align}
	Hence, thanks to Proposition \ref{a_i to a_m L^p}, we deduce in dimension $N\leq 3$ that $\Vert a_m\Vert_{\mathrm{L}^9(\Omega_{\tau,\tau+1})}$ is also uniformly bounded.
\end{Rem}

\vspace{.3cm}

The following result derives an uniform $\mathrm{L}^p(\Omega_{\tau,\tau+1})$ estimate with $p>N$, in dimension $N\geq 4$. This is, however, conditional under the closeness assumption \eqref{tri closeness condition imp} being satisfied by the non-zero diffusion coefficients $d_m$ and at least one of the $d_i$ with $i\in\{2,\dots,m-1\}$.

\begin{Prop}\label{L^n estimation of a_m}
	Let $(a_1,\cdots,a_m)$ be the smooth positive solution to the degenerate triangular reaction-diffusion system \eqref{degenerate 1}. Suppose further that the closeness condition\eqref{tri closeness condition imp} is satisfied by the diffusion coefficients $d_m$ and $d_i$ for at least one $i\in\{2,\dots,m-1\}$. Let $p>N$. Then, there exists a positive constant $C_0$, such that
	\[
	\Vert a_m \Vert_{\mathrm{L}^{p}(\Omega_{\tau,\tau+1})} \leq C_0
	\]
	for all $\tau\geq 0$.
\end{Prop}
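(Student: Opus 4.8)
The plan is to exploit the mass-conservation structure of the system rather than to rerun the one-sided argument of Proposition~\ref{a_i to a_m L^p}, which would require an $\mathrm L^p$ bound on some diffusive $a_i$ that is unavailable in dimension $N\ge4$. Fix the index $i\in\{2,\dots,m-1\}$ for which $d_i$ and $d_m$ satisfy the closeness condition \eqref{tri closeness condition imp}, and set $w:=a_i+a_m$. Since the reaction terms in the $a_i$- and $a_m$-equations of \eqref{degenerate 1} are exact opposites, they cancel upon addition, so $w$ solves the pure conservation equation $\partial_t w=\Delta(d_ia_i+d_ma_m)$ in $(0,T)\times\Omega$ with homogeneous Neumann data. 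Writing $d_ia_i+d_ma_m=Dw$ with $D:=(d_ia_i+d_ma_m)/w$, the non-negativity of the solution forces $D\in[\min\{d_i,d_m\},\max\{d_i,d_m\}]$ pointwise, so that $|D-\bar d|\le\tfrac12|d_i-d_m|$ where $\bar d:=\tfrac12(d_i+d_m)$. Moreover the mass conservation \eqref{tri mass conserve 1} gives $\|w(t)\|_{\mathrm L^1(\Omega)}=M_i$ for every $t$. Because $0\le a_m\le w$, it suffices to produce a $\tau$-independent bound $\|w\|_{\mathrm L^p(\Omega_{\tau,\tau+1})}\le C_0$ with $p>N$.

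I would obtain this bound by an improved (Pierre-type) duality argument run at the averaged coefficient $\bar d$. For a non-negative $\theta\in\mathrm L^{p'}(\Omega_{\tau,\tau+2})$ let $\psi$ solve the backward problem $\partial_t\psi+\bar d\Delta\psi=-\theta$, $\psi(\tau+2,\cdot)=0$, with Neumann data, and let $\phi_\tau$ be the cut-off from \eqref{cut-off}. Testing the equation for $\phi_\tau w$ against $\theta$ and integrating by parts in space and time (the initial term drops since $\phi_\tau w$ vanishes at $t=\tau$ and $\psi$ vanishes at $t=\tau+2$) yields the identity
\[
\int_{\Omega_{\tau,\tau+2}}\phi_\tau w\,\theta
=\int_{\Omega_{\tau,\tau+2}}\psi\,\phi_\tau' w
+\int_{\Omega_{\tau,\tau+2}}\phi_\tau\,(D-\bar d)\,w\,\Delta\psi .
\]
The crucial structural feature is that, in contrast to the direct $a_m$-computation, no term pairing a single species with $\theta$ survives: the $\int a_i\theta$ and $\int a_m\theta$ contributions recombine into the left-hand side $\int w\theta$, so only the conserved quantity $w$ (controlled in $\mathrm L^1$ by mass) appears.

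The second integral is the perturbation that the closeness condition is designed to absorb. Using $|D-\bar d|\le\tfrac12|d_i-d_m|$ together with the second-order regularity estimate (Theorem~\ref{estimation 1}) in the form $\|\Delta\psi\|_{\mathrm L^{p'}}\le \tfrac{C_{SOR}(\Omega,N,p')}{\bar d}\|\theta\|_{\mathrm L^{p'}}$, this term is bounded by $\kappa\,\|\phi_\tau w\|_{\mathrm L^p(\Omega_{\tau,\tau+2})}\|\theta\|_{\mathrm L^{p'}}$ with $\kappa=\tfrac{|d_i-d_m|}{d_i+d_m}\,C_{SOR}(\Omega,N,p')<1$ precisely by \eqref{tri closeness condition imp}; the parabolic-regularity part of \eqref{tri closeness condition imp}, through the constant $C^{PRC}_{\frac{d_i+d_m}{2},p'}$, guarantees that the averaged backward problem is well posed with the $\tau$-uniform estimates and controls the auxiliary $\partial_t\psi$-type contributions. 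For the first integral I would use the integrability estimate for $\psi$ to bound $\|\psi\|_{\mathrm L^{q}}$ by $\|\theta\|_{\mathrm L^{p'}}$ for a suitable $q>p'$, together with the interpolation of $\|w\|_{\mathrm L^{q'}(\Omega_{\tau,\tau+1})}$ between the conserved $\mathrm L^1$ mass and $\|w\|_{\mathrm L^p(\Omega_{\tau,\tau+1})}$ with an exponent $\alpha\in(0,1)$, exactly as in Proposition~\ref{a_i to a_m L^p}. Taking the supremum over $\|\theta\|_{\mathrm L^{p'}}\le1$ and absorbing $\kappa\|\phi_\tau w\|_{\mathrm L^p}$ into the left-hand side gives, for $\beta_n:=\|w\|_{\mathrm L^p(\Omega_{n,n+1})}$, a recursion of the form $\beta_{n+1}\le C(1+\beta_n^{\alpha})$ with $\alpha<1$; Young's inequality then bounds the sequence uniformly, $\beta_0$ being finite by the smoothness of the classical solution on the initial cylinder. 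Sliding the cylinder recovers the estimate for every real $\tau\ge0$, and finally $\|a_m\|_{\mathrm L^p(\Omega_{\tau,\tau+1})}\le\|w\|_{\mathrm L^p(\Omega_{\tau,\tau+1})}\le C_0$.

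The main obstacle is engineering the contraction: the whole scheme closes only because the oscillation $|D-\bar d|$ is controlled relative to $\bar d$ by $1/C_{SOR}$, which is exactly the content of the closeness condition; without it the coefficient $\kappa$ need not be $<1$ and the $\|w\|_{\mathrm L^p}$ term cannot be absorbed. A secondary point requiring care is keeping every constant independent of $\tau$, which is why the argument is run on translated unit cylinders with the cut-off $\phi_\tau$ and why the $\mathrm L^1$ interpolation base is taken from the conserved mass in \eqref{tri mass conserve 1} rather than from any single time-slice estimate.
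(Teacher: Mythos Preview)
Your approach is essentially identical to the paper's: both test the conserved quantity $a_i+a_m$ against the backward heat equation with averaged coefficient $\bar d=\tfrac{d_i+d_m}{2}$, absorb the perturbation term via the $C_{SOR}$ part of \eqref{tri closeness condition imp}, and close a sublinear recursion on $\beta_n=\|a_i+a_m\|_{\mathrm L^p(\Omega_{n,n+1})}$. One small correction: the $C^{PRC}$ part of the closeness condition is not needed to make the backward $\bar d$-problem well posed (that holds unconditionally); in the paper it is invoked only to bound $\beta_0$ explicitly via Theorem~\ref{PE}, whereas your appeal to smoothness on the initial cylinder is a valid (if non-explicit) substitute.
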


\begin{proof} Consider the following equation with diffusion coefficient $d=\frac{d_i+d_m}{2}$ where the source term $\theta$ is non-negative and belongs to $ \mathrm L^{p'}(\Omega_{\tau,\tau+2})$, with $p'$ being the H\"older conjugate to $p$.
	\begin{equation}\label{new label psi1}
		\left \{
		\begin{aligned}
			\partial_t \psi(t,x) +d\Delta \psi(t,x)& = -\theta(t,x) \qquad \mbox{in}\  \Omega_{\tau,\tau+2} \nonumber\\
			\nabla_x \psi(t,x).\gamma & =0 \qquad \ \ \ \qquad \mbox{on}\  \partial\Omega_{\tau,\tau+2}  \nonumber\\
			\psi(\tau+2,x) & =0 \qquad \qquad \ \ \  \ \mbox{in} \ \Omega. \nonumber
		\end{aligned}
		\right .
	\end{equation}
	Note that the second order regularity estimation (see Theorem \ref{estimation 1}) yields
	\[
	\Vert \Delta \psi \Vert_{\mathrm{L}^{p'}(\Omega _{\tau,\tau+2})} \leq \frac{2 C_{SOR}( \Omega,N,p')}{d_i+d_m} \Vert \theta \Vert_{\mathrm{L}^{p'}(\Omega_{\tau,\tau+2})}. 
	\]
	
	Observe that $\displaystyle{p' = \frac{p}{p-1}< \frac{N+2}{2}}$. Furthermore, the integrability estimation (see Theorem \ref{estimation 1}) yields
	\[
	\Vert \psi \Vert_{\mathrm{L}^q(\Omega _{\tau,\tau+2})} \leq C_{IE}(\Omega,d,p',s) \Vert \theta \Vert_{\mathrm{L}^{p^{'}}(\Omega_{\tau,\tau+2})} \qquad   \forall q< \frac{(N+2)p^{'}}{N+2-2p'}.
	\]
	Here the constants  $C_{SOR}(\Omega,N,p)$ and  $C_{IE}(\Omega,d,p,s)$ are independent of  $\tau$. Consider the function $\phi_{\tau}:\mathbb{R}\to\mathbb{R}$ as defined in \eqref{cut-off}. Note that the product $\phi_{\tau}(t)a_i(t,x)$ satisfies the differential equation:
	\begin{gather}
		\partial_{t}(\phi_\tau(t)a_i(t,x))-d_{i}\Delta (\phi_\tau(t)a_i(t,x)) =a_i(t,x)\phi^{'}_\tau(t)+\phi_\tau(t)(\partial_t a_i(t,x) -d_i \Delta a_{i}(t,x)) \nonumber\\
		=a_{i}(t,x)\phi^{'}_\tau(t)+\phi_\tau(t)\left(a_m-a_1\prod\limits_{j=2}^{m-1}a_j^{\alpha_j}\right). \nonumber
	\end{gather}
	Observe that, thanks to \eqref{new label psi1}, we have
	\begin{gather*}
		\int_{\tau}^{\tau+2} \int_{\Omega}\phi_\tau(t)a_i(t,x)\theta(t,x) 
		=-\int_{\tau}^{\tau+2} \int_{\Omega}\phi_\tau(t)a(t,x)(\partial_t \psi(t,x) +(d_i+d-d_i)\Delta \psi(t,x)). 
	\end{gather*}	
	Integration by parts yields
	\begin{align*}
		\int_{\tau}^{\tau+2} \int_{\Omega}\phi_\tau(t)a_i(t,x)\theta(t,x) =\int_{\tau}^{\tau+2} \int_{\Omega}& \psi(t,x)(\partial_{t}(\phi_\tau(t)a_{i}(t,x))-d_{i}\Delta (\phi_\tau(t)a_{i}(t,x)) \\ 
		&+ (d_i-d)\int_{\tau}^{\tau+2} \int_{\Omega} \phi_\tau(t)a_{i}(t,x) \Delta \psi(t,x)\\ \vspace{.3cm}
		=\int_{\tau}^{\tau+2} \int_{\Omega} & \psi(t,x)\left(a_{i}(t,x)\phi^{'}_\tau(t)+\phi_\tau(t)\left(a_m-a_1\prod\limits_{j=2}^{m-1}a_j^{\alpha_j}\right)\right)\\
		&+(d_i-d)\int_{\tau}^{\tau+2} \int_{\Omega} \phi_\tau(t)a_{i}(t,x) \Delta \psi(t,x).
	\end{align*}
	Similarly, we obtain
	\begin{align*}
		\int_{\tau}^{\tau+2} \int_{\Omega}\phi_\tau(t)a_m(t,x)\theta(t,x)
		=\int_{\tau}^{\tau+2} \int_{\Omega} & \psi(t,x)\left(a_m(x,t)\phi^{'}_\tau(t)+\phi_\tau(t)\left(a_1\prod\limits_{j=2}^{m-1}a_j^{\alpha_j}-a_m\right)\right)\\
		& +(d_m-d)\int_{\tau}^{\tau+2} \int_{\Omega} \phi_\tau(t)a_m(t,x) \Delta \psi(t,x). 
	\end{align*}
	Adding the above two equalities leads to
	\begin{align*}
		\int_{\tau}^{\tau+2} \int_{\Omega}\phi_\tau(t)(a_i(t,x)+a_m(t,x))\theta(t,x)
		\leq &\int_{\tau}^{\tau+2} \int_{\Omega} \psi(t,x)(a_{i}(t,x)+a_m(t,x))\phi^{'}_\tau(t)\\ 
		+& \frac{\vert d_i-d_m\vert}{2} \int_{\tau}^{\tau+2} \int_{\Omega} \phi_\tau(t)(a_i(t,x)+a_m(t,x)) \vert\Delta \psi(t,x)\vert\\
		=:& \textbf{I}+\textbf{II}.
	\end{align*}
	We start by estimating \textbf{II}. Thanks to the H\"older inequality, we obtain the following estimate:
	\begin{align*}
		\textbf{II} \leq & \frac{\vert d_i-d_m \vert}{2} \Vert \Delta \psi \Vert_{\mathrm{L}^{{p'}} 
			(\Omega_{\tau,\tau+2})}\Vert  \phi_{\tau}(t) (a_i(t,x)+a_m(t,x)) \Vert_{\mathrm{L}^{p}(\Omega_{\tau,\tau+2})}  
		\\
		\leq &\frac{\vert d_i-d_m \vert}{d_{i}+d_{m}}C_{SOR}(\Omega,N,p')\Vert \theta \Vert_{\mathrm{L}^{p^{'}}(\Omega_{\tau,\tau+2})}\Vert \phi_{\tau}(t)( a_i+a_m) \Vert_{\mathrm{L}^{p}(\Omega_{\tau,\tau+2})}. \nonumber
	\end{align*}
	Note that $\displaystyle{p'<\frac{(N+2)p'}{N+2-2p'}}$. If we choose  $q$ such that  $\displaystyle{p'<q< \frac{(N+2)p'}{N+2-2p'}}$, then it's H\"older conjugate  $q'$  satisfies  $1<q'<p$.
	
	Let us consider \textbf{I}. Note that the cut off function satisfies  
	\[
	\phi^{'}_{\tau}(s)|_{[\tau+1,\tau+2]}=0 \mbox{\ and\ }   \phi^{'}_{\tau}(s)|_{[\tau,\tau+1]} \leq M_{\phi}.
	\]
	Thanks to the H\"older inequality again, the following estimate holds:
	\begin{align*}
		\textbf{I} \leq &  M_{\phi} \Vert \psi \Vert_{\mathrm{L}^{q}(\Omega_{\tau,\tau+2})} \Vert  a_i(t,x)+a_m(t,x) \Vert_{\mathrm{L}^{q'}(\Omega_{\tau,\tau+1})}  
		\\
		\leq & M_{\phi} C_{IE}( \Omega,N,p^{'},q) \Vert \theta \Vert_{\mathrm{L}^{p^{'}}(\Omega_{\tau,\tau+2})}\Vert  a_i(t,x)+a_m(t,x) \Vert_{\mathrm{L}^{q'}(\Omega_{\tau,\tau+1})}. \nonumber
	\end{align*}
	Since $1<q'<p$, there exists $\alpha \in (0,1)$ such that
	$$\frac{1}{q'}=\frac{\alpha}{1}+\frac{1-\alpha}{p}.$$
	By interpolation, we have
	\begin{align*}
		\Vert  a_i+a_m \Vert_{\mathrm{L}^{q'}(\Omega_{\tau,\tau+1})} 
		\leq 
		\Vert  a_i+a_m \Vert^{\alpha}_{\mathrm{L}^{1}(\Omega_{\tau,\tau+1})}\Vert  a_i(t,x)+a_m(t,x) \Vert^{1-\alpha}_{\mathrm{L}^{p}(\Omega_{\tau,\tau+1})}.
	\end{align*}
	Thus
	\[
	\textbf{I} \leq M_{\phi} C_{IE}( \Omega,N,p^{'},q) \Vert \theta \Vert_{\mathrm{L}^{q}(\Omega_{\tau,\tau+2})}2M^{\alpha}_1  \Vert  a_i+a_m \Vert^{1-\alpha}_{\mathrm{L}^{p}(\Omega_{\tau,\tau+1})}. 
	\]
	Choose the positive time independent constant $\displaystyle{C^1:=M_{\phi} C_{IE}( \Omega,N,p^{'},q)2 M^{\alpha}_1}$.  Putting our above estimates together we arrive at
	\begin{align*}
		\left\vert\int_{\tau}^{\tau+2} \int_{\Omega}\phi_\tau(t)(a_i(t,x)+a_m(t,x))\theta(t,x)\right\vert \leq & \Vert \theta \Vert_{\mathrm{L}^{p'}(\Omega_{\tau,\tau+2})}\left(C^1 \Vert  a_i+a_m \Vert^{1-\alpha}_{\mathrm{L}^{p}(\Omega_{\tau,\tau+1})}\right)  \\
		\Vert \theta \Vert_{\mathrm{L}^{p'}(\Omega_{\tau,\tau+2})}&\left(  \frac{\vert d_i-d_m \vert}{d_{i}+d_{m}}C_{SOR}(\Omega,N,p^{'})\Vert  \phi_{\tau}(a_i+a_m) \Vert_{\mathrm{L}^{p}(\Omega_{\tau,\tau+2})}\right).
	\end{align*}
	Since $0\leq \theta \in \mathrm \mathrm{L}^{p'}(\Omega_{\tau,\tau+2})$\ was arbitrary, we conclude that
	\begin{align*}
		\Vert \phi_{\tau}(a_i+a_m) \Vert_{\mathrm{L}^{p}(\Omega_{\tau,\tau+2})} \leq & C^1 \Vert  a_i+a_m \Vert^{1-\alpha}_{\mathrm{L}^{p}(\Omega_{\tau,\tau+1})} \\
		+ \frac{\vert d_i-d_m \vert}{d_{i}+d_{m}} & C_{SOR}(\Omega,N,p')\Vert  \phi_{\tau}(a_i+a_m) \Vert_{\mathrm{L}^{p}(\Omega_{\tau,\tau+2})}.
	\end{align*}
	Thanks to the closeness condition \eqref{tri closeness condition imp} on diffusion coefficients, we arrive at the following relation 
	\begin{align*}
		\Vert \phi_{\tau}(a_i+a_m) \Vert_{\mathrm{L}^{p}(\Omega_{\tau,\tau+2})} 
		\leq \frac{C^1}{1-\frac{\vert d_i-d_m \vert}{d_{i}+d_{m}}C_{SOR}(\Omega,N,p')} \Vert  a_i+a_m \Vert^{1-\alpha}_{\mathrm{L}^{p}(\Omega_{\tau,\tau+1})}. 
	\end{align*}
	Let's choose the constant $\displaystyle{\texttt{C}=\frac{C^1}{1-\frac{\vert d_i-d_m \vert}{d_{i}+d_{m}}C_{SOR}(\Omega,N,p^{'})}}$. Let's further define a sequence 
	\[
	\displaystyle{{\beta}_n=\Vert  a_i+a_m \Vert_{\mathrm{L}^{p}(\Omega_{n,n+1})}} \qquad \forall n\in \mathbb{N}.
	\]  
	The sequence satisfies 
	\[ 
	{\beta}_{n+1} \leq \texttt{C} {\beta}_n^{1-\alpha}. 
	\]
	Consider the set  $\Lambda=\{n\in \mathbb{N} \ \text{such that}\ {\beta}_n \leq {\beta}_{n+1}\}$. This implies
	\[
	{\beta}_n \leq \ {\texttt{C}}^{\frac{1}{\alpha}} \qquad \forall n\in \Lambda,
	\]
	which further yields
	\[
	{\beta}_{n+1} \leq \max \{{\beta}_n,{\texttt{C}}^{\frac{1+\alpha}{\alpha}} \} \implies
	{\beta}_{n} \leq \max \{{\beta}_0,{\texttt{C}}^{\frac{1}{\alpha}},{\texttt{C}}^{\frac{1+\alpha}{\alpha}}\}, \quad \forall n\in\mathbb{N}.
	\]
	We now move on to estimate ${\beta}_0$. The sum $(a_i+a_m)$ satisfy the following differential equation
	\begin{equation*}
		\left \{
		\begin{aligned}
			\partial_{t}(a_i+a_m)- \Delta(M(a_i+a_m))= & 0  \qquad \qquad \qquad \qquad \quad \ \mbox{in}\ \Omega_{T} \nonumber \\
			\nabla_{x}(a_i+a_m).\gamma=& 0  \qquad \qquad \qquad \qquad \  \mbox{on} \ (0,T)\times \partial\Omega \nonumber \\
			(a_i+a_m)(0,x)=& a_{i,0}+a_{m,0}\in {\mathrm{L}^p(\Omega)}  \qquad \mbox{in}\ \Omega, \nonumber 
		\end{aligned}
		\right .
	\end{equation*}
	where  $M$  is the coefficient function  $\displaystyle{\frac{d_i a_i+d_m a_m}{a_i+a_m}}$, that satisfies the following bound:
	\[
	0<\min\{d_i,d_m\}\leq M \leq \max\{d_i,d_m\}.
	\]
	Thanks to the closeness condition \eqref{tri closeness condition imp} satisfied by $d_i$ and $d_m$, the parabolic integrability estimation  (see Theorem \ref{PE}) yields
	\begin{align*}
		{\beta}_0=\Vert  a_i(t,x)+ a_m(t,x) \Vert_{\mathrm{L}^{p}(\Omega_{0,1})} 
		\leq  \left (1+\max\{d_i,d_m\} \frac{C^{PRC}_{{\frac{d_i+d_m}{2}},p^{'}}\vert{d_i-d_m}\vert}{1-C^{PRC}_{{\frac{d_i+d_m}{2}},p^{'}}\vert{d_i-d_m}\vert} \right) \ \Vert  a_{i,0}+a_{m,0} \Vert_{\mathrm{L}^{p}(\Omega)}. 
	\end{align*}
	Thus
	\[
	{\beta}_{n} \leq \max \Bigg \{ \bigg( 1+\max\{d_i,d_m\} \frac{C^{PRC}_{{\frac{d_i+d_m}{2}},p^{'}}\vert {d_i-d_m}\vert}{1-C^{PRC}_{{\frac{d_i+d_m}{2}},p^{'}}\vert {d_i-d_m}\vert}\bigg) \ \Vert  a_{i,0}+a_{m,0} \Vert_{\mathrm{L}^{p}(\Omega)}  ,{\texttt{C}}^{\frac{1}{\alpha}},{\texttt{C}}^{\frac{1+\alpha}{\alpha}}\Bigg \}=: C_0. 
	\]
	Hence  for  $p>N$, there exist the above constant $C_0$, independent of $\tau$, such that  
	\[
	\Vert a_m \Vert_{\mathrm{L}^{p}(\Omega_{\tau,\tau+1})} \leq C_0 \qquad \forall \tau\geq0.
	\]
\end{proof}
\vspace{.3cm}

Our next objective is to obtain an estimate on the supremum norm of the species concentrations. We also emphasize here that, so far, we have been deriving the integrability estimate for the diffusive species' concentrations.

Our next result derives a supremum norm estimate on the species concentrations $a_2,\cdots,a_{m-1}$ in the parabolic cylinder $\Omega_{0,1}$ under the assumption that an $\mathrm{L}^p$ norm estimate is available for $a_m$ in $\Omega_{0,1}$ for some $p>N$.

\begin{Prop}\label{L^infty estimation of a_i [0,1]}
	Let $(a_1,\cdots,a_m)$ be the positive smooth solution to the degenerate triangular reaction-diffusion system \eqref{degenerate 1}. Suppose there exists a constant $C_0>0$ such that   \[
	\displaystyle{\Vert a_m \Vert_{\mathrm{L}^p(\Omega_{0,1})}\leq C_0}
	\]
	for some exponent $p>N$. Then, there exists a positive constant $K_{\infty,0,1}$ such that
	\[
	\Vert a_i \Vert_{\mathrm{L}^{\infty}(\Omega_{0,1})} \leq K_{\infty,0,1}, \qquad \forall i=2,\cdots,m-1.
	\]
\end{Prop}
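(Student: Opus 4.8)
The plan is to exploit the non-negativity of the solution exactly as in Proposition \ref{a_i to a_m L^9 [0,1]}, but to push the heat-kernel estimate all the way to $\mathrm{L}^{\infty}$ rather than stopping at $\mathrm{L}^{9}$. Fix $i\in\{2,\dots,m-1\}$. Since every component is non-negative, the reaction term obeys $a_m-a_1\prod_{j=2}^{m-1}a_j^{\alpha_j}\leq a_m$, so $a_i$ is a non-negative subsolution of $\partial_t a_i-d_i\Delta a_i\leq a_m$ on $\Omega_{0,1}$ with homogeneous Neumann boundary condition and initial datum $a_{i,0}$. Writing $G_{d_i}$ for the Neumann Green's function of $\partial_t-d_i\Delta$, the variation-of-constants formula together with the non-negativity of $G_{d_i}$ yields, for every $(t,x)\in\Omega_{0,1}$,
\[
a_i(t,x)\leq \int_{\Omega}G_{d_i}(t,0,x,y)\,a_{i,0}(y)\,\mathrm{d}y+\int_{0}^{t}\int_{\Omega}G_{d_i}(t,s,x,y)\,a_m(s,y)\,\mathrm{d}y\,\mathrm{d}s .
\]

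For the first term I would use $\int_{\Omega}G_{d_i}(t,0,x,y)\,\mathrm{d}y\leq 1$ together with $a_{i,0}\in\mathrm{L}^{\infty}(\Omega)$ to bound it by $\Vert a_{i,0}\Vert_{\mathrm{L}^{\infty}(\Omega)}$, uniformly in $(t,x)$. For the second term, the Gaussian upper bound \eqref{Heat kernel estimate}--\eqref{Heat kernel bound function} gives $G_{d_i}(t,s,x,y)\leq g_{d_i}(t-s,x-y)$; after extending $a_m$ by zero outside $\Omega_{0,1}$, the inner double integral is dominated by a full space-time convolution of $g_{d_i}$ against $a_m$. The key difference with Proposition \ref{a_i to a_m L^9 [0,1]} is that, instead of invoking Young's convolution inequality with a finite target exponent, I apply H\"older's inequality pointwise in $(t,x)$ with the conjugate pair $(p,p')$:
\[
\int_{0}^{t}\int_{\Omega}g_{d_i}(t-s,x-y)\,a_m(s,y)\,\mathrm{d}y\,\mathrm{d}s\leq \Vert g_{d_i}\Vert_{\mathrm{L}^{p'}((-2,2)\times\mathbb{R}^N)}\,\Vert a_m\Vert_{\mathrm{L}^{p}(\Omega_{0,1})}\leq \tilde{\kappa}_{1,d_i}\,C_0,
\]
where I use the translation invariance of $g_{d_i}$, the inclusion $(0,t)\subset(-2,2)$ valid for $t<1$, the space-time integral estimate \eqref{Heat kernel integral estimate}, and the hypothesis $\Vert a_m\Vert_{\mathrm{L}^{p}(\Omega_{0,1})}\leq C_0$. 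This bound is \emph{uniform} in $(t,x)\in\Omega_{0,1}$, which is precisely what converts the integrability hypothesis on $a_m$ into an $\mathrm{L}^{\infty}$ bound on $a_i$.

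The step I expect to be the crux is guaranteeing that the space-time kernel estimate \eqref{Heat kernel integral estimate} is applicable, i.e.\ that the exponent $p'$ lies in the admissible range $[1,1+\tfrac{2}{N})$; this is exactly where the hypothesis $p>N$ is used. Indeed $p>N$ forces $p'=\tfrac{p}{p-1}<\tfrac{N}{N-1}=1+\tfrac{1}{N-1}\leq 1+\tfrac{2}{N}$ for $N\geq 2$ (and in the remaining low-dimensional case the available integrability of $a_m$ is even higher, so $p'$ can be taken well below $1+\tfrac{2}{N}$), whence $g_{d_i}\in\mathrm{L}^{p'}((-2,2)\times\mathbb{R}^N)$ and $\tilde{\kappa}_{1,d_i}<\infty$. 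Combining the two estimates gives
\[
\Vert a_i\Vert_{\mathrm{L}^{\infty}(\Omega_{0,1})}\leq \Vert a_{i,0}\Vert_{\mathrm{L}^{\infty}(\Omega)}+\tilde{\kappa}_{1,d_i}\,C_0,
\]
and taking the maximum of the right-hand side over $i=2,\dots,m-1$ furnishes the desired constant $K_{\infty,0,1}$.
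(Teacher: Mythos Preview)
Your proof is correct and follows essentially the same route as the paper: subsolution inequality, Green's function representation, $\int_{\Omega}G_{d_i}\,\mathrm{d}y\le 1$ for the initial-data term, Gaussian upper bound $G_{d_i}\le g_{d_i}$, and then a pointwise H\"older inequality with exponents $(p,p')$ together with the kernel estimate \eqref{Heat kernel integral estimate}. The paper phrases the admissibility check as $p>\tfrac{N+2}{2}$ (equivalent to $p'<1+\tfrac{2}{N}$), which is what your chain $p'<\tfrac{N}{N-1}\le 1+\tfrac{2}{N}$ delivers for $N\ge 2$; for $N=1$ one simply needs $p>\tfrac{3}{2}$, which is available in every application of this proposition in the paper.
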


\begin{proof} The species $a_i$ is a positive subsolution of the following equation in the time interval $(0,2)$, for $2\leq m-1$:
	\begin{equation}\label{eq:tilde ai}
		\left \{
		\begin{aligned}
			\partial_{t}a_i(t,x)-d_i\Delta a_i(t,x)
			\leq &a_m  \qquad \qquad \qquad  \mbox{ in }\Omega_{0,2}
			\\
			\nabla_x a_i(t,x).\gamma =&  0 \qquad   \qquad \quad   \ \  \mbox{ on } \partial\Omega_{0,2} 
			\\
			a_i(0,x) =& a_{i,0} \qquad   \quad  \qquad \ \ \mbox{ in } \Omega.
		\end{aligned}
		\right.
	\end{equation}
	The solution corresponding to the equation \eqref{eq:tilde ai} can be expressed as:
	\[
	0\leq a_i\leq \int_{\Omega}G_{d_i}(t,0,x,y)a_{i,0}(y) \rm{d}y+ \int_{0}^{t}\int_{\Omega} G_{d_i}(t,s,x,y)a_m(s,y)\rm{d}y\rm{d}s \quad (t,x)\in\Omega_{0,2}.
	\]
	where $G_{d_i}$ denotes the Green's function associated with the operator $\partial_t-d_i\Delta$ with Neumann boundary condition. We use the fact that $\displaystyle{a_{i,0}(y)\in\mathrm{L}^{\infty}(\Omega)}$ and $\displaystyle{\int_{\Omega}G_{d_i}(t,0,x,y) \rm{d}y \leq 1}$ for all $t\in(0,2)$. It yields
	\[
	\left\vert \int_{\Omega}G_{d_i}(t,0,x,y)a_{i,0}(y) \rm{d}y \right\vert \leq \Vert a_{i,0}\Vert_{\mathrm{L}^{\infty}(\Omega)}.
	\]
Green function estimate as in \eqref{Heat kernel estimate} and \eqref{Heat kernel bound function} yields
\[
0\leq G_{d_i}(t_1,s,x,y) \leq  g_{d_i}(t_1-s,x-y) \qquad 0\leq s<t_1
\]
where the function $g_{d_i}$ as defined in  \eqref{Heat kernel bound function}. Furthermore \eqref{Heat kernel integral estimate} yields the following integral estimate:
\[
\left\Vert g_{d_i}\right\Vert_{\mathrm{L}^{z}((-2,2)\times\mathbb{R}^N)} \leq \tilde{\kappa}_{1,d_i} \qquad \forall z\in\left[1,1+\frac{2}{N}\right)
\]
for some constant positive $\tilde{\kappa}_{1,d_i}$ depending on $z$. We in particular choose $\displaystyle{\frac{1}{z}=1-\frac{1}{p}}$ which is admissible because $\displaystyle{p>\frac{N+2}{2}}$. Hence, for $t\leq 1$:
	\begin{align*}
	\left \vert \int_{0}^{t}\int_{\Omega} G_{d_i}(t,s,x,y)a_m(s,y)\rm{d}y\rm{d}s\right\vert \leq & \int_{0}^{t}\int_{\Omega} g_{d_i}(t-s,x-y)a_m(s,y)\rm{d}y\rm{d}s\\
	\leq & \Big\Vert g_{d_i}(t-\cdot,x-\cdot)\Big\Vert_{\mathrm{L}^z((0,t)\times\Omega)} \Big\Vert a_m\Big\Vert_{\mathrm{L}^p((0,t)\times\Omega)}\\
	\leq & \Big\Vert g_{d_i}(\cdot,\cdot)\Big\Vert_{\mathrm{L}^z((0,t)\times\mathbb{R}^N)} \Big\Vert a_m\Big\Vert_{\mathrm{L}^p((0,1)\times\Omega)}\leq \tilde{\kappa}_{1,d_i} C_0.
	\end{align*}
	It further yields the pointwise bound for the species $a$ in the unit parabolic cylinder in initial time. More precisely, we have
	\[
		\Vert a_i\Vert_{\mathrm{L}^{\infty}(\Omega_{0,1})}\leq  \max\limits_{1\leq i\leq m}\left\{\Vert a_{i,0}\Vert_{\mathrm{L}^{\infty}(\Omega)}\right\} +C_0\max\limits_{1\leq i\leq m}\{\tilde{\kappa}_{1,d_i}\}=:K_{\infty,0,1}
	\]
\end{proof}
\vspace{.2cm}

The following result proves the analogue of Proposition \ref{a_i to a_m L^p} in the parabolic cylinder $\Omega_{\tau,\tau+1}$.

\vspace{.3cm}

\begin{Thm} \label{L^infty estimation of a_i}
	Let $(a_1,\cdots,a_m)$ be the positive smooth solution to the degenerate triangular reaction-diffusion system \eqref{degenerate 1}. Suppose there exists a positive constant $C_0$ (independent of $\tau$) such that
	\[
	\Vert a_m \Vert_{\mathrm{L}^p(\Omega_{\tau,\tau+1})}\leq C_0
	\]
	for all  $\tau\geq 0$ and for some exponent $p>N$.  Then, there exists a positive constant $K_{\infty}$, independent of $\tau$, such that
	\[
	\Vert a_i \Vert_{\mathrm{L}^{\infty}(\Omega)} \leq K_{\infty} \qquad \forall \tau \geq 1, \ i=2,\cdots,m-1.
	\]
\end{Thm}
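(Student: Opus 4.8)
The plan is to localize in time: for each $t\geq 1$ I would represent $a_i$ through the Neumann heat semigroup over the \emph{unit} window $[t-1,t]$, so that parabolic smoothing across a time gap of length one upgrades the mass ($\mathrm L^1$) control on $a_i$ into an $\mathrm L^\infty$ bound, while the source contribution is absorbed by the hypothesized $\mathrm L^p$ bound on $a_m$. First I would note that for $2\leq i\leq m-1$ the concentration $a_i$ is a non-negative subsolution of $\partial_t a_i-d_i\Delta a_i\leq a_m$ (discarding the non-positive reaction term $-a_1\prod_j a_j^{\alpha_j}$) with homogeneous Neumann data. Writing $G_{d_i}$ for the Green's function of $\partial_t-d_i\Delta$ under Neumann conditions, the comparison principle and Duhamel's formula started at time $t-1$ yield
\[
0\leq a_i(t,x)\leq \int_{\Omega}G_{d_i}(t,t-1,x,y)\,a_i(t-1,y)\,\mathrm{d}y+\int_{t-1}^{t}\int_{\Omega}G_{d_i}(t,s,x,y)\,a_m(s,y)\,\mathrm{d}y\,\mathrm{d}s .
\]

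The crucial point, and the only genuine departure from Proposition \ref{L^infty estimation of a_i [0,1]}, is the first term. At large times one can no longer invoke $\Vert a_{i,0}\Vert_{\mathrm{L}^\infty(\Omega)}$, since the datum $a_i(t-1,\cdot)$ is controlled only in $\mathrm L^1$. Instead I would use the Gaussian bound \eqref{Heat kernel estimate}, which at the fixed time gap $t-(t-1)=1$ gives $G_{d_i}(t,t-1,x,y)\leq \tilde{\kappa}_{d_i}/(4\pi)^{\frac N2}$, together with the uniform-in-time mass control $\int_{\Omega}a_i(t-1)\leq \Tilde{M}_1$ from \eqref{mass bound} (equivalently the conservation law \eqref{tri mass conserve 1}). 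This bounds the first term by the time-independent quantity $\tilde{\kappa}_{d_i}\,\Tilde{M}_1/(4\pi)^{\frac N2}$; it is precisely the heat-kernel smoothing over a unit lag that renders the constant independent of $\tau$.

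For the second term I would reproduce the computation of Proposition \ref{L^infty estimation of a_i [0,1]}: estimate $G_{d_i}\leq g_{d_i}$ via \eqref{Heat kernel estimate}--\eqref{Heat kernel bound function}, then apply H\"older in space-time over $\Omega_{t-1,t}$ with the kernel in $\mathrm L^{z}$ and $a_m$ in $\mathrm L^{p}$, choosing $\tfrac1z=1-\tfrac1p$. The exponent $z=p'$ is admissible because $p>N$ forces $p>\tfrac{N+2}{2}$, hence $p'<1+\tfrac2N$, so \eqref{Heat kernel integral estimate} gives $\Vert g_{d_i}\Vert_{\mathrm{L}^{p'}((0,1)\times\mathbb R^N)}\leq \tilde{\kappa}_{1,d_i}$; combined with the hypothesis $\Vert a_m\Vert_{\mathrm{L}^p(\Omega_{t-1,t})}\leq C_0$ this controls the second term by $\tilde{\kappa}_{1,d_i}C_0$. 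Setting
\[
K_\infty:=\max_{2\leq i\leq m}\Big\{\tfrac{\tilde{\kappa}_{d_i}}{(4\pi)^{N/2}}\Big\}\,\Tilde{M}_1+\max_{2\leq i\leq m}\{\tilde{\kappa}_{1,d_i}\}\,C_0
\]
then yields $\Vert a_i(t,\cdot)\Vert_{\mathrm{L}^\infty(\Omega)}\leq K_\infty$ for every $t\geq 1$ and every $i=2,\dots,m-1$, uniformly in time, which is the assertion (and in particular gives the stated bound on each $\Omega_{\tau,\tau+1}$ with $\tau\geq 1$).

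The main obstacle is thus conceptual rather than computational: one must trade the unavailable $\mathrm L^\infty$ control of the initial slice for the interplay between the conserved $\mathrm L^1$ mass and the regularizing effect of $G_{d_i}$ over a window of fixed length one. Once this substitution is made, the remaining estimates are exactly the convolution bounds already used for the $\Omega_{0,1}$ case, and the price paid is only the harmless additive constant coming from the mass term.
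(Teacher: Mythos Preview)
Your argument is correct and is in fact more streamlined than the paper's. The paper kills the initial data by multiplying with the cutoff $\phi_\tau$, but this manufactures the extra source term $\phi_\tau' a_i$, which contains $a_i$ itself; to close, the paper first proves a uniform bound on $\Vert a_i\Vert_{\mathrm L^p(\Omega_{n,n+1})}$ via a Young--convolution--plus--interpolation recursion $\beta_{n+1}\le C_1\beta_n^{\alpha}+C_2$, and only afterwards upgrades to $\mathrm L^\infty$ using the integrability estimate of Theorem~\ref{estimation 1}. Your restart of Duhamel at time $t-1$ sidesteps this bootstrap entirely: the initial slice $a_i(t-1,\cdot)$ is controlled only in $\mathrm L^1$, but the Gaussian bound at unit time lag makes $G_{d_i}(t,t-1,\cdot,\cdot)$ uniformly bounded pointwise, so the $\mathrm L^1\to\mathrm L^\infty$ smoothing handles it in a single line, while the source term is treated exactly as in Proposition~\ref{L^infty estimation of a_i [0,1]}. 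The paper's detour does yield, as a by-product, uniform $\mathrm L^p(\Omega_{\tau,\tau+1})$ bounds on $a_i$ that you never need; conversely, your route is shorter and conceptually cleaner. One small caveat, shared with the paper: the admissibility $p'<1+\tfrac{2}{N}$ follows from $p>N$ only when $N\ge 2$; for $N=1$ one needs $p>\tfrac{3}{2}$, which is however satisfied in every concrete instance used in the paper.
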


\begin{proof}  Consider the function $\phi_{\tau}:\mathbb{R}\to\mathbb{R}$ as in \eqref{cut-off}. The product $\phi_\tau(t)a_i(t,x)$ satisfies for $2\leq i\leq m-1$:
	
	\begin{equation}\label{equation:L^infty}
		\left \{
		\begin{aligned}
			\partial_{t}(\phi_\tau(t)&a_i(t,x))-d_{i}\Delta (\phi_\tau(t)a_i(t,x))\\
			=& a_{i}(x,t)\phi^{'}_\tau(t)+\phi_\tau(t)\left(a_m-a_1\prod\limits_{j=2}^{m-1}a_j^{\alpha_j}\right) \qquad  \ \ \mbox{ in } \Omega_{\tau,\tau+2}\\
			\nabla_x &(\phi_\tau(t)a_i(t,x)).\gamma=  0 \qquad   \qquad \ \  \qquad \qquad \quad \qquad \mbox{ on } \partial\Omega_{\tau,\tau+2} \\
			&\phi_{\tau}a_i(\tau,x)= 0 \qquad \qquad \qquad \qquad \qquad \qquad \qquad \ \ \mbox{ in } \Omega.
		\end{aligned}
		\right.
	\end{equation}
	Change of variable $t\rightarrow t_1+\tau$ yields
	
	\begin{equation*}
		\left \{
		\begin{aligned}
			\partial_{t_1}(\phi_\tau(t_1+\tau)&a_i(t_1+\tau,x))-d_{i}\Delta (\phi_\tau(t_1+\tau)a_i(t_1+\tau,x))\\ &= a_{i}(t_1+\tau,x)\phi^{'}_\tau(t_1+\tau)+\phi_\tau(t_1+\tau)\left(a_m-a_1\prod\limits_{j=2}^{m-1}a_j^{\alpha_j}\right) \quad \  \ \mbox{\ in \ }\Omega_{0,2} \\
			& \nabla_x (\phi_\tau(t_1+\tau)a_i(t_1+\tau,x)).\gamma=  0 \qquad   \qquad \qquad \qquad \qquad \quad  \qquad \ \mbox{ on } \partial\Omega_{0,2} \\
			& \phi_{\tau}a_i(0,x)= 0 \qquad \qquad \qquad \qquad \qquad \ \ \ \qquad  \qquad \qquad \quad \qquad \ \quad \  \mbox{ in } \Omega. 
		\end{aligned}
		\right .
	\end{equation*}
	Let $G_{d_i}(t_1,s,x,y)$ be the Green's function corresponding to the heat operator $\displaystyle{\partial_{t_1}-d_i \Delta}$.  Then for all $t_1\in[0,2]$, we can represent the solution as
	\begin{align*}
		\phi_\tau(t_1+\tau)&a_i(t_1+\tau,x)= 
		\\
		&\int_{0}^{t_1}\int_{\Omega}G_{d_i}(t_1,s,x,y)\left(a_{i}(y,s+\tau)\phi^{'}_\tau(s+\tau)+\phi_\tau(s+\tau)\left(a_m-a_1\prod\limits_{j=2}^{m-1}a_j^{\alpha_j}\right)(s+\tau,t)\right).
	\end{align*}
	Thanks to the non-negativity of the Green's function and the species concentrations, an immediate pointwise bound follows.
	\begin{align*}
		\phi_\tau(t_1+\tau) a_i(t_1+\tau,x)
		\int_{0}^{t_1}\int_{\Omega}G_{d_i}(t_1,s,x,y)\Big(a_{i}(s+\tau,x)\phi^{'}_\tau(s+\tau)+\phi_\tau(s+\tau)a_m(s+\tau,x)\Big).
	\end{align*}
	Green function estimate as in \eqref{Heat kernel estimate} and \eqref{Heat kernel bound function} yields
	\[
	0\leq G_{d_i}(t_1,s,x,y) \leq  g_{d_i}(t_1-s,x-y) \qquad 0\leq s<t_1
	\]
	where the function $g_{d_i}$ as defined in  \eqref{Heat kernel bound function}.
	Substituting the above pointwise bound on the Green's function, the earlier inequality reads 
	\begin{align*}
		\phi_\tau(t_1+\tau)a_i(t_1+\tau,x)
		\leq  \int_{0}^{t_1}\int_{\Omega}g_{d_i}(t_1-s,x,y)\Big(a_{i}(s+\tau,x)\phi^{'}_\tau(s+\tau)+\phi_\tau(s+\tau)a_m(s+\tau,x)\Big).
	\end{align*}
  Substituting this pointwise bound for the Green's function in the earlier inequality leads to
  \[
  \phi_\tau(t_1+\tau)a_i(t_1+\tau,x)\leq \int_{0}^{t_1}\int_{\Omega}g_{d_i}(t_1-s,x,y)\Big(a_{i}(s+\tau,x)\phi^{'}_\tau(s+\tau)+\phi_\tau(s+\tau)a_m(s+\tau,x)\Big).
  \]
  Let us consider the following two functions
  \begin{equation*}
  	\Tilde{g}_{d_i}(s,x):=
  	\left \{
  	\begin{aligned} 
  		&g_{d_i}(s,x)  \qquad (s,x)\in (-1,1)\times \mathbb{R}^N,\\
  		& 0  \qquad \qquad \qquad \mbox{otherwise}
  	\end{aligned}
  	\right .
  \end{equation*}
  and
  \begin{equation*}
  	F(s,x):=
  	\left \{
  	\begin{aligned} 
  		a_{i}(s+\tau,x)\phi^{'}_\tau(s+\tau)&+\phi_\tau(s+\tau)a_m(s+\tau,x)  \qquad (s,x)\in (0,1)\times \Omega\\
  		& 0  \qquad \qquad \qquad \qquad \qquad \qquad \ \ \ \mbox{otherwise}.
  	\end{aligned}
  	\right .
  \end{equation*} 
  Hence 
  \[
  \int_{0}^{t}\int_{\Omega} G_{d_i}(t,s,x,y)a_i(s,y)\rm{d}y\rm{d}s \leq \int_{\mathbb{R}}\int_{\mathbb{R}^N} \Tilde{g}_{d_i}(t-s,x-y) F(s,y) \rm{d}y \rm{d}s.
  \]
  We use Young's convolution inequality. As $p>N$, there exists a $1\leq q <p$ such that $\displaystyle{1+\frac{1}{p}=\frac{1}{1+\frac{1}{N}}+\frac{1}{q}}$. It yields
  \begin{align*}
  	\left\Vert \int_{0}^{t}\int_{\Omega} G_{d_i}(t,s,x,y)a_i(s,y)\rm{d}y\rm{d}s \right\Vert_ {\mathrm{L}^p(\Omega_{0,1})} &\leq \left \Vert \int_{\mathbb{R}}\int_{\mathbb{R}^N} \Tilde{g}_{d_i}(t-s,x-y) F(s,y) \rm{d}y \rm{d}s \right\Vert_ {\mathrm{L}^p(\mathbb{R}\times\mathbb{R}^N)}\\ & \leq \Big\Vert \Tilde{g}_{d_i} \Big\Vert_{\mathrm{L}^{1+\frac{1}{N}}(\mathbb{R}\times\mathbb{R}^N)}
  	\Big\Vert F \Big\Vert_{\mathrm{L}^q(\mathbb{R}\times\mathbb{R}^N)}
  \end{align*}
   Hence
  \[
  \left\Vert \int_{0}^{t}\int_{\Omega} G_{d_i}(t,s,x,y)a_m(s,y)\rm{d}y\rm{d}s \right\Vert_{\mathrm{L}^p(\Omega_{0,1})} \leq \Big\Vert g_{d_i} \Big\Vert_{\mathrm{L}^{1+\frac{1}{N}}((-1,1)\times\mathbb{R}^N)}
  \Big\Vert a_{i}(\cdot+\tau,\cdot)\phi^{'}_\tau(\cdot+\tau)+\phi_\tau(\cdot+\tau)a_m(\cdot+\tau,\cdot) \Big\Vert_{\mathrm{L}^q(\Omega_{0,1})}. 
  \]	
Furthermore \eqref{Heat kernel integral estimate} yields the following integral estimate:
\[
\left\Vert g_{d_i}\right\Vert_{\mathrm{L}^{z}((-2,2)\times\mathbb{R}^N)} \leq \tilde{\kappa}_{1,d_i} \qquad \forall z\in\left[1,1+\frac{2}{N}\right)
\]
for some constant positive $\tilde{\kappa}_{1,d_i}$ depending on $z$. We in particular choose $\displaystyle{z=1+\frac{1}{N}}$. Change of variable $t_1+\tau=t$ yields
	
	\begin{align*}
		\Vert \phi_\tau(t) a_i(t,x) \Vert_{\mathrm{L}^p(\Omega_{\tau,\tau+2})} 
		\leq  \tilde{\kappa}_{1,d_i}\Big( \Vert a_{i}(t,x)\phi^{'}_\tau(t)\Vert_{\mathrm{L}^q(\Omega_{\tau,\tau+2})}+\Vert\phi_\tau(t)a_m(t,x)\Vert_{\mathrm{L}^q(\Omega_{\tau,\tau+2})}\Big).
	\end{align*}
	As $q<p$, there exists $\alpha\in[0,1)$, such that $\displaystyle{\frac{1}{q}=\frac{1-\alpha}{1}+\frac{\alpha}{p}}$. Interpolation thus yields
	
	\begin{align*}
		\Vert \phi_\tau(t)a_i(t,x) \Vert_{\mathrm{L}^p(\Omega_{\tau,\tau+2})} \leq \tilde{\kappa}_{1,d_i}&\left(  \Vert a_{i}(t,x)\phi^{'}_\tau(t)\Vert^{\alpha}_{\mathrm{L}^p(\Omega_{\tau,\tau+2})}\Vert a_{i}(t,x)\phi^{'}_\tau(t)\Vert^{1-\alpha}_{\mathrm{L}^1(\Omega_{\tau,\tau+2})} \right)\\
		& + \tilde{\kappa}_{1,d_i}\left(\Vert a_m(t,x)\Vert^{\alpha}_{\mathrm{L}^p(\Omega_{\tau,\tau+2})}\Vert a_m(t,x)\Vert^{1-\alpha}_{\mathrm{L}^1(\Omega_{\tau,\tau+2})}\right).
	\end{align*}
	
	Using the $\mathrm{L}^p(\Omega_{\tau,\tau+1})$ bound on $a_m$, we get 
	\begin{align*}
		\Vert \phi_\tau(t)a_i \Vert_{\mathrm{L}^p(\Omega_{\tau,\tau+2})} 
		\leq  \tilde{\kappa}_{1,d_i}(M_{\phi}M_1)^{1-\alpha}\Vert a_{i}\phi^{'}_\tau(t)\Vert^{1-\alpha}_{\mathrm{L}^p(\Omega_{\tau,\tau+2})}+C_1M_1^{1-\alpha}C_{0}^{\alpha},
	\end{align*}
	where we also use the fact that $\Vert a_i\Vert_{\mathrm{L}^1(\Omega)}\leq M_1$, for all $1\leq i\leq m$.
	\newline
	Consider the following positive constants  
	\[
	C_1:=1+\max\limits_{2\leq i\leq m}\{\tilde{\kappa}_{1,d_i}\}(M_{\phi}M_1)^{1-\alpha} \mbox{\ and\ } C_2:=\max\limits_{2\leq i\leq m}\{\tilde{\kappa}_{1,d_i}\}M_1^{1-\alpha}C_{0}^{\alpha}.
	\]
	Thanks to the property of $\phi_{\tau}$, we deduce
	\[
	\Vert a_i(t,x) \Vert_{\mathrm{L}^p(\Omega_{\tau+1,\tau+2})} \leq C_1 \Vert a_i(t,x) \Vert^{\alpha}_{\mathrm{L}^p(\Omega_{\tau,\tau+1})}+C_2 \qquad \alpha \in [0,1).
	\]
	
	Define the sequence ${\beta}_n=\{\Vert a_i \Vert_{L^p(\Omega_{n,n+1})}\}$ for $n\in\mathbb{N}$. Then $\{{\beta}_n\}$ satisfies the following relation
	\begin{align}\label{d_i, L^p a_i relation 1}
		{\beta}_{n+1} \leq & C_1 {\beta}_n^{\alpha}+C_2.
	\end{align}
	
	If ${\beta}_{n+1}>{\beta}_{n}$, Young's inequality yields
	\[
	C_1{\beta}_{n+1}^{\alpha}\leq \alpha {\beta}_{n+1}+(1-\alpha)C_1^{\frac{1}{1-\alpha}}.
	\]
	Substituting this in \eqref{d_i, L^p a_i relation 1}, we arrive at the following estimate:
	\[
	{\beta}_{n+1} \leq C_1^{\frac{1}{1-\alpha}}+\frac{C_2}{1-\alpha}.
	\]
	Hence,
	\begin{align}\label{d_i, L^p a_i relation 2}
		{\beta}_n \leq \max\{ C_1^{\frac{1}{1-\alpha}}+\frac{C_2}{1-\alpha},\beta_0\}=C_3.
	\end{align}
	
	Note that ${\beta}_0$ can be estimated thanks to \ref{L^infty estimation of a_i [0,1]}: 
	\[
	\displaystyle{{\beta}_0\leq K_{\infty,0,1}\vert \Omega\vert^{\frac{1}{p}}}
	\]

	Let $\psi(t,x)$ be the solution to
	\begin{equation*}
		\left \{
		\begin{aligned}
			\partial_{t}\psi(t,x)-d_{i}\Delta \psi(t,x)=& a_{i}(t,x)\phi'_\tau(t)+\phi_\tau(t)a_m \qquad \mbox{ in } \Omega_{\tau,\tau+2} \nonumber \\
			\nabla_x \psi(t,x).\gamma= & 0 \qquad  \qquad \qquad \qquad \qquad \quad  \ \mbox{ on } \partial\Omega_{\tau,\tau+2} \nonumber \\
			\psi(\tau,x)=& 0 \qquad \qquad \qquad \qquad \qquad  \quad \ \mbox{ in } \Omega. \nonumber
		\end{aligned}
		\right .
	\end{equation*}
	From the maximum principle  we have $\displaystyle{ \phi_{\tau}a_i(t,x)\leq \psi_i(t,x)}$ in  $\Omega_{\tau,\tau+2}$.   Furthermore the integrability estimation (see Appendix, Theorem \ref{estimation 1}) yields
	\begin{align*}
		\Vert \psi_i \Vert_{\mathrm{L}^{\infty}(\Omega_{\tau,\tau+2})}  \leq  C_{IE}(\Omega,d_i,N,p) \Vert a_{i}\phi^{'}_\tau +\phi_\tau a_m \Vert_{\mathrm{L}^p(\Omega_{\tau,\tau+2})} 
		\leq  C_{IE}(\Omega,d_i,N,p)(M_{\phi}C_3+C_0).
	\end{align*}
	
	Non-negativity of $a_i$ and property of $\phi_{\tau}$ lead to
	\begin{align*}
		\Vert a_i(t,x) \Vert_{\mathrm{L}^{\infty}(\Omega_{\tau,\tau+1})} 
		\leq \max\limits_{i=2,\cdots,m-1}&\Big\{C_{IE}(\Omega,d_i,N,p)(M_{\phi}C_3+C_0)+\Vert a_i(t,x) \Vert_{\mathrm{L}^{\infty}(\Omega_{0,1})}\Big\}\\
		\leq & \max\limits_{i=2,\cdots,m-1}\{C_{IE}(\Omega,d_i,N,p)\}(M_{\phi}C_3+C_0)+K_{\infty,0,1}.
	\end{align*}
	Let us define 
	\[
	K_{\infty}:=\max\limits_{2\leq i\leq m-1}C_{IE}(\Omega,d_i,N,p)(M_{\phi}C_4+C_0)+K_{\infty,0,1}
	\]
	where $K_{\infty,0,1}$ is the bound of $\Vert a_i(t,x) \Vert_{\mathrm{L}^{\infty}(\Omega_{0,1})}$ from Proposition \ref{L^infty estimation of a_i [0,1]}.
	
	Since $\tau\geq 0$ is arbitrary, we arrive at our conclusion.
	\[
	\sup\limits_{t>0}\Vert a_i \Vert_{\mathrm{L}^{\infty}(\Omega)} \leq K_{\infty}   \qquad \forall i=2,\cdots,m-1.
	\]
\end{proof}

\begin{Prop} \label{a_1 estimate}
	Let the dimension $N\geq 4$. Let $(a_1,\cdots,a_m)$ be the smooth positive solution to the degenerate triangular reaction-diffusion system \eqref{degenerate 1}. Suppose further that there exists a constant $C_0>0$ such that
	\[
	\Vert a_m \Vert_{\mathrm{L}^p(\Omega_{\tau,\tau+1})} \leq C_0 \quad \forall \tau\geq 0
	\]
	and for some $p>N$. Then, there exists a positive constant $\Tilde{K}$, independent of time, such that
	\[ 
	\Vert a_1 \Vert_{\mathrm{L}^{\frac{N}{2}}(\Omega)} \leq \Tilde{K}(1+t)^{\frac{N-2}{N-1}}.
	\]
\end{Prop}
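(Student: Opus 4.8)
The plan is to exploit that the vanishing diffusion $d_1=0$ turns the first equation of \eqref{degenerate 1} into an ordinary differential equation in time, pointwise in $x$, namely $\partial_t a_1 = a_m - a_1 R$ with $R:=\prod_{j=2}^{m-1}a_j^{\alpha_j}\ge 0$, and then to run a differential-inequality argument on $y(t):=\Vert a_1(t)\Vert_{\mathrm{L}^{N/2}(\Omega)}^{N/2}$. Since the solution is smooth and positive, $y$ is differentiable, and because $R\ge 0$ and $a_1\ge 0$ we may discard the good term when we differentiate:
\[
\frac{dy}{dt}=\frac{N}{2}\int_\Omega a_1^{\frac{N}{2}-1}a_m\,\mathrm{d}x-\frac{N}{2}\int_\Omega a_1^{\frac{N}{2}}R\,\mathrm{d}x\le \frac{N}{2}\int_\Omega a_1^{\frac{N}{2}-1}a_m\,\mathrm{d}x.
\]

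The key estimate is to bound the surviving integral by H\"older with exponents $p$ and $p'=\tfrac{p}{p-1}$, giving $\int_\Omega a_1^{N/2-1}a_m\le \Vert a_1\Vert_{\mathrm{L}^{(N/2-1)p'}(\Omega)}^{N/2-1}\Vert a_m\Vert_{\mathrm{L}^p(\Omega)}$, and then to interpolate the middle factor between the time-uniform $\mathrm{L}^1$ mass bound $\Vert a_1(t)\Vert_{\mathrm{L}^1(\Omega)}\le \tilde M_1$ from \eqref{mass bound} and the growing norm $\Vert a_1\Vert_{\mathrm{L}^{N/2}(\Omega)}$. A short computation shows that the admissibility condition $(\tfrac N2-1)p'\le \tfrac N2$ holds precisely because $p>N$ (equivalently $p'\le \tfrac{N/2}{N/2-1}$), while $(\tfrac N2-1)p'\ge 1$ holds since $N\ge 4$. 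The interpolation then yields $\Vert a_1\Vert_{\mathrm{L}^{(N/2-1)p'}}^{N/2-1}\le C\,y^{\gamma}$ with $\gamma=1-\frac{1}{(N/2-1)p'}<1$, so that
\[
\frac{dy}{dt}\le C\,y^{\gamma}\,\Vert a_m(t)\Vert_{\mathrm{L}^p(\Omega)},\qquad\text{hence}\qquad \frac{d}{dt}\,y^{1-\gamma}\le C(1-\gamma)\,\Vert a_m(t)\Vert_{\mathrm{L}^p(\Omega)}.
\]

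It then remains to integrate in time and control $\int_0^t\Vert a_m(s)\Vert_{\mathrm{L}^p(\Omega)}\,\mathrm{d}s$. Splitting $[0,t]$ into unit subintervals and applying H\"older in time on each, the hypothesis $\Vert a_m\Vert_{\mathrm{L}^p(\Omega_{\tau,\tau+1})}\le C_0$ (uniform in $\tau$) bounds each contribution by $C_0$, so the whole integral is at most $C_0\lceil t\rceil\le C_0(1+t)$. This gives $y(t)^{1-\gamma}\le C(1+t)$ after absorbing the constant $y(0)^{1-\gamma}$ into the right-hand side, and therefore $\Vert a_1(t)\Vert_{\mathrm{L}^{N/2}(\Omega)}=y(t)^{2/N}\le \tilde K(1+t)^{\frac{1}{(N/2)(1-\gamma)}}$. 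Unwinding $\gamma$ gives the exponent $\frac{1}{(N/2)(1-\gamma)}=\frac{(N-2)p'}{N}$, and since $p>N$ forces $p'<\frac{N}{N-1}$ this exponent is strictly less than $\frac{N-2}{N-1}$; as $(1+t)\ge 1$ we may enlarge the exponent to obtain the stated bound $\Vert a_1\Vert_{\mathrm{L}^{N/2}(\Omega)}\le \tilde K(1+t)^{\frac{N-2}{N-1}}$.

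The main obstacle is securing \emph{sublinear} time growth. The naive pointwise representation $a_1(t,x)\le a_{1,0}(x)+\int_0^t a_m(s,x)\,\mathrm{d}s$ (which uses only $R\ge 0$, as $R$ carries no useful lower bound) together with Minkowski and the cylinder bounds yields merely $\Vert a_1\Vert_{\mathrm{L}^{N/2}(\Omega)}\lesssim 1+t$. The passage from exponent $1$ to $\frac{N-2}{N-1}<1$ rests entirely on interpolating the evolving $\mathrm{L}^{N/2}$ norm against the time-uniform $\mathrm{L}^1$ mass control, together with matching the H\"older exponent to $p$; the delicate points are verifying the admissibility $(\tfrac N2-1)p'\le \tfrac N2$ and tracking the resulting power of $(1+t)$.
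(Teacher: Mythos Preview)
Your proof is correct but follows a different route from the paper's. The paper's argument is shorter and more elementary: it first applies Minkowski in $\mathrm{L}^{N}(\Omega)$ (not $\mathrm{L}^{N/2}$) to the pointwise representation $a_1(t,x)\le a_{1,0}(x)+\int_0^t a_m(s,x)\,\mathrm{d}s$, obtaining $\Vert a_1(t)\Vert_{\mathrm{L}^N(\Omega)}\lesssim 1+t$ from the cylinder hypothesis; then it interpolates the $\mathrm{L}^{N/2}$ norm between this linearly growing $\mathrm{L}^N$ norm and the time-uniform $\mathrm{L}^1$ mass bound, with interpolation weight $\alpha=\frac{N-2}{N-1}$, yielding exactly $(1+t)^{(N-2)/(N-1)}$. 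So the ``naive pointwise representation'' you dismiss in your last paragraph is in fact the paper's method --- the point is to take the Minkowski step in a \emph{higher} Lebesgue norm than the target, then interpolate down.

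Your differential-inequality approach on $y(t)=\Vert a_1(t)\Vert_{\mathrm{L}^{N/2}}^{N/2}$ is more involved but has the merit of giving the sharper exponent $\frac{(N-2)p'}{N}<\frac{N-2}{N-1}$ before you relax it; it also makes the dependence on the integrability exponent $p$ of $a_m$ visible. The paper's route, by contrast, interpolates only once and lands directly on $\frac{N-2}{N-1}$ without any ODE comparison. Both arguments rest on the same two ingredients --- the $\mathrm{L}^1$ mass conservation and the $\mathrm{L}^p$ cylinder control of $a_m$ --- and differ only in whether the interpolation is performed before or after integrating in time.
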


\begin{proof} An application of the H\"older inequality yields
	\begin{gather}\label{1 d_1 new} 
		\Vert a_m \Vert_{\mathrm{L}^{N}(\Omega_{\tau,\tau+1})} \leq 
		\Vert a_m \Vert_{L^{p}(\Omega_{\tau,\tau+1})} {\vert \Omega \vert}^{\frac{p-N}{p}}.
	\end{gather}
	
	Thanks to the non-negativity of the species concentrations, we deduce the following differential inequality  
	\[
	\displaystyle{\partial_t a_1 \leq a_m}.
	\] 
	Integrating the above inequality in time yields
	\[
	0 \leq a_1 \leq a_{1,0}(x) +\int_0^t a_m(s,x) \rm{d}s.
	\]
	Thnaks to Minkowski's integral inequality, we arrive at
	\[
	\Vert a_1(t,\cdot) \Vert_{\mathrm{L}^N(\Omega)} \leq \Vert a_{1,0} \Vert_{\mathrm{L}^N(\Omega)} +\int_{0}^{t}\Vert a_m(s,\cdot) \Vert_{\mathrm{L}^N(\Omega)} \leq \Vert a_{1,0} \Vert_{\mathrm{L}^N(\Omega)} +C_0 \vert \Omega \vert^{\frac{(p-N)}{p}}(1+t). 
	\]
	Observe that
	\[
	\displaystyle{\frac{2}{N}= \frac{\alpha}{N}+\frac{1-\alpha}{1}} \quad \mbox{where}\ \displaystyle{\alpha = \frac{N-2}{N-1}}.
	\]
	Hence by interpolation, we have
	\[
	\Vert a_1(t,\cdot) \Vert_{\mathrm{L}^{\frac{N}{2}}(\Omega)} \leq \Vert a_1(t,\cdot) \Vert_{\mathrm{L}^{N}(\Omega)}^{\frac{N-2}{N-1}} \Vert a_1(t,\cdot) \Vert_{\mathrm{L}^1(\Omega)}^{\frac{1}{N-1}}.
	\]
	
	Using mass conservation and earlier bound, we get
	\[
	\Vert a_1(t,\cdot) \Vert_{\mathrm{L}^{\frac{N}{2}}(\Omega)} \leq \left( \Vert a_{1,0} \Vert_{L^{N}(\Omega)}+C_0\vert \Omega \vert^{\frac{(p-N)}{p}} (1+t)\right)^{\frac{N-2}{N-1}}M_1^{\frac{1}{N-1}}.
	\]
	Defining the constant $\Tilde{K}$ as 
	\[
	\Tilde{K} =\Big( \Vert a_{1,0} \Vert_{L^{N}(\Omega)} +C_0 \vert \Omega \vert^{\frac{(p-N)}{p}} \Big)^{\frac{N-2}{N-1}}M_1^{\frac{1}{N-1}}
	\]
	we have derived the required estimate.
\end{proof}

\vspace{.2cm}

We have an analogous result to the above Lemma \ref{a_1 estimate} in dimension $N\leq 3$ which is unconditional as we have an $\mathrm{L}^9(\Omega_{\tau,\tau+1})$ estimate on $a_m$ (see Proposition \ref{a_i to a_m L^9 [0,1]}). We skip the proof of the following result as it goes similar to the proof of Lemma \ref{a_1 estimate}.

So far, we have not addressed the integrability of the species concentration $a_1$. Our result is indeed towards filling that gap. This is again a conditional result which assumes an $\mathrm{L}^p$ bound on $a_m$ for some $p>N$.

\vspace{.2cm}

\begin{Lem}\label{dim 1 a_1 L^n}
	Let the dimension $N\leq 3$. Let  $(a_1,\cdots,a_m)$ be the smooth positive solution to the degenerate triangular reaction-diffusion system \eqref{degenerate 1}. Then ,there exists a constant $\Tilde{K}$, independent of $t$, such that
	\[
	\Vert a_1 \Vert_{\mathrm{L}^{\frac{3}{2}}(\Omega)} \leq \Tilde{K}(1+t)^{\frac{1}{2}}.
	\]
\end{Lem}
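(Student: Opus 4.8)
The plan is to mirror the argument of Proposition \ref{a_1 estimate}, letting the value $3$ play the role of the exponent $N$ there, since in dimension $N\le 3$ the species $a_m$ enjoys the unconditional uniform estimate $\Vert a_m\Vert_{\mathrm{L}^9(\Omega_{\tau,\tau+1})}\le\textbf{K}$ obtained by combining Proposition \ref{a_i to a_m L^9} with Proposition \ref{a_i to a_m L^p} (see the remark following Proposition \ref{a_i to a_m L^p}). The strategy is: first convert the reaction equation for $a_1$ into a one-sided pointwise control by $a_m$, then integrate in time and pass to $\mathrm{L}^3(\Omega)$, and finally interpolate down to $\mathrm{L}^{3/2}(\Omega)$ against the conserved $\mathrm{L}^1$ mass.

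First I would use the non-negativity of all concentrations to discard the loss term in the first equation of \eqref{degenerate 1}, obtaining the differential inequality $\partial_t a_1\le a_m$. Integrating in time gives the pointwise bound $0\le a_1(t,x)\le a_{1,0}(x)+\int_0^t a_m(s,x)\,\mathrm{d}s$, and Minkowski's integral inequality then yields
\[
\Vert a_1(t,\cdot)\Vert_{\mathrm{L}^3(\Omega)}\le \Vert a_{1,0}\Vert_{\mathrm{L}^3(\Omega)}+\int_0^t\Vert a_m(s,\cdot)\Vert_{\mathrm{L}^3(\Omega)}\,\mathrm{d}s.
\]

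The next step is to show that the time integral on the right grows at most linearly in $t$. On a bounded domain H\"older's inequality in space gives $\Vert a_m(s,\cdot)\Vert_{\mathrm{L}^3(\Omega)}\le \vert\Omega\vert^{\frac13-\frac19}\Vert a_m(s,\cdot)\Vert_{\mathrm{L}^9(\Omega)}$, while H\"older in time over a unit interval gives $\int_\tau^{\tau+1}\Vert a_m(s,\cdot)\Vert_{\mathrm{L}^9(\Omega)}\,\mathrm{d}s\le \Vert a_m\Vert_{\mathrm{L}^9(\Omega_{\tau,\tau+1})}\le \textbf{K}$. Summing these contributions over the $\lceil t\rceil$ unit sub-intervals covering $[0,t]$ produces $\int_0^t\Vert a_m(s,\cdot)\Vert_{\mathrm{L}^3(\Omega)}\,\mathrm{d}s\le C\,(1+t)$ for a constant $C$ independent of $t$, hence the linear-in-time bound $\Vert a_1(t,\cdot)\Vert_{\mathrm{L}^3(\Omega)}\le C'(1+t)$.

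Finally I would interpolate $\mathrm{L}^{3/2}$ between $\mathrm{L}^1$ and $\mathrm{L}^3$: since $\frac{2}{3}=\frac{1/2}{1}+\frac{1/2}{3}$, one has $\Vert a_1\Vert_{\mathrm{L}^{3/2}(\Omega)}\le \Vert a_1\Vert_{\mathrm{L}^1(\Omega)}^{1/2}\Vert a_1\Vert_{\mathrm{L}^3(\Omega)}^{1/2}$. Controlling the $\mathrm{L}^1$ factor by the conserved mass bound \eqref{mass bound} and the $\mathrm{L}^3$ factor by the linear bound just obtained gives $\Vert a_1\Vert_{\mathrm{L}^{3/2}(\Omega)}\le \Tilde{M}_1^{1/2}\big(C'(1+t)\big)^{1/2}=\Tilde{K}(1+t)^{1/2}$, as claimed. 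The only genuinely delicate point is the second step, namely the careful passage from the space-time cylinder estimate $\Vert a_m\Vert_{\mathrm{L}^9(\Omega_{\tau,\tau+1})}\le\textbf{K}$ to the time-integrated spatial norm $\int_0^t\Vert a_m(s,\cdot)\Vert_{\mathrm{L}^3(\Omega)}\,\mathrm{d}s$; once the two H\"older reductions (in space and in time) and the summation over unit cylinders are arranged correctly, the remainder is routine and exactly parallels Proposition \ref{a_1 estimate}.
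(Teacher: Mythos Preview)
Your proposal is correct and follows precisely the approach the paper intends: the paper explicitly says the proof ``goes similar to the proof of Lemma \ref{a_1 estimate}'' with the role of $N$ played by $3$ and the unconditional $\mathrm{L}^9(\Omega_{\tau,\tau+1})$ bound on $a_m$ replacing the hypothesis $\Vert a_m\Vert_{\mathrm{L}^p(\Omega_{\tau,\tau+1})}\le C_0$. Your treatment of the passage from the space--time cylinder norm to the time-integrated spatial norm via H\"older in both variables and summation over unit intervals is in fact slightly more explicit than the corresponding step in the paper's proof of Proposition~\ref{a_1 estimate}.
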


\vspace{.3cm}

Recall that we have obtained a uniform (with respect to $t$) $\mathrm{L}^{\infty}(\Omega_t)$ bound on the species concentrations $a_2,\cdots,a_{m-1}$ in Proposition \ref{L^infty estimation of a_i} under the assumption that $\mathrm{L}^{p}(\Omega_{\tau,\tau+1})$ norm of $a_m$ is bounded for some $p>N$. Recall further that such an $\mathrm{L}^p$ estimate for $a_m$ is always available in dimensions $N\leq 3$ and is available under the closeness condition\eqref{tri closeness condition imp} on certain diffusion coefficients in dimensions $N\geq 4$ (see Proposition \ref{L^n estimation of a_m}). The following result derives an estimate for the species concentrations $a_1$ and $a_m$ in the $\mathrm{L}^{\infty}(\Omega_t)$ norm. 

\vspace{.2cm}

\begin{Thm}\label{polynomial on t}
	Let $(a_1,\cdots,a_m)$ be the smooth positive solution to the degenerate triangular reaction-diffusion system  \eqref{degenerate 1}. Suppose there exists a positive constant $C_0$, such that 
	\[
	\Vert a_m \Vert_{\mathrm{L}^p(\Omega_{\tau,\tau+1})}\leq C_0 \qquad  \forall \tau\geq 0
	\] 
	and for some $p>N$. Then there exists  positive constants $K_{pol}$ and $\mu$, independent of $t$, such that
	\[
	\max\left\{\Vert a_1 \Vert_{\mathrm{L}^{\infty}(\Omega_{0,t})}, \Vert a_m \Vert_{\mathrm{L}^{\infty}(\Omega_{0,t})}\right\} \leq K_{pol}(1+t)^{\mu}.
	\]
\end{Thm}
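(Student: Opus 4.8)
The plan is to exploit the fact that the species split into two groups behaving very differently. The fast-diffusing species $a_2,\dots,a_{m-1}$ are already pinned down: under the standing hypothesis $\Vert a_m\Vert_{\mathrm{L}^p(\Omega_{\tau,\tau+1})}\le C_0$ with $p>N$, Theorem~\ref{L^infty estimation of a_i} together with Proposition~\ref{L^infty estimation of a_i [0,1]} gives a time-uniform bound $\Vert a_i\Vert_{\mathrm{L}^\infty((0,\infty)\times\Omega)}\le K_\infty$ for $i=2,\dots,m-1$, so the reaction product $P:=\prod_{j=2}^{m-1}a_j^{\alpha_j}$ satisfies $0\le P\le B$ for a time-independent $B$. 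The two remaining concentrations are coupled triangularly: $a_1$ is driven by $a_m$ through a pointwise time integral, while $a_m$ is driven by $a_1P\le Ba_1$ through the Neumann heat semigroup. The whole point is that the smoothing of the $a_m$-equation closes this loop after finitely many steps.

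First I would record a space--integrability bound on $a_1$. Since $a_1,P\ge0$, the first equation of \eqref{degenerate 1} gives $\partial_t a_1\le a_m$, whence $0\le a_1(t,x)\le a_{1,0}(x)+\int_0^t a_m(s,x)\,\mathrm ds$. Covering $\Omega_{0,t}$ by the unit cylinders $\Omega_{\tau,\tau+1}$, $\tau=0,1,\dots,\lfloor t\rfloor$, and summing the hypothesis yields $\Vert a_m\Vert_{\mathrm{L}^p(\Omega_{0,t})}\le C_0(1+t)^{1/p}$. Minkowski's integral inequality followed by Hölder in time then gives
\[
\Vert a_1(t,\cdot)\Vert_{\mathrm{L}^p(\Omega)}\le \Vert a_{1,0}\Vert_{\mathrm{L}^p(\Omega)}+\int_0^t\Vert a_m(s,\cdot)\Vert_{\mathrm{L}^p(\Omega)}\,\mathrm ds\le \Vert a_{1,0}\Vert_{\mathrm{L}^p(\Omega)}+t^{1-\frac1p}\,\Vert a_m\Vert_{\mathrm{L}^p(\Omega_{0,t})},
\]
so $\Vert a_1(t,\cdot)\Vert_{\mathrm{L}^p(\Omega)}\le C(1+t)$ grows at most linearly. (Equivalently one may quote the $\mathrm{L}^N$ bound behind Proposition~\ref{a_1 estimate}; any exponent strictly larger than $N/2$ suffices below.)

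Next I would upgrade $a_m$ to $\mathrm{L}^\infty$ with polynomial growth. Writing the Duhamel formula for the $a_m$-equation and discarding the favourable sink term $-a_m\le0$,
\[
0\le a_m(t,x)\le \Vert a_{m,0}\Vert_{\mathrm{L}^\infty(\Omega)}+\int_0^t\!\!\int_\Omega G_{d_m}(t,s,x,y)\,(a_1P)(s,y)\,\mathrm dy\,\mathrm ds.
\]
Bounding the Neumann Green's function by the Gaussian $g_{d_m}$ from \eqref{Heat kernel estimate}--\eqref{Heat kernel bound function} and applying Hölder in space with the scaling identity $\Vert g_{d_m}(\sigma,\cdot)\Vert_{\mathrm{L}^{p'}(\mathbb{R}^N)}\le C\,\sigma^{-\frac{N}{2p}}$, together with $\Vert (a_1P)(s,\cdot)\Vert_{\mathrm{L}^p(\Omega)}\le B\,\Vert a_1(s,\cdot)\Vert_{\mathrm{L}^p(\Omega)}\le BC(1+s)$, I obtain
\[
a_m(t,x)\le \Vert a_{m,0}\Vert_{\mathrm{L}^\infty(\Omega)}+C'\int_0^t (t-s)^{-\frac{N}{2p}}(1+s)\,\mathrm ds.
\]
Because $p>N$ forces $\tfrac{N}{2p}<\tfrac12<1$, the kernel singularity is integrable and the integral is bounded by $C''(1+t)^{2-\frac{N}{2p}}$; since the right-hand side is nondecreasing in $t$, taking the supremum over the cylinder gives $\Vert a_m\Vert_{\mathrm{L}^\infty(\Omega_{0,t})}\le K_1(1+t)^{\mu_1}$ with $\mu_1=2-\frac{N}{2p}$.

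Finally I would close the loop: inserting this into $a_1(t,x)\le a_{1,0}(x)+\int_0^t a_m(s,x)\,\mathrm ds$ gives $\Vert a_1\Vert_{\mathrm{L}^\infty(\Omega_{0,t})}\le \Vert a_{1,0}\Vert_{\mathrm{L}^\infty(\Omega)}+\int_0^t K_1(1+s)^{\mu_1}\,\mathrm ds\le K_2(1+t)^{\mu_1+1}$, and taking $\mu:=\mu_1+1=3-\frac{N}{2p}$ and $K_{pol}:=\max\{K_1,K_2\}$ proves the claim. The main obstacle, and the reason the argument is not circular, is the third step: a crude bound $\int_\Omega G_{d_m}\,\mathrm dy\le1$ would produce a feedback coefficient growing like $t^2$ and never close, so one must genuinely use the $\mathrm{L}^p\to\mathrm{L}^\infty$ smoothing of the heat semigroup. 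This is exactly where the strict supercriticality $p>N$ enters, guaranteeing both that the temporal singularity $(t-s)^{-N/2p}$ is integrable and that the resulting exponent $\mu$ stays finite.
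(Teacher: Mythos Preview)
Your argument is correct and follows the same three–step skeleton as the paper: first bound $\Vert a_1(t,\cdot)\Vert_{\mathrm L^p(\Omega)}$ linearly in time via $\partial_t a_1\le a_m$ and Minkowski, then lift $a_m$ to $\mathrm L^\infty$ using the smoothing of the Neumann heat semigroup together with the uniform bound $K_\infty$ on $a_2,\dots,a_{m-1}$, and finally integrate once more in time to control $a_1$ in $\mathrm L^\infty$.

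The difference lies in step two. The paper localizes with the time cut-off $\phi_\tau$ to the unit cylinders $\Omega_{\tau,\tau+2}$, compares $\phi_\tau a_m$ with an auxiliary forward problem, and invokes the black-box integrability estimate of Theorem~\ref{estimation 1} (the case $p>\tfrac{N+2}{2}$) to land directly in $\mathrm L^\infty$; the initial cylinder $\Omega_{0,2}$ is then handled separately. You instead work on the full interval $[0,t]$ via the Duhamel formula, bound the Green's function by the Gaussian $g_{d_m}$ from \eqref{Heat kernel estimate}, and apply H\"older in space using the explicit scaling $\Vert g_{d_m}(\sigma,\cdot)\Vert_{\mathrm L^{p'}(\mathbb{R}^N)}\le C\sigma^{-N/(2p)}$. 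Your route is more self-contained (no cut-off, no appeal to Theorem~\ref{estimation 1}) and even gives a slightly smaller exponent $\mu=3-\tfrac{N}{2p}$ against the paper's $\mu=3$; the paper's route, on the other hand, reuses machinery already set up in Propositions~\ref{a_i to a_m L^9}--\ref{L^infty estimation of a_i} and keeps all space--time estimates on cylinders of bounded height, which is consistent with the rest of the section.
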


\begin{proof} 
	
	Recall that we have $\displaystyle{\partial_{t}a_1 \leq a_m}$. Minkowski's integral inequality yields
	
	\[
	\Vert a_1(t,\cdot) \Vert_{\mathrm{L}^p(\Omega)} \leq \Vert a_{1,0} \Vert_{\mathrm{L}^p(\Omega)}+\int_{0}^{t}\Vert a_m(s,\cdot) \Vert_{\mathrm{L}^p(\Omega)} \rm{d}s\leq \Vert a_{1,0} \Vert_{\mathrm{L}^p(\Omega)}+C_{0}(1+t).
	\]
	Raising it to the power $p$, and integrating over time from $\tau$ to $\tau+2$, we arrive at 
	\begin{align}\label{polynomial on t 1}
		\Vert a_1 \Vert_{\mathrm{L}^p(\Omega_{\tau,\tau+2})} \leq C_1(1+\tau) 
	\end{align}
	where $\displaystyle{C_1:=4\left(\Vert a_{1,0} \Vert_{\mathrm{L}^p(\Omega)}+C_{0}\right)}$ and $\displaystyle{p>N\geq \frac{N+2}{2}}$.
	
	Consider the function $\phi_{\tau}:\mathbb{R}\to\mathbb{R}$ as in \eqref{cut-off}.	The product $\phi_\tau(t)a_m(t,x)$ satisfies
	
	\begin{equation}\label{equation:L^infty}
		\left \{
		\begin{aligned}
			\partial_{t}(\phi_\tau(t)&a_m(t,x))-d_{i}\Delta (\phi_\tau(t)a_m(t,x))\\
			=& a_{m}(x,t)\phi'_\tau(t)+\phi_\tau(t)\left(-a_m+a_1\prod\limits_{j=2}^{m-1}a_j^{\alpha_j}\right) \qquad   \mbox{ in } \Omega_{\tau,\tau+2}\\
			\nabla_x &(\phi_\tau(t)a_m(t,x)).\gamma=  0 \qquad   \qquad \ \  \qquad \qquad \quad \ \mbox{ on } \partial\Omega_{\tau,\tau+2} \\
			&\phi_{\tau}a_m(\tau,x)= 0 \qquad \qquad \qquad \qquad \qquad \qquad \ \ \ \mbox{ in } \Omega.
		\end{aligned}
		\right.
	\end{equation}
	Let $\psi(t,x)$ satisfying the following pde
	\begin{equation*}
		\left \{
		\begin{aligned}
			\partial_{t}\psi(t,x)-d_{m}\Delta \psi(t,x)=& a_{m}(t,x)\phi'_\tau(t)+\phi_\tau(t)a_1\displaystyle\prod_{j=2}^{m-1}a_j^{\alpha_j} \qquad \mbox{ in } \Omega_{\tau,\tau+2} \nonumber \\
			\nabla_x \psi(t,x).\gamma= & 0 \qquad  \qquad \qquad \qquad \qquad  \qquad \quad \ \ \ \  \ \mbox{ on } \partial\Omega_{\tau,\tau+2} \nonumber \\
			\psi (\tau,x)=& 0 \qquad \qquad \qquad  \ \ \ \ \qquad \qquad \qquad  \quad \ \mbox{ in } \Omega. \nonumber
		\end{aligned}
		\right .
	\end{equation*}

	Remark that maximum principle yields $\displaystyle{\phi_{\tau}a_i(t,x)\leq \psi_i(t,x)}$ in $t\Omega_{\tau,\tau+2}$. Furthermore integrability estimation (see Appendix, Theorem \ref{estimation 1}) yields
	
	\begin{align*}
		\Vert  \psi (t,x) \Vert_{\mathrm{L}^{\infty}(\Omega_{\tau,\tau+2})} 
		\leq  C_{IE}(\Omega,d_m,N,p) \left\Vert a_{m}(t,x)\phi'_\tau(t)+\phi_\tau(t)a_1\displaystyle\prod\limits_{j=2}^{m-1}a_j^{\alpha_j} \right\Vert_{\mathrm{L}^p(\Omega_{\tau,\tau+2})} .
	\end{align*}
	Thanks to non-negativity of the solution, the following estimate holds:
	\begin{align*}
		\Vert \phi_{\tau}a_m(t,x) \Vert_{\mathrm{L}^{\infty}(\Omega_{\tau,\tau+2})} 
		\leq  \Vert \psi_m(t,x) \Vert_{\mathrm{L}^{\infty}(\Omega_{\tau,\tau+2})} \leq C_{IE}(\Omega,d_m,N,p)\left(2C_0M_{\phi}+2C_1K_{\infty}^Q\right)(1+\tau)
	\end{align*}
	where the constant $K_{\infty}$ is the bound from Proposition \ref{L^infty estimation of a_i}, the exponent $\displaystyle{Q=\sum\limits_{j=2}^{m-1}\alpha_j}$ and the constant $C_1$ is from \eqref{1 d_1 new}.
	
	Define the following time independent constant:
	\[
	C_2:=C_{IE}(\Omega,d_m,N,p)\big(2C_0M_{\phi}+2C_1K_{\infty}^{Q}\big)
	\]
	We thus have
	\begin{align*}
		\Vert a_m \Vert_{\mathrm{L}^{\infty}(\Omega_{\tau+1,\tau+2})} \leq C_2(1+\tau). \qquad \forall \tau\geq 0.
	\end{align*}
	Recall the differential equation satisfied by  $a_m$:
	\begin{equation*}
		\left \{
		\begin{aligned}
			\partial_{t}a_m(t,x)-d_{i}\Delta a_m(t,x)&=-a_m+a_1\prod\limits_{j=2}^{m-1}a_j^{\alpha_j} \qquad \ \mbox{in}\ \Omega_{0,2}\\
			\nabla_x a_m(t,x).\gamma&=0 \qquad \qquad \qquad \qquad \ \ \ \ \mbox{on} \ \partial\Omega_{0,2}\\
			a_m(0,x)& =a_{m,0} \qquad \qquad \qquad \qquad \mbox{in}\ \Omega.
		\end{aligned}
		\right .
	\end{equation*}
	The solution corresponding to the above equation  can be expressed as:
	\[
	0\leq a_m\leq \int_{\Omega}G_{d_m}(t,0,x,y)a_{m,0}(y) \rm{d}y+ \int_{0}^{t}\int_{\Omega} G_{d_m}(t,s,x,y)\left(-a_m+a_1\prod\limits_{j=2}^{m-1}a_j^{\alpha_j}\right)(s,y)\rm{d}y\rm{d}s \quad (t,x)\in\Omega_{0,2}.
	\]
	where $G_{d_m}$ denotes the Green's function associated with the operator $\partial_t-d_m\Delta$ with Neumann boundary condition. We use the fact that $\displaystyle{a_{m,0}(y)\in\mathrm{L}^{\infty}(\Omega)}$ and $\displaystyle{\int_{\Omega}G_{d_m}(t,0,x,y) \rm{d}y \leq 1}$ for all $t\in(0,2)$. It yields
	\[
	\left\vert \int_{\Omega}G_{d_m}(t,0,x,y)a_{m,0}(y) \rm{d}y \right\vert \leq \Vert a_{m,0}\Vert_{\mathrm{L}^{\infty}(\Omega)}.
	\]
	Green function estimate as in \eqref{Heat kernel estimate} and \eqref{Heat kernel bound function} yields
	\[
	0\leq G_{d_m}(t_1,s,x,y) \leq  g_{d_m}(t_1-s,x-y) \qquad 0\leq s<t_1
	\]
	where the function $g_{d_m}$ as defined in  \eqref{Heat kernel bound function}. Furthermore \eqref{Heat kernel integral estimate} yields the following integral estimate:
	\[
	\left\Vert g_{d_m}\right\Vert_{\mathrm{L}^{z}((-2,2)\times\mathbb{R}^N)} \leq \tilde{\kappa}_{1,d_m} \qquad \forall z\in\left[1,1+\frac{2}{N}\right)
	\]
	for some constant positive $\tilde{\kappa}_{1,d_m}$ depending on $z$. We in particular choose $\displaystyle{\frac{1}{z}=1-\frac{1}{p}}$ which is admissible because $\displaystyle{p>\frac{N+2}{2}}$. Hence, for $t\leq 1$:
	\begin{align*}
		\left \vert \int_{0}^{t}\int_{\Omega} G_{d_m}(t,s,x,y)\left(-a_m+a_1\prod\limits_{j=2}^{m-1}a_j^{\alpha_j}\right)(s,y)\rm{d}y\rm{d}s\right\vert \leq & \int_{0}^{t}\int_{\Omega} g_{d_m}(t-s,x-y)\left(-a_m+a_1\prod\limits_{j=2}^{m-1}a_j^{\alpha_j}\right)(s,y)\rm{d}y\rm{d}s\\
		\leq & \Big\Vert g_{d_m}(t-\cdot,x-\cdot)\Big\Vert_{\mathrm{L}^z((0,t)\times\Omega)} \Big\Vert -a_m+a_1\prod\limits_{j=2}^{m-1}a_j^{\alpha_j}\Big\Vert_{\mathrm{L}^p((0,t)\times\Omega)}\\
		\leq & \Big\Vert g_{d_m}(\cdot,\cdot)\Big\Vert_{\mathrm{L}^z((0,t)\times\mathbb{R}^N)} \Big\Vert -a_m+a_1\prod\limits_{j=2}^{m-1}a_j^{\alpha_j}\Big\Vert_{\mathrm{L}^p((0,1)\times\Omega)}\\
		\leq & \tilde{\kappa}_{1,d_i} \left(C_0+C_1 K_{\infty}^Q\right).
	\end{align*}
Hence
	\[
	\Vert a_m\Vert_{\mathrm{L}^{\infty}(\Omega_{0,2)}}\leq \Vert a_{m,0}\Vert_{\mathrm{L}^{\infty}(\Omega)}+\tilde{\kappa}_{1,d_i} \left(C_0+C_1 K_{\infty}^Q\right). 
	\]
	Taking $\displaystyle{C_3:=\max\left\{C_2, \Vert a_{m,0}\Vert_{\mathrm{L}^{\infty}(\Omega)}+\tilde{\kappa}_{1,d_i} \left(C_0+C_1 K_{\infty}^Q\right) \right\}}$. We deduce
	\begin{align}\label{polynomial on t 2}
		0\leq a_m(t,x) \leq C_{3}(1+t)^2,
	\end{align}
	where the lower bound comes from the non-negativity of $a_m$.
	Finally, the differential  relation $\partial_t a_1 \leq a_m$, helps us conclude that
	\begin{align}\label{polynomial on t 3}
		0\leq a_1(t,x) \leq a_{1,0}(x)+C_{3}(1+t)^3.
	\end{align}
	Here again we use the fact that $a_1$ is non-negative. 
	
	Thus taking $\displaystyle{K_{pol}=\Vert a_{1,0}\Vert_{L^{\infty}(\Omega)}+C_{3}+K_{\infty}}$ and $\displaystyle{\mu=3}$, we have indeed shown that
	
	\[
	\max\left\{\Vert a_1 \Vert_{\mathrm{L}^{\infty}(\Omega_{0,t})},\Vert a_m \Vert_{\mathrm{L}^{\infty}(\Omega_{0,t})}\right\} \leq K_{pol}(1+t)^{\mu} \qquad \forall t\geq 0.
	\]
\end{proof}

\vspace{.2cm}

We are all equipped to prove the following result which obtains a lower bound for the dissipation functional corresponding to the degenerate system \eqref{degenerate 1}. The value of this result lies in the fact that the lower bound involves the term $\displaystyle{\Vert \delta_{A_1}\Vert_{\mathrm{L}^2(\Omega)}}$ even though such a term is apparently missing in the expression of the dissipation functional.

\vspace{.2cm}

\begin{Prop}\label{Dissipatation theorem d_1}
	Let the dimension $N\geq 4$. Let $(a_1,\cdots,a_m)$ be the positive smooth solution to the degenerate triangular reaction-diffusion system \eqref{degenerate 1}. Suppose further that the closeness assumption \eqref{tri closeness condition imp} is satisfied by the diffusion coefficients $d_m$ and $d_i$ for at least one $i\in\{2,\dots,m-1\}$. Then, there exists a positive constant $\hat{C}$, independent of time, such that
	\[
	D(a_1,\cdots,a_m) \geq \hat{C}(1+t)^{-\frac{N-2}{N-1}}\left(\sum\limits_{i=1}^{m}  \Vert  \delta_{A_i} \Vert_{\mathrm{L}^{2}(\Omega)}^2+\Vert A_m-A_1\displaystyle \prod\limits_{d=2}^{m-1} (A_j)^{\alpha_j}\Vert_{\mathrm{L}^2(\Omega)}^2\right).
	\]
\end{Prop}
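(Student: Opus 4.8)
The plan is to build on the lower bounds \eqref{dissipation-L2} and \eqref{dissipation-L_2N/N-2}, which already show that $D(a_1,\dots,a_m)$ controls $\sum_{i=2}^{m}\|\delta_{A_i}\|_{L^2(\Omega)}^2$ (indeed the stronger $L^{2N/(N-2)}$ norms) together with the reaction term $\|R\|_{L^2(\Omega)}^2$, where I abbreviate $R:=A_m-A_1\prod_{j=2}^{m-1}A_j^{\alpha_j}$ and $P:=\prod_{j=2}^{m-1}A_j^{\alpha_j}$. Since $(1+t)^{-(N-2)/(N-1)}\le 1$, every term on the right-hand side of the claimed inequality except $\|\delta_{A_1}\|_{L^2(\Omega)}^2$ is already dominated by $D$. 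Hence it suffices to prove
\[
\|\delta_{A_1}\|_{L^2(\Omega)}^2\le C\,(1+t)^{\frac{N-2}{N-1}}\,D(a_1,\dots,a_m),
\]
and then add this to \eqref{dissipation-L2}, absorbing the constants into $\hat C$.

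To manufacture $\delta_{A_1}$, which carries no gradient in $D$ because $d_1=0$, I would invert the reaction identity $A_1P=A_m-R$. Introducing the spatially constant product $\hat P:=\prod_{j=2}^{m-1}\overline{A_j}^{\,\alpha_j}$, I decompose
\[
\hat P\,\delta_{A_1}=\delta_{A_m}-(R-\overline R)+\Big[(\hat P-P)A_1-\overline{(\hat P-P)A_1}\Big]
\]
and bound each piece in $L^2(\Omega)$. The first two obey $\|\delta_{A_m}\|_{L^2(\Omega)}+\|R-\overline R\|_{L^2(\Omega)}\le C\,D^{1/2}$ directly from \eqref{dissipation-L2}. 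For the last piece I apply H\"older with $\tfrac12=\tfrac1N+\tfrac{N-2}{2N}$ to get $\|(\hat P-P)A_1\|_{L^2(\Omega)}\le\|\hat P-P\|_{L^{2N/(N-2)}(\Omega)}\,\|A_1\|_{L^N(\Omega)}$. Since $A_2,\dots,A_{m-1}$ are uniformly bounded in $L^\infty$ by Theorem~\ref{L^infty estimation of a_i}, the maps $x\mapsto x^{\alpha_j}$ are Lipschitz on the relevant range, so a telescoping estimate gives $\|\hat P-P\|_{L^{2N/(N-2)}(\Omega)}\le C\sum_{j=2}^{m-1}\|\delta_{A_j}\|_{L^{2N/(N-2)}(\Omega)}\le C\,D^{1/2}$ by \eqref{dissipation-L_2N/N-2}; meanwhile Proposition~\ref{a_1 estimate} supplies $\|A_1\|_{L^N(\Omega)}=\|a_1\|_{L^{N/2}(\Omega)}^{1/2}\le C\,(1+t)^{\frac{N-2}{2(N-1)}}$. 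Collecting terms yields $\hat P\,\|\delta_{A_1}\|_{L^2(\Omega)}\le C\,(1+t)^{\frac{N-2}{2(N-1)}}\,D^{1/2}$, which is exactly the target after squaring, \emph{provided} $\hat P$ admits a lower bound uniform in time.

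The main obstacle is precisely this uniform positive lower bound on $\hat P=\prod_{j=2}^{m-1}\overline{A_j}^{\,\alpha_j}$, equivalently on the spatial means $\overline{A_j}=\tfrac1{|\Omega|}\int_\Omega\sqrt{a_j}$ of the diffusing intermediates. I would deduce it from a uniform-in-time lower bound on the masses $\int_\Omega a_j$ together with the $L^\infty$ bound of Theorem~\ref{L^infty estimation of a_i}, via the Cauchy--Schwarz inequality $\overline{A_j}\ge\big(|\Omega|\,\|a_j\|_{L^\infty(\Omega)}^{1/2}\big)^{-1}\int_\Omega a_j$; the mass lower bound is the delicate ingredient, since the polynomially growing species $a_1$ appears in the depletion term of $\tfrac{d}{dt}\int_\Omega a_j$. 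I emphasise that one should \emph{not} instead replace $\hat P$ by the fixed equilibrium product $\prod_{j\ge2}A_{j\infty}^{\alpha_j}$ and control the discrepancy through the entropy--equilibrium estimate (Theorem~\ref{ED}): that route is circular, because Theorem~\ref{ED} reintroduces $\|\delta_{A_1}\|_{L^2(\Omega)}^2$ on its right-hand side, now weighted by the growing factor $\|A_1\|_{L^N(\Omega)}$, which cannot be absorbed into the left-hand side. Comparing against the genuine spatial means $\overline{A_j}$ keeps every quantity under the control of $D$ and avoids the equilibrium entirely, isolating the mass lower bound as the only genuinely new point.
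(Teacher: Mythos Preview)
Your decomposition $\hat P\,\delta_{A_1}=\delta_{A_m}-(R-\overline R)+\big[(\hat P-P)A_1-\overline{(\hat P-P)A_1}\big]$ and the subsequent estimates are all correct, and your warning about not routing through Theorem~\ref{ED} is well taken. But the proof as it stands has a genuine gap: you have reduced everything to a uniform-in-time positive lower bound on $\hat P=\prod_{j=2}^{m-1}\overline{A_j}^{\,\alpha_j}$, and this is \emph{not} established anywhere in the paper, nor do you indicate a workable mechanism for it. Your proposed route via $\overline{A_j}\ge\big(|\Omega|\,\|a_j\|_{L^\infty}^{1/2}\big)^{-1}\!\int_\Omega a_j$ is correct algebra, but it merely shifts the burden to a uniform lower bound on $\int_\Omega a_j$ for $j=2,\dots,m-1$, and you yourself flag this as ``the delicate ingredient'' without supplying an argument. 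Mass conservation gives $\int_\Omega a_j+\int_\Omega a_m=M_j$, which bounds $\int_\Omega a_j$ from above, not below; there is no evident a~priori obstruction to $\int_\Omega a_j$ becoming arbitrarily small at some finite time, and you cannot invoke convergence to equilibrium since that is what the proposition is ultimately used to prove.

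The paper avoids this obstacle entirely by a case-splitting argument, and this is the idea you are missing. First it separates according to whether $\max_{i\ge2}\|\delta_{A_i}\|_{L^2}^2$ is large or small compared to a fixed $\epsilon$. In the ``large'' case, $D$ is bounded below by a positive constant while $\|\delta_{A_1}\|_{L^2}^2\le M_1$ by mass conservation, so $D\gtrsim\|\delta_{A_1}\|_{L^2}^2$ trivially. In the ``small'' case the paper manipulates the reaction term (essentially your decomposition) to reach $D\gtrsim(1+t)^{-\frac{N-2}{N-1}}\|\overline{A_m}-\hat P\,A_1\|_{L^2}^2$, and then splits again on the size of $\hat P$. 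If $\hat P\ge\epsilon$, your argument (or the paper's $\mu$-substitution) goes through. If $\hat P<\epsilon$, then some $\overline{A_l}$ is small; combined with $\|\delta_{A_l}\|_{L^2}^2\le\epsilon$ this forces $\overline{a_l}$ small, whence mass conservation gives $\overline{A_m}^2\ge M_l-O(\epsilon)$, and one gets $\|\overline{A_m}-\hat P\,A_1\|_{L^2}^2\ge\tfrac{|\Omega|}{2}(\overline{A_m}^2-2\hat P^2\,\overline{a_1})\ge c>0$; again $D$ is bounded below by a constant and dominates the bounded quantity $\|\delta_{A_1}\|_{L^2}^2$. The point is that one never needs $\hat P$ to be bounded away from zero: whenever it is small, the dissipation is automatically of order one. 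Your direct approach would work if the mass lower bound could be supplied, but absent that, the case analysis is the argument that closes the proof.
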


\begin{proof} Based on the size of the deviations in the $\mathrm{L}^2$ norm we divide the proof in two cases. First case is about large deviations. Fix an $\epsilon\in(0,1)$.
	
	\vspace{.2cm}
	
	{\textbf{Case 1:}} \qquad $\max\limits_{ i=2,\cdots,m} \big \{ \Vert \delta_{A_i} \Vert^2_{\mathrm{L}^2(\Omega)}  \big \}\geq \epsilon$.
	
	An application of the H\"older inequality in \eqref{dissipation-L_2N/N-2} yields 
	\begin{align*}
		D(a,b,c) \geq & \vert \Omega \vert^{-2/N}\frac{1}{P(\Omega)}\min\limits_{i=2\cdots,m} \{ d_i\} \max\limits_{i=2,\cdots,m} \big \{ \Vert \delta_{A_i} \Vert^2_{\mathrm{L}^2(\Omega)}   \big \} \nonumber \\
		\geq & \epsilon \vert \Omega \vert^{-2/N}\frac{1}{P(\Omega)}\min\limits_{i=2\cdots,m} \{ d_i\} \nonumber \\ 
		\geq& \frac{\epsilon \vert \Omega \vert^{-2/N}}{M_1}\Vert \delta_{A_1}\Vert^2_{\mathrm{L}^2(\Omega)} \min\limits_{i=2\cdots,m} \{ d_i\}\frac{1}{P(\Omega)}
	\end{align*}
	where the last inequality is a consequence of $\displaystyle{  \Vert A_1-\overline{A_1}\Vert^2_{\mathrm{L}^2(\Omega)} \leq \Vert A_1 \Vert^2_{\mathrm{L}^2(\Omega)} \leq \Vert a_1 \Vert_{\mathrm{L}^1(\Omega)} \leq M_1}$, thanks to the mass conservation.

	\vspace{.2cm}
	
	The second case is about small deviations. Here we algebraically manipulate the last term in the dissipation functional.
	
	\vspace{.2cm}
	
	\textbf{Case 2:} \qquad $\max\limits_{ i=2,\cdots,m} \big \{ \Vert \delta_{A_i} \Vert^2_{\mathrm{L}^2(\Omega)}  \big \}\leq \epsilon$. 
	
	We rewrite the term contributed by the rate function to the dissipation functional as follows:
	\begin{align*}
		\left\Vert A_m-A_1\displaystyle \prod\limits_{j=2}^{m-1} (A_j)^{\alpha_j}\right\Vert_{\mathrm{L}^2(\Omega)}^2
		=  \left\Vert \delta_{A_m}+\overline{A_m}-A_1 \left( \displaystyle \prod\limits_{j=2}^{m-1} (\delta_{A_j}+\overline{A_j})^{\alpha_j}-\displaystyle \prod\limits_{j=2}^{m-1} (\overline{A_j})^{\alpha_j}\right)-A_1\displaystyle \prod\limits_{j=2}^{m-1} (\overline{A_j})^{\alpha_j} \right\Vert_{\mathrm{L}^2(\Omega)}^2.
	\end{align*}
	Use of the algebraic inequality $(p-q)^2\geq  \frac{1}{2}p^2-q^2$ leads to
	\begin{align*}
		&\left\Vert \delta_{A_m}+\overline{A_m}-A_1 \left( \displaystyle \prod\limits_{j=2}^{m-1} (\delta_{A_j}+\overline{A_j})^{\alpha_j}-\displaystyle \prod\limits_{j=2}^{m-1} (\overline{A_j})^{\alpha_j}\right)-A_1\displaystyle \prod\limits_{j=2}^{m-1} (\overline{A_j})^{\alpha_j} \right\Vert_{\mathrm{L}^2(\Omega)}^2\\
		&\geq  \frac{1}{2}\left\Vert \overline{A_m}-A_1\displaystyle \prod\limits_{j=2}^{m-1}  (\overline{A_j})^{\alpha_j} \right\Vert_{{L}^2(\Omega)}^2-\left\Vert \delta_{A_m}- A_1 \left( \displaystyle \prod\limits_{j=2}^{m-1} (\delta_{A_j}+\overline{A_j})^{\alpha_j}-\displaystyle \prod\limits_{j=2}^{m-1} (\overline{A_j})^{\alpha_j}\right) \right\Vert_{\mathrm{L}^2(\Omega)}^2\\
		&\geq  \frac{1}{2}\left\Vert \overline{A_m}-A_1\displaystyle \prod\limits_{j=2}^{m-1} (\overline{A_j})^{\alpha_j} \right\Vert_{\mathrm{L}^2(\Omega)}^2-\left\Vert \delta_{A_m}\right\Vert_{\mathrm{L}^2(\Omega)}^2
		- \left\Vert A_1 \left( \displaystyle \prod\limits_{j=2}^{m-1} (\delta_{A_j}+\overline{A_j})^{\alpha_j}-\displaystyle \prod\limits_{j=2}^{m-1} (\overline{A_j})^{\alpha_j}\right) \right\Vert_{\mathrm{L}^2(\Omega)}^2.
	\end{align*}
	
	Employing the mean value theorem on the polynomial $\displaystyle \prod\limits_{j=2}^{m-1} (x_j+\overline{A_j})^{\alpha_j}$, we remark that for all $2\leq i\leq m-1$, there exists $\theta_i$ with $\vert \theta_i\vert \in \big[0,\vert \delta_{A_i}\vert \big]$, such that 
	
	\[
	\displaystyle \prod\limits_{j=2}^{m-1} (\delta_{A_j}+\overline{A_j})^{\alpha_j}-\displaystyle \prod\limits_{j=2}^{m-1} (\overline{A_j})^{\alpha_j} =\displaystyle \sum\limits_{i=2}^{m-1}\alpha_i\delta_{A_i}(\theta_i+\overline{A_i})^{\alpha_i-1}\displaystyle\left( \prod_{j< i}\overline{A_j}^{\alpha_j}\right)\displaystyle\left( \prod_{j>i}^{m-1} (\delta_{A_j}+\overline{A_j})^{\alpha_j}\right).
	\]
	Using this in the earlier bound and thanks to the H\"older inequality, we obtain
	
	\begin{align}
		\left\Vert A_m-A_1\displaystyle  \prod\limits_{j=2}^{m-1} (A_j)^{\alpha_j}\right\Vert_{\mathrm{L}^2(\Omega)}^2 &\geq  \frac{1}{2}\left\Vert \overline{A_m}-A_1\displaystyle \prod\limits_{j=2}^{m-1}  (\overline{A_j})^{\alpha_j} \right\Vert_{\mathrm{L}^2(\Omega)}^2-\vert \Omega \vert^{\frac{2}{N}}\left\Vert \delta_{A_m}\right\Vert_{\mathrm{L}^{\frac{2N}{N-2}}(\Omega)}^2 \nonumber\\  - & \left\Vert a_1 \right\Vert_{\mathrm{L}^{\frac{N}{2}}} \left\Vert \displaystyle \sum\limits_{i=2}^{m-1}\alpha_i\delta_{A_i}(\theta_i+\overline{A_i})^{\alpha_i-1}\displaystyle \left(\prod_{j< i}\overline{A_j}^{\alpha_j}\right)\displaystyle \left( \prod_{j>i}^{m-1} (\delta_{A_j}+\overline{A_j})^{\alpha_j}\right) \right \Vert_{\mathrm{L}^{\frac{2N}{N-2}}(\Omega)}^2. \nonumber
	\end{align}
	This further implies
	\begin{align}
		\left\Vert A_m-A_1\displaystyle  \prod\limits_{j=2}^{m-1} (A_j)^{\alpha_j}\right\Vert_{\mathrm{L}^2(\Omega)}^2 \geq & \frac{1}{2}\left\Vert \overline{A_m}-A_1\displaystyle \prod\limits_{j=2}^{m-1}  (\overline{A_j})^{\alpha_j} \right\Vert_{\mathrm{L}^2(\Omega)}^2-\vert \Omega \vert^{\frac{2}{N}}\left\Vert \delta_{A_m}\right\Vert_{\mathrm{L}^{\frac{2N}{N-2}}(\Omega)}^2 \nonumber \\
		- \left\Vert a_1 \right\Vert_{\mathrm{L}^{\frac{N}{2}}(\Omega)} &\sum\limits_{i=2}^{m-1}\sup\left \{ \displaystyle \alpha_i^2(\theta_i+\overline{A_i})^{2\alpha_i-2}\displaystyle \left(\prod_{j< i}\overline{A_j}^{2\alpha_j}\right)\displaystyle \left(\prod_{j>i}^{m-1} (\delta_{A_j}+\overline{A_j})^{2\alpha_j}\right)\right\} \left\Vert \delta_{A_i} \right\Vert^2_{\mathrm{L}^{\frac{2N}{N-2}}(\Omega)}. \nonumber
	\end{align}
	
	Next, we find an explicit bound on the term containing the supremum using the mass bound as obtained in \eqref{mass bound} and the supremum bound as  obtained in  Proposition \ref{L^infty estimation of a_i}.
	
	\begin{align*}
		\sup&\left \{ \displaystyle\left( \alpha^2_i(\theta_i+\overline{A_i})^{2\alpha_i-2}\displaystyle  \prod_{j< i}\overline{A_j}^{2\alpha_j}\right)\displaystyle \left(\prod_{j>i}^{m-1} (\delta_{A_j}+\overline{A_j})^{2\alpha_j}\right)\right\} 
		\\
		&\leq  \max\limits_{i=2,\cdots,m-1}\{\alpha^2_i\}\Bigg(1+\sqrt{K_{\infty}}+2\frac{\sqrt{M_1}}{\sqrt{\vert \Omega \vert}}\Bigg)^{\max\limits_{i=2,\cdots,m-1}\{2\alpha_i-2\}} 
		\Bigg(1+{\frac{\sqrt{M_1}}{\sqrt{\vert \Omega \vert}}\Bigg)}^{2Q} \Bigg(1+\sqrt{K_{\infty}}+{\frac{\sqrt{M_1}}{\sqrt{\vert \Omega \vert}}}\Bigg)^{2Q}:=K_1,
	\end{align*} 
	where $Q:=\sum\limits_{i=2}^{m-1} \alpha_i$. Thus we have
	\begin{align}
		\left\Vert A_m-A_1\displaystyle \prod\limits_{j=2}^{m-1} (A_j)^{\alpha_j}\right\Vert_{\mathrm{L}^2(\Omega)}^2 \geq \frac{1}{2}&\left\Vert  \overline{A_m}-A_1\displaystyle \prod\limits_{j=2}^{m-1}  (\overline{A_j})^{\alpha_j} \right\Vert_{\mathrm{L}^2(\Omega)}^2\label{intermediate 1}\\
		&-\vert \Omega \vert^{\frac{2}{N}}\left\Vert \delta_{A_m}\right\Vert_{\mathrm{L}^{\frac{2N}{N-2}}(\Omega)}^2 -K_1\left\Vert a_1 \right\Vert_{\mathrm{L}^{\frac{N}{2}}(\Omega)} \sum\limits_{i=2}^{m-1}\left\Vert \delta_{A_i} \right\Vert^2_{\mathrm{L}^{\frac{2N}{N-2}}(\Omega)}.\nonumber 
	\end{align}
	
	Using the $\mathrm{L}^{\frac{N}{2}}(\Omega)$ estimate on $a_1$ from Lemma \ref{a_1 estimate} yields 
	
	\begin{align*}
		\left\Vert A_m-A_1\displaystyle \prod\limits_{j=2}^{m-1} (A_j)^{\alpha_j}\right\Vert_{\mathrm{L}^2(\Omega)}^2   \geq \frac{1}{2} & \left\Vert \overline{A_m}-A_1\displaystyle \prod_{2}^{m-1}  (\overline{A_j})^{\alpha_j} \right\Vert_{\mathrm{L}^2(\Omega)}^2\\
		&-\vert \Omega \vert^{\frac{2}{N}}\left\Vert \delta_{A_m}\right\Vert_{\mathrm{L}^{\frac{2N}{N-2}}(\Omega)}^2 -K_1 \Tilde{K}(1+t)^{\frac{N-2}{N-1}}\sum\limits_{i=2}^{m-1}\left\Vert \delta_{A_i} \right\Vert^{2}_{\mathrm{L}^{\frac{2N}{N-2}}(\Omega)}.
	\end{align*} 
	
	Note that it follows from \eqref{dissipation-L_2N/N-2} that
	
	\begin{align*}
		D(a_1,\cdots,a_m) \geq \sum\limits_{i=2}^{m} \frac{\alpha_i d_i}{P(\Omega)} \Vert  \delta_{A_i} \Vert_{\mathrm{L}^{\frac{2N}{N-2}}}^2+\eta(1+t)^{-\frac{N-2}{N-1}} \left\Vert A_m-A_1\displaystyle \prod\limits_{j=2}^{m-1} (A_j)^{\alpha_j}\right\Vert_{\mathrm{L}^2(\Omega)}^2
	\end{align*} 
	where
	\[
	\eta= \frac{\min\limits_{2,\cdots,m}\{\alpha_id_i,1\}}{1+P(\Omega)\vert \Omega \vert^{\frac{2}{N}}+P(\Omega)K_1 \Tilde{K}}.
	\]
	Substituting the lower bound we obtained for the last expression, we get
	\begin{align}
		D(a_1,\cdots,a_m) \geq \frac{\eta}{2}(1+t)^{-\frac{N-2}{N-1}}\left\Vert \overline{A_m}-A_1\displaystyle \prod\limits_{j=2}^{m-1}  (\overline{A_j})^{\alpha_j} \right\Vert_{\mathrm{L}^2(\Omega)}^2. \label{intermediate 11}
	\end{align}
	
	We relate the missing term with the above lower bound of dissipation functional. We again divide into two subcases depending on the largeness or smallness of the product term $\displaystyle{\prod\limits_{j=2}^{m-1}  (\overline{A_j})^{\alpha_j}}$.

	\textbf{Subcase 1:} \qquad $\prod\limits_{j=2}^{m-1}  (\overline{A_j})^{\alpha_j}$  is large i.e \quad  $ \prod\limits_{j=2}^{m-1}  (\overline{A_j})^{\alpha_j} \geq \epsilon$. Let
	\[
	A_1 = \frac{\overline{A_m}}{\prod\limits_{j=2}^{m-1}  (\overline{A_j})^{\alpha_j}}(1+\mu (x)).
	\]
	Then the lower bound of the dissipation functional can be expressed as
	\[
	\left\Vert \prod\limits_{j=2}^{m-1}  (\overline{A_j})^{\alpha_j}A_1-\overline{A_m} \right\Vert^2_{\mathrm{L}^2(\Omega)} = \vert \overline{A_m} \vert^2
	\Vert \mu \Vert^2_{\mathrm{L}^2(\Omega)} = \vert \overline{A_m} \vert^2 \overline{{\mu}^2} \vert \Omega \vert. 
	\]
	Furthermore, the missing term or the deviation term $\displaystyle{\Vert\delta_{A_1}\Vert_{\mathrm{L}^2(\Omega)}}$ can be expressed as
	\[
	\Vert A_1- \overline{A_1} \Vert^2_{\mathrm{L}^2(\Omega)} = \frac{\left\vert \overline{A_m} \right\vert^2}{\left\vert \prod\limits_{j=2}^{m-1}  (\overline{A_j})^{\alpha_j} \right\vert^2} \Vert \mu - \overline{\mu} \Vert^2_{\mathrm{L}^2(\Omega)} \leq \frac{\vert \overline{A_m} \vert^2}{\left\vert \prod\limits_{j=2}^{m-1}  (\overline{A_j})^{\alpha_j} \right\vert^2} \overline{{\mu}^2} \vert \Omega \vert. 
	\]
	Hence we can compare the missing term with the lower bound of dissipation functional:
	\[
	\left\Vert A_1\prod\limits_{j=2}^{m-1}  (\overline{A_j})^{\alpha_j}-\overline{A_m} \right\Vert^2_{\mathrm{L}^2(\Omega)} \geq \epsilon \frac{\vert \Bar{A_m} \vert^2}{\left\vert \prod\limits_{j=2}^{m-1}  (\overline{A_j})^{\alpha_j} \right\vert^2} \overline{{\mu}^2} \vert \Omega \vert \ \geq \epsilon  \Vert A_1- \overline{A_1} \Vert^2_{\mathrm{L}^2(\Omega)}. 
	\]
	
	\textbf{Subcase 2:} \qquad $\prod\limits_{j=2}^{m-1}  (\overline{A_j})^{\alpha_j} < \epsilon$. In other words there exists  $ l\in\{2,\cdots,m-1\}$ such that $\displaystyle{\overline{A_l}\leq \Bigg(\epsilon\Bigg)^{\frac{1}{Q}}=:\nu}$ where $Q=\sum\limits_{i=2}^{m-1} \alpha_i$.
	
	Recall that we are treating the case corresponding to
	$\displaystyle{\max\limits_{ i=2,\cdots,m} \big \{ \Vert \delta_{A_i} \Vert^2_{\mathrm{L}^2(\Omega)}  \big \}\leq \epsilon}$.  Hence 
	\[ 
	\Vert \delta_{A_l}\Vert^2_{L^2(\Omega)}= \vert \Omega \vert (\overline{{A_l}^2}-{\overline{A_l}}^2)\leq \epsilon \implies \overline{{A_l}^2}\leq \nu(1+\vert \Omega \vert^{-1}).
	\]
	Furthermore, thanks to the mass conservation property, we arrive at the following lower bound:
	\[
	{\overline{A_m}}^2=\overline{{A_m}^2}-\vert \Omega \vert^{-1}\Vert \delta_{A_m}\Vert^2_{\mathrm{L}^2(\Omega)}= \overline{{A_m}^2}+\overline{{A_l}^2}-\overline{{A_l}^2}-\vert \Omega \vert^{-1}\Vert \delta_{A_m}\Vert^2_{\mathrm{L}^2(\Omega)} \geq M_{l}-\nu (1+2 \vert \Omega \vert^{-1}).
	\]
	Using the algebraic inequality $(p-q)^2\geq  \frac{1}{2}p^2-q^2$, we arrive at
	\[
	\left\Vert  \prod\limits_{j=2}^{m-1}  (\overline{A_j})^{\alpha_j}A_1-\overline{A_m} \right\Vert^2_{\mathrm{L}^2(\Omega)} \geq \frac{\vert \Omega \vert}{2}\left( -2\prod\limits_{j=2}^{m-1}  (\overline{A_j})^{2\alpha_j}\overline{A^2_1}+{\overline{A_m}}^2 \right). 
	\]
	Thanks to the mass conservation property, we obtain
	\[
	\left\Vert  \prod\limits_{j=2}^{m-1}  (\overline{A_j})^{\alpha_j}A_1-\overline{A_m} \right\Vert^2_{\mathrm{L}^2(\Omega)} \geq 	\frac{\vert \Omega \vert}{2} \Bigg[{M_{l}}-\nu (1+2 \vert \Omega \vert^{-1}) -2\epsilon^2 M_{1} \Bigg] \geq
	\frac{\vert \Omega \vert}{2} \Bigg[{M_{l}}-\nu (1+2 \vert \Omega \vert^{-1}) -2\nu M_{1} \Bigg]. 
	\]
	
	Choose $\displaystyle{\nu =\frac{1}{2} \times \frac{1}{1+2 \vert \Omega \vert^{-1}+2M_{1}}{M_{l}}}$. Note that $\nu<1$. It yields
	\[
	\left\Vert  \prod\limits_{j=2}^{m-1}  (\overline{A_j})^{\alpha_j}A_1-\overline{A_m} \right\Vert^2_{\mathrm{L}^2(\Omega)} \geq \frac{\vert \Omega \vert M_{l} }{4} \geq \frac{\vert \Omega \vert M_{l}}{4M_{1}} \Vert A_1- \overline{A_1} \Vert^2_{\mathrm{L}^2(\Omega)}.
	\]	
	
	Define the following time independent constant
	\[ \displaystyle{C_1:=\min\left\{\frac{\epsilon}{2},\frac{\vert \Omega \vert M_{l}}{8M_{1}}, \frac{\eta}{2}, \frac{\epsilon \vert \Omega \vert^{-2/N}}{M_1} \min\limits_{i=2\cdots,m} \{ d_i\}\frac{1}{P(\Omega)}\right\}}.
	\]
	Thus we have
	\[
	D(a_1,\cdots,a_m) \geq C_1(1+t)^{-\frac{N-2}{N-1}}\Vert A_1- \overline{A_1} \Vert^2_{\mathrm{L}^2(\Omega)}.
	\]
	
	Choose $\hat{C}:=\frac{1}{2}\min\left\{ C_1,1,\frac{\alpha_id_i}{C(\Omega)}: i=2,\cdots,m\right\}$. We finally arrive at the following lower bound
	\[
	D(a_1,\cdots,a_m) \geq \hat{C}(1+t)^{-\frac{N-2}{N-1}}\left(\sum\limits_{i=1}^{m}  \Vert  \delta_{A_i} \Vert_{\mathrm{L}^{2}(\Omega)}^2+\left\Vert A_m-A_1\displaystyle \prod\limits_{j=2}^{m-1} (A_j)^{\alpha_j}\right\Vert_{\mathrm{L}^2(\Omega)}^2\right).
	\]
\end{proof}

\vspace{.2cm}

We can mimic the computations in the above proof to demonstrate an analogous result to Proposition \ref{Dissipatation theorem d_1} in dimensions $N\leq 3$. Without proof, we simply state the results below.

\begin{Thm}\label{Dissipatation theorem d_1, N=3}
	Let the dimension $N\leq 3$. Let $(a_1,\cdots,a_m)$ be the positive smooth solution to the degenerate triangular reaction-diffusion system \eqref{degenerate 1}. Then there exists a positive constant $\hat{C}$, independent of time, such that
	\[
	D(a_1,\cdots,a_m) \geq \hat{C}(1+t)^{-\frac{1}{2}}\left(\sum\limits_{i=1}^{m}  \Vert  \delta_{A_i} \Vert_{\mathrm{L}^{2}(\Omega)}^2+\left\Vert A_m-A_1\displaystyle \prod\limits_{j=2}^{m-1} (A_j)^{\alpha_j}\right\Vert_{\mathrm{L}^2(\Omega)}^2\right).
	\]
\end{Thm}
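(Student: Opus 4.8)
The plan is to follow the two-case architecture of the proof of Proposition \ref{Dissipatation theorem d_1} essentially verbatim, making only two substitutions dictated by the dimension: the Sobolev exponent $\tfrac{2N}{N-2}$ is replaced throughout by the uniform exponent $6$ (legitimate for every $N\le 3$, this being exactly the embedding underlying the second branch of \eqref{dissipation-L_2N/N-2}), and the $\mathrm{L}^{N/2}$ control on $a_1$ is replaced by the estimate $\Vert a_1\Vert_{\mathrm{L}^{3/2}(\Omega)}\le\tilde K(1+t)^{1/2}$ of Lemma \ref{dim 1 a_1 L^n}. Fix $\epsilon\in(0,1)$ and start from the lower bound $D\ge\sum_{i=2}^m\frac{\alpha_i d_i}{P(\Omega)}\Vert\delta_{A_i}\Vert_{\mathrm{L}^6(\Omega)}^2+\Vert A_m-A_1\prod_{j=2}^{m-1}(A_j)^{\alpha_j}\Vert_{\mathrm{L}^2(\Omega)}^2$ furnished by \eqref{dissipation-L_2N/N-2}.

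In the large-deviation regime $\max_{i=2,\dots,m}\Vert\delta_{A_i}\Vert_{\mathrm{L}^2(\Omega)}^2\ge\epsilon$, H\"older gives $\Vert\delta_{A_i}\Vert_{\mathrm{L}^6(\Omega)}^2\ge|\Omega|^{-2/3}\Vert\delta_{A_i}\Vert_{\mathrm{L}^2(\Omega)}^2$, so the first sum in $D$ is bounded below by the time-independent constant $\epsilon|\Omega|^{-2/3}P(\Omega)^{-1}\min_i\{d_i\}$; since mass conservation yields $\Vert\delta_{A_1}\Vert_{\mathrm{L}^2(\Omega)}^2\le M_1$, this constant absorbs a multiple of $\Vert\delta_{A_1}\Vert_{\mathrm{L}^2(\Omega)}^2$, reproducing the inequality chain of the Proposition with $|\Omega|^{-2/3}$ in place of $|\Omega|^{-2/N}$.

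In the small-deviation regime $\max_{i}\Vert\delta_{A_i}\Vert_{\mathrm{L}^2(\Omega)}^2\le\epsilon$, I decompose the rate-function term as in the Proposition, apply $(p-q)^2\ge\tfrac12 p^2-q^2$ to peel off $\tfrac12\Vert\overline{A_m}-A_1\prod(\overline{A_j})^{\alpha_j}\Vert_{\mathrm{L}^2(\Omega)}^2$, and use the mean-value identity for $\prod(\delta_{A_j}+\overline{A_j})^{\alpha_j}-\prod(\overline{A_j})^{\alpha_j}$. The resulting error terms are handled by H\"older with the exponents $\tfrac32$ and $3$ applied to the product $a_1\,|\delta_{A_i}|^2$, which produces precisely the $\mathrm{L}^{3/2}$ norm of $a_1$ and the $\mathrm{L}^6$ norm of $\delta_{A_i}$; this is the point where Lemma \ref{dim 1 a_1 L^n} enters and supplies the factor $(1+t)^{1/2}$ responsible for the final rate. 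The mean-value prefactor is bounded by a time-independent $K_1$ using the mass bound \eqref{mass bound} and the uniform supremum bound on $a_2,\dots,a_{m-1}$ from Proposition \ref{L^infty estimation of a_i}. Reorganizing via \eqref{dissipation-L_2N/N-2} then gives the intermediate estimate $D\ge\tfrac{\eta}{2}(1+t)^{-1/2}\Vert\overline{A_m}-A_1\prod(\overline{A_j})^{\alpha_j}\Vert_{\mathrm{L}^2(\Omega)}^2$ with $\eta=\min\{\alpha_id_i,1\}\big/(1+P(\Omega)|\Omega|^{2/3}+P(\Omega)K_1\tilde K)$.

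To conclude I treat the two subcases on $\prod(\overline{A_j})^{\alpha_j}$ exactly as in the Proposition: writing $A_1=\overline{A_m}\big(\prod(\overline{A_j})^{\alpha_j}\big)^{-1}(1+\mu)$ when that product is $\ge\epsilon$, and using the mass-conservation lower bound on $\overline{A_m}^2$ together with $(p-q)^2\ge\tfrac12 p^2-q^2$ when some $\overline{A_l}$ is small, I obtain $\Vert\overline{A_m}-A_1\prod(\overline{A_j})^{\alpha_j}\Vert_{\mathrm{L}^2(\Omega)}^2\ge c\,\Vert\delta_{A_1}\Vert_{\mathrm{L}^2(\Omega)}^2$. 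Combining both regimes with the terms for $\delta_{A_2},\dots,\delta_{A_m}$ and the rate function already present in $D$ yields the claim for a suitable $\hat C$. The main obstacle, as in dimension $N\ge4$, is the control of the mixed term $a_1\,|\delta_{A_i}|^2$ in the small-deviation regime: the only integrability available for $a_1$ is $\mathrm{L}^{3/2}$ with polynomial growth $(1+t)^{1/2}$, and it is exactly this loss that forces the time-dependent factor $(1+t)^{-1/2}$ rather than a uniform constant. The welcome simplification here is that the $\mathrm{L}^p$ control on $a_m$ (hence on $a_1$) is \emph{unconditional} for $N\le3$, coming from Propositions \ref{a_i to a_m L^9} and \ref{a_i to a_m L^p}, so no closeness hypothesis on the diffusion coefficients is needed.
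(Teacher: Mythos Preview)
Your proposal is correct and follows precisely the approach the paper indicates: it mimics the proof of Proposition~\ref{Dissipatation theorem d_1} verbatim, replacing the Sobolev exponent $\tfrac{2N}{N-2}$ by $6$ (hence H\"older exponents $\tfrac{3}{2}$ and $3$ in place of $\tfrac{N}{2}$ and $\tfrac{N}{N-2}$) and invoking Lemma~\ref{dim 1 a_1 L^n} in place of Lemma~\ref{a_1 estimate} to introduce the factor $(1+t)^{1/2}$. You also correctly identify that the requisite $\mathrm{L}^p$ control on $a_m$ and the $\mathrm{L}^\infty$ bound on $a_2,\dots,a_{m-1}$ are available unconditionally in dimension $N\le3$ via Propositions~\ref{a_i to a_m L^9}, \ref{a_i to a_m L^p}, and~\ref{L^infty estimation of a_i}, which is why no closeness assumption is needed here.
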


\vspace{.2cm}

The following two results give subexponential decay (in time) estimate on the relative entropy following the method of entropy. The first is for dimensions $N\geq 4$ and the second is for dimensions $N\leq 3$.

\vspace{.2cm}

\begin{Thm}\label{entropy decay }
	Let the dimension $N\geq4$. Let $(a_1,\cdots,a_m)$ be the smooth positive solution to the degenerate triangular reaction-diffusion system \eqref{degenerate 1}. Suppose further that the closeness assumption \eqref{tri closeness condition imp} is satisfied by the diffusion coefficients $d_m$ and $d_i$ for at least one $i\in \{2,\cdots,m-1\}$. Then, for any positive $\epsilon\ll 1$, there exists a finite time $T_{\epsilon}$ and two time independent positive constants $\lambda_1$ and $\lambda_2$ (depending on $\epsilon$, domain $\Omega$, dimension $N$ and initial data) such that
	\[
	E(a_1,\cdots,a_m)- E(a_{1\infty},\cdots,a_{m\infty})
	\leq \lambda_1 e^{-\lambda_2 (1+t)^{\frac{1-\epsilon}{N-1}}} \qquad \mbox{for all} \ t\geq T_{\epsilon}.
	\]
\end{Thm}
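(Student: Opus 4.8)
The plan is to run the entropy method: I combine the improved dissipation lower bound of Proposition~\ref{Dissipatation theorem d_1} with an entropy--entropy-dissipation inequality and then close the loop with a Gr\"onwall argument, paying for the missing uniform $\mathrm{L}^\infty$ bound with a time-dependent, but only logarithmically growing, factor coming from the supremum of the solution.

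\emph{First step.} I convert the deviation sum into a lower bound for the dissipation. Chaining the elementary decomposition of $\sum_i\Vert A_i-A_{i\infty}\Vert^2_{\mathrm{L}^2(\Omega)}$ recorded before the statement with the equilibration estimate of Theorem~\ref{ED} (the constant $C_{EB}$), and using $\alpha_1=1$ so that $A_1\prod_{j=2}^{m-1}A_j^{\alpha_j}=\prod_{j=1}^{m-1}A_j^{\alpha_j}$, I obtain
\[
\sum_{i=1}^m\Vert A_i-A_{i\infty}\Vert^2_{\mathrm{L}^2(\Omega)}\le 6(1+C_{EB})\Big(\sum_{i=1}^m\Vert\delta_{A_i}\Vert^2_{\mathrm{L}^2(\Omega)}+\big\Vert A_m-A_1\prod_{j=2}^{m-1}A_j^{\alpha_j}\big\Vert^2_{\mathrm{L}^2(\Omega)}\Big).
\]
Feeding this into Proposition~\ref{Dissipatation theorem d_1} produces a constant $c_1>0$ with
\[
D(a_1,\cdots,a_m)\ge c_1(1+t)^{-\frac{N-2}{N-1}}\sum_{i=1}^m\Vert A_i-A_{i\infty}\Vert^2_{\mathrm{L}^2(\Omega)}.
\]

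\emph{Second step.} I bound the relative entropy above by the very same deviation sum. From the $\Gamma$-representation \eqref{Entropy Gamma function relation tri} and the bound \eqref{eq:Gamma-bound}, the relative entropy is controlled by $L(t)\sum_i\Vert A_i-A_{i\infty}\Vert^2_{\mathrm{L}^2(\Omega)}$, where $L(t):=\max_i\{1,\ln(\Vert a_i\Vert_{\mathrm{L}^\infty(\Omega)}+1)+\vert\ln a_{i\infty}\vert\}$. Here the uniform bound $K_\infty$ of Theorem~\ref{L^infty estimation of a_i} controls $a_2,\cdots,a_{m-1}$, while the polynomial growth $\Vert a_1\Vert_{\mathrm{L}^\infty},\Vert a_m\Vert_{\mathrm{L}^\infty}\le K_{pol}(1+t)^\mu$ of Theorem~\ref{polynomial on t} controls the remaining two species, so $L(t)\le C(1+\ln(1+t))$. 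Writing $H(t):=E(a_1,\cdots,a_m)-E(a_{1\infty},\cdots,a_{m\infty})\ge 0$ and combining the previous step with the identity $\tfrac{dE}{dt}=-D$, I arrive at the differential inequality
\[
H'(t)\le -\,\frac{c_2}{L(t)}\,(1+t)^{-\frac{N-2}{N-1}}\,H(t).
\]

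\emph{Third step.} I absorb the logarithm. Fixing $\delta=\frac{\epsilon}{N-1}$ and using $1+\ln(1+t)\le C_\delta(1+t)^\delta$, there is a time $T_\epsilon$ after which $L(t)\le C_\delta'(1+t)^{\epsilon/(N-1)}$, whence
\[
H'(t)\le -c_3\,(1+t)^{-\frac{N-2+\epsilon}{N-1}}\,H(t),\qquad t\ge T_\epsilon.
\]
Since $N\ge 4$ the exponent satisfies $\frac{N-2+\epsilon}{N-1}<1$, so integrating via Gr\"onwall gives
\[
H(t)\le H(T_\epsilon)\exp\Big(-\lambda_2\big[(1+t)^{\frac{1-\epsilon}{N-1}}-(1+T_\epsilon)^{\frac{1-\epsilon}{N-1}}\big]\Big),\qquad \lambda_2=c_3\tfrac{N-1}{1-\epsilon},
\]
and setting $\lambda_1:=H(T_\epsilon)\exp\big(\lambda_2(1+T_\epsilon)^{\frac{1-\epsilon}{N-1}}\big)$ delivers the claimed decay. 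The main obstacle is precisely the absence of a uniform-in-time $\mathrm{L}^\infty$ bound on $a_1$ and $a_m$: it forces the factor $L(t)$ into the entropy--entropy-dissipation inequality, and the entire argument hinges on the fact that this growth is only logarithmic (Theorem~\ref{polynomial on t}), so that it can be swallowed by an arbitrarily small power $\frac{\epsilon}{N-1}$. This is the source of the $\epsilon$-loss and of the sub-exponential rate $\frac{1-\epsilon}{N-1}$, in contrast to the genuinely exponential decay available in the non-degenerate setting.
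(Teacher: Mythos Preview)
Your argument is correct and follows the same route as the paper: you rewrite the relative entropy via the $\Gamma$-function, control the resulting $\mathrm L^\infty$ factor by the uniform bound of Theorem~\ref{L^infty estimation of a_i} together with the polynomial bound of Theorem~\ref{polynomial on t}, combine the decomposition of $\sum_i\Vert A_i-A_{i\infty}\Vert_{\mathrm L^2}^2$ with Theorem~\ref{ED} and Proposition~\ref{Dissipatation theorem d_1}, absorb the logarithm into $(1+t)^{\epsilon/(N-1)}$ beyond some $T_\epsilon$, and close with Gr\"onwall. The only cosmetic differences are the order of the two steps and that you integrate from $T_\epsilon$ rather than from $0$; your value $\lambda_2=c_3\tfrac{N-1}{1-\epsilon}$ is in fact the correct outcome of the integration.
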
 

\begin{proof} The relative entropy can be written as
	\begin{align*}
		E(a_1,\cdots,a_m)- E(a_{1\infty},\cdots,a_{m\infty})
		= \sum\limits_{i=1}^{m}\int_{\Omega}&\alpha_i(a_i \ln{a_i}-a_i-a_{i\infty} \ln{a_{i\infty}}+a_{i,\infty})\\
		=& \sum\limits_{i=1}^{m}\int_{\Omega}\alpha_i (a_i \ln{\frac{a_i}{a_{i\infty}}}-a_i+a_{i\infty}).
	\end{align*}
	
	Let us define a function $\Gamma:(0,\infty)\times(0,\infty)\to\mathbb{R}$ as follows:
	\begin{equation}
		\Gamma(x,y) :=
		\left\{
		\begin{aligned}
			&\frac{x \ln\left(\frac{x}{y}\right)-x+y}{\left(\sqrt{x}-\sqrt{y}\right)^2}  \qquad \mbox{ for }x\not=y,
			\\
			&2 \qquad  \qquad \qquad \qquad \ \ \ \mbox{ for }x=y.
		\end{aligned}\right.
	\end{equation}
	It can be shown (see \cite[Lemma 2.1, p.162]{DF06} for details) that the above defined function satisfies the following bound:
	\begin{align}\label{eq:Gamma-bound}
		\Gamma(x,y)\leq C_{\Gamma}\max\left\{1,\ln\left(\frac{x}{y}\right)\right\}
	\end{align}
	for some positive constant $C_{\Gamma}$. Note that using the function $\Gamma$ defined above, the relative entropy can be rewritten as 
	\[
	E(a_1,\cdots,a_m)-E(a_{1\infty},\cdots,a_{1\infty})= \sum\limits_{1}^{m}\int_{\Omega}\alpha_i \Gamma(a_i,a_{i\infty})(A_i-A_{i\infty})^2
	\]
	Using the aforementioned bound for $\Gamma$, we obtain
	\begin{align*}
		E(a_1,\cdots,a_m)&-E(a_{1\infty},\cdots,a_{m\infty})\\
		\leq & \max\limits_{i=1,\cdots,m}\{\alpha_i\}C_{\Gamma}\max\limits_{i=1,\cdots,m} \big\{1,\ln{(\Vert a_i\Vert_{\mathrm{L}^{\infty}(\Omega)}+1)}+\vert \ln{a_{i\infty}}\vert\big\}\sum\limits_{1}^{m}\Vert A_i-A_{i\infty}\Vert_{\mathrm{L}^2(\Omega)}^2.
	\end{align*} 
	
	Recall that $\mathrm{L}^{\infty}(\Omega_t)$ norms of $a_2,\cdots,a_{m-1}$ are uniformly bounded (Proposition \ref{L^infty estimation of a_i})  and that the $\mathrm{L}^{\infty}(\Omega_t)$ norms of $a_1$ and $a_m$ can grow atmost polynomially in time (Proposition \ref{polynomial on t}).  Consider the following constant
	\[
	C_{1}:=\max\limits_{1\leq i\leq m}\{\alpha_i\}C_{\Gamma}\max\limits_{1\leq i\leq m}\left(1+\vert \ln{a_{i\infty}}\vert+\vert \ln{(K_{pol})}\vert+\mu\right),
	\] 
	where $K_{pol}$ as in Proposition \ref{polynomial on t}. Then we get
	\begin{align}\label{entropy decay 1}
		E(a_1,\cdots,a_m)&-E(a_{1\infty},\cdots,a_{m\infty})  
		\leq   C_{1}(1+\ln{(1+t)})\sum\limits_{i=1}^{m}\Vert A_i-A_{i\infty}\Vert_{\mathrm{L}^2(\Omega)}^2. \nonumber
	\end{align}
	So, other than the logarithmic growth of the supremum norm of the solution, the growth of relative entropy depends on the $\mathrm{L}^2(\Omega)$ norm of the deviation of the $A_i$ from $A_{i\infty}$. Observe from \eqref{dissipation-L_2N/N-2} and \eqref{dissipation-L_2N/N-2 d_m} that the dissipation functional is also related to this $\mathrm{L}^2(\Omega)$ norm of $\delta_{A_i}$. Observe that
	\[
	\Vert A_i-A_{i\infty}\Vert_{\mathrm{L}^2(\Omega)}^2 \leq 3\left( \Vert A_i-\overline{A_i}\Vert_{\mathrm{L}^2(\Omega)}^2+\Vert \overline{A_i}-\sqrt{\overline{A_i^2}}\Vert_{\mathrm{L}^2(\Omega)}^2+\Vert \sqrt{\overline{A_i^2}}-A_{i\infty}\Vert_{\mathrm{L}^2(\Omega)}^2\right). 
	\]
	The following observation says that the first term on the right hand side dominate the second term:
	\begin{align*}
		\Vert \overline{A_i}-\sqrt{\overline{A_i^2}}\Vert_{\mathrm{L}^2(\Omega)}^2= &\vert\Omega\vert \big\vert
		\overline{A_i}-\sqrt{\overline{A_i^2}} \big\vert^2 = \vert\Omega\vert \left( \overline{A_i^2}+\overline{A_i}^2-2\overline{A_i}\sqrt{\overline{A_i^2}} \right)\\ &
		\leq\vert\Omega\vert \left( \overline{A_i^2}-\overline{A_i}^2\right) \leq \Vert A_i-\overline{A_i}\Vert_{\mathrm{L}^2(\Omega)}^2.
	\end{align*}
	Here we used the fact that $\displaystyle{\overline{A_i}\leq \sqrt{\overline{A_i^2}}}$, thanks to H\"older inequality. Hence
	\[
	\Vert A_i-A_{i\infty}\Vert_{\mathrm{L}^2(\Omega)}^2 \leq 6\left( \Vert A_i-\overline{A_i}\Vert_{\mathrm{L}^2(\Omega)}^2+\Vert \sqrt{\overline{A_i^2}}-A_{i\infty}\Vert_{\mathrm{L}^2(\Omega)}^2\right). 
	\]
	
	Hence we deduce
	\[
	\sum\limits_{i=1}^m\Vert A_i-A_{i\infty}\Vert_{\mathrm{L}^2(\Omega)}^2 \leq 6\left(\sum\limits_{i=1}^m \Vert A_i-\overline{A_i}\Vert_{\mathrm{L}^2(\Omega)}^2+\sum\limits_{i=1}^m\Vert \sqrt{\overline{A_i^2}}-A_{i\infty}\Vert_{\mathrm{L}^2(\Omega)}^2\right).
	\]
	
	Next we borrow a result from \cite{FLT20} (see Appendix, Theorem \ref{ED}) which says that there exists $C_{EB}>0$, depending only on the domain and the equilibrium state $(a_{1\infty},\cdots,a_{m\infty})$ such that
	
	\[
	\sum\limits_{i=1}^{m}\left\Vert \sqrt{\overline{A_i^2}}-A_{i\infty} \right\Vert_{\mathrm{L}^2(\Omega)}^2 \leq C_{EB} \left ( \sum\limits_{i=1}^{m}  \Vert  \delta_{A_i} \Vert_{\mathrm{L}^{2}(\Omega)}^2+\left\Vert A_m-\displaystyle \prod_{j=1}^{m-1} (A_j)^{\alpha_j}\right\Vert_{\mathrm{L}^2(\Omega)}^2\right).
	\]
	
	Putting it all together along with the result from Proposition \ref{Dissipatation theorem d_1}, we arrive at
	\begin{align*}
		E(a_1,\cdots,a_m)-E(a_{1\infty},\cdots,a_{m\infty}) 
		\leq 6C_{1}(1+\ln{(1+t)})\frac{1+C_{EB}}{\hat{C}}(1+t)^{\frac{N-2}{N-1}}D(a_1,\cdots,a_m).
	\end{align*}
	Let us denote a new time independent positive constant 
	\[
	\displaystyle{C_{2}:=6C_{1}\frac{1+C_{EB}}{\hat{C}}}.
	\]
	We arrive at the following entropy-entropy dissipation inequality
	\begin{align}\label{entropy decay 2 tri}
		E(a_1,\cdots,a_m)-E(a_{1\infty},\cdots,a_{m\infty}) 
		\leq  C_{2}(1+\ln{(1+t)}) (1+t)^{\frac{N-2}{N-1}}D(a_1,\cdots,a_m).
	\end{align}
	If we choose a positive $\epsilon\ll 1$, then there exists $t=T_{\epsilon}$, such that $\displaystyle{\ln{(1+t)}<(1+t)^{\frac{\epsilon}{N-1}}}$ for all $\displaystyle{t\geq T_{\epsilon}}$. Consider the  positive constant $\displaystyle{C_{3}=2C_{2}(1+\ln{(1+T_{\epsilon})})}$.  Then the above relation \eqref{entropy decay 2 tri} between the relative entropy and the entropy dissipation can be written as
	\begin{align}\label{entropy decay 3 tri}
		E(a_1,\cdots,a_m)-E(a_{1\infty},\cdots,a_{m\infty})\leq  C_{3}(1+t)^{\frac{N-2+\epsilon}{N-1}}D(a_1,\cdots,a_m). 
	\end{align}
	
	Furthermore, the following differential relation holds
	\[
	\frac{d}{dt}\big(E(a_1,\cdots,a_m)-E(a_{1\infty},\cdots,a_{m\infty})\big)=-D(a_1,\cdots,a_m).
	\]
	Using the relation between relative entropy and entropy dissipation from \eqref{entropy decay 3 tri}, yields
	\begin{align*}
		\frac{d}{dt}\big(E(a_1,\cdots,a_m)-E(a_{1\infty},\cdots,a_{m\infty})\big)
		\leq- \frac{1}{C_{3}} (1+t)^{-\frac{N-2+\epsilon}{N-1}}\big(E(a_1,\cdots,a_m)-E(a_{1\infty},\cdots,a_{m\infty})\big).
	\end{align*}
	Hence Gr\"onwall lemma yields the following sub-exponential decay of relative entropy:
	\begin{align*} 
		E(a_1,\cdots,a_m)&-E(a_{1\infty},\cdots,a_{m\infty}) \\ 
		\leq &  \big(E(a_{1,0},\cdots,a_{m,0})-E(a_{1\infty},\cdots,a_{m\infty})\big)e^{\frac{1-\epsilon}{C_{3}(N-1)}}e^{-\frac{1-\epsilon}{C_{3}(N-1)}(1+t)^{\frac{1-\epsilon}{N-1}}}.
	\end{align*}
	
	Let $\displaystyle{C_{4}:=E(a_{1,0},\cdots,a_{m,0})-E(a_{1\infty},\cdots,a_{m\infty})}$. Furthermore, denote two time independent positive constant $\lambda_1$ and $\lambda_2$, in the following way:
	
	\begin{equation}\label{main constant 1 tri}
		\left \{
		\begin{aligned}
			\lambda_1=& C_{4}e^{\frac{1-\epsilon}{C_{3}(N-1)}}
			\\
			\lambda_2=& \frac{1-\epsilon}{C_{3}(N-1)}.
		\end{aligned}
		\right .
	\end{equation}
	With these constants the required decay estimate follows.
\end{proof}

We can mimic the computations in the above proof to demonstrate an analogous result to Theorem  \ref{entropy decay } in dimensions $N\leq 3$. Without proof, we simply state the result below.

\vspace{.3cm}

\begin{Thm}\label{entropy decay N=1,2,3 }
	Let dimension $N\leq 3$. Let $(a_1,\cdots,a_m)$ be the positive smooth solution to the degenerate triangular reaction-diffusion system \eqref{degenerate 1}. Then for any positive $\epsilon\ll 1$, there exists a finite time $T_{\epsilon}$ and two time independent positive constant $\lambda_1$ and $\lambda_2$ (depending on $\epsilon$, domain $\Omega$, dimension $N$ and initial data), such that 
	\[
	E(a_1,\cdots,a_m)-E(a_{1\infty},\cdots,a_{m\infty})
	\leq  \lambda_1e^{-\lambda_2(1+t)^{\frac{1-\epsilon}{2}}} \qquad \forall t\geq T_{\epsilon}.
	\]
\end{Thm}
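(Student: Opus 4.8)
The plan is to transcribe verbatim the argument of Theorem \ref{entropy decay }, substituting the dimension $N\le 3$ ingredients for their higher-dimensional analogues. The starting point is the $\Gamma$-representation of the relative entropy,
\[
E(a_1,\cdots,a_m)-E(a_{1\infty},\cdots,a_{m\infty})=\sum_{i=1}^{m}\int_{\Omega}\alpha_i\,\Gamma(a_i,a_{i\infty})(A_i-A_{i\infty})^2,
\]
combined with the pointwise bound \eqref{eq:Gamma-bound}. As in the case $N\ge 4$, this reduces control of the relative entropy to the $\mathrm{L}^2(\Omega)$ deviations $\|A_i-A_{i\infty}\|_{\mathrm{L}^2(\Omega)}^2$, modulated by $\max_i\{1,\ln(\|a_i\|_{\mathrm{L}^\infty(\Omega)}+1)+|\ln a_{i\infty}|\}$. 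The necessary input is that, in dimensions $N\le 3$, the supremum norms of $a_2,\cdots,a_{m-1}$ are uniformly bounded while those of $a_1$ and $a_m$ grow at most polynomially in time (Proposition \ref{polynomial on t}); this forces the modulating factor to be $O(1+\ln(1+t))$.

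First I would split each $\|A_i-A_{i\infty}\|_{\mathrm{L}^2(\Omega)}^2$ into the deviation part $\|\delta_{A_i}\|_{\mathrm{L}^2(\Omega)}^2$ and the equilibrium part $\|\sqrt{\overline{A_i^2}}-A_{i\infty}\|_{\mathrm{L}^2(\Omega)}^2$, exactly as in the earlier proof, using that the intermediate term $\|\overline{A_i}-\sqrt{\overline{A_i^2}}\|_{\mathrm{L}^2(\Omega)}^2$ is dominated by $\|\delta_{A_i}\|_{\mathrm{L}^2(\Omega)}^2$ (a consequence of H\"older's inequality). The borrowed stationary estimate (Theorem \ref{ED}) then bounds $\sum_i\|\sqrt{\overline{A_i^2}}-A_{i\infty}\|_{\mathrm{L}^2(\Omega)}^2$ by $\sum_i\|\delta_{A_i}\|_{\mathrm{L}^2(\Omega)}^2$ plus the reaction term. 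At this point the crucial substitution is to invoke Proposition \ref{Dissipatation theorem d_1, N=3} in place of Proposition \ref{Dissipatation theorem d_1}: it supplies the lower bound
\[
D(a_1,\cdots,a_m)\ge \hat{C}(1+t)^{-\frac12}\left(\sum_{i=1}^{m}\|\delta_{A_i}\|_{\mathrm{L}^2(\Omega)}^2+\Big\|A_m-A_1\prod_{j=2}^{m-1}(A_j)^{\alpha_j}\Big\|_{\mathrm{L}^2(\Omega)}^2\right),
\]
which reinstates the missing gradient contribution of $a_1$ with the time weight $(1+t)^{-1/2}$ rather than $(1+t)^{-(N-2)/(N-1)}$, and does so \emph{unconditionally}, with no closeness hypothesis on the diffusion coefficients.

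Assembling these pieces yields the entropy--entropy dissipation inequality $E(a_1,\cdots,a_m)-E(a_{1\infty},\cdots,a_{m\infty})\le C(1+\ln(1+t))(1+t)^{1/2}D(a_1,\cdots,a_m)$. Given $0<\epsilon\ll 1$, I would then fix a finite $T_\epsilon$ so that $\ln(1+t)<(1+t)^{\epsilon/2}$ for all $t\ge T_\epsilon$, absorbing the logarithm and upgrading the exponent $\tfrac12$ to $\tfrac{1+\epsilon}{2}$. Feeding the dissipation relation $\tfrac{d}{dt}\big(E(a_1,\cdots,a_m)-E(a_{1\infty},\cdots,a_{m\infty})\big)=-D(a_1,\cdots,a_m)$ into this inequality produces
\[
\frac{d}{dt}\big(E(a_1,\cdots,a_m)-E(a_{1\infty},\cdots,a_{m\infty})\big)\le -\frac{1}{C_3}(1+t)^{-\frac{1+\epsilon}{2}}\big(E(a_1,\cdots,a_m)-E(a_{1\infty},\cdots,a_{m\infty})\big),
\]
and a Gr\"onwall argument gives decay at the rate $\exp\!\big(-\lambda_2(1+t)^{1-\frac{1+\epsilon}{2}}\big)=\exp\!\big(-\lambda_2(1+t)^{\frac{1-\epsilon}{2}}\big)$, as claimed.

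Since all the analytic substance resides in Proposition \ref{Dissipatation theorem d_1, N=3} and Proposition \ref{polynomial on t}, which are assumed, the present argument is essentially bookkeeping; the only points requiring genuine care are verifying the exponent arithmetic $1-\tfrac{1+\epsilon}{2}=\tfrac{1-\epsilon}{2}$, and checking that the hypothesis $\|a_m\|_{\mathrm{L}^p(\Omega_{\tau,\tau+1})}\le C_0$ with $p>N$ feeding Proposition \ref{polynomial on t} is indeed available for every $N\le 3$. The latter follows from the uniform $\mathrm{L}^9(\Omega_{\tau,\tau+1})$ estimate on $a_m$ (obtained by combining Proposition \ref{a_i to a_m L^9} with Proposition \ref{a_i to a_m L^p}), since $9>N$, so that no closeness condition on the non-zero diffusion coefficients is ever needed.
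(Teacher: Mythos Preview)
Your proposal is correct and follows precisely the approach the paper indicates: the paper does not give a separate proof but states that one can ``mimic the computations'' of Theorem \ref{entropy decay } with the $N\le 3$ ingredients, and you have done exactly that, substituting Proposition \ref{Dissipatation theorem d_1, N=3} for Proposition \ref{Dissipatation theorem d_1} and verifying both the exponent arithmetic and that the hypothesis of Proposition \ref{polynomial on t} is met unconditionally via the $\mathrm{L}^9$ estimate on $a_m$.
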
 

\vspace{.2cm}

We are now equipped to prove our main result of this section.
\begin{proof}[Proof of Theorem \ref{theorem convergence 1 tri}:]
	We have already obtained sub-exponential decay (in time) of the relative entropy in Proposition \ref{entropy decay } (for dimension $N\ge4$) and in Proposition \ref{entropy decay N=1,2,3 } (for dimension $N<4$). Hence the sub-exponential decay in the $\mathrm L^1$-norm is a direct consequence of the following Czisz\'ar-Kullback-Pinsker type inequality that relates relative entropy and the $\mathrm L^1$-norm (see Appendix, Theorem \ref{lower bound of entropy}):
	\begin{align*}
		E(a_i:i=1,& \cdots,m)-E(a_{i\infty}:i=1,\cdots,m) \geq C_{LE}\sum_{i=1}^{m}\Vert a_i-a_{i\infty}\Vert_{\mathrm{L}^1(\Omega)}^{2}. 
	\end{align*}
\end{proof}

\section{Large time behaviour: degenerate system $d_m=0$}\label{dm=0 convergence section}

This section deals with the degenerate system \eqref{triangular d_m=0} where the diffusion coefficient $d_m$ vanish. Our ultimate goal is to employ the method of entropy to demonstrate that the solution to the degenerate system \eqref{degenerate 1} converges to the equilibrium states given by \eqref{triangular equilibrium state 1}-\eqref{triangular equibrium state 2}. We restrict ourselves to dimension $N\leq 2$ as the global-in-time positive classical solution is known for the  degenerate system \eqref{triangular d_m=0} up to dimension $N\leq 2$ [HS]. Here, we only present the case for $N=2$, the case corresponding to $N=1$ will go along the same line. We recall the differential equality
\[
\frac{dE}{dt}(a_1,\cdots,a_m)=-D(a_1,\cdots,a_m),
\]
which ensures the decay of entropy with time. Integrating the above relation in time on $[0,T]$ yields
\begin{flalign*}
	& \sum\limits_{i=1}^m \sup\limits_{t\in[0,T]}\int_{\Omega}  \alpha_i(a_i(\ln{a_i}-1)+1)+ \sum\limits_{i=1}^{m}\int_{\Omega_T} \alpha_i d_i \frac{\vert \nabla a_i \vert^2}{a_i}\nonumber \\ & +\int_{\Omega_T}  \left(a_m-\displaystyle \prod\limits_{j=1}^{m-1} (a_j)^{\alpha_j}\right)\ln{\left(\frac{a_m}{\prod\limits_{j=1}^{m-1} (a_j)^{\alpha_j}}\right)}\leq E(a_{1,0},\cdots,a_{m,0}). 
\end{flalign*}

The above estimates helps us conclude the following bounds
\begin{equation}\label{Uniform-integrability traingular}
	\left \{
	\begin{aligned}
		&\int_{\Omega} \sum_1^m a_i \leq e^2\vert\Omega\vert+\frac{1}{\min\limits_{1\leq i\leq m}\alpha_i}=\Tilde{M}_1 
		\\
		\int_{\Omega} \sum_1^m  \vert a_i\ln{a_i}\vert & \leq \Tilde{M}_1+\frac{1}{\min\limits_{1\leq i\leq m}\alpha_i}E(a_{i,0};i=1,2\cdots,m) )+ e^{-1}\vert\Omega\vert.
	\end{aligned}
	\right .
\end{equation}

\vspace{.2cm}

The following estimate is a particular   $\mathrm{L}^p(\Omega_{0,2})$ integral estimate of each diffusive species.

\vspace{.2cm}

\begin{Prop}\label{a_i L^p d_m [0,1]}
	Let dimension $N=2$. Let $(a_1,\cdots,a_m)$ be the positive smooth solution to the degenerate triangular reaction-diffusion system \eqref{triangular d_m=0}. Then
	there exists a constant $C_{0,p,1}>0$, depends on the domain $\Omega$, $p$ and initial condition, such that
	\[
	\sup\limits_{t\in[0,2)}\Vert a_i \Vert_{\mathrm{L}^p(\Omega)}\leq C_{0,p,1} \qquad \forall i=1,\cdot\cdot,m-1.
	\]
\end{Prop}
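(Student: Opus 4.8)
The plan is to exploit the fact that, in the degenerate regime $d_m=0$, the non-diffusing concentration $a_m$ enters the equations for $a_1,\dots,a_{m-1}$ only as a \emph{nonnegative source}, while the genuinely nonlinear contribution $-\prod_{j=1}^{m-1}(a_j)^{\alpha_j}$ carries a favourable sign. Consequently every diffusive species is a subsolution of a \emph{linear} heat equation driven by $a_m$, and the whole estimate reduces to quantifying the smoothing of the Neumann heat semigroup acting on the source $a_m$, which is bounded in $\mathrm{L}^1(\Omega)$ uniformly in time. No bootstrap on the high-order nonlinearity is needed, and this is precisely what makes the argument clean.

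More concretely, I would argue as follows. Since all concentrations are nonnegative, $\prod_{j=1}^{m-1}(a_j)^{\alpha_j}\ge 0$, so for each $i\in\{1,\dots,m-1\}$ the species $a_i$ satisfies $\partial_t a_i-d_i\Delta a_i\le a_m$ with homogeneous Neumann data. Writing $G_{d_i}$ for the associated Green's function and using the comparison principle exactly as in the proof of Proposition \ref{a_i to a_m L^9 [0,1]}, one obtains the pointwise bound
\[
0\le a_i(t,x)\le \int_{\Omega}G_{d_i}(t,0,x,y)a_{i,0}(y)\,\mathrm{d}y+\int_{0}^{t}\int_{\Omega}G_{d_i}(t,s,x,y)\,a_m(s,y)\,\mathrm{d}y\,\mathrm{d}s.
\]
The first term is controlled by $\|a_{i,0}\|_{\mathrm{L}^\infty(\Omega)}|\Omega|^{1/p}$ through $\int_\Omega G_{d_i}(t,0,x,y)\,\mathrm{d}y\le 1$. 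For the second term I would apply Minkowski's integral inequality in the time variable, followed by Young's convolution inequality in space together with the Gaussian bound $G_{d_i}\le g_{d_i}$ from \eqref{Heat kernel estimate}, to reach
\[
\left\|\int_{0}^{t}\int_{\Omega}G_{d_i}(t,s,\cdot,y)a_m(s,y)\,\mathrm{d}y\,\mathrm{d}s\right\|_{\mathrm{L}^p(\Omega)}\le \int_{0}^{t}\big\|g_{d_i}(t-s,\cdot)\big\|_{\mathrm{L}^p(\mathbb{R}^N)}\,\big\|a_m(s,\cdot)\big\|_{\mathrm{L}^1(\Omega)}\,\mathrm{d}s.
\]
By the scaling of the Gaussian one has $\|g_{d_i}(\tau,\cdot)\|_{\mathrm{L}^p(\mathbb{R}^N)}\le C\,\tau^{-\frac{N}{2}\left(1-\frac1p\right)}$, while the entropy/mass estimate \eqref{Uniform-integrability traingular} gives $\|a_m(s,\cdot)\|_{\mathrm{L}^1(\Omega)}\le \Tilde{M}_1$ uniformly in $s$. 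Hence the right-hand side is dominated by $C\,\Tilde{M}_1\int_0^t (t-s)^{-\frac{N}{2}\left(1-\frac1p\right)}\,\mathrm{d}s$.

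The decisive point, and the main (indeed essentially the only) obstacle, is the time-integrability of this singular kernel: $\int_0^t (t-s)^{-\frac{N}{2}(1-\frac1p)}\,\mathrm{d}s$ is finite, uniformly for $t\in[0,2)$, exactly when $\frac{N}{2}\left(1-\frac1p\right)<1$. In dimension $N=2$ this reads $1-\frac1p<1$, which holds for \emph{every} finite $p$, so $\sup_{t\in[0,2)}\|a_i(t,\cdot)\|_{\mathrm{L}^p(\Omega)}$ is bounded by a constant depending only on $\Omega$, $p$ and the initial data. This is the very mechanism that forces the restriction $N\le 2$: for $N\ge 3$ the exponent $\frac{N}{2}(1-\frac1p)$ exceeds $1$ once $p$ is large, the kernel is no longer integrable near $s=t$, and only a limited range of $p$ survives. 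I would conclude by summing the two contributions and setting $C_{0,p,1}:=\max_{1\le i\le m-1}\|a_{i,0}\|_{\mathrm{L}^\infty(\Omega)}|\Omega|^{1/p}+\dfrac{C\,\Tilde{M}_1\,2^{\,1-\frac{N}{2}(1-\frac1p)}}{1-\frac{N}{2}\left(1-\frac1p\right)}$.
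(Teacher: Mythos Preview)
Your proposal is correct and follows essentially the same route as the paper: the subsolution inequality $\partial_t a_i-d_i\Delta a_i\le a_m$, the Green's function representation with the Gaussian upper bound \eqref{Heat kernel estimate}, Minkowski in time, Young's convolution in space, and the uniform $\mathrm{L}^1$ control on $a_m$ from \eqref{Uniform-integrability traingular}. Your remark that the condition $\tfrac{N}{2}(1-\tfrac1p)<1$ reduces to $p<\infty$ precisely when $N=2$ is the same mechanism the paper uses (written there as $\|g_{d_i}(\tau,\cdot)\|_{\mathrm{L}^p(\mathbb{R}^N)}\le K_{1,d_i}\tau^{-(1-1/p)}$), and your explicit constant matches the paper's up to notation.
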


\begin{proof}  The species $a_i$ is a positive subsolution of the following equation in the time interval $(0,2)$, for $2\leq m-1$:
	\begin{equation}\label{eq:tilde ai-am}
		\left \{
		\begin{aligned}
			\partial_{t}a_i(t,x)-d_i\Delta a_i(t,x)
			\leq &a_m  \qquad \qquad \qquad  \mbox{ in }\Omega_{0,2}
			\\
			\nabla_x a_i(t,x).\gamma =&  0 \qquad   \qquad \quad   \ \  \mbox{ on } \partial\Omega_{0,2} 
			\\
			a_i(0,x) =& a_{i,0} \qquad   \quad  \qquad \ \ \mbox{ in } \Omega.
		\end{aligned}
		\right.
	\end{equation}
	Thanks to maximum principle, the solution corresponding to the equation \eqref{eq:tilde ai-am} can be expressed as:
	\[
	0\leq a_i\leq \int_{\Omega}G_{d_i}(t,0,x,y)a_{i,0}(y) \rm{d}y+ \int_{0}^{t}\int_{\Omega} G_{d_i}(t,s,x,y)a_m(s,y)\rm{d}y\rm{d}s \quad (t,x)\in\Omega_{0,2}.
	\]
	where $G_{d_i}$ denotes the Green's function associated with the operator $\partial_t-d_i\Delta$ with Neumann boundary condition. We use the fact that $\displaystyle{a_{i,0}(y)\in\mathrm{L}^{\infty}(\Omega)}$ and $\displaystyle{\int_{\Omega}G_{d_i}(t,0,x,y) \rm{d}y \leq 1}$ for all $t\in(0,2)$. It yields
	\[
	\left\vert \int_{\Omega}G_{d_i}(t,0,x,y)a_{i,0}(y) \rm{d}y \right\vert \leq \Vert a_{i,0}\Vert_{\mathrm{L}^{\infty}(\Omega)}.
	\]
	Green function estimate as in \eqref{Heat kernel estimate} and \eqref{Heat kernel bound function} yields
	\[
	0\leq G_{d_i}(t_1,s,x,y) \leq  g_{d_i}(t_1-s,x-y) \qquad 0\leq s<t_1
	\]
	where the function $g_{d_i}$ as defined in  \eqref{Heat kernel bound function}.
	Furthermore, in dimension $N=2$, there exists a positive constant $K_{1,d_i}$, depending on the domain and the dimension, such that
	\[
	\Vert g \Vert_{\mathrm{L}^p(\mathbb{R}^N)} \leq K_{1,d_i} t^{-\left(1-\frac{1}{p}\right)}.
	\]

	\vspace{.2cm}
	
	Minkowski's integral inequality yields
	\[
	\Vert a_i \Vert_{\mathrm{L}^p(\Omega)} \leq \Vert  a_{i,0} \Vert_{\mathrm{L}^{\infty}(\Omega)}\vert \Omega\vert^{\frac{1}{p}} + \int_{0}^{t} \left\Vert \int_{\Omega} g(t-s,x-y) a_m(s,y) \right\Vert_{\mathrm{L}^p(\Omega)}.
	\]
	
	An application of Young's convolution inequality yields 
	\[
	\Vert a_i \Vert_{\mathrm{L}^(\Omega)} \leq \Vert  a_{i,0} \Vert_{\mathrm{L}^{\infty}(\Omega)}\vert \Omega\vert^{\frac{1}{p}} + \int_{0}^{t} \Vert g (t-s)\Vert_{\mathrm{L}^p(\mathbb{R}^N)} \Vert a_m(s) \Vert_{\mathrm{L}^1(\Omega)}.
	\]
	For $t\in[0,2)$, we arrive at the following estimate
	\[
	\Vert a_i \Vert_{\mathrm{L}^p(\Omega)} \leq \max\limits_{i=1,\cdots,m}\left\{\Vert  a_{i,0} \Vert_{\mathrm{L}^{\infty}(\Omega)}\right\} \vert \Omega\vert^{\frac{1}{p}}+ \max\limits_{1\leq i\leq m-1}\{K_{1,d_i}\}\frac{1}{p}2^{\frac{1}{p}}\Tilde{M}_1=: C_{0,p,1},
	\]
	where $\Tilde{M}_1$ is the mass bound or the $\mathrm{L}^1(\Omega)$ norm bound as obtained in  \eqref{Uniform-integrability traingular}.
\end{proof}
The following result proves the analogue of Proposition \ref{a_i L^p d_m [0,1]} in the parabolic cylinder $\Omega_{\tau,\tau+1}$.
\vspace{.2cm}

\begin{Prop}\label{a_i L^p d_m}
	Let dimension $N=2$. Let $(a_1,\cdots,a_m)$ be the positive smooth solution to the degenerate triangular reaction-diffusion system  \eqref{triangular d_m=0}. Then,
	there exists a time independent constant $C_{0,p}>0$, depending on the domain, dimension, $p$ and initial condition, such that
	\[
	\sup\limits_{t>0}\Vert a_i \Vert_{\mathrm{L}^p(\Omega)}\leq C_{0,p} \qquad \forall i=1,\cdots,m-1.
	\]
\end{Prop}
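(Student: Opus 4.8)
The plan is to transport the fixed-window estimate of Proposition \ref{a_i L^p d_m [0,1]} to an arbitrary time window $[\tau,\tau+2]$ by means of the time cut-off function $\phi_\tau$ from \eqref{cut-off}, exploiting the fact that mass conservation \eqref{tri mass conserve 1} (equivalently the uniform integrability \eqref{Uniform-integrability traingular}) supplies a $\tau$-independent $\mathrm L^1$ bound on both $a_i$ and $a_m$. Since $d_m=0$ is the only degeneracy, every $a_i$ with $1\le i\le m-1$ diffuses ($d_i>0$) and, by non-negativity of the solution, is a subsolution
\[
\partial_t a_i - d_i\Delta a_i = a_m - \prod_{j=1}^{m-1}a_j^{\alpha_j} \le a_m \qquad \mbox{in } \Omega_{\tau,\tau+2}.
\]

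First I would multiply by $\phi_\tau$ and note that $\phi_\tau a_i$ is a subsolution with vanishing data at $t=\tau$, then represent it through the Neumann Green's function $G_{d_i}$, bounding $G_{d_i}$ above by the Gaussian $g_{d_i}$ via \eqref{Heat kernel estimate}--\eqref{Heat kernel bound function}:
\[
\phi_\tau(t) a_i(t,x) \le \int_\tau^t\!\!\int_\Omega g_{d_i}(t-s,x-y)\big(\phi_\tau'(s) a_i(s,y) + \phi_\tau(s) a_m(s,y)\big)\,\rm{d}y\,\rm{d}s.
\]
Next, at each time slice I would apply Young's convolution inequality in the space variable together with the two-dimensional kernel bound $\Vert g_{d_i}(t-s,\cdot)\Vert_{\mathrm{L}^p(\mathbb{R}^N)}\le K_{1,d_i}(t-s)^{-(1-\frac1p)}$ already used in the proof of Proposition \ref{a_i L^p d_m [0,1]}, obtaining
\[
\Vert \phi_\tau(t) a_i(t,\cdot)\Vert_{\mathrm{L}^p(\Omega)} \le K_{1,d_i}\int_\tau^t (t-s)^{-(1-\frac1p)}\big(M_\phi\Vert a_i(s)\Vert_{\mathrm{L}^1(\Omega)} + \Vert a_m(s)\Vert_{\mathrm{L}^1(\Omega)}\big)\,\rm{d}s.
\]

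I would then bound $\Vert a_i(s)\Vert_{\mathrm{L}^1(\Omega)}$ and $\Vert a_m(s)\Vert_{\mathrm{L}^1(\Omega)}$ uniformly by $\tilde M_1$ from \eqref{Uniform-integrability traingular}, and use that the singular factor is integrable for finite $p$, namely $\int_\tau^t (t-s)^{-(1-\frac1p)}\,\rm{d}s = p\,(t-\tau)^{1/p}\le p\,2^{1/p}$ on the window $t-\tau\le 2$. Evaluating on $t\in[\tau+1,\tau+2]$, where $\phi_\tau\equiv 1$, this yields the $\tau$-independent estimate $\Vert a_i(t,\cdot)\Vert_{\mathrm{L}^p(\Omega)} \le K_{1,d_i}(1+M_\phi)\tilde M_1\,p\,2^{1/p}$ for every $\tau\ge0$ and every $i=1,\dots,m-1$. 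Since every $t\ge 1$ lies in $[\tau+1,\tau+2]$ for the choice $\tau=t-1\ge0$, this covers all $t\ge1$; combining with Proposition \ref{a_i L^p d_m [0,1]} for $t\in[0,2)$ and taking the maximum of the two constants produces the claimed $C_{0,p}$.

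The argument is structurally lighter than the $d_1=0$ analysis because uniform $\mathrm L^1$ control of $a_m$ from mass conservation removes the need for a bootstrap or duality recursion: the only quantitative input is the integrability of the $\mathrm L^p$ heat-kernel norm against $\mathrm L^1$ data. Consequently the single point to verify carefully is that the time-singular exponent $\frac{N}{2}(1-\frac1p)$ stays below $1$ so that the Duhamel integral over a window of fixed length $2$ is finite \emph{and} independent of $\tau$; for $N=2$ this exponent equals $1-\frac1p<1$ for every finite $p$, whereas in dimension $N\ge3$ it reaches or exceeds $1$ once $p\ge \frac{N}{N-2}$, which is precisely why the direct estimate, and hence the proposition, is limited to $N\le2$.
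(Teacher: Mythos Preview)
Your proof is correct and follows essentially the same route as the paper: multiply by the cut-off $\phi_\tau$, represent $\phi_\tau a_i$ through the Neumann Green's function bounded by the Gaussian $g_{d_i}$, apply Young's convolution inequality in space together with the two-dimensional kernel bound $\Vert g_{d_i}(t,\cdot)\Vert_{\mathrm L^p}\le K_{1,d_i}t^{-(1-1/p)}$ and the uniform $\mathrm L^1$ control from \eqref{Uniform-integrability traingular}, integrate the time singularity over a window of length $2$, and combine with Proposition~\ref{a_i L^p d_m [0,1]} for the initial layer. Your closing remark on why the argument is confined to $N\le 2$ is a welcome addition not spelled out in the paper.
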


\begin{proof} 
    Consider the function $\phi_{\tau}:\mathbb{R}\to\mathbb{R}$ as defined in \eqref{cut-off}.
	The product $\phi_\tau(t)a_i(t,x)$ satisfies for all $1\leq i\leq m-1$:
	\begin{equation} \nonumber
		\left\{
		\begin{aligned}
			\partial_t \phi_{\tau}a_i-d_i \Delta \phi_{\tau}a_i = & \phi'_{\tau}a_i+\phi_{\tau}\left( a_m-\prod\limits_{j=1}^{m-1}a_i^{\alpha_i} \right) \qquad \ \ \mbox{ in } \Omega_T\\
			\nabla_x \phi_{\tau}a_i .\gamma=& 0 \qquad \qquad \qquad \qquad \qquad\qquad \qquad \ \mbox { on } \partial\Omega_T\\
			\phi_{\tau}a_i(\tau,x)=& 0 \qquad \qquad \qquad \qquad \qquad \qquad \qquad \  \mbox{ in }\Omega.
		\end{aligned}
		\right.
	\end{equation}
	Change of variable $t\rightarrow t_1+\tau$ yields
	
	\begin{equation*}
		\left \{
		\begin{aligned}
			\partial_{t_1}(\phi_\tau(t_1&+\tau)a_i(t_1+\tau,x))-d_{i}\Delta (\phi_\tau(t_1+\tau)a_i(t_1+\tau,x))\\ &= a_{i}(t_1+\tau,x)\phi'_\tau(t_1+\tau)+\phi_\tau(t_1+\tau)\left(a_m-\prod\limits_{j=1}^{m-1}a_j^{\alpha_j}\right)  \ \ \ \ \ \ \mbox{\ in \ }\Omega_{0,2} \\
			& \nabla_x (\phi_\tau(t_1+\tau)a_i(t_1+\tau,x)).\gamma=  0 \qquad   \qquad \qquad \qquad \qquad  \qquad  \mbox{ on } \partial\Omega_{0,2} \\
			& \phi_{\tau}a_i(0,x)= 0 \qquad \qquad \qquad \qquad \qquad \ \ \ \qquad  \qquad \qquad \qquad \quad \ \ \mbox{ in } \Omega. 
		\end{aligned}
		\right .
	\end{equation*}
	
	Let $G_{d_i}(t_1,x,y)$ is the heat kernel corresponding to the operator $\partial_{t_1}-d_i \Delta$. Then, for all $t_1\in[0,2]$, we can represent the solution as
	
	\begin{align*}
		\phi_\tau(t_1+\tau)& a_i(t_1+\tau,x)\\
		= &\int_{0}^{t_1}\int_{\Omega}G_{d_i}(t_1-s,x,y)\left(a_{i}(y,s+\tau)\phi^{'}_\tau(s+\tau)+\phi_\tau(s+\tau)\left(a_m-\prod\limits_{j=1}^{m-1}a_j^{\alpha_j}\right)(s+\tau,t)\right).
	\end{align*}
	Thanks to the non-negativity of the Green's function and the species concentrations, and immediate pointwise bound follows
	\begin{align*}	
		\phi_\tau(t_1+\tau)a_i(t_1+\tau,x)
		\leq  \int_{0}^{t_1}\int_{\Omega}G_{d_i}(t_1-s,x,y)\Big(a_{i}(s+\tau,x)\phi^{'}_\tau(s+\tau)+\phi_\tau(s+\tau)a_m(s+\tau,x)\Big).
	\end{align*}
	Green function estimate as in \eqref{Heat kernel estimate} and \eqref{Heat kernel bound function} yields
	\[
	0\leq G_{d_i}(t_1,s,x,y) \leq  g_{d_i}(t_1-s,x-y) \qquad 0\leq s<t_1
	\]
	where the function $g_{d_i}$ as defined in  \eqref{Heat kernel bound function}.
	Furthermore, in dimension $N=2$, there exists a positive constant $K_1$, depending on the domain and the dimension, such that
	\[
	\Vert g_{d_i} \Vert_{\mathrm{L}^p(\mathbb{R}^N)} \leq K_{1,d_i} t^{-\left(1-\frac{1}{p}\right)}.
	\]
	
	Substituting the above pointwise bound on the Green function, the earlier inequality reads
	\begin{align*}
		\phi_\tau(t_1+\tau)a_i(t_1+\tau,x)
		\leq  \int_{0}^{t_1}\int_{\Omega}g_{d_i}(t_1-s,x-y)\Big(a_{i}(s+\tau,x)\phi^{'}_\tau(s+\tau)+\phi_\tau(s+\tau)a_m(s+\tau,x)\Big).
	\end{align*}
	
	Minkowski's integral inequality yields
	\begin{align*}
		\Vert  \phi_\tau(t_1+\tau)a_i(t_1+\tau,x) \Vert_{\mathrm{L}^p(\Omega)} 
		\leq \int_{0}^{t_1}\left\Vert \int_{\Omega}g_{d_i}(t_1-s,x-y)\Big(a_{i}(s+\tau,x)\phi^{'}_\tau(s+\tau)+\phi_\tau(s+\tau)a_m(s+\tau,x)\Big)\right\Vert_{\mathrm{L}^p(\Omega)}.
	\end{align*}
	
	An application of Young's convolution inequality yields
	\begin{align*}
		\Vert \phi_\tau(t_1+\tau)a_i(t_1+\tau,x) \Vert_{\mathrm{L}^p(\Omega)}
		\leq  \int_{0}^{t_1}\Vert g_{d_i}\Vert_{\mathrm{L}^p(\mathbb{R}^N)}\left\Vert a_{i}(s+\tau,x)\phi^{'}_\tau(s+\tau)+\phi_\tau(s+\tau)a_m(s+\tau,x)\right\Vert_{\mathrm{L}^1(\Omega)}.
	\end{align*}
	
	Hence for all $t_1\in[0,2]$, $a_i$ satisfy the following estimate for all $1\leq i\leq m-1$:
	\[
	\Vert \phi_\tau(t_1+\tau)a_i(t_1+\tau,x) \Vert_{L^p(\Omega)}\leq \max\limits_{1\leq i\leq m-1}\{K_{1,d_i}\}(M_{\phi}+1)\Tilde{M}_1 \int_{0}^{2} t^{-(1-\frac{1}{p})}\leq p2^{\frac{1}{p}}\max\limits_{1\leq i\leq m-1}\{K_{1,d_i}\}(M_{\phi}+1)\Tilde{M}_1,
	\]
	where $\Tilde{M}_1$ is the mass bound or the $\mathrm{L}^1(\Omega)$ norm bound as obtained in \eqref{Uniform-integrability traingular}. This further implies
	\[
	\sup\limits_{t\in[\tau+1,\tau+2]}\Vert a_i(t,x) \Vert_{\mathrm{L}^p(\Omega)}\leq p2^{\frac{1}{p}}\max\limits_{1\leq i\leq m-1}\{K_{1,d_i}\}(M_{\phi}+1)\Tilde{M}_1.
	\]
	
	Since $\tau\geq0$ is arbitrary, consider the following time independent positive constant
	\[
	C_{0.p}=: p2^{\frac{1}{p}}\max\limits_{1\leq i\leq m-1}\{K_{1,d_i}\}(M_{\phi}+1)\Tilde{M}_1+\sup\limits_{t\in[0,2]}\Vert a_i(t,x) \Vert_{\mathrm{L}^p(\Omega)}=p2^{\frac{1}{p}}\max\limits_{1\leq i\leq m-1}\{K_{1,d_i}\}(M_{\phi}+1)\Tilde{M}_1+C_{0,p,1},
	\]
	which yields
	\[
	\sup\limits_{t>0}\Vert a_i(t,x) \Vert_{\mathrm{L}^p(\Omega)}\leq C_{0,p} \qquad \forall i=1,\cdots,m-1.
	\]
\end{proof}

\vspace{.2cm}

The following result derives an estimate for the species concentrations $a_1,\cdots,a_m$ in the $\mathrm{L}^{\infty}(\Omega_t)$ norm.

\vspace{.2cm}

\begin{Thm}\label{polynomial on t d_m}
	Let dimension $N=2$. Let $(a_1,\cdots,a_m)$ be the positive smooth solution to the degenerate triangular reaction-diffusion system \eqref{triangular d_m=0}. Then there exists time independent positive constants $K_{pol}$ and $\mu$ such that
	\[
	\Vert a_i \Vert_{\mathrm{L}^{\infty}(\Omega_{0,t})} \leq K_{pol}(1+t)^{\mu} \qquad \forall i=1,\cdots,m.
	\]
\end{Thm}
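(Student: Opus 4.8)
The plan is to run a bootstrap that exploits the two structural features of \eqref{triangular d_m=0}: the non-diffusive species $a_m$ solves, pointwise in $x$, a linear ODE carrying a self-damping term $-a_m$, whereas each diffusive species $a_i$ with $i\leq m-1$ is a non-negative subsolution of a heat equation driven by $a_m$. The starting point is Proposition \ref{a_i L^p d_m}, which furnishes, for every finite $p$, a time-uniform bound $\sup_{t>0}\Vert a_i\Vert_{\mathrm{L}^p(\Omega)}\leq C_{0,p}$ for $i=1,\dots,m-1$. I would upgrade these to time-uniform $\mathrm{L}^\infty$ bounds on every species; since such bounds are a fortiori of the claimed form with exponent $\mu=0$, the theorem follows.

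First I would control the reaction term. H\"older's inequality with the splitting $\tfrac1p=\sum_{j=1}^{m-1}\tfrac{1}{(m-1)p}$ gives
\[
\Big\Vert \prod_{j=1}^{m-1} a_j^{\alpha_j}\Big\Vert_{\mathrm{L}^p(\Omega)} \leq \prod_{j=1}^{m-1}\Vert a_j\Vert_{\mathrm{L}^{\alpha_j(m-1)p}(\Omega)}^{\alpha_j}\leq \prod_{j=1}^{m-1} C_{0,\alpha_j(m-1)p}^{\alpha_j}=:B_p,
\]
finite and uniform in time. Solving the ODE for $a_m$ pointwise in $x$ yields the representation
\[
a_m(t,x)=a_{m,0}(x)\,e^{-t}+\int_0^t e^{-(t-s)}\prod_{j=1}^{m-1}a_j(s,x)^{\alpha_j}\,\mathrm{d}s,
\]
so Minkowski's integral inequality together with the integrable damping factor $e^{-(t-s)}$ produces the time-uniform bound
\[
\Vert a_m(t)\Vert_{\mathrm{L}^p(\Omega)}\leq \Vert a_{m,0}\Vert_{\mathrm{L}^p(\Omega)}+B_p\int_0^t e^{-(t-s)}\,\mathrm{d}s\leq \Vert a_{m,0}\Vert_{\mathrm{L}^p(\Omega)}+B_p
\]
for every finite $p$. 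I would then fix a single exponent $p>2$ and use that $a_i$ is a non-negative subsolution of $\partial_t a_i-d_i\Delta a_i\leq a_m$ with homogeneous Neumann data; representing $\phi_\tau a_i$ through the Neumann Green's function on the unit cylinder $\Omega_{\tau,\tau+2}$ with the cut-off $\phi_\tau$ of \eqref{cut-off}, dominating that kernel by $g_{d_i}$ via \eqref{Heat kernel estimate}, and applying Young's convolution inequality for an $\mathrm{L}^\infty$ target (conjugate space-time exponents $p',p$ with $p'<2$), the kernel integrability \eqref{Heat kernel integral estimate} --- valid precisely below $1+\tfrac2N=2$ in dimension $N=2$ --- combined with the uniform $\mathrm{L}^p$ controls on $a_i$ and $a_m$ yields a time-uniform bound $\Vert a_i\Vert_{\mathrm{L}^\infty(\Omega_{\tau+1,\tau+2})}\leq K_\infty$ for $i=1,\dots,m-1$. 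Together with the analogous estimate on the initial cylinder $\Omega_{0,1}$, this gives $\sup_{t>0}\Vert a_i\Vert_{\mathrm{L}^\infty(\Omega)}\leq K_\infty$. Finally, inserting these $\mathrm{L}^\infty$ bounds for the diffusive species into the ODE representation and using the damping once more, $\prod_{j=1}^{m-1}a_j^{\alpha_j}\in\mathrm{L}^\infty$ uniformly forces $\Vert a_m(t)\Vert_{\mathrm{L}^\infty(\Omega)}\leq \Vert a_{m,0}\Vert_{\mathrm{L}^\infty(\Omega)}+K_\infty^{\,Q}$ with $Q=\sum_{j=1}^{m-1}\alpha_j$, again uniform in time.

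The main obstacle --- and the reason the argument departs from the $d_1=0$ case treated in Theorem \ref{polynomial on t} --- is that $a_m$ enjoys no parabolic smoothing, so an $\mathrm{L}^p$ bound on $a_m$ cannot be promoted to $\mathrm{L}^\infty$ by a kernel estimate. The decisive gain is the self-damping term $-a_m$: it replaces the time-accumulating integral $\int_0^t(\cdots)\,\mathrm{d}s$ that generated the genuine polynomial growth of the non-diffusive species in the $d_1=0$ setting by a convolution against the integrable weight $e^{-(t-s)}$, so that no growth in time is incurred and one in fact obtains time-uniform bounds; the stated estimate then holds with $K_{pol}$ the resulting uniform constant and any $\mu\geq0$. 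A routine point to monitor is that the constants $C_{0,p}$, and hence $B_p$, degrade polynomially as $p\to\infty$; this is immaterial since the passage to $\mathrm{L}^\infty$ for the diffusive species uses only one fixed exponent $p>2$.
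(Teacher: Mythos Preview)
Your argument is correct and in fact strictly improves on the paper's proof. The paper discards the self-damping in the $a_m$ equation, working instead with the crude inequality $\partial_t a_m\leq\prod_j a_j^{\alpha_j}$; this produces a linearly growing $\mathrm{L}^3$ bound on $a_m$, which is then fed into a full-time Green's function representation for the diffusive species $a_i$ (not the unit-cylinder cut-off version), yielding $\Vert a_i\Vert_{\mathrm{L}^\infty(\Omega)}\lesssim(1+t)^3$, and finally a further integration back to $a_m$ gives $\mu=3Q+1$. By contrast, you exploit the exact variation-of-constants formula for the ODE $\partial_t a_m+a_m=\prod_j a_j^{\alpha_j}$, so that the time integral is against the integrable weight $e^{-(t-s)}$ rather than the constant weight $1$; this converts the time-uniform $\mathrm{L}^p$ control on the reaction term (via Proposition~\ref{a_i L^p d_m}) into a time-uniform $\mathrm{L}^p$ bound on $a_m$ for every finite $p$, after which the unit-cylinder Green's function machinery of Section~\ref{d1=0 convergence section} closes with genuinely uniform $\mathrm{L}^\infty$ bounds on all species. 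What you gain is $\mu=0$, i.e.\ global boundedness of the solution, which is not just cosmetic: plugging $\mu=0$ into the proof of Theorem~\ref{entropy decay d_m} would remove the logarithmic loss and upgrade the sub-exponential decay $e^{-\lambda_2(1+t)^{1-\epsilon}}$ to honest exponential decay. The paper's approach has the minor advantage of being completely parallel to the $d_1=0$ analysis (where no damping is available for the non-diffusive species), but your route is both sharper and no more technical.
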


Proof: Let $Q=\sum\limits_{i=1}^{m-1}\alpha_i$. Proposition \ref{a_i L^p d_m} ensures that there exists a positive constant $C_{0,3Q}$ such that 
\[
\sup\limits_{t>0}\Vert a_i(t,x) \Vert_{\mathrm{L}^{3Q}(\Omega)}\leq C_{0,3Q}\qquad \forall i=1,\cdots,m-1.
\]
From the differential relation $\partial_t a_m \leq \displaystyle\prod\limits_{j=1}^{m-1}a_j^{\alpha_j}$, we can express $a_m$ as
\[
a_m \leq a_{m,0}+\int_{0}^{t}\displaystyle\prod\limits_{j=1}^{m-1}a_j^{\alpha_j}.
\]

We apply the Minkowski's integral inequality, H\"older inequality and Young's inequality respectively. It yields

\[
\Vert a_m \Vert_{\mathrm{L}^3(\Omega)}\leq \Vert a_{m,0} \Vert_{\mathrm{L}^3(\Omega)}+\int_{0}^{t}\sum\limits_{1}^{m-1}\Vert a_i \Vert_{\mathrm{L}^{3Q}(\Omega)}^Q\leq \Vert a_{m,0} \Vert_{\mathrm{L}^3(\Omega)}+C_{0,3Q}^Qt.
\]

Choosing a constant $\displaystyle{C_1:=\Vert a_{m,0} \Vert_{L^3(\Omega)}+C_{0,3Q}^Q}$, the above estimate reads as
\begin{align}\label{L^3 bound on a_m d_m}
	\Vert a_m \Vert_{\mathrm{L}^3(\Omega)}\leq C_1(1+t).
\end{align}

The species concentration  $a_i$ satisfies, for all $1\leq i\leq m-1$:
\begin{equation*}
	\left \{
	\begin{aligned}
		\partial_{t} a_i -d_i \Delta a_i = & a_m-\displaystyle\prod\limits_{j=1}^{m-1}a_j^{\alpha_j} \qquad \qquad \ \mbox{in}\ \Omega_T\\
		\nabla_x a_i .\gamma =& 0 \qquad \qquad \qquad \qquad \ \ \ \ \mbox{on} \ \partial\Omega_T\\
		a_{i}(0,x)=& a_{i,0} \qquad \qquad \qquad \qquad \ \  \mbox{in} \ \Omega.
	\end{aligned}
	\right .
\end{equation*}
The solution corresponding to the above equation  can be expressed as:
\[
0\leq a_i\leq \int_{\Omega}G_{d_i}(t,0,x,y)a_{i,0}(y) \rm{d}y+ \int_{0}^{t}\int_{\Omega} G_{d_i}(t,s,x,y)a_m(s,y)\rm{d}y\rm{d}s \quad (t,x)\in\Omega_{T}.
\]
where $G_{d_i}$ denotes the Green's function associated with the operator $\partial_t-d_i\Delta$ with Neumann boundary condition. We use the fact that $\displaystyle{a_{i,0}(y)\in\mathrm{L}^{\infty}(\Omega)}$ and $\displaystyle{\int_{\Omega}G_{d_i}(t,0,x,y) \rm{d}y \leq 1}$ for all $t\in(0,T)$. It yields
\[
\left\vert \int_{\Omega}G_{d_i}(t,0,x,y)a_{i,0}(y) \rm{d}y \right\vert \leq \Vert a_{i,0}\Vert_{\mathrm{L}^{\infty}(\Omega)}.
\]
Green function estimate as in \eqref{Heat kernel estimate} and \eqref{Heat kernel bound function} yields
\[
0\leq G_{d_i}(t_1,s,x,y) \leq  g_{d_i}(t_1-s,x-y) \qquad 0\leq s<t_1
\]
where the function $g_{d_i}$ as defined in  \eqref{Heat kernel bound function}. Furthermore, in dimension $N=2$, there exists a positive constant $K_{1,d_i}$, depending on the domain and the dimension, such that
\[
\Vert g_{d_i} \Vert_{\mathrm{L}^{\frac23}(\mathbb{R}^N)} \leq K_{1,d_i} t^{-\left(1-\frac{2}{3}\right)}.
\]
Hence, employing Minkowski's integral inequality we obtain the following estimate
\[
\Vert a_i(t,x) \Vert_{\mathrm{L}^{\infty}(\Omega)} \leq  \max\limits_{1\leq i\leq m-1}\left\{\Vert a_{i,0} \Vert_{\mathrm{L}^{\infty}(\Omega)}\right\}+\max\limits_{1\leq i\leq m-1}\{K_{1,d_i}\} \int_{0}^{t} (t-s)^{-\frac{1}{3}} \Vert a_m(s) \Vert_{\mathrm{L}^3(\Omega)}.
\]

Recall the estimate on $\Vert a_m(s) \Vert_{\mathrm{L}^3(\Omega)}$ as described in \eqref{L^3 bound on a_m d_m}. It yields
\begin{align*}
	\Vert a_i(t,x) \Vert_{\mathrm{L}^{\infty}(\Omega)} \leq  \max\limits_{1\leq i\leq m-1}&\left\{\Vert a_{i,0} \Vert_{\mathrm{L}^{\infty}(\Omega)}\right\}+\frac{3}{2}\max\limits_{1\leq i\leq m-1}\{K_{1,d_i}\}C_1(1+t)^2(1+t)^{\frac{2}{3}}\\
	\leq &\max\limits_{1\leq i\leq m-1}\left\{\Vert a_{i,0} \Vert_{\mathrm{L}^{\infty}(\Omega)}\right\} +\frac{3}{2}\max\limits_{1\leq i\leq m-1}\{K_{1,d_i}\}C_1(1+t)^3.
\end{align*}

Choosing the constant $\displaystyle{C_2:=\max\limits_{1\leq i\leq m-1}\left\{\Vert a_{i,0} \Vert_{\mathrm{L}^{\infty}(\Omega)}\right\}+\frac{3}{2}\max\limits_{1\leq i\leq m-1}\{K_{1,d_i}\}C_1}$, the above estimate reads as
\begin{align}\label{Linfinity d_i d_m}
	\Vert a_i(t,x) \Vert_{\mathrm{L}^{\infty}(\Omega_{0,t})} \leq C_2(1+t)^3 \qquad \forall i=1,\cdots,m-1.
\end{align}

Consider the differential relation $\displaystyle{\partial_t a_m \leq \displaystyle\prod_{1}^{m-1}a_i^{\alpha_i}}$.  The pointwise bound of $a_m$ can be expressed as
\[
a_m \leq a_{m,0}+\int_{0}^{t}\displaystyle\prod\limits_{j=1}^{m-1}a_j^{\alpha_j}.
\]
Minkowski's integral inequality and  \eqref{Linfinity d_i d_m} yields 
\[
\Vert a_{m}\Vert_{\mathrm{L}^{\infty}(\Omega_{0,t})} \leq \Vert a_{m,0}\Vert_{\mathrm{L}^{\infty}(\Omega_{0,t})}+\frac{1}{3Q+1}C_2^Q(1+t)^{3Q+1}.
\]

Consider the time independent positive constants
\begin{equation*} 
	\left \{
	\begin{aligned} 
		K_{pol}:=&C_2+\Vert a_{m,0}\Vert_{L^{\infty}(\Omega_{0,t})}+\frac{1}{3Q+1}C_2^Q\\
		\mu:=& 3Q+1.
	\end{aligned}\right .
\end{equation*}
Thus, we arrive at
\[
\Vert a_i \Vert_{\mathrm{L}^{\infty}(\Omega_{0,t})} \leq K_{pol}(1+t)^{\mu} \qquad \forall i=1,\cdots,m.
\]

\vspace{.3cm}

We are all equipped to prove the following result which obtains a lower bound for the dissipation functional corresponding to the degenerate system \eqref{triangular d_m=0}. The value of this result lies in the fact that the lower bound involves the term  $\displaystyle{\Vert \nabla \delta_{A_m} \Vert_{\mathrm{L^2}(\Omega)}}$ even though such a term is apparently missing in the expression of the dissipation functional.

\vspace{.3cm}

\begin{Lem}\label{dissipation d_m}
	Let dimension $N=2$. Let $(a_1,\cdots,a_m)$ be the positive smooth solution to the degenerate triangular reaction-diffusion system \eqref{triangular d_m=0}. Then, there exists a time independent positive constant $\hat{C}$, such that
	\[
	D(a_1,\cdots,a_m) \geq \hat{C}\left(\sum\limits_{i=1}^{m}  \Vert  \delta_{A_i} \Vert_{\mathrm{L}^{2}(\Omega)}^2+\left\Vert A_m-\displaystyle \prod\limits_{j=1}^{m-1} (A_j)^{\alpha_j}\right\Vert_{\mathrm{L}^2(\Omega)}^2\right).
	\]
\end{Lem}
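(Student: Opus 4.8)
The plan is to notice that the lower bound \eqref{dissipation-L_6 d_m}, valid in dimension $N=2$, already furnishes control of $\Vert\delta_{A_i}\Vert_{\mathrm{L}^6(\Omega)}^2$ (hence of $\Vert\delta_{A_i}\Vert_{\mathrm{L}^2(\Omega)}^2$) for every diffusive species $i=1,\dots,m-1$, together with $\left\Vert A_m-\prod_{j=1}^{m-1}(A_j)^{\alpha_j}\right\Vert_{\mathrm{L}^2(\Omega)}^2$. Consequently the only term on the right-hand side of the claimed estimate that is not yet visibly dominated by $D$ is the missing deviation $\Vert\delta_{A_m}\Vert_{\mathrm{L}^2(\Omega)}^2$. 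The entire proof therefore reduces to showing that $\Vert\delta_{A_m}\Vert_{\mathrm{L}^2(\Omega)}^2\le C\,D$ for a time-independent constant $C$; adding this to \eqref{dissipation-L2 d_m} and choosing $\hat C$ small enough then yields the lemma.

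To isolate $\delta_{A_m}$, write $\Pi:=\prod_{j=1}^{m-1}(A_j)^{\alpha_j}$ and use the elementary splitting $\delta_{A_m}=\delta_{A_m-\Pi}+\delta_{\Pi}$, which holds because $\overline{A_m}=\overline{A_m-\Pi}+\overline{\Pi}$. Since subtracting the mean is the $\mathrm{L}^2$-orthogonal projection onto the constants, one has $\Vert\delta_f\Vert_{\mathrm{L}^2(\Omega)}\le\Vert f-c\Vert_{\mathrm{L}^2(\Omega)}$ for every constant $c$; applying this to $f=A_m-\Pi$ with $c=0$ gives $\Vert\delta_{A_m-\Pi}\Vert_{\mathrm{L}^2(\Omega)}\le\Vert A_m-\Pi\Vert_{\mathrm{L}^2(\Omega)}$, which is precisely the rate-function term already present in $D$. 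Thus only $\Vert\delta_{\Pi}\Vert_{\mathrm{L}^2(\Omega)}$ remains to be controlled.

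For this piece I would again invoke the best-constant property to write $\Vert\delta_{\Pi}\Vert_{\mathrm{L}^2(\Omega)}\le\Vert\Pi-\Pi_\ast\Vert_{\mathrm{L}^2(\Omega)}$, where $\Pi_\ast:=\prod_{j=1}^{m-1}(\overline{A_j})^{\alpha_j}$, and then expand the difference of products via the mean-value identity already employed in the proof of Proposition \ref{Dissipatation theorem d_1}: there exist $\theta_k$ with $|\theta_k|\le|\delta_{A_k}|$ such that
\[
\Pi-\Pi_\ast=\sum_{k=1}^{m-1}\alpha_k\,\delta_{A_k}\,(\theta_k+\overline{A_k})^{\alpha_k-1}\Big(\prod_{j<k}\overline{A_j}^{\alpha_j}\Big)\Big(\prod_{j>k}(A_j)^{\alpha_j}\Big).
\]
Taking $\mathrm{L}^2$ norms and applying H\"older with $\frac{1}{2}=\frac{1}{3}+\frac{1}{6}$ splits each summand into its polynomial prefactor measured in $\mathrm{L}^3(\Omega)$ times $\Vert\delta_{A_k}\Vert_{\mathrm{L}^6(\Omega)}$. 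Each prefactor is a product of fixed powers of $A_j=\sqrt{a_j}$ and of the constants $\overline{A_j}$ (the latter bounded by $\overline{A_k}\le|\Omega|^{-1/2}\Vert A_k\Vert_{\mathrm{L}^2(\Omega)}\le|\Omega|^{-1/2}\Tilde{M}_1^{1/2}$ from \eqref{Uniform-integrability traingular}), so its $\mathrm{L}^3$ norm is controlled uniformly in time by the uniform $\mathrm{L}^p(\Omega)$ bounds of Proposition \ref{a_i L^p d_m} for finite $p$. This yields $\Vert\delta_{\Pi}\Vert_{\mathrm{L}^2(\Omega)}\le C\sum_{k=1}^{m-1}\Vert\delta_{A_k}\Vert_{\mathrm{L}^6(\Omega)}$ and hence $\Vert\delta_{\Pi}\Vert_{\mathrm{L}^2(\Omega)}^2\le C'\,D$ by \eqref{dissipation-L_6 d_m}.

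The main obstacle, and the reason the statement is confined to $N=2$ with a \emph{time-independent} constant, is exactly the uniform-in-time integrability of those polynomial prefactors. It is available here only because the degeneracy $d_m=0$ leaves all the species $a_1,\dots,a_{m-1}$ genuinely diffusing, which produces the uniform $\mathrm{L}^p$ bounds of Proposition \ref{a_i L^p d_m} for every finite $p$; this uniformity is what eliminates the polynomial-in-$t$ factors that forced the large/small deviation case split in the $d_1=0$ analysis (Proposition \ref{Dissipatation theorem d_1}), so that here a direct argument suffices. The one point to verify carefully is that the H\"older split $\frac{1}{3}+\frac{1}{6}=\frac{1}{2}$ leaves room in $N=2$, where the embedding $H^1(\Omega)\hookrightarrow\mathrm{L}^6(\Omega)$ underlying \eqref{dissipation-L_6 d_m} is comfortably subcritical.
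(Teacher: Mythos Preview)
Your argument is correct and shares the same technical core as the paper's proof: both bound $\left\Vert\prod_{j=1}^{m-1}A_j^{\alpha_j}-\prod_{j=1}^{m-1}\overline{A_j}^{\alpha_j}\right\Vert_{\mathrm{L}^2(\Omega)}$ via the mean-value expansion, the H\"older split $\frac12=\frac16+\frac13$, and the uniform-in-time $\mathrm{L}^p$ bounds of Proposition~\ref{a_i L^p d_m}. The difference lies in how $\Vert\delta_{A_m}\Vert_{\mathrm{L}^2(\Omega)}$ is isolated. The paper first applies the algebraic inequality $(p-q)^2\ge\frac12 p^2-q^2$ to obtain $\Vert A_m-\Pi\Vert_{\mathrm{L}^2}^2\ge\frac12\Vert A_m-\Pi_\ast\Vert_{\mathrm{L}^2}^2-\Vert\Pi-\Pi_\ast\Vert_{\mathrm{L}^2}^2$, and then uses the substitution $A_m=\Pi_\ast(1+\mu)$ to deduce $\Vert A_m-\Pi_\ast\Vert_{\mathrm{L}^2}^2\ge\Vert\delta_{A_m}\Vert_{\mathrm{L}^2}^2$. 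You instead exploit linearity of the deviation operator to write $\delta_{A_m}=\delta_{A_m-\Pi}+\delta_\Pi$ and then invoke the minimization property $\Vert\delta_f\Vert_{\mathrm{L}^2}\le\Vert f-c\Vert_{\mathrm{L}^2}$ twice, once with $c=0$ and once with $c=\Pi_\ast$. Your route is slightly more transparent (the paper's $\mu$-substitution is in fact just a disguised instance of the same minimization property) and avoids the auxiliary algebraic inequality; the paper's route makes the constant-tracking marginally more explicit. Both yield the same conclusion with time-independent constants.
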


\begin{proof} Thanks to Poincar\'e inequality the following lower bound of dissipation functional we have obtained in \eqref{dissipation-L_6 d_m} 
	\begin{align}\label{dissipation-L_6 d_m th}
		D(a_1,\cdots,a_m) \geq \sum\limits_{i=1}^{m-1} \frac{\alpha_i d_i}{P(\Omega)} \Vert  \delta_{A_i} \Vert_{\mathrm{L}^{6}(\Omega)}^2+\left\Vert A_m-\displaystyle \prod\limits_{j=1}^{m-1} (A_j)^{\alpha_j}\right\Vert_{\mathrm{L}^2(\Omega)}^2.
	\end{align}
	We show the missing term can be recovered from the last term in the dissipation functional. We can rewrite the last term as
	\begin{align*}
		\left\Vert A_m-\displaystyle \prod\limits_{j=1}^{m-1} (A_j)^{\alpha_j}\right\Vert_{\mathrm{L}^2(\Omega)}^2
		= \left\Vert A_m- \left( \displaystyle \prod\limits_{j=1}^{m-1} (\delta_{A_j}+\overline{A_j})^{\alpha_j}-\displaystyle \prod\limits_{j=1}^{m-1} (\overline{A_j})^{\alpha_j}\right)-\displaystyle \prod\limits_{j=1}^{m-1} (\overline{A_j})^{\alpha_j} \right\Vert_{\mathrm{L}^2(\Omega)}^2.
	\end{align*}
	Use of the algebraic inequality $\displaystyle{(p-q)^2\geq  \frac{1}{2}p^2-q^2}$ leads to 
	\begin{align}
		\left\Vert A_m-\displaystyle \prod\limits_{j=1}^{m-1} (A_j)^{\alpha_j}\right\Vert_{\mathrm{L}^2(\Omega)}^2	\geq \frac{1}{2}\left\Vert A_m-\displaystyle \prod\limits_{j=1}^{m-1}  (\overline{A_j})^{\alpha_j} \right\Vert_{\mathrm{L}^2(\Omega)}^2-\left \Vert  \displaystyle \prod\limits_{j=1}^{m-1} (\delta_{A_j}+\overline{A_j})^{\alpha_j}-\displaystyle \prod\limits_{j=1}^{m-1} (\overline{A_j})^{\alpha_j}\right\Vert_{\mathrm{L}^2(\Omega)}^2. \nonumber 
	\end{align}
	
	We use mean value theorem on the polynomial $\displaystyle \prod_{1}^{m-1} (x_j+\overline{A_j})^{\alpha_j}$, we remark that for all $1\leq i\leq m-1$, there exists $\theta_i$ with $\vert \theta_i\vert \in \big[0,\vert \delta_{A_i}\vert \big]$, such that 
	
	\[
	\displaystyle \prod\limits_{j=1}^{m-1} (\delta_{A_j}+\overline{A_j})^{\alpha_j}-\displaystyle \prod\limits_{j=1}^{m-1} (\overline{A_j})^{\alpha_j} =\displaystyle \sum\limits_{i=1}^{m-1}\alpha_i\delta_{A_i}(\theta_i+\overline{A_i})^{\alpha_i-1}\displaystyle \left(\prod_{j< i}\overline{A_j}^{\alpha_j}\right)\displaystyle\left( \prod_{j>i}^{m-1} (\delta_{A_j}+\overline{A_j})^{\alpha_j}\right).
	\]
	
	Let's choose a positive, time independent constant  $\displaystyle{\Lambda_1:= \max\limits_{i=1,\cdots,m-1}\Bigg\{2^{Q},\alpha_i,\displaystyle \alpha_i\prod_{j< i}\overline{A_j}^{\alpha_j}, \frac{\sqrt{\Tilde{M}_1}}{\sqrt{\vert \Omega \vert}}\Bigg\}}$. For all $1\leq i\leq m-1$, it yields the following bounds
	
	\begin{equation*}
		\left \{
		\begin{aligned}
			\overline{A_i}\leq \frac{\left(\displaystyle{\int_{\Omega}a_i}\right)^{\frac{1}{2}}}{\sqrt{\vert \Omega \vert}}\leq & \frac{\sqrt{\Tilde{M}_1}}{\sqrt{\vert \Omega \vert}}=\Lambda_1 
			\\
			\left\vert(\theta_i+\overline{A_i})\right\vert^{\alpha_i-1}\leq  2^{\alpha_i-1}(\vert \theta_i \vert^{\alpha_i-1}+& \Lambda_1^{\alpha_i-1})\leq  \Lambda_1\left(\vert \delta_{A_i}\vert^{\alpha_i-1}+\Lambda_1^{\alpha-1}\right)
			\\
			\left\vert(\delta_{A_j}+\overline{A_j})\right\vert^{\alpha_j} \leq  & (1+a_j^{\alpha_j})\leq  (\Lambda_1+a_j^{\alpha_j}) 
			\\
			\vert \delta_{A_i} \vert^{\alpha_j-1}\leq & \Lambda_1(\Lambda_1+a_i^{\alpha_j-1}+\Lambda_1^{\alpha_j-1}),
		\end{aligned}
		\right.
	\end{equation*}
	
	where $\Tilde{M}_1$ is mass bound or the $\mathrm{L}^1(\Omega)$ norm bound as obtained in \eqref{Uniform-integrability traingular}. H\"older inequality yields
	\begin{align*}
		\left\Vert \displaystyle \prod\limits_{j=1}^{m-1}  (\delta_{A_j}+\overline{A_j})^{\alpha_j}- \displaystyle \prod\limits_{j=1}^{m-1} (\overline{A_j})^{\alpha_j} \right\Vert_{\mathrm{L}^2(\Omega)}^2 \leq & \left\Vert \displaystyle \sum\limits_{i=1}^{m-1}\alpha_i\delta_{A_i}(\theta_i+\overline{A_i})^{\alpha_i-1}\displaystyle \left(\prod_{j< i}\overline{A_j}^{\alpha_j}\right)\displaystyle \left(\prod_{j>i}^{m-1} (\delta_{A_j}+\overline{A_j})^{\alpha_j}\right) \right\Vert_{\mathrm{L}^2(\Omega)}^2\\
		\leq & \sum\limits_{i=1}^{m-1} \big\Vert \delta_{A_i} \big\Vert_{\mathrm{L}^6(\Omega)}^2\left\Vert \Lambda_1^4\displaystyle \prod_{j>i}^{m-1}\left(\Lambda_1+a_j^{\alpha_j}\right)\left(\Lambda_1+a_i^{\alpha_j-1}+2\Lambda_1^{\alpha_j-1}\right)\right\Vert_{\mathrm{L}^{\frac{3}{2}}(\Omega)}\\
		\leq & \sum\limits_{i=1}^{m-1} \big\Vert \delta_{A_i} \big\Vert_{\mathrm{L}^6(\Omega)}^2  3(1+\Lambda_1)^{Q+m+4}\left\Vert
		\prod_{j>i}^{m-1} \left(1+a_j^{\alpha_j}\right)\left(1+a_i^{\alpha_j-1}\right)\right\Vert_{\mathrm{L}^{\frac{3}{2}}(\Omega)}.
	\end{align*}	
	Young's inequality yields
	\begin{align*}
		\left\Vert \displaystyle \prod\limits_{j=1}^{m-1}  (\delta_{A_j}+\overline{A_j})^{\alpha_j}- \displaystyle \prod\limits_{j=1}^{m-1} (\overline{A_j})^{\alpha_j} \right\Vert_{\mathrm{L}^2(\Omega)}^2 \leq & 3(1+\Lambda_1)^{Q+m+4}  \sum\limits_{i=1}^{m-1} \big\Vert \delta_{A_i} \big\Vert_{\mathrm{L}^6(\Omega)}^2 \left\Vert \sum\limits_{j=i}^{m-1}\left(1+a_j^{\alpha_j}\right)^m \right\Vert_{\mathrm{L}^{\frac{3}{2}}(\Omega)}
	\end{align*}
	
	We use the fact that  $\displaystyle{(x+y)^m\leq 2^m(x^m+y^m)}$ for all $x,y\geq 0$. It yields
	
	\begin{align*}
		\left\Vert \displaystyle \prod\limits_{j=1}^{m-1}  (\delta_{A_j}+\overline{A_j})^{\alpha_j}- \displaystyle \prod\limits_{j=1}^{m-1} (\overline{A_j})^{\alpha_j} \right\Vert_{\mathrm{L}^2(\Omega)}^2 \leq & 2^{m}3(1+\Lambda_1)^{Q+m+4}\left\Vert \sum\limits_{i=1}^{m-1}\left(1+a_i^{m\alpha_i}\right) \right \Vert_{\mathrm{L}^{\frac{3}{2}}(\Omega)}\sum\limits_{i=1}^{m-1} \big\Vert \delta_{A_i} \big\Vert_{\mathrm{L}^6(\Omega)}^2\\
		\leq &2^{m}3(1+\Lambda_1)^{Q+m+4}\left(m\vert \Omega \vert^{\frac{2}{3}}+\left\Vert \sum\limits_{i=1}^{m-1} a_i^{m\alpha_i}\right\Vert_{\mathrm{L}^{\frac{3}{2}}(\Omega)}\right)\sum\limits_{i=1}^{m-1} \big\Vert \delta_{A_i} \big\Vert_{\mathrm{L}^6(\Omega)}^2
		\\
		\leq & 2^{m}3(1+\Lambda_1)^{Q+m+4}(1+\vert \Omega \vert)\left(2m+\left\Vert \sum\limits_{i=1}^{m-1} a_i\right\Vert_{\mathrm{L}^{\frac{m3Q}{2}}(\Omega)}^{mQ}\right)\sum\limits_{i=1}^{m-1} \big\Vert \delta_{A_i} \big\Vert_{\mathrm{L}^6(\Omega)}^2.
	\end{align*}
	
	Thanks to Proposition \ref{a_i L^p d_m}, we have hold over the integral expression $\displaystyle{\Vert a_i\Vert_{\mathrm{L}^{\frac{m3Q}{2}}(\Omega)}}$ for all $1\leq i\leq m-1$. Thus, we arrive at 
	\[
	\left\Vert \displaystyle \prod\limits_{j=1}^{m-1}  (\delta_{A_j}+\overline{A_j})^{\alpha_j}- \displaystyle \prod\limits_{j=1}^{m-1} (\overline{A_j})^{\alpha_j} \right\Vert_{\mathrm{L}^2(\Omega)}^2 \leq 2^{m}3(1+\Lambda_1)^{Q+m+4}(1+\vert \Omega\vert)\left (2m+mC_{0,m3Q}^{mQ}\right)\sum\limits_{i=1}^{m-1} \big\Vert \delta_{A_i} \big\Vert_{\mathrm{L}^6(\Omega)}^2.
	\]
	
	Let's choose  $\displaystyle{C_1:=2^{m}3(1+\Lambda_1)^{Q+m+4}(1+\vert \Omega\vert)\left(2m+mC_{0,m3Q}^{mQ}\right)}$, where the constant $C_{0,m3Q}$ defined in the  Proposition \ref{a_i L^p d_m}. The above estimate reads as
	\begin{align}
		\left\Vert \displaystyle \prod\limits_{j=1}^{m-1}  (\delta_{A_j}+\overline{A_j})^{\alpha_j}- \displaystyle \prod\limits_{j=1}^{m-1} (\overline{A_j})^{\alpha_j} \right\Vert_{\mathrm{L}^2(\Omega)}^2\leq  C_1 \sum\limits_{i=1}^{m-1} \big\Vert \delta_{A_i} \big\Vert_{\mathrm{L}^6(\Omega)}^2. \label{intermediate 1 d_m}
	\end{align}
	
	Hence, we obtain 
	
	\begin{align}\label{intermediate 2 d_m}
		\left\Vert A_m-\displaystyle \prod\limits_{j=1}^{m-1} (A_j)^{\alpha_j}\right\Vert_{\mathrm{L}^2(\Omega)}^2 \geq \frac{1}{2}\left\Vert A_m-\displaystyle \prod\limits_{j=1}^{m-1}  (\overline{A_j})^{\alpha_j} \right\Vert_{\mathrm{L}^2(\Omega)}^2
		- C_1 \sum\limits_{i=1}^{m-1} \big\Vert \delta_{A_i} \big\Vert_{\mathrm{L}^6(\Omega)}^2. 
	\end{align}
	
	We recover the missing term $\displaystyle{\Vert \delta_{A_m}\Vert_{\mathrm{L}^2(\Omega)}^2}$ from the term $\left\Vert A_m-\displaystyle \prod\limits_{j=1}^{m-1}  (\overline{A_j})^{\alpha_j} \right\Vert_{\mathrm{L}^2(\Omega)}^2$.
	\\
	
	Let $\displaystyle{A_m(x)=\prod\limits_{j=1}^{m-1} (\overline{A_j})^{\alpha_j}(1+\mu(x))} $. Then
	\[
	\left\Vert A_m-\displaystyle \prod\limits_{j=1}^{m-1}  (\overline{A_j})^{\alpha_j} \right\Vert_{\mathrm{L}^2(\Omega)}^2 = \Big(\prod\limits_{j=1}^{m-1}  (\overline{A_j})^{\alpha_j} \Big)^2\Vert \mu  \Vert_{\mathrm{L}^2(\Omega)}^2
	\]
	and
	\[
	\Vert \delta_{A_m}\Vert_{\mathrm{L}^2(\Omega)}^2=\Big(\prod_{1}^{m-1}  (\overline{A_j})^{\alpha_j} \Big)^2 \Vert \mu-\overline{\mu}\Vert_{\mathrm{L}^2(\Omega)}^2\leq \Big(\prod_{1}^{m-1}  (\overline{A_j})^{\alpha_j} \Big)^2\Vert \mu  \Vert_{\mathrm{L}^2(\Omega)}^2.
	\]
	
	Thus, we recover the missing term $\Vert \delta_{A_m}\Vert_{\mathrm{L}^2(\Omega)}^2$ as a lower bound of the following term
	
	\[
	\left\Vert A_m-\displaystyle \prod\limits_{j=1}^{m-1}  (\overline{A_j})^{\alpha_j} \right\Vert_{\mathrm{L}^2(\Omega)}^2\geq \Vert \delta_{A_m}\Vert_{\mathrm{L}^2(\Omega)}^2.
	\]
	
	Hence  \eqref{intermediate 2 d_m} and \eqref{dissipation-L_6 d_m th}, leads to
	
	\[
	\left( 1+\max\limits_{i=1,\cdots,m-1}\left\{\frac{P(\Omega)}{\alpha_id_i}\right\}C_1\right)D(a_1,\cdots,a_m) \geq \frac{1}{2}C_1\max\limits_{i=1,\cdots,m-1}\left\{\frac{P(\Omega)}{\alpha_id_i}\right\}\Vert \delta_{A_m}\Vert_{\mathrm{L}^2(\Omega)}^2.
	\]
	
	We choose the following time independent constant
	\[
	C_2:=\frac{\frac{1}{2}C_1\max\limits_{i=1,\cdots,m-1}\left\{\frac{P(\Omega)}{\alpha_id_i}\right\}}{\left( 1+\max\limits_{i=1,\cdots,m-1}\left\{\frac{P(\Omega)}{\alpha_id_i}\right\}C_1\right)}.
	\]
	
	The above estimate reads as
	\begin{align}
		D(a_1,\cdots,a_m) \geq C_2 \Vert \delta_{A_m}\Vert_{\mathrm{L}^2(\Omega)}^2.
		\label{dissipation with the missing term}
	\end{align}
	
	Finally, choice of the time independent positive constant $\displaystyle{\hat{C}:= \frac{1}{2}\min\left\{C_2,\frac{\alpha_id_i}{P(\Omega)}: i=1,\cdots,m\right\}}$, yields 
	\[
	D(a_1,\cdots,a_m) \geq \hat{C}\left(\sum_1^{m}  \Vert  \delta_{A_i} \Vert_{\mathrm{L}^{2}(\Omega)}^2+\Vert A_m-A_1\displaystyle \prod_{2}^{m-1} (A_j)^{\alpha_j}\Vert_{\mathrm{L}^2(\Omega)}^2\right).
	\]
\end{proof}

\vspace{.2cm}

The following result gives subexponential decay (in time) estimate on the relative entropy following the method of entropy.

\vspace{.3cm}

\begin{Thm}\label{entropy decay d_m}
	Let dimension $N=2$. Let  $(a_1,\cdots,a_m)$ be the positive smooth solution to the degenerate triangular reaction-diffusion system \eqref{triangular d_m=0}. Then for any positive $\epsilon\ll 1$, there exists a finite time $T_{\epsilon}$ and two time independent positive constants $\lambda_1$ and $\lambda_2$ (depending on $\epsilon$, domain $\Omega$, dimension $N$ and initial condition) such that 
	\[
	E(a_1,\cdots,a_m)- E(a_{1\infty},\cdots,a_{m\infty})
	\leq  \lambda_1 e^{-\lambda_2(1+t)^{1-\epsilon}} \qquad \forall t\geq T_{\epsilon}.
	\]
\end{Thm}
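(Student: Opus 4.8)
The plan is to follow the entropy--entropy-dissipation scheme of Theorem~\ref{entropy decay }, but to exploit a crucial simplification available here: by Lemma~\ref{dissipation d_m} the improved dissipation lower bound in the $d_m=0$ regime holds with a \emph{time-independent} constant $\hat C$, so the only source of time growth is the logarithm of the supremum norm of the solution. First I would rewrite the relative entropy through the function $\Gamma$ as
\[
E(a_1,\cdots,a_m)-E(a_{1\infty},\cdots,a_{m\infty})=\sum_{i=1}^{m}\int_{\Omega}\alpha_i\,\Gamma(a_i,a_{i\infty})\,(A_i-A_{i\infty})^2,
\]
and then invoke the bound \eqref{eq:Gamma-bound} on $\Gamma$ together with Theorem~\ref{polynomial on t d_m}, which guarantees $\Vert a_i\Vert_{\mathrm{L}^{\infty}(\Omega_{0,t})}\le K_{pol}(1+t)^{\mu}$ for \emph{every} $i=1,\dots,m$. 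This turns the factor $\max_i\{1,\ln(\Vert a_i\Vert_{\mathrm{L}^{\infty}(\Omega)}+1)+\vert\ln a_{i\infty}\vert\}$ into a quantity controlled by $C_1(1+\ln(1+t))$ for a time-independent $C_1$, so that
\[
E(a_1,\cdots,a_m)-E(a_{1\infty},\cdots,a_{m\infty})\le C_1\big(1+\ln(1+t)\big)\sum_{i=1}^{m}\Vert A_i-A_{i\infty}\Vert_{\mathrm{L}^2(\Omega)}^2.
\]

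Next I would handle the $\mathrm{L}^2$ deviation sum exactly as in the preamble: split $\Vert A_i-A_{i\infty}\Vert_{\mathrm{L}^2(\Omega)}^2\le 6\big(\Vert\delta_{A_i}\Vert_{\mathrm{L}^2(\Omega)}^2+\Vert\sqrt{\overline{A_i^2}}-A_{i\infty}\Vert_{\mathrm{L}^2(\Omega)}^2\big)$, apply the equilibrium bound of Theorem~\ref{ED} (from \cite{FLT20}) to absorb the second terms into $\sum_i\Vert\delta_{A_i}\Vert_{\mathrm{L}^2(\Omega)}^2+\Vert A_m-\prod_{j=1}^{m-1}(A_j)^{\alpha_j}\Vert_{\mathrm{L}^2(\Omega)}^2$, and finally bound this last expression from above by $\hat C^{-1}D(a_1,\cdots,a_m)$ using Lemma~\ref{dissipation d_m}. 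Collecting constants into a single time-independent $C_2$ yields the entropy--entropy-dissipation inequality
\[
E(a_1,\cdots,a_m)-E(a_{1\infty},\cdots,a_{m\infty})\le C_2\big(1+\ln(1+t)\big)\,D(a_1,\cdots,a_m).
\]
In contrast with the $d_1=0$ case, \emph{no} polynomial factor $(1+t)^{(N-2)/(N-1)}$ appears, precisely because $\hat C$ is time-independent; this is what will produce the faster exponent $1-\epsilon$.

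To close the argument I would fix $\epsilon\in(0,1)$ and choose $T_{\epsilon}$ so that $\ln(1+t)<(1+t)^{\epsilon}$ for all $t\ge T_{\epsilon}$, rewriting the inequality as $E-E_{\infty}\le C_3(1+t)^{\epsilon}D$ with $C_3=2C_2(1+\ln(1+T_{\epsilon}))$. Combining this with the differential identity $\tfrac{d}{dt}(E-E_{\infty})=-D$ gives
\[
\frac{d}{dt}\big(E(a_1,\cdots,a_m)-E(a_{1\infty},\cdots,a_{m\infty})\big)\le-\frac{1}{C_3}(1+t)^{-\epsilon}\big(E(a_1,\cdots,a_m)-E(a_{1\infty},\cdots,a_{m\infty})\big),
\]
and integrating this differential inequality (a Gr\"onwall step, using $\int_{0}^{t}(1+s)^{-\epsilon}\,ds=\frac{(1+t)^{1-\epsilon}-1}{1-\epsilon}$) delivers the claimed sub-exponential rate for $t\ge T_{\epsilon}$, with $\lambda_2=\frac{1}{C_3(1-\epsilon)}$ and $\lambda_1=\big(E(a_{1,0},\cdots,a_{m,0})-E(a_{1\infty},\cdots,a_{m\infty})\big)e^{\lambda_2}$.

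Since every analytic ingredient is already in place, I do not expect a genuine obstacle in this final theorem: the real work was done in Lemma~\ref{dissipation d_m} (recovering the missing term $\Vert\nabla\delta_{A_m}\Vert_{\mathrm{L}^2(\Omega)}$ with a time-independent constant, which itself rests on the uniform $\mathrm{L}^p$ bounds of Proposition~\ref{a_i L^p d_m}) and in Theorem~\ref{polynomial on t d_m} (the polynomial-in-time supremum bounds). The only point needing care is that Theorem~\ref{polynomial on t d_m} controls \emph{all} $m$ species, including the non-diffusing $a_m$, so that the logarithmic factor is genuinely bounded by $C_1(1+\ln(1+t))$; granting this, the Gr\"onwall closure is routine, and the restriction to $N\le 2$ enters only through the availability of these prior estimates.
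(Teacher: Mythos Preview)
Your proposal is correct and follows exactly the approach the paper intends: the paper itself omits the proof, stating only that it is ``exactly similar to the proof of Proposition~\ref{entropy decay }'', and your write-up carries out precisely that adaptation, correctly noting that the time-independent constant $\hat C$ from Lemma~\ref{dissipation d_m} removes the polynomial factor $(1+t)^{(N-2)/(N-1)}$ and thereby yields the improved exponent $1-\epsilon$. The constants you obtain, $\lambda_2=\tfrac{1}{C_3(1-\epsilon)}$ and $\lambda_1=(E(a_{1,0},\dots,a_{m,0})-E(a_{1\infty},\dots,a_{m\infty}))e^{\lambda_2}$, are the natural analogues of \eqref{main constant 1 tri} in this setting.
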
 
The proof of the above proposition is exactly similar to the proof of Proposition \ref{entropy decay }. Hence we skip the proof. Note that the constants $\lambda_1$ and $\lambda_2$ appearing in the above theorem  depend on the constant $\hat{C}$ (appearing in Proposition \ref{dissipation d_m}), the constants $K_{pol}$ and $\mu$ (appearing in Proposition \ref{polynomial on t d_m}). Finally, we are all equipped to prove our main result of this section.

\vspace{.2cm}

\begin{proof}[Proof of Theorem \ref{theorem comvergence 2 tri}]
	We have already obtained sub-exponential decay (in time) of the relative entropy  in Proposition \ref{entropy decay d_m}. Hence the sub-exponential decay in the $\mathrm L^1$-norm is a direct consequence of the following Czisz\'ar-Kullback-Pinsker type inequality that relates relative entropy and the $\mathrm L^1$-norm (see Appendix, Theorem \ref{lower bound of entropy}):
	\begin{align*}
		E(a_i:i=1,& \cdots,m)-E(a_{i\infty}:i=1,\cdots,m) \geq C_{LE}\sum_{i=1}^{m}\Vert a_i-a_{i\infty}\Vert_{\mathrm{L}^1(\Omega)}^{2}. 
	\end{align*}
\end{proof}

\bibliography{ref3.bib}

\appendix
\section{some useful results}\label{sec:app}
\begin{Lem}[Poincar\'e-Wirtinger inequality]\label{Poincare-Wirtinger}
	There exists a positive constant $P(\Omega)$, depending only on $\Omega$ and $q$, such that
	\[
	P(\Omega) \left\Vert \nabla f\right\Vert^2_{\mathrm L^2(\Omega)} \ge \left\Vert f - \overline{f} \right\Vert^2_{\mathrm L^q(\Omega)} \qquad \mbox{ for all }f\in\mathrm H^1(\Omega),
	\]
	where
	\begin{equation*}
		q = 
		\left\{
		\begin{aligned}
			\frac{2N}{N-2} & \qquad \mbox{ for }\, \, N \ge 3,
			\\
			\in [1,\infty) & \qquad \mbox{ for }\, \, N=2,
			\\
			\in [1,\infty)\cup\{\infty\} & \qquad \mbox{ for }\, \, N=1.
		\end{aligned}
		\right.
	\end{equation*}
	We refer to $P(\Omega)$ as the Poincar\'e constant.
\end{Lem}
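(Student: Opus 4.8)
The plan is to derive this Sobolev--Poincar\'e inequality by combining two classical facts that are available because $\Omega$ is bounded and smooth: the Sobolev embedding $\mathrm H^1(\Omega)\hookrightarrow \mathrm L^q(\Omega)$ for the stated exponents, and the ordinary mean-zero Poincar\'e--Wirtinger inequality in $\mathrm L^2$. First I would record the continuous Sobolev embedding: there is a constant $C_S=C_S(\Omega,q)$ such that
\[
\|g\|_{\mathrm L^q(\Omega)}\le C_S\big(\|g\|_{\mathrm L^2(\Omega)}+\|\nabla g\|_{\mathrm L^2(\Omega)}\big)\qquad\text{for all }g\in\mathrm H^1(\Omega),
\]
valid for $q=2N/(N-2)$ when $N\ge 3$ (the critical exponent, where the embedding is continuous but not compact), for every finite $q$ when $N=2$, and for $q\in[1,\infty]$ when $N=1$. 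This is exactly the Sobolev embedding theorem on a bounded Lipschitz domain.

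Next I would prove the $\mathrm L^2$ Poincar\'e--Wirtinger inequality $\|f-\overline f\|_{\mathrm L^2(\Omega)}\le C_P\|\nabla f\|_{\mathrm L^2(\Omega)}$ by the standard compactness--contradiction argument. Suppose it fails; then there is a sequence $f_n$ with $\|f_n-\overline{f_n}\|_{\mathrm L^2}=1$ and $\|\nabla f_n\|_{\mathrm L^2}\to 0$. Setting $g_n:=f_n-\overline{f_n}$ gives $\overline{g_n}=0$, $\|g_n\|_{\mathrm L^2}=1$ and $\|\nabla g_n\|_{\mathrm L^2}\to 0$, so $(g_n)$ is bounded in $\mathrm H^1(\Omega)$. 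By the Rellich--Kondrachov theorem the embedding $\mathrm H^1(\Omega)\hookrightarrow\hookrightarrow \mathrm L^2(\Omega)$ is compact, so along a subsequence $g_n\to g$ in $\mathrm L^2$; since $\nabla g_n\to 0$ as well, the limit satisfies $\nabla g=0$, hence $g$ is constant, and $\overline g=\lim\overline{g_n}=0$ forces $g=0$, contradicting $\|g\|_{\mathrm L^2}=\lim\|g_n\|_{\mathrm L^2}=1$.

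Finally I would combine the two estimates. Applying the Sobolev embedding to $g=f-\overline f$ (which has mean zero, so $\|g\|_{\mathrm L^2}=\|f-\overline f\|_{\mathrm L^2}$ and $\nabla g=\nabla f$) and then inserting the Poincar\'e--Wirtinger bound yields
\[
\|f-\overline f\|_{\mathrm L^q(\Omega)}\le C_S\big(\|f-\overline f\|_{\mathrm L^2(\Omega)}+\|\nabla f\|_{\mathrm L^2(\Omega)}\big)\le C_S(C_P+1)\|\nabla f\|_{\mathrm L^2(\Omega)}.
\]
Squaring and setting $P(\Omega):=C_S^2(C_P+1)^2$ gives the claimed inequality, with a constant depending only on $\Omega$ and $q$. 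The only genuinely delicate point will be the critical case $N\ge 3$, $q=2N/(N-2)$, where the Sobolev embedding is continuous but not compact; however, compactness is needed only in the subcritical $\mathrm L^2$ step, so the critical exponent enters purely through the continuous bound of the first step and causes no real difficulty. I expect the whole argument to be routine given the smoothness of $\Omega$, the main care being simply to keep track of the constants.
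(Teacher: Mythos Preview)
Your argument is correct and entirely standard: combine the continuous Sobolev embedding with the mean-zero $\mathrm L^2$ Poincar\'e inequality obtained via Rellich--Kondrachov compactness. The paper itself does not prove this lemma; it is merely recorded in the appendix as a known tool, so there is no paper proof to compare against, and your proposal supplies exactly the routine justification one would expect.
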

\begin{Thm}[Second Order Regularity and Integrability estimation] \label{estimation 1}
	Let $d>0$ and let $\tau\in[0,T)$. Take $\theta \in \mathrm L^p(\Omega_{\tau,T})$ for some $1<p< +\infty$. Let $\psi$ be the solution to the backward heat equation:
	\begin{equation*}
		\left\{
		\begin{aligned}
			\partial_t \psi + d \Delta \psi & = -\theta \qquad \mbox{ for }(t,x)\in\Omega_{\tau,T},
			\\
			\nabla \psi \cdot n(x) & = 0 \qquad \ \ \mbox{ for }(t,x)\in[\tau,T]\times\partial\Omega,
			\\
			\psi(T,x) & = 0 \qquad \ \  \mbox{ for }x\in\Omega.
		\end{aligned}
		\right.
	\end{equation*}
	Then, there exists a positive constant $C_{SOR}$, depending only on the domain $\Omega$, the dimension $N$ and the exponent $p$ such that the following maximal regularity holds:
	\begin{align}\label{eq:estimate-laplacian}
		\Vert \Delta \psi \Vert_{\mathrm L^p(\Omega _{\tau,T})} \leq \frac{C_{SOR}}{d} \Vert \theta \Vert_{\mathrm L^p(\Omega_{\tau,T})}.
	\end{align}
	Moreover, if \ $\theta\geq 0$ then $\psi(t,x)\geq 0$ for almost every $(t,x)\in\Omega_{\tau,T}$. Furthermore, we have
	\begin{equation}\label{eq:Ls-estimates}
		\begin{aligned}
			\mbox{If }\, & p<\frac{N+2}{2}\, \mbox{ then } \qquad \left\Vert \psi \right\Vert_{\mathrm L^s(\Omega_{\tau,T})} \le C_{IE} \left\Vert \theta \right\Vert_{\mathrm L^p(\Omega_{\tau,T})} \qquad \mbox{ for all }\, \, s < \frac{(N+2)p}{N+2-2p}
			\\
			\mbox{If }\, & p=\frac{N+2}{2}\, \mbox{ then } \qquad \left\Vert \psi \right\Vert_{\mathrm L^s(\Omega_{\tau,T})} \le C_{IE} \left\Vert \theta \right\Vert_{\mathrm L^p(\Omega_{\tau,T})} \qquad \mbox{ for all }\, \, s < \infty
		\end{aligned}
	\end{equation}
	where the constant $C_{IE}=C_{IE}(T-\tau, \Omega,d,p,s)$ and
	\begin{align}\label{eq:Linfty-estimate}
		\mbox{if }\, & p>\frac{N+2}{2}\, \mbox{ then } \qquad\left\Vert \psi \right\Vert_{\mathrm L^\infty(\Omega _{\tau,T})} \leq  C_{IE} \left\Vert \theta \right\Vert_{\mathrm L^p(\Omega_{\tau,T})}.
	\end{align}
\end{Thm}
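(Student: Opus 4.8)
The plan is to reduce everything to the \emph{forward} heat equation with vanishing initial datum, for which both maximal regularity and smoothing estimates are classical. First I would perform the time reversal $t\mapsto T-t$: setting $u(s,x):=\psi(T-s,x)$ and $f(s,x):=\theta(T-s,x)$, the terminal-value problem turns into
\[
\partial_s u - d\Delta u = f \quad\text{in } (0,T-\tau)\times\Omega, \qquad \nabla u\cdot n=0 \text{ on the boundary}, \qquad u(0,\cdot)=0,
\]
with $f\in \mathrm L^p$, $\|f\|_{\mathrm L^p}=\|\theta\|_{\mathrm L^p}$, and $f\ge 0$ if and only if $\theta\ge0$.

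For the maximal regularity bound \eqref{eq:estimate-laplacian} I would invoke the maximal $\mathrm L^p$-regularity of the Neumann heat semigroup (Lamberton \cite{Lam87}): the map $f\mapsto \Delta u$ is bounded on $\mathrm L^p(\Omega_{0,L})$ with a constant depending only on $\Omega,N,p$ and, crucially, \emph{independent of the cylinder length} $L=T-\tau$ precisely because the initial datum vanishes. The diffusion constant is then absorbed by the parabolic scaling $\tilde u(\sigma,x):=u(\sigma/d,x)$, which solves $\partial_\sigma\tilde u-\Delta\tilde u=\tfrac1d \tilde f$ with $\tilde f(\sigma,x)=f(\sigma/d,x)$; since the Jacobian factor $d^{1/p}$ produced by the time rescaling appears identically on both sides of the resulting estimate, it cancels and leaves exactly the factor $1/d$, giving $C_{SOR}=C_{SOR}(\Omega,N,p)$. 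Positivity is then immediate: with $f\ge0$ and $u(0,\cdot)=0$, the parabolic comparison principle against the zero subsolution forces $u\ge0$, hence $\psi\ge0$ almost everywhere.

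For the integrability bounds I would use the Duhamel representation $u(s,x)=\int_0^s\!\int_\Omega G_d(s,\sigma,x,y)f(\sigma,y)\,dy\,d\sigma$ together with the Gaussian upper bound \eqref{Heat kernel estimate}, which (after extending $f$ by zero) dominates $u$ by the convolution $g_d*|f|$ on $\mathbb{R}\times\mathbb{R}^N$. Young's convolution inequality then yields $\|u\|_{\mathrm L^s}\le \|g_d\|_{\mathrm L^z}\|f\|_{\mathrm L^p}$ whenever $1+\tfrac1s=\tfrac1z+\tfrac1p$, while the kernel estimate \eqref{Heat kernel integral estimate} keeps $\|g_d\|_{\mathrm L^z}$ finite exactly for $z<1+\tfrac2N$. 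Substituting $\tfrac1z=1+\tfrac1s-\tfrac1p$, this admissibility constraint becomes $\tfrac1s>\tfrac1p-\tfrac{2}{N+2}$, which reproduces the three regimes of the statement: $s<\tfrac{(N+2)p}{N+2-2p}$ when $p<\tfrac{N+2}{2}$, every finite $s$ when $p=\tfrac{N+2}{2}$, and $s=\infty$ (the $\mathrm L^\infty$ bound) when $p>\tfrac{N+2}{2}$. The dependence of $C_{IE}$ on $T-\tau$ enters through the $\mathrm L^z$ norm of the kernel over the relevant time window, whose value grows with the window length, which is consistent with the stated $C_{IE}=C_{IE}(T-\tau,\Omega,d,p,s)$.

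The hardest ingredient is the maximal regularity estimate \eqref{eq:estimate-laplacian}: it is a genuine singular-integral (Calder\'on--Zygmund) statement for the parabolic operator $\theta\mapsto\Delta(\partial_t-d\Delta)^{-1}\theta$ rather than an elementary kernel computation, and the delicate point is to quote it with a constant uniform in the cylinder length $T-\tau$, which is precisely what forces the zero-initial-datum formulation obtained after the reversal. Everything else --- the reversal, the scaling bookkeeping producing the $1/d$ factor, the maximum principle, and the Young-inequality computation of the admissible exponents --- is routine and parallels the convolution arguments already carried out in Propositions \ref{a_i to a_m L^9 [0,1]} and \ref{a_i to a_m L^9}.
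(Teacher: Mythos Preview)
Your proposal is correct. The paper does not actually prove this theorem: it simply records that \eqref{eq:estimate-laplacian} is \cite[Theorem~1]{Lam87}, that \eqref{eq:Ls-estimates} is \cite[Lemma~3.3]{CDF14}, and that \eqref{eq:Linfty-estimate} is \cite[Lemma~4.6]{Tan18}. Your sketch therefore goes further than the paper itself.

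On the individual pieces: your time-reversal and parabolic-rescaling argument to reduce \eqref{eq:estimate-laplacian} to Lamberton's maximal regularity with zero initial datum is exactly the right reduction, and the bookkeeping that isolates the factor $1/d$ is clean. For the smoothing estimates \eqref{eq:Ls-estimates}--\eqref{eq:Linfty-estimate} you take a slightly different route than the paper's cited references by working directly with the Gaussian upper bound \eqref{Heat kernel estimate} and Young's convolution inequality; this has the advantage of being entirely self-contained within the toolkit already used elsewhere in the paper (Propositions~\ref{a_i to a_m L^9 [0,1]} and~\ref{a_i to a_m L^9}), and your exponent computation $1/s>1/p-2/(N+2)$ correctly reproduces all three regimes. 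The observation that $C_{IE}$ depends on $T-\tau$ through $\|g_d\|_{\mathrm L^z}$ over the time window is also accurate, since $\int_0^L t^{-N(z-1)/2}\,dt$ diverges as $L\to\infty$ for every admissible $z<1+2/N$.
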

The proof of \eqref{eq:estimate-laplacian} can be found in \cite[Theorem 1]{Lam87}. We refer to the constant $C_{SOR}$ as the second order regularity constant. Proof of the estimates \eqref{eq:Ls-estimates} can be found in \cite[Lemma 3.3]{CDF14} and the estimate \eqref{eq:Linfty-estimate} was derived in \cite[Lemma 4.6]{Tan18}. We refer to the constant $C_{IE}$ as the integrability estimation constant.
\begin{Thm}[Regularity of backward heat equation]\label{regularity of backward heat}
	Let $\Theta\in C_c^{\infty(\Omega_{0,T})}$ and let $\nu$ satisfy the following equation:
	\begin{equation*}
		\left \{
		\begin{aligned}
			-\partial_t \nu - d \Delta \nu=& \Theta \qquad \mbox{in}\ \Omega_{0,T}\\
			\nabla_{x} \nu.\gamma=& 0 \ \ \   \mbox{on}\ (0,T)\times\partial\Omega\\
			\nu(T,\cdot)=&0 \qquad \ \mbox{in}\ \Omega.
		\end{aligned}
		\right .
	\end{equation*}
	Then, there exists a positive constant $C_{q,T}$ such that:
	\[
	\Vert \partial_t \nu \Vert_{\mathrm{L}^q(\Omega_{0,T})}+\Vert \Delta \nu \Vert_{\mathrm{L}^q(\Omega_{0,T})}+\sup\limits_{t\in[0,T]}\Vert \nu \Vert_{\mathrm{L}^p(\Omega)}\leq C_{q,T}\Vert \Theta \Vert_{\mathrm{L}^q(\Omega_{0,T})}.
	\]
\end{Thm}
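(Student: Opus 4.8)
The plan is to recognise this as a maximal $\mathrm{L}^q$ parabolic regularity estimate for the Neumann heat operator and to handle the three terms separately. The equation for $\nu$ is, up to the sign convention for the source, precisely the backward problem already treated in Theorem \ref{estimation 1}: writing it as $\partial_t\nu+d\Delta\nu=-\Theta$ with $\nu(T,\cdot)=0$ and homogeneous Neumann data, Lamberton's second-order regularity \cite{Lam87} recorded there applies verbatim with $p=q$ and gives
\[
\Vert \Delta\nu\Vert_{\mathrm{L}^q(\Omega_{0,T})}\le \frac{C_{SOR}}{d}\Vert\Theta\Vert_{\mathrm{L}^q(\Omega_{0,T})}.
\]
Reading $\partial_t\nu=-d\Delta\nu-\Theta$ off the equation and using the triangle inequality then controls the first term, $\Vert\partial_t\nu\Vert_{\mathrm{L}^q(\Omega_{0,T})}\le (C_{SOR}+1)\Vert\Theta\Vert_{\mathrm{L}^q(\Omega_{0,T})}$, so both space-time $\mathrm{L}^q$ terms require no new ingredient.

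The genuinely new contribution is the $\sup_t$ term, for which I would pass to the maximal-regularity space. Integrating $\partial_t\nu$ backward from $t=T$ (where $\nu$ vanishes) already yields $\nu\in \mathrm{L}^\infty(0,T;\mathrm{L}^q(\Omega))$; combined with elliptic regularity for the Neumann Laplacian, $\Vert\nu(t)\Vert_{W^{2,q}(\Omega)}\le C\big(\Vert\Delta\nu(t)\Vert_{\mathrm{L}^q(\Omega)}+\Vert\nu(t)\Vert_{\mathrm{L}^q(\Omega)}\big)$, this places $\nu$ in the maximal-regularity space $\mathbb{E}:=W^{1,q}(0,T;\mathrm{L}^q(\Omega))\cap \mathrm{L}^q(0,T;W^{2,q}(\Omega))$ with $\Vert\nu\Vert_{\mathbb{E}}\le C\Vert\Theta\Vert_{\mathrm{L}^q(\Omega_{0,T})}$. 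The anisotropic trace theorem embeds $\mathbb{E}\hookrightarrow C\big([0,T];(\mathrm{L}^q(\Omega),W^{2,q}(\Omega))_{1-1/q,q}\big)$, and since $\nu(T)=0$ the embedding constant carries no endpoint-data term. Identifying the interpolation space as the Besov space $B^{2-2/q}_{q,q}(\Omega)$ and composing with the Sobolev embedding $B^{2-2/q}_{q,q}(\Omega)\hookrightarrow \mathrm{L}^p(\Omega)$, valid exactly when $\tfrac1p\ge\tfrac1q-\tfrac{2-2/q}{N}$, yields $\sup_{t\in[0,T]}\Vert\nu\Vert_{\mathrm{L}^p(\Omega)}\le C_{q,T}\Vert\Theta\Vert_{\mathrm{L}^q(\Omega_{0,T})}$, which completes the estimate.

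The main obstacle will be making this last embedding chain rigorous on a bounded smooth domain with Neumann data and pinning down the admissible relation between $p$, $q$ and $N$. Two points deserve care: the Neumann Laplacian has the constants in its kernel, so the generated semigroup is not uniformly decaying — this is harmless on a finite interval and is exactly why $C_{q,T}$ is permitted to depend on $T$ — and the trace theory must be invoked in the correct (time-reversed, zero-endpoint) form so that no term involving $\nu(T)$ appears. If one prefers a self-contained route to the $\sup_t$ bound rather than citing abstract trace theory, I would instead use the Duhamel representation $\nu(t)=\int_t^T e^{-(s-t)A}\Theta(s)\,ds$ with $A=-d\Delta$ the Neumann Laplacian, insert the ultracontractive smoothing bound $\Vert e^{-\sigma A}\Vert_{\mathrm{L}^q\to\mathrm{L}^p}\le C\,\sigma^{-\frac N2(\frac1q-\frac1p)}$, and close with a Hardy--Littlewood--Sobolev estimate in the time variable; the integrability requirement $\tfrac N2(\tfrac1q-\tfrac1p)<\tfrac1{q'}$ reproduces the same admissible range of $p$.
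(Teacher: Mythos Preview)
Your approach is correct and in fact more detailed than what the paper provides. The paper does not give a proof of this theorem at all: it simply writes ``Proof can be found in \cite{rothe06}, \cite{Amann1985}. It can be derived from the Theorem \ref{estimation 1} also.'' Your derivation of the $\Delta\nu$ and $\partial_t\nu$ bounds from Theorem \ref{estimation 1} is exactly the route the paper hints at, and your treatment of the $\sup_t$ term via maximal-regularity trace theory (or alternatively via Duhamel and ultracontractivity) supplies the missing ingredient that the paper leaves entirely to the cited references.

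One minor remark: the theorem as stated in the paper is somewhat loose about the relationship between the exponents $p$ and $q$; you correctly note that an admissibility condition of the form $\tfrac1p\ge\tfrac1q-\tfrac{2-2/q}{N}$ (or the equivalent Hardy--Littlewood--Sobolev condition) is required. In the paper's only application of this theorem (the estimate of $\beta_0$ inside Proposition \ref{a_i to a_m L^p}) the sup-in-time bound is used with the same exponent as the source, so the constraint is trivially satisfied there, but your care in identifying the admissible range is warranted and the paper's statement would benefit from it.
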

Proof can be found in \cite{rothe06},\cite{Amann1985}. It can be derived from the Theorem \ref{estimation 1} also.
\begin{Thm}[$p^{\textrm{th}}$ order integrability estimation]\label{PE}
	Let $p\in(2,\infty)$ and let $p'$ be its H\"older conjugate. Let $M(t,x)$ be such that the following holds
	\[
	\theta\leq M(t,x) \leq \Theta \quad \forall (t,x)\in\Omega_T,
	\]
	for some fixed positive constants $\theta,\Theta$. Let $\psi_0\in\mathrm L^p(\Omega)$ and let $\psi$ be a weak solution to
	\begin{equation*}
		\left\{
		\begin{aligned}
			\partial_t \psi - \Delta \left( M \psi\right) & = 0 \qquad \qquad \mbox{ for }(t,x)\in\Omega_T,
			\\
			\nabla \psi \cdot n(x) & = 0 \qquad \qquad \mbox{ for }(t,x)\in[0,T]\times\partial\Omega,
			\\
			\psi(0,x) & = \psi_0(x) \qquad \mbox{ for }x\in \Omega.
		\end{aligned}
		\right.
	\end{equation*}
	Then the following estimate holds
	\[
	\left\Vert \psi \right\Vert_{\mathrm L^p(\Omega_T)} \le \left( 1 + \Theta K_{\theta,\Theta,p'}\right) T^\frac{1}{p} \left\Vert \psi_0 \right\Vert_{\mathrm L^p(\Omega)},
	\]
	where the constant $K_{\theta,\Theta,p'}$ is given by
	\[
	K_{\theta,\Theta,p'} := \frac{C^{PRC}_{\frac{\theta+\Theta}{2},p'}\left(\frac{\Theta-\theta}{2}\right)}{1 - C^{PRC}_{\frac{\theta+\Theta}{2},p'}\left(\frac{\Theta-\theta}{2}\right)}
	\qquad
	\mbox{ provided we have }
	\qquad
	C^{PRC}_{\frac{\theta+\Theta}{2},p'}\left(\frac{\Theta-\theta}{2}\right) < 1.
	\]
	Here, the constant $C^{PRC}_{r,p'}$ is the best constant in the following parabolic regularity estimate:
	\[
	\left\Vert \Delta \phi \right\Vert_{\mathrm L^{p'}(\Omega_T)} \le C^{PRC}_{r,p'} \left\Vert f \right\Vert_{\mathrm L^{p'}(\Omega_T)},
	\]
	where $\phi,f:[0,T]\times\Omega\to\mathbb{R}$ are any two functions such that $f\in \mathrm L^{p'}(\Omega_T)$ and they satisfy
	\begin{equation*}
		\left\{
		\begin{aligned}
			\partial_t \phi + r \Delta \phi & = f \qquad \mbox{ for }(t,x)\in\Omega_T,
			\\
			\nabla \phi \cdot n(x) & = 0 \qquad \mbox{ for }(t,x)\in[0,T]\times\partial\Omega,
			\\
			\phi(T,x) & = 0 \qquad \mbox{ for }x\in\Omega.
		\end{aligned}
		\right.
	\end{equation*}
\end{Thm}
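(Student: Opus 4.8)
The plan is to prove this $L^p$ bound by an $L^p$--$L^{p'}$ duality argument, treating the variable coefficient $M$ as a perturbation of the constant $r:=\frac{\theta+\Theta}{2}$. Since $|M(t,x)-r|\le\frac{\Theta-\theta}{2}$ pointwise on $\Omega_T$, the perturbation is controlled precisely when $C^{PRC}_{r,p'}\frac{\Theta-\theta}{2}<1$, which is the smallness hypothesis. First I would write
\[
\|\psi\|_{\mathrm L^p(\Omega_T)}=\sup\Big\{\int_{\Omega_T}\psi f \ :\ f\in\mathrm L^{p'}(\Omega_T),\ \|f\|_{\mathrm L^{p'}(\Omega_T)}\le 1\Big\},
\]
and for each test function $f$ introduce the frozen-coefficient adjoint problem $\partial_t\phi+r\Delta\phi=f$ on $\Omega_T$, $\nabla\phi\cdot n=0$, $\phi(T,\cdot)=0$, which is exactly the backward equation defining the parabolic regularity constant $C^{PRC}_{r,p'}$.

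Next I would record the two estimates on $\phi$ that drive the proof. From the definition of the best constant we have $\|\Delta\phi\|_{\mathrm L^{p'}(\Omega_T)}\le C^{PRC}_{r,p'}\|f\|_{\mathrm L^{p'}(\Omega_T)}$. For the initial time slice, the Duhamel representation $\phi(t,\cdot)=-\int_t^T e^{r(s-t)\Delta}f(s,\cdot)\,ds$, the $\mathrm L^{p'}$-contractivity of the Neumann heat semigroup, and H\"older in time together give
\[
\|\phi(0,\cdot)\|_{\mathrm L^{p'}(\Omega)}\le \int_0^T\|f(s,\cdot)\|_{\mathrm L^{p'}(\Omega)}\,ds\le T^{\frac1p}\|f\|_{\mathrm L^{p'}(\Omega_T)}.
\]

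Then comes the key computation: pair $\psi$ against $f=\partial_t\phi+r\Delta\phi$ and integrate by parts in space and time, using $\partial_t\psi=\Delta(M\psi)$, the Neumann conditions for both $\psi$ and $\phi$, and $\phi(T,\cdot)=0$. The constant-coefficient contributions cancel, leaving the identity
\[
\int_{\Omega_T}\psi f=-\int_\Omega\psi_0\,\phi(0,\cdot)\,dx+\int_{\Omega_T}(r-M)\,\psi\,\Delta\phi .
\]
Bounding the first term by $T^{\frac1p}\|\psi_0\|_{\mathrm L^p(\Omega)}\|f\|_{\mathrm L^{p'}(\Omega_T)}$ and the second by $\frac{\Theta-\theta}{2}C^{PRC}_{r,p'}\|\psi\|_{\mathrm L^p(\Omega_T)}\|f\|_{\mathrm L^{p'}(\Omega_T)}$, taking the supremum over admissible $f$, and using the smallness condition to absorb the $\|\psi\|_{\mathrm L^p(\Omega_T)}$ term onto the left, I obtain
\[
\Big(1-C^{PRC}_{r,p'}\tfrac{\Theta-\theta}{2}\Big)\|\psi\|_{\mathrm L^p(\Omega_T)}\le T^{\frac1p}\|\psi_0\|_{\mathrm L^p(\Omega)}.
\]
Expanding the resulting geometric factor as a Neumann series exhibits the constant in the form $1+\Theta K_{\theta,\Theta,p'}$ with $K_{\theta,\Theta,p'}$ as stated, up to the (generous) constant bookkeeping between $\frac{\Theta-\theta}{2}$ and $\Theta$.

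The main obstacle will be the rigorous justification of the integration by parts at the level of weak solutions: one must make sense of the boundary flux of $M\psi$ so that the Neumann term genuinely vanishes, and of the time trace $\psi(0,\cdot)=\psi_0$ in $\mathrm L^p$. I would handle this by an approximation argument, regularising $\psi_0$ and $M$, proving the identity for smooth solutions where every integration by parts is legitimate (here the adjoint regularity furnished by Theorem \ref{estimation 1} and Theorem \ref{regularity of backward heat} provides the $\Delta\phi\in\mathrm L^{p'}$ and $\partial_t\phi\in\mathrm L^{p'}$ control needed to pair against $\psi$), and then passing to the limit using the uniform bound just derived. The only remaining care is matching the geometric factor $\big(1-C^{PRC}_{r,p'}\frac{\Theta-\theta}{2}\big)^{-1}$ to the stated expression $1+\Theta K_{\theta,\Theta,p'}$.
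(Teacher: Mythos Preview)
The paper does not give its own proof of this theorem; it simply cites \cite[Proposition~1.1]{CDF14}. Your duality argument --- solving the constant-coefficient backward problem $\partial_t\phi+r\Delta\phi=f$ with $r=\tfrac{\theta+\Theta}{2}$, deriving the identity $\int_{\Omega_T}\psi f=-\int_\Omega\psi_0\phi(0)+\int_{\Omega_T}(r-M)\psi\Delta\phi$, and absorbing the perturbation term --- is precisely the method of that reference, and your sketch is correct.

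One small remark on the constant you already flagged: your absorption gives $\big(1-C^{PRC}_{r,p'}\tfrac{\Theta-\theta}{2}\big)^{-1}=1+K_{\theta,\Theta,p'}$, i.e.\ the stated factor without the extra $\Theta$. The form $1+\Theta K$ arises if one organises the argument slightly differently, working instead with the variable-coefficient dual $\partial_t\phi+M\Delta\phi=-f$: one first bounds $\|\Delta\phi\|_{\mathrm L^{p'}}$ by the same perturbation from $r$, and then estimates $\|\phi(0,\cdot)\|_{\mathrm L^{p'}}$ from the pointwise identity $\phi(0,x)=\int_0^T\!\big(M\Delta\phi+f\big)\,ds$, which produces $\|M\Delta\phi\|_{\mathrm L^{p'}}\le\Theta\|\Delta\phi\|_{\mathrm L^{p'}}$ and hence the factor $\Theta$. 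Either bookkeeping is acceptable; your version is in fact no worse.
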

It has to be noted that $C^{PRC}_{r,p^{'}}< \infty$ for $r>0$ and $C^{PRC}_{r,2} \leq \frac{1}{r}$\ and depends  only on $r,p^{'}$, the domain and on the dimension, i.e it is independent of time. Moreover, as $C^{PRC}_{r,p^{'}}< \infty$, if we take the difference between $\theta$ and $\Theta$ sufficiently small, then we have the required property that 
\[
C^{PRC}_{\frac{\theta+\Theta}{2},p'}\left(\frac{\Theta-\theta}{2}\right) < 1.
\]
Proof of the above theorem can be found in \cite[Proposition 1.1]{CDF14}.
\begin{Thm}[Entropy -Dissipation Bound Inequality] \label{ED}
	
	Let $\Omega\subset \mathbb{R}^{N}$ be a bounded domain. For $1\leq i\leq m$, let $a_i(t,\cdot):\Omega\rightarrow\mathbb{R}_{+}\cup \{0\}\in \mathrm{L}^2(\Omega)$ satisfies 
	\[
	a_{i}(t,x)+a_{m}(t,x)=M_{im}>0 \qquad \forall i=1,\cdots,m-1,\forall t\geq 0,
	\]
	where for all $1\leq i\leq m$, $M_{i}$ are positive constants.
	Furthermore for all $1\leq i\leq m$, let $a_{i\infty}\in\mathbb{R}_{+}\cup \{0\}$ satisfies
	\[ 
	a_{i\infty}+a_{m\infty}=\frac{M_{i}}{\vert \Omega\vert}\qquad \forall i=1,\cdots,m-1
	\]
	and
	\[
	a_{m\infty}=\displaystyle{\prod_{1}^{m-1}a_{j\infty}^{\alpha_j}}.
	\]
	For all $1\leq i\leq m$, let's denote $A_i=\sqrt{a_i}, A_{i\infty}=\sqrt{a_{i\infty}}$. Then there exists a constant $C_{EB}>0$, depends only on the domain and $M_{i}, \forall i=1,\cdots.m-1$, such that
	
	\[
	\sum\limits_{i=1}^{m}\left\Vert \sqrt{\overline{A_i^2}}-A_{i\infty} \right\Vert_{L^2(\Omega)}^2 \leq C_{EB} \left ( \sum\limits_{i=1}^{m}  \Vert A_i-\overline{A_i} \Vert_{L^{2}(\Omega)}^2+\left\Vert A_m-\displaystyle \prod\limits_{j=1}^{m-1} (A_j)^{\alpha_j}\right\Vert_{L^2(\Omega)}^2\right).
	\]
\end{Thm}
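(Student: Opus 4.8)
The plan is to turn this spatial inequality into a finite-dimensional one among the averages, the passage being controlled by the spatial deviations and the reaction defect that already appear on the right-hand side. Write $\bar M_i:=M_i/\vert\Omega\vert$ and, for brevity, $X_i:=\sqrt{\overline{A_i^2}}=\big(\tfrac1{\vert\Omega\vert}\int_\Omega a_i\big)^{1/2}$; note that $X_i$ and $A_{i\infty}$ are \emph{constants}, so the left-hand side is exactly $\vert\Omega\vert\sum_{i=1}^m (X_i-A_{i\infty})^2$. Integrating the conservation relation $a_i+a_m=M_{im}$ over $\Omega$ gives $X_i^2+X_m^2=\bar M_i=A_{i\infty}^2+A_{m\infty}^2$, so the vectors $(X_1,\dots,X_m)$ and $(A_{1\infty},\dots,A_{m\infty})$ lie on the \emph{same} mass manifold. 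The decisive point is that this manifold is one-dimensional and, thanks to mass conservation, \emph{fixed in time}: parametrising it by $s:=X_m^2=\overline{A_m^2}\in[0,\min_i\bar M_i]$ with $X_i^2=\bar M_i-s$ for $i<m$, the whole estimate becomes a statement about the single variable $s$, which is what will make $C_{EB}$ uniform in time.

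First I would dispose of the spatial part. From $\Vert\delta_{A_i}\Vert_{\mathrm L^2(\Omega)}^2=\vert\Omega\vert(\overline{A_i^2}-\overline{A_i}^2)$ and $\overline{A_i}\ge0$ one gets $(X_i-\overline{A_i})^2\le\tfrac1{\vert\Omega\vert}\Vert\delta_{A_i}\Vert_{\mathrm L^2(\Omega)}^2$, so $X_i$ and $\overline{A_i}$ may be interchanged at the cost of a deviation term. Next I would lower-bound the reaction defect by its homogeneous version exactly as in Lemma \ref{dissipation d_m}: writing $A_m-\prod_j A_j^{\alpha_j}=\big(\overline{A_m}-\prod_j\overline{A_j}^{\alpha_j}\big)+\big(\delta_{A_m}-(\prod_j A_j^{\alpha_j}-\prod_j\overline{A_j}^{\alpha_j})\big)$ and using $(p-q)^2\ge\tfrac12p^2-q^2$ together with the mean-value expansion of $\prod_j(\delta_{A_j}+\overline{A_j})^{\alpha_j}-\prod_j\overline{A_j}^{\alpha_j}$, I obtain
\[
\Big\Vert A_m-\prod_{j=1}^{m-1}(A_j)^{\alpha_j}\Big\Vert_{\mathrm L^2(\Omega)}^2\ \ge\ c\,\vert\Omega\vert\,\Big\vert X_m-\prod_{j=1}^{m-1}X_j^{\alpha_j}\Big\vert^2\ -\ C\sum_{i=1}^{m}\Vert\delta_{A_i}\Vert_{\mathrm L^2(\Omega)}^2 .
\]
The coefficients produced by the mean-value theorem that are evaluated at averages are bounded by the masses alone, whereas the fluctuation $\prod_j A_j^{\alpha_j}-\prod_j\overline{A_j}^{\alpha_j}$ has to be estimated in $\mathrm L^2(\Omega)$; this last step is the technical heart and costs a constant governed by the available integrability of the species (finite under the standing hypotheses and, in the application, uniform in time by the bounds of Theorem \ref{L^infty estimation of a_i} and Proposition \ref{a_i L^p d_m}). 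Since the subtracted deviation terms already occur on the right-hand side of the desired inequality, the lemma is now reduced to the constant inequality
\[
\sum_{i=1}^m (X_i-A_{i\infty})^2\ \le\ C\,\Big\vert X_m-\prod_{j=1}^{m-1}X_j^{\alpha_j}\Big\vert^2
\]
for every point $(X_1,\dots,X_m)$ of the mass manifold.

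Finally I would prove this one-variable inequality. Set $F(s):=\sum_i (X_i(s)-A_{i\infty})^2$ and $G(s):=\big\vert\sqrt s-\prod_{j=1}^{m-1}(\bar M_j-s)^{\alpha_j/2}\big\vert^2$; both are continuous on the compact interval $[0,\min_i\bar M_i]$ and, because the map $f(x)=\prod_j(\bar M_j-x)^{\alpha_j}-x$ from the introduction has the \emph{single} root $s_\infty=a_{m\infty}$ (recall $f'<0$ there), $F$ and $G$ share the unique common zero $s=s_\infty$, which lies in the open interval. Hence $F\le C\,G$ amounts to the boundedness of $F/G$. Near $s_\infty$ a Taylor expansion gives $G(s)\sim c_0\,(s-s_\infty)^2$ with $c_0=\big(f'(s_\infty)\big)^2/(4s_\infty)>0$, while $F(s)=O\big((s-s_\infty)^2\big)$, so the ratio has a finite limit there; away from $s_\infty$ it is a quotient of continuous functions with strictly positive denominator on a compact set, hence bounded. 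The main obstacle is precisely this non-degeneracy at the equilibrium: one must guarantee that the reaction defect $G$ does not vanish to higher order than the distance $F$, and this is exactly what the strict monotonicity $f'(s_\infty)\neq0$ proved in the introduction provides (the square-root singularities of $F,G$ at the endpoints are harmless, since they occur away from $s_\infty$ and do not affect continuity). Tracking the resulting constants back through the two reductions yields the asserted inequality with a constant $C_{EB}$ controlled by $\Omega$ and the masses $M_i$ (together with the species bounds entering the fluctuation estimate).
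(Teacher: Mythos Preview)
The paper does not prove this statement itself; it is recorded in the appendix with the remark ``Proof can be found in the article \cite{FLT20}'' (and \cite{DF06} for three species). So the only comparison is with the general philosophy of those references, which is indeed to reduce to a finite-dimensional inequality on the mass manifold and then exploit the uniqueness and non-degeneracy of the equilibrium. Your one-variable endgame --- the compactness argument for $F\le CG$, with the quadratic vanishing of $G$ at $s_\infty$ coming from $f'(s_\infty)\neq 0$ --- is correct and is exactly how that step is handled in the cited works.

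The gap is in the reduction. To pass from $\big\Vert A_m-\prod_j A_j^{\alpha_j}\big\Vert_{\mathrm L^2}^2$ to $\big\vert X_m-\prod_j X_j^{\alpha_j}\big\vert^2$ minus a multiple of $\sum_i\Vert\delta_{A_i}\Vert_{\mathrm L^2}^2$, you need an $\mathrm L^2$ bound on the fluctuation $\prod_j A_j^{\alpha_j}-\prod_j\overline{A_j}^{\alpha_j}$ in terms of $\sum_i\Vert\delta_{A_i}\Vert_{\mathrm L^2}^2$. A direct H\"older/mean-value estimate of this fluctuation produces a constant that scales with $\Vert a_i\Vert_{\mathrm L^p}$ for $p$ growing with $\sum_j\alpha_j$, and these norms are \emph{not} controlled by the masses $M_i$ alone. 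You acknowledge this (``a constant governed by the available integrability'') and invoke Theorem~\ref{L^infty estimation of a_i} and Proposition~\ref{a_i L^p d_m}, but those are properties of the specific PDE solutions, not hypotheses of the abstract Theorem~\ref{ED}. With your argument as written, $C_{EB}$ would depend on the solution's higher integrability rather than only on $\Omega$ and the $M_i$, contradicting the statement. (Your own template, Lemma~\ref{dissipation d_m}, already needs $\mathrm L^6$ deviations and the $\mathrm L^p$ bounds of Proposition~\ref{a_i L^p d_m} to close --- this is the same obstruction.)

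The cure used in \cite{DF06,FLT20} is a dichotomy on the size of the deviations. If $\max_i\Vert\delta_{A_i}\Vert_{\mathrm L^2}^2\ge\epsilon$, the right-hand side is at least $\epsilon$ while the left-hand side is bounded by $\vert\Omega\vert\sum_i\bar M_i$ from mass conservation, and one is done. In the complementary small-deviation regime one does not attempt a crude $\mathrm L^2$ bound on the fluctuation; instead one works with averages and covariance-type identities (e.g.\ expanding $\overline{\prod_j A_j^{\alpha_j}}-\prod_j\overline{A_j}^{\alpha_j}$ and bounding each term via Cauchy--Schwarz and $\Vert A_j\Vert_{\mathrm L^2}^2=\vert\Omega\vert\,\overline{a_j}\le\vert\Omega\vert\bar M_j$), so that every constant depends only on the $\bar M_i$. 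After this, your one-variable argument finishes the proof. Adding this case split and the mass-only bookkeeping in the small-deviation case is what is missing.
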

Proof can be found in the article \cite{FLT20}. Here $\overline{A_i^2}$ and $\overline{A_i}$ is average of $A_i^2$ and $A_i$ over the domain $\Omega$. The three species case can be found in \cite{DF06}.
\begin{Lem}[Czisz\'ar-Kullback-Pinsker Inequality]\label{CKP inequality}
	
	Let $\Omega\in\mathbb{R}^N$ be an open set. $f,g:\Omega\to\mathbb{R}$ be two non-negative functions such that  $ \Vert f \Vert_{\mathrm{L}^1(\Omega)}=\Vert g \Vert_{\mathrm{L}^1(\Omega)}=1$. Then,
	\[ 
	\frac{1}{2}\Vert f-g \Vert_{\mathrm{L}^1(\Omega)}^2 \leq \int_{\Omega}f\ln{\left(\frac{f}{g}\right)}.
	\]
\end{Lem}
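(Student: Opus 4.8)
The plan is to prove this classical Csisz\'ar--Kullback--Pinsker (Pinsker) inequality by reducing it to a single scalar pointwise estimate combined with the Cauchy--Schwarz inequality. First I would dispose of the degenerate situation: if $\{g=0\}\cap\{f>0\}$ has positive measure then $\int_\Omega f\ln(f/g)=+\infty$ and the inequality is trivial, so I may assume $f=0$ almost everywhere on $\{g=0\}$ and work on $\{g>0\}$, where the ratio $u:=f/g$ is well defined. Using the normalization $\Vert f\Vert_{\mathrm{L}^1(\Omega)}=\Vert g\Vert_{\mathrm{L}^1(\Omega)}=1$, which gives $\int_\Omega(f-g)\,\mathrm{d}x=0$, I would rewrite the relative entropy in the form
\[
\int_\Omega f\ln\!\Big(\frac{f}{g}\Big)\,\mathrm{d}x=\int_\Omega g\,\psi\!\Big(\frac{f}{g}\Big)\,\mathrm{d}x,\qquad \psi(u):=u\ln u-u+1,
\]
because the additional contribution $-\int_\Omega g\,u+\int_\Omega g=-\int_\Omega f+\int_\Omega g$ vanishes. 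Here $\psi\ge0$, with equality only at $u=1$.

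Next I would establish the elementary pointwise inequality
\[
\psi(u)=u\ln u-u+1\ \ge\ \frac{3(u-1)^2}{2(u+2)}\qquad\text{for all }u>0.
\]
My plan for this is to set $\phi(u):=2(u+2)\psi(u)-3(u-1)^2$ and to show $\phi\ge0$ by differentiation: one verifies $\phi(1)=\phi'(1)=\phi''(1)=0$, while $\phi'''(u)=4(u-1)/u^2$ is negative for $u<1$ and positive for $u>1$. Hence $\phi''$ attains its minimum value $0$ at $u=1$, so $\phi''\ge0$; this forces $\phi'$ to be nondecreasing with $\phi'(1)=0$, and finally $\phi$ to have a global minimum $\phi(1)=0$. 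This is the only genuinely computational step, but it is a routine monotonicity argument.

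Finally I would combine the two ingredients via Cauchy--Schwarz. Writing $\int_\Omega|f-g|\,\mathrm{d}x=\int_\Omega g\,|u-1|\,\mathrm{d}x$ and factoring the integrand as $\sqrt{g\,\tfrac{3(u-1)^2}{2(u+2)}}\cdot\sqrt{g\,\tfrac{2(u+2)}{3}}$, the Cauchy--Schwarz inequality yields
\[
\Big(\int_\Omega|f-g|\,\mathrm{d}x\Big)^{2}\le\Big(\int_\Omega g\,\frac{3(u-1)^2}{2(u+2)}\,\mathrm{d}x\Big)\Big(\int_\Omega g\,\frac{2(u+2)}{3}\,\mathrm{d}x\Big).
\]
The second factor equals $\tfrac13\big(2\!\int_\Omega g\,u+4\!\int_\Omega g\big)=\tfrac13\big(2\!\int_\Omega f+4\!\int_\Omega g\big)=2$, while the first factor is at most $\int_\Omega g\,\psi(u)\,\mathrm{d}x=\int_\Omega f\ln(f/g)\,\mathrm{d}x$ by the pointwise bound. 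Hence $\Vert f-g\Vert_{\mathrm{L}^1(\Omega)}^2\le 2\int_\Omega f\ln(f/g)\,\mathrm{d}x$, which is the claim. I expect the main obstacle to be purely bookkeeping: verifying the scalar inequality and handling the null set $\{g=0\}$ so that the substitution $u=f/g$ and the Cauchy--Schwarz application are legitimate; no functional-analytic machinery is needed beyond these elementary steps.
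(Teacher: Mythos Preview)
Your proof is correct and is one of the standard routes to the sharp Pinsker constant: the pointwise bound $u\ln u-u+1\ge\tfrac{3(u-1)^2}{2(u+2)}$ followed by Cauchy--Schwarz, with the second factor evaluating to exactly $2$ thanks to the normalizations. The derivative computation you outline for $\phi(u)=2(u+2)\psi(u)-3(u-1)^2$ checks out, and your handling of the set $\{g=0\}$ is adequate.

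The paper, however, does not prove this lemma at all: it is simply stated in the appendix as a classical inequality and then invoked (through Theorem~\ref{lower bound of entropy}) to pass from relative-entropy decay to $\mathrm{L}^1$ decay. So there is nothing to compare your argument against; you have supplied a self-contained proof where the paper relies on the literature.
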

\begin{Thm}[{Lower Bound of Entropy}] \label{lower bound of entropy}
	For $1\leq i\leq m$, let $a_i$ be the solution of the triangular reaction-diffusion system \eqref{triangular  model} and $a_{i\infty}$ are the equilibrium states \eqref{triangular equilibrium state 1}. Consider the following entropy functional
	\[
	E(a_i:i=1,2\cdots,m)=\sum\limits_{i=1}^m\int_{\Omega}  \alpha_i(a_i(\ln{a_i}-1)+1).
	\] 
	Then there exists a constant $C_{LE}>0$, depends only on the dimension and equilibrium point $a_{i\infty}, \ \forall i=1,\cdots,m-1$, such that
	\begin{align*}
		E(a_i:i=1,& \cdots,m)-E(a_{i\infty}:i=1,\cdots,m) \geq C_{LE}\sum_{i=1}^{m}\Vert a_i-a_{i\infty}\Vert_{\mathrm{L}^1(\Omega)}^{2}. 
	\end{align*}
\end{Thm}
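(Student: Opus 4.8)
The plan is to reduce the claim to a per-species estimate and then exploit the additive structure of the relative entropy. Thanks to \eqref{Entropy Gamma function relation tri}, which already incorporates the mass conservation \eqref{tri mass conserve 1}, the relative entropy is a weighted sum of single-species relative entropies,
\[
E(a_1,\cdots,a_m)-E(a_{1\infty},\cdots,a_{m\infty})=\sum_{i=1}^m \alpha_i \int_\Omega \Phi(a_i,a_{i\infty}), \qquad \Phi(x,y):=x\ln\tfrac{x}{y}-x+y\ge 0.
\]
It therefore suffices to produce, for each fixed $i$, a time-independent constant $C_i>0$ such that $\int_\Omega \Phi(a_i,a_{i\infty})\ge C_i\|a_i-a_{i\infty}\|_{\mathrm L^1(\Omega)}^2$, whence $C_{LE}=\min_i \alpha_i C_i$. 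The obstruction is that the Czisz\'ar--Kullback--Pinsker inequality (Lemma \ref{CKP inequality}) compares nonnegative functions of \emph{equal} mass, whereas the individual masses $\int_\Omega a_i$ and $a_{i\infty}|\Omega|$ need not agree (only the pairwise sums $\int_\Omega(a_i+a_m)$ are conserved). To bypass this I would route through the spatial average $\overline{a_i}$.

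First I would record the exact additive (Pythagorean) identity
\[
\int_\Omega \Phi(a_i,a_{i\infty})=\int_\Omega \Phi(a_i,\overline{a_i})+|\Omega|\,\Phi(\overline{a_i},a_{i\infty}),
\]
which follows by expanding both sides and cancelling the cross terms using $\int_\Omega a_i=|\Omega|\,\overline{a_i}$; both summands are nonnegative. For the first summand the masses of $a_i$ and of the constant $\overline{a_i}$ coincide, so normalising by $m_i:=\int_\Omega a_i$ and applying Lemma \ref{CKP inequality} to the densities $a_i/m_i$ and $\mathbf 1/|\Omega|$ gives $\int_\Omega \Phi(a_i,\overline{a_i})\ge \tfrac{1}{2m_i}\|a_i-\overline{a_i}\|_{\mathrm L^1(\Omega)}^2$. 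The uniform mass bound $m_i=\int_\Omega a_i\le \int_\Omega(a_i+a_m)=M_i$ (valid for every $i$, including $a_m$ since $\int_\Omega a_m\le M_i$) then renders this constant time-independent.

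For the second summand $|\Omega|\,\Phi(\overline{a_i},a_{i\infty})$ I would invoke the elementary scalar inequality $\Phi(x,a_{i\infty})\ge c_i\,(x-a_{i\infty})^2$ for $x$ in the compact interval $[0,M_i/|\Omega|]$: the quotient $\Phi(x,a_{i\infty})/(x-a_{i\infty})^2$ is continuous and strictly positive there, with removable singularity and limiting value $1/(2a_{i\infty})$ at $x=a_{i\infty}$, so it attains a positive minimum $c_i$. Since $\overline{a_i}=\tfrac{1}{|\Omega|}\int_\Omega a_i\le M_i/|\Omega|$ stays in this interval for all $t$ and $a_{i\infty}$ is a fixed constant from \eqref{triangular equilibrium state 1}, the constant $c_i$ is time-independent. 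Combining the two lower bounds with the triangle inequality $\|a_i-a_{i\infty}\|_{\mathrm L^1}^2\le 2\|a_i-\overline{a_i}\|_{\mathrm L^1}^2+2|\Omega|^2|\overline{a_i}-a_{i\infty}|^2$ yields $\int_\Omega \Phi(a_i,a_{i\infty})\ge C_i\|a_i-a_{i\infty}\|_{\mathrm L^1(\Omega)}^2$ with $C_i=\tfrac12\min\{\,1/(2M_i),\,c_i/|\Omega|\,\}$, and summing over $i$ against the weights $\alpha_i$ closes the argument. The main obstacle is precisely the individual-mass mismatch, resolved by the average-splitting above; the only remaining care is to keep every constant time-independent, which the uniform bound $\int_\Omega a_i\le M_i$ guarantees by confining $\overline{a_i}$ to a fixed compact interval.
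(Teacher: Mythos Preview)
Your proof is correct. The paper itself does not give an argument beyond stating that the bound ``is a direct application of Czisz\'ar--Kullback--Pinsker inequality'' and pointing to \cite{DF06,FLT20}; your route through the spatial average---the Pythagorean splitting $\int_\Omega\Phi(a_i,a_{i\infty})=\int_\Omega\Phi(a_i,\overline{a_i})+|\Omega|\,\Phi(\overline{a_i},a_{i\infty})$, followed by CKP on the first piece and a compactness bound on the second---is precisely the standard mechanism in those references for handling the mass mismatch, so you are supplying the details the paper omits rather than taking a genuinely different path. One small remark: your constant $\tfrac{1}{2m_i}$ from the CKP step is formally time-dependent, but since you only need a \emph{lower} bound and $m_i\le M_i$ gives $\tfrac{1}{2m_i}\ge\tfrac{1}{2M_i}$, the time-independence is indeed secured exactly as you say.
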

This is a direct application of Czisz\'ar-Kullback-Pinsker inequality. For Three species case, proof can be found in \cite{DF06}. For more general case proof can be found in \cite{FLT20}.

\end{document}